\documentclass[a4paper,12pt,reqno]{amsart}

\usepackage{a4wide}
\usepackage{latexsym,amssymb}
\usepackage{upgreek}
\usepackage[hidelinks]{hyperref}
\usepackage{amsmath,amsthm,amsxtra,amsbsy}
\usepackage{amsfonts,eucal}
\usepackage{mathrsfs}
\usepackage{xcolor}
\usepackage{enumitem}
\usepackage{bbm} 
\usepackage{longtable}
\usepackage{soul}
\usepackage{stmaryrd} 
\numberwithin{equation}{section}

\usepackage{fancyhdr}
\usepackage{datetime}
\fancyhf{}
\fancyhead{}

\fancyfoot[L]{\tiny \today\ \currenttime}
\fancyfoot[C]{\thepage}
\addtolength\footskip{5mm}
\pagestyle{fancy}

 \def\calB{{\mathcal B}} 
 \def\calE{{\mathcal E}} \def\calF{{\mathcal F}}

\def\calM{{\mathcal M}}  
  \def\calR{{\mathcal R}}
\def\calS{{\mathcal S}}  
 \def\calW{{\mathcal W}} 
 
  
 \def\rme{{\mathrm e}}

\def\rmA{{\mathrm A}} \def\rmB{{\mathrm B}} \def\rmC{{\mathrm C}}
\def\rmD{{\mathrm D}}  \def\rmF{{\mathrm F}}
\def\rmG{{\mathrm G}}

 \def\fw{{\mathrm W}}

\def\sfS{{\mathsf S}}
\def\sfp{{\mathsf p}}
\def\sfx{{\mathsf x}}
\def\sfy{{\mathsf y}}
\def\sfs{{\mathsf s}} 
\def\sfR{{\mathsf R}}
\def\sfE{{\mathsf E}}

\def\fw{\mathrm F}
\newcommand\tj{\bj^\flat} 

\newtheorem{theorem}{Theorem}[section]
\newtheorem{prop}[theorem]{Proposition}
\newtheorem{problem}[theorem]{Problem}
\newtheorem{cor}[theorem]{Corollary}
\newtheorem{lemma}[theorem]{Lemma}
\newtheorem{Flemma}[theorem]{Formal Lemma}
\theoremstyle{definition}
\newtheorem{definition}[theorem]{Definition}

%
%
\newtheorem{xremark}[theorem]{Remark}
\newenvironment{remark}%
  {\pushQED{\qed}\begin{xremark}}
  {\popQED\end{xremark}}
\newtheorem{xample}[theorem]{Example}
\newenvironment{example}%
  {\pushQED{\qed}\begin{xample}}
  {\popQED\end{xample}}

\usepackage{framed}
\usepackage{ifthen,enumitem}
\makeatletter
\newcommand{\showfont}{encoding: \f@encoding{},  
  family: \f@family{},
  series: \f@series{},
  shape: \f@shape{},
  size: \f@size{}
}
\newcommand{\iffont}[3]{\ifthenelse{\equal{\f@shape}{#1}}{#2}{#3}}
\newcounter{Assumptions}
\newenvironment{Assumptions}[1]
  {\renewcommand\theAssumptions{{\bf(#1)}}%
   \protected@edef\@currentlabel{{\bf(#1)}}%
   \begin{framed}\noindent\textbf{Assumption~\theAssumptions.}}
  {\end{framed}}
\makeatother

\usepackage{mathtools}

\DeclareFontFamily{U}{matha}{\hyphenchar\font45}
\DeclareFontShape{U}{matha}{m}{n}{ <-6> matha5 <6-7> matha6 <7-8>
matha7 <8-9> matha8 <9-10> matha9 <10-12> matha10 <12-> matha12 }{}
\DeclareSymbolFont{matha}{U}{matha}{m}{n}
\DeclareMathSymbol{\abxrightharpoonup}{\mathrel}{matha}{"E1}


\newcommand{\OLI}{\color{red}}
\definecolor{ddcyan}{rgb}{0,0.1,0.9}
\definecolor{ddmagenta}{rgb}{0.7,0,0.9}
\definecolor{purple}{rgb}{0.5,0,0.9}

\newcommand{\GGG}{\color{ddcyan}}
\newcommand{\GGGO}{\nc}
\newcommand{\nc}{\normalcolor}

\newcommand{\EEE}{\color{black}}

\let\GGG\relax
\let\GGGO\relax

\let\OLI\relax

\newcommand{\RICKYNEW}{\color{purple}}
\let\RICKYNEW\relax


\newcommand{\ep}{\varepsilon}

\let\eps\ep
\newcommand{\R}{\mathbb{R}}
\newcommand{\N}{\mathbb{N}}

\newcommand{\teta}{{\boldsymbol\vartheta}}

\newcommand{\foraa}{\text{for a.e.\ }}
\def\One{\mathbbm{1}}
\def\dd{\mathrm{d}}
\DeclareMathSymbol{\mtimes}{\mathord}{symbols}{"0A}
\newcommand{\argmin}{\mathop{\rm argmin}}

\newcommand{\edg}{E}

\newcommand{\CB}{\mathrm{C}_{\mathrm{b}}}
\newcommand{\dnabla}{\overline\nabla}
\newcommand{\ona}{\dnabla}
\newcommand{\odiv}[1]{\dnabla \cdot(#1)}
\renewcommand{\odiv}[1]{\mathop{\overline{\mathrm{div}}}#1}
\newcommand{\odivn}{\overline{\mathrm{div}}}
\newcommand{\DVT}[3]{\mathscr{W}(#1,#2,#3)}
\newcommand{\CER}[2]{\mathcal{A}{(#1,#2)}}
\newcommand{\CEIR}[1]{\mathcal{A}{(#1)}}
\newcommand{\ADM}[4]{\mathscr{A}{(#1,#2;#3,#4)}}
\newcommand{\DVTn}{\mathscr{W}}
\newcommand{\VarW}[3]{\mathbb W(#1;[#2,#3])}
\newcommand{\VarWn}{\mathbb W}

\newcommand{\GMM}[2]{\mathrm{GMM}(#1;#2)}
\newcommand{\GMMT}[4]{\mathrm{GMM}(#1;(#2, #3),#4)}
\newcommand{\nuovorel}{\mathscr{S}^-}

\newcommand{\Cb}{\mathrm{C}_{\mathrm{b}}}
\newcommand{\Bb}{\mathrm{B}_{\mathrm b}}

\newcommand{\Fish}{\mathscr{D}}

\newcommand{\weakto}{\rightharpoonup}

\newcommand{\weaksigmatoabs}{\stackrel{\sigma}{\rightharpoonup}}
\newcommand{\hj}{\bj'}

\newcommand\cw{w^\flat} 

\newcommand\symmap{\mathsf s_\#}
\newcommand\symmapn{\mathsf s}

\newcommand{\CE}[2]{\mathcal{CE}(#1,#2)}
\newcommand{\CEI}[1]{\mathcal{CE}(#1)}
\newcommand{\CEP}[4]{\mathcal{CE}(#1,#2; #3,#4)}
\newcommand{\CEIP}[3]{\mathcal{CE}(#1;#2,#3)}

\newcommand{\piecewiseConstant}[2]{\overline{#1}_{\kern-1pt#2}}
\newcommand{\pwC}{\piecewiseConstant}
\newcommand{\underpiecewiseConstant}[2]{\underline{#1}_{\kern-1pt#2}}
\newcommand{\upwC}{\underpiecewiseConstant}
\newcommand{\sft}{\mathsf{t}}
\newcommand{\ttau}{t_{\tau}}
\newcommand{\Utau}{\rho_\tau}
\newcommand{\pwM}[2]{\widetilde{#1}_{\kern-1pt#2}}
\def\gen(#1,#2){\calS_{#1}(#2)}
\newcommand\genn[1]{\calS_{#1}}
\newcommand{\nuovo}{\mathscr{S}}
\let\ol\overline


\newcommand{\tetapi}{\boldsymbol \teta}
\newcommand{\pinfty}{{+\infty}}
\newcommand{\mres}{\kern1pt\mathbin{\vrule height 1.6ex depth 0pt width
    0.13ex\vrule height 0.13ex depth 0pt width 1.3ex}}
\newcommand{\topref}[2]{\stackrel{\eqref{#1}}#2}
\newcommand{\Dom}{\calM^+}
\newcommand{\half}{\relax} 
\newcommand{\thalf}{\relax} 
\newcommand{\frA}{\mathfrak A}
\newcommand{\frB}{\mathfrak B}
\newcommand{\frc}{\mathfrak c}
\newcommand{\frl}{\mathfrak l}
\newcommand{\frs}{\mathfrak s}
\newcommand{\frf}{\mathfrak f}
\newcommand{\frm}{\mathfrak m}
\newcommand{\kernel}[2]{\boldsymbol {#1}_{#2}}
\def\calS{\mathscr E}
\def\calF{\mathscr F}
\def\calR{\mathscr R}
\def\Aalpha{\upalpha}
\newcommand{\Lebone}{\lambda}
\newcommand{\bnu}{\boldsymbol\upnu}
\newcommand{\bj}{{\boldsymbol j}}
\newcommand{\bsigma}{{\boldsymbol \varsigma}}

\newcommand{\rfield}{\mathrm F_0}

\newcommand{\OrmD}{\rmD}
\newcommand{\Gop}{\boldsymbol G}
\newcommand{\Fop}{\boldsymbol F}
\newcommand{\restr}[1]{\lower3pt\hbox{$|_{#1}$}}
\newcommand{\nchi}{{\raise.3ex\hbox{$\chi$}}}
\begin{document}

\title{Jump processes as Generalized Gradient Flows}

\author{Mark A.\ Peletier}
\address{M.\ A.\ Peletier, Department of Mathematics and Computer Science and Institute for Complex Molecular Systems, TU Eindhoven, 5600 MB Eindhoven, The Netherlands} 
\email{M.A.Peletier\,@\,tue.nl}

\author{Riccarda Rossi}
\address{R.\ Rossi, DIMI, Universit\`a degli studi di Brescia. Via Branze 38, I--25133 Brescia -- Italy}
\email{riccarda.rossi\,@\,unibs.it}

\author{Giuseppe Savar\'e}
\address{G.\ Savar\'e, Dipartimento di Matematica ``F.\ Casorati'', Universit\`a degli studi di Pavia. Via Ferrata 27, I--27100 Pavia -- Italy}
\email{giuseppe.savare\,@\,unipv.it}

\author{Oliver Tse}
\address{O.\ Tse, Department of Mathematics and Computer Science, Eindhoven University of Technology, 5600 MB Eindhoven, The Netherlands}
\email{o.t.c.tse\,@\,tue.nl}

\begin{abstract}
We have created a functional framework for a class of non-metric gradient systems. The state space is a space of nonnegative measures, and the class of systems includes
 the Forward Kolmogorov equations for the laws of Markov jump processes on Polish spaces. This framework comprises
  a definition of a notion of solutions, a method to prove existence, and an archetype uniqueness result.
We do this by using only the structure that is provided directly by the dissipation functional, which need not be homogeneous, and we do not appeal to any metric structure.
\end{abstract}

\maketitle

\tableofcontents

\section{Introduction}

The study of dissipative variational evolution equations has seen a tremendous activity in the last two decades. A general class of such systems is that of \emph{generalized gradient flows}, which formally can be written as 
\begin{equation}
\label{eq:GGF-intro-intro}
\dot \rho =   \rmD_\upzeta \EEE \sfR^*(\rho,-\rmD_\rho \sfE(\rho))
\end{equation}
in terms of a \emph{driving functional} $\sfE$ and a \emph{dual dissipation potential}
  $\sfR^* = \sfR^*(\rho,\upzeta)$, where $\rmD_\upzeta$ 
and $\rmD_\rho$ denote derivatives with respect to 
 $\upzeta$
and $\rho$. The most well-studied of these are  classical gradient flows~\cite{AmbrosioGigliSavare08}, for which 
 $\upzeta \mapsto \rmD_\upzeta \sfR^*(\rho,\upzeta) = \mathbb K(\rho) \upzeta$  \EEE  is a linear operator $\mathbb K(\rho)$, and rate-independent systems~\cite{MielkeRoubicek15}, 
for which 
 $\upzeta\mapsto \rmD_\upzeta\sfR^*(\rho,\upzeta)$ \EEE
 is zero-homogeneous. 

However, various models naturally lead to gradient structures that are neither classic nor rate-independent. For these systems, the map 
 $\upzeta\mapsto \rmD_\upzeta\sfR^*(\rho,\upzeta)$  \EEE
is neither linear nor zero-homogeneous, and in many cases it is not even homogeneous of any order. Some examples are 
\begin{enumerate}
\item Models of chemical reactions, where $\sfR^*$ depends exponentially on 
$\upzeta$~\cite{Feinberg72,Grmela10,ArnrichMielkePeletierSavareVeneroni12,LieroMielkePeletierRenger17},
\item The Boltzmann equation, also with exponential $\sfR^*$~\cite{Grmela10},
\item Nonlinear viscosity relations such as the Darcy-Forchheimer equation for porous media flow~\cite{KnuppLage95,GiraultWheeler08},
\item Effective, upscaled descriptions in materials science, where the effective potential~$\sfR^*$ arises through a cell problem, and can have many different types of dependence on~$\upzeta$ \cite{ElHajjIbrahimMonneau09,PerthameSouganidis09,PerthameSouganidis09a,MirrahimiSouganidis13,LieroMielkePeletierRenger17,DondlFrenzelMielke18TR,PeletierSchlottke19TR,MielkeMontefuscoPeletier20TR},
\item  Gradient structures that arise from  large-deviation principles for sequences of stochastic processes, in particular jump processes~\cite{MielkePeletierRenger14,MielkePattersonPeletierRenger17}.
\end{enumerate}
The last example is the inspiration for this paper.

Regardless whether $\sfR^*$ is classic, rate-independent, or otherwise,  equation~\eqref{eq:GGF-intro-intro} typically is only formal, and it is a major mathematical challenge to construct an appropriate functional framework for this equation. Such a functional framework should give the equation a rigorous meaning, and provide the means to prove well-posedness, stability, regularity and approximation results to facilitate the study of the equation. 

For classical gradient systems, in which $\rmD_\upzeta\sfR^*$ is linear and $\sfR^*$ is quadratic in $\upzeta$ (therefore also  called `quadratic' gradient systems)  and when $\sfR^*$ generates a metric space, a rich framework has been created by Ambrosio, Gigli, and Savar\'e~\cite{AmbrosioGigliSavare08}. For rate-independent systems, in which $\sfR^*$ is $1$-homogeneous in $\upzeta$, the complementary concepts of `Global Energetic solutions' and `Balanced Viscosity solutions' give rise to two different frameworks~\cite{MielkeTheilLevitas02,Dal-MasoDeSimoneMora06,MielkeRossiSavare12a,MRS13,MielkeRoubicek15}. 

For the examples (1--5) listed above, however, $\sfR^*$ is  not  homogeneous in~$\upzeta$, and neither the rate-independent frameworks nor the metric-space theory  apply. Nonetheless, the existence of such models of real-world systems with a formal variational-evolutionary structure suggests that there may exist a functional framework for such equations that relies on this structure.
 In this paper we build exactly such a framework for an important class of equations of this type, those that describe Markov jump processes. We expect the approach
 advanced here to be applicable  to a broader range of systems.

\subsection{Generalized gradient systems for Markov jump processes}
%

Some generalized gradient-flow structures of evolution equations are generated by the large deviations of an underlying, more microscopic stochastic process~\cite{AdamsDirrPeletierZimmer11,AdamsDirrPeletierZimmer13,DuongPeletierZimmer13,MielkePeletierRenger14,MielkePeletierRenger16,LieroMielkePeletierRenger17}. This explains the origin and interpretation of such structures, and it can be used to identify hitherto unknown gradient-flow structures~\cite{PeletierRedigVafayi14,GavishNyquistPeletier19TR}. 

It is the example of Markov \emph{jump} processes that inspires the
results of this paper, and we describe this example here;
 nonetheless, \EEE
 the general
setup that starts in Section~\ref{ss:assumptions} has wider
application. We think of Markov  jump processes as jumping from one
`vertex' to another `vertex' along an `edge' of a `graph'; we place
these terms between quotes because the space $V$ of vertices may be
finite, countable, or even uncountable, and similarly the space
$\edg:= V\times V$ 
of edges may be finite, countable, or uncountable (see
Assumption~\ref{ass:V-and-kappa} below). In this paper, $V$ is a
 standard Borel space.

The laws of such processes are time-dependent measures
$t\mapsto \rho_t\in \Dom(V)$
(with $\Dom(V)$ the space of
 positive finite Borel \EEE
\nc
measures---see Section~\ref{ss:3.1}). These laws satisfy 
the Forward Kolmogorov equation
\begin{align}\label{eq:fokker-planck}
    \partial_t\rho_t = Q^*\rho_t, \qquad 
      (Q^*\rho)(\dd x) = \int_{y\in V} \rho(\dd y) \kappa(y,\dd x)  - \rho(\dd x)\int_{y\in V} \kappa(x,\dd y).
\end{align}
Here  $Q^*:\calM(V)\to \calM(V)$ 
is the dual of the infinitesimal generator
 $Q:\Bb(V)\to \Bb(V)$ 
of the
process, which
for an arbitrary bounded Borel function $\varphi\in \Bb(V)$
\nc is given by
\begin{equation}
\label{eq:def:generator}
 (Q\varphi)(x) = \int_V [\varphi(y)-\varphi(x)]\,\kappa(x,\dd y).
\end{equation}
The jump kernel $\kappa$  in these definitions characterizes the process: $\kappa(x,\cdot)\in \calM^+(V)$ is the infinitesimal rate of jumps of a particle from the point $x$ to  points in $V$.  Here we   address \EEE the reversible case, which means that the process has an invariant measure $\pi\in \calM^+(V)$, i.e., $Q^*\pi=0$, and that the joint measure $\pi(\dd x) \kappa(x,\dd y)$ is symmetric in $x$ and~$y$.

\medskip

In this paper we consider evolution equations of the form~\eqref{eq:fokker-planck} for the nonnegative measure $\rho$, as well as various linear and nonlinear generalizations. We will view them as gradient systems of the form~\eqref{eq:GGF-intro-intro}, and use this gradient structure to study their properties. 

The gradient structure for equation~\eqref{eq:fokker-planck}  consists of the state
space $\calM^+(V)$, a driving functional
$\calS:\calM^+(V)\to[0,\pinfty]$, and a dual dissipation potential
$\calR^*:\calM^+(V)\times \Bb(\edg)\to[0,\pinfty]$ (where $\Bb(\edg)$ denotes the space of bounded Borel functions  on $\edg$). We now describe this structure in formal terms, and   making it rigorous is one of the aims of this paper. 

The functional that drives the evolution is the relative entropy with
respect to the invariant measure $\pi$,
namely 
\begin{equation}
\label{eq:def:S}
\calS(\rho) =
\half\calF_{\upphi}(\rho|\pi):= 
\begin{cases}
\displaystyle
 \half \int_{V}   \upphi\bigl(u(x)\bigr) \pi(\dd x)  & \displaystyle \text{ if } \rho \ll \pi, \text{ with } u =\frac{\dd \rho }{\dd \pi},
\\
\pinfty & \text{ otherwise},
\end{cases}
\end{equation}
where for the example of Markov jump processes the `energy density' $\upphi$ is given by
\begin{equation}
\label{def:phi-log-intro}
\upphi(s) := \half s\log s  - s + 1.
\end{equation}
(In the general development below we  consider more general functions $\upphi$, such as those that arise in strongly interacting particle systems; see e.g.~\cite{KipnisOllaVaradhan89,DirrStamatakisZimmer16}). 

The dissipation potential $\sfR^*$ is best written in terms of an alternative potential $\calR^*$, 
\[
\sfR^*(\rho,\upzeta) := \calR^*(\rho,\ona \upzeta).
\]
Here the `graph gradient' $\ona:\Bb(V) \to \Bb(\edg)$ and its
 negative dual, the 
`graph divergence operator' $ \odiv:\calM(\edg)\to\calM(V)$,  \EEE are
defined as follows:
\begin{subequations}
\label{eq:def:ona-div}
\begin{align}
\label{eq:def:ona-grad}
(\ona \varphi)(x,y) &:= \varphi(y)-\varphi(x) &&\text{for any }\varphi\in \Bb(V),\\
  \nc (\odiv \bj )(\dd x) &:= \int_{y\in V}
                         \bigl[\bj (\dd x,\dd y)-\bj (\dd y,\dd
                         x)\bigr]\nc
                                              &&\text{for any }\bj \in \calM(\edg), \EEE
       \label{eq:def:div}
\end{align}
\end{subequations}
\GGGO and are linked by
\begin{equation}
  \label{eq:nabladiv}
  \iint_\edg \ona\varphi(x,y)\,\bj (\dd x,\dd y)=
  -\int_V \varphi(x) \,\odiv \bj (\dd x)\quad
  \text{for every }\varphi\in \Bb(V).
\end{equation}
\nc
The dissipation functional $\calR^*$ is defined for
$\xi\in \Bb(\edg)$
by
\begin{align}
\label{eq:def:R*-intro}
  &\calR^*(\rho,\xi) :=
     \frac 12 
    \int_{\edg} \Psi^*(\xi(x,y)) \, \bnu_\rho(\dd x \,\dd y),
\end{align}
where the function $\Psi^*$ and the  `edge'   measure $\bnu_\rho$ will be fixed in \eqref{eq:def:alpha} below.

With these definitions, the gradient-flow equation~\eqref{eq:GGF-intro-intro} can be written alternatively as 
\begin{equation}
\label{eq:GF-intro}
\partial_t \rho_t = -
\odiv \Bigl[ \rmD_\xi\calR^*\Bigl(\rho_t,-\thalf
\ona\upphi'\Bigl(\frac{\dd \rho_t}{\dd\pi}\Bigr)\Bigr)\Bigr],
\end{equation}
which can be recognized by observing that 
\[
\bigl\langle \rmD_\upzeta\sfR^*(\rho,\upzeta),\tilde \upzeta\bigr \rangle 
= \frac{\dd }{\dd h} \calR^*(\rho,\ona \upzeta+h\ona \tilde \upzeta)\Big|_{h=0}
=\bigl\langle \rmD_\xi\calR^*(\rho,\ona \upzeta),\ona \tilde \upzeta\bigr \rangle 
=\bigl\langle -\odiv \rmD_\xi\calR^*(\rho,\ona \upzeta), \tilde \upzeta\bigr \rangle,
\]
and  $\rmD \calS(\rho) = \half
\upphi'(u)$  (which corresponds to $\half \log u$
for the logarithmic entropy \eqref{def:phi-log-intro}). 
This $(\odiv,\ona)$-duality structure is a common feature in both physical and probabilistic models, and
 has its origin in the distinction between `states' and `processes'; see~\cite[Sec.~3.3]{PeletierVarMod14TR} and~\cite{Ottinger19} for discussions.

\bigskip

For this example of Markov jump processes we consider a class of
generalized gradient structures   
of the type above, given by $\calS$ and $\calR^*$ (or equivalently by
 the densities 
 $\upphi$, $\Psi^*$, and  the measure  $\bnu_\rho$), with the property  that  equations~\eqref{eq:GGF-intro-intro} and~\eqref{eq:GF-intro}  coincide with~\eqref{eq:fokker-planck}. Even for fixed $\calS$ there exists a range of choices for $\Psi^*$ and $\bnu_\rho$ that achieve this (see also the discussion in~\cite{GlitzkyMielke13,MielkePeletierRenger14}).
 A simple calculation 
  (see the discussion at the end of Section \ref{ss:assumptions}) 
 shows that, if one chooses for the measure $\bnu_\rho$ the form
 \begin{equation}
 \label{eq:def:alpha}
 \bnu_\rho(\dd x\,\dd y) = 
 \upalpha(u(x),u(y))\, \pi(\dd x)\kappa(x,\dd y),
 \end{equation}
 \GGG
 for a suitable fuction $\upalpha:[0,\infty)\times [0,\infty)\to [0,\infty)$,
 \EEE
 and one introduces the map $\rmF:(0,\infty)\times(0,\infty)\to\R$
 \begin{equation}
   \label{eq:184}
   \rmF(u,v):=
   (\Psi^*)'\big[\upphi'(v)-\upphi'(u)\big]\upalpha(u,v)\quad
   u,v>0,
 \end{equation}
 then \eqref{eq:GF-intro} takes the form of the integro-differential
 equation
 \begin{equation}
\partial_t u_t(x) = \int_{y\in V} \fw\bigl(u_t(x),u_t(y)\bigr)\,
\kappa(x,\dd y),\label{eq:180}
\end{equation}
in terms of the density $u_t$ of $\rho_t$ with respect to $\pi$.
Therefore, \nc
a pair $(\Psi^*,\bnu_\rho)$ leads to equation~\eqref{eq:fokker-planck}
 whenever
 \GGGO $(\Psi^*,\upphi,\upalpha)$ satisfy the \emph{compatibility property}
 \begin{equation}
   \label{cond:heat-eq-2}
   \rmF(u,v)=v-u
   \quad
   \text{for every }u,v>0.
 \end{equation}
 The classical quadratic-energy, quadratic-dissipation choice 
 \begin{equation}
   \label{eq:68}
   \Psi^*(\xi)=\tfrac 12\xi^2,\quad
   \upphi(s)=\tfrac 12s^2,\quad
   \upalpha(u,v)=1
 \end{equation}
 corresponds to the Dirichlet-form approach to
 \eqref{eq:fokker-planck}
 in $L^2(V,\pi)$. Here $\calR^*(\rho,\bj)=\calR^*(\bj)$
 is in fact independent of $\rho$: if one introduces
 the symmetric bilinear form
 \begin{equation}
   \label{eq:185}
   \llbracket u,v\rrbracket:=\frac 12\iint_\edg \ona u(x,y)\,\ona
   v(x,y)\,\tetapi(\dd x,\dd y),\quad
   \llbracket u,u\rrbracket=\frac 12\iint_\edg \Psi(\ona u)\,\dd \tetapi,
 \end{equation}
  with $\tetapi (\dd x, \dd y) = \pi(\dd x ) \kappa(x, \dd y)$ (cf.\ \eqref{nu-pi} ahead), \EEE
 then \eqref{eq:180} can also be formulated as
 \begin{equation}
   \label{eq:186}
   (\dot u_t, v)_{L^2(V,\pi)}+
   \llbracket u_t,v\rrbracket=0\quad
   \text{for every }v\in L^2(V,\pi).
 \end{equation}
 \nc
 Two other choices have received attention in the
  recent literature. Both of these are based not on the quadratic energy $\upphi(s)=\tfrac 12s^2$, but on the Boltzmann entropy functional $\upphi(s) = s\log s - s + 1$:
\begin{subequations}
\label{choices}
\begin{enumerate}
\item The large-deviation characterization~\cite{MielkePeletierRenger14} leads to the choice
\begin{equation}
\label{choice:cosh}
\Psi^*(\xi) :=  4\bigl(\cosh (\xi/2) - 1\bigr)
\quad \text{and}\quad \upalpha(u,v) := \sqrt{uv}.
\end{equation}
The corresponding primal dissipation potential $\Psi := (\Psi^*)^*$ is given by
\[
\Psi(s) := 2s\log \left(\frac{s+\sqrt{s^2+4}}2 \right)  - \sqrt{s^2 + 4} + 4.
\]

\item The `quadratic-dissipation' choice introduced independently by Maas~\cite{Maas11},   Mielke \cite{Mielke13CALCVAR}, and Chow, Huang, and Zhou~\cite{ChowHuangLiZhou12} for Markov processes on \emph{finite} graphs,  
\begin{equation}
\label{choice:quadratic}
\Psi^*(\xi) := \tfrac12 \xi^2, \quad \Psi(s) = \tfrac12 s^2 , 
\quad \text{and}\quad \upalpha(u,v) := \frac{
  u-v
}{
  \log(u) -
  \log(v) }.
\end{equation}
\end{enumerate}
\end{subequations}
Other examples are discussed in \S
\ref{subsec:examples-intro}.
 With  \EEE the quadratic choice~\eqref{choice:quadratic}, the gradient
system fits into the metric-space structure (see e.g.~\cite{AmbrosioGigliSavare08})
and this feature has been used extensively to investigate the properties of general Markov  jump processes~\cite{Maas11,Mielke13CALCVAR,ErbarMaas12,Erbar14,Erbar16TR,ErbarFathiLaschosSchlichting16TR}.  In this paper, however, we focus on functions $\Psi^*$ that are not homogeneous,  as in~\eqref{choice:cosh}, and 
such that  the   corresponding structure is not covered by the
 usual metric
framework. On the other hand, there are various arguments why this structure nonetheless has a certain `naturalness' (see Section~\ref{ss:comments}), and these motivate  our aim to develop a functional framework based on this structure.

\subsection{Challenges}

Constructing a `functional framework' for the gradient-flow equation~\eqref{eq:GF-intro} with the choices~\eqref{def:phi-log-intro} and~\eqref{choice:cosh} presents a number of independent challenges.

\subsubsection{Definition of a solution} 
\label{ss:def-sol-intro}
As it stands, the formulation of equation~\eqref{eq:GF-intro}
and of the functional $\mathcal R^*$ of \eqref{eq:def:R*-intro} presents
many difficulties:
the definition of $\mathcal R^*$ and the measure $\bnu_\rho$ when $\rho$ is not absolutely
continuous
with respect to~$\pi$, the concept of  time differentiability for  the curve of  measures $\rho_t$, \EEE
whether  $\rho_t$ is necessarily absolutely continuous with respect to $\pi$ along an evolution,
what happens if $\dd \rho_t /\dd \pi$ vanishes
and $\upphi$ is not differentiable at $0$ as in the case of the
logarithmic entropy, 
etcetera. As a result of these difficulties, it is not clear what constitutes a solution of equation~\eqref{eq:GF-intro}, let alone whether such solutions exist. In addition, a good solution concept should be robust under taking limits, and the formulation~\eqref{eq:GF-intro} 
does not seem to satisfy this requirement either. 

For quadratic and rate-independent systems, successful functional frameworks have been constructed on the basis of the  Energy-Dissipation balance~\cite{Sandier-Serfaty04,Serfaty11,MRS2013,LieroMielkePeletierRenger17,MielkePattersonPeletierRenger17}, and we follow that example here. In fact, the same large-deviation principle that gives rise to the `cosh' structure above formally yields the `EDP' functional
\begin{equation}
\label{eq:def:mathscr-L}
\mathscr L(\rho,\bj ) := 
\begin{cases}
\displaystyle
\int_0^T \Bigl[ \calR(\rho_t, \bj _t) + \calR^*\Bigl(\rho_t, -\thalf\ona \upphi'\Bigl(\frac{\dd \rho_t}{\dd\pi}\Bigr) \Bigr) \Bigr]\dd t + \calS(\rho_T) - \calS(\rho_0)\hskip-8cm&\\
&\text{if }\partial_t \rho_t + \odiv \bj _t = 0 \text{ and } \rho_t \ll
\pi \text{  for all
  $t\in [0,T],$ \nc}\\
\pinfty &\text{otherwise.}
\end{cases}
\end{equation}
In this formulation, $\calR$ is the Legendre dual of $\calR^*$
 with respect to the $\xi$ variable,
\nc which can be written in terms of the Legendre dual $\Psi:=\Psi^{**}$ of $\Psi^*$ as
\begin{equation}
  \label{eq:def:R-intro}
\calR(\rho,\bj ) := \frac 12\nc\int_{\edg} \Psi\left( 2\frac{\dd \bj}{\dd \bnu_\rho}\right)\dd\bnu_\rho. \qquad
\end{equation}
 Along smooth curves $\rho_t=u_t\pi$ with strictly positive
densities,
the functional $\mathscr L$ is nonnegative, since 
\nc
\begin{align}
\notag
  \nc\frac \dd{\dd t} \calS(\rho_t)
  &= 
   \int_V \upphi'(u_t)\partial_t u_t\, \dd\pi \nc
    =\int_V \upphi'(u_t(x)) \partial_t\rho_t(\dd x) 
    = - \half
    \int_V \upphi'(u_t(x)) (\odiv \bj_t)(\dd x)\\
  & = \half 
    \iint_\edg \ona \upphi'(u_t) (x,y) \,\bj_t(\dd x\,\dd y)
    = \half
    \iint_\edg \ona \upphi'(u_t) (x,y) \frac{\dd \bj_t}{\dd
    \bnu_{\rho_t}}(x,y)\;\bnu_{\rho_t} (\dd x\,\dd y)
 \label{eq:174} \\
  &\geq -
     \frac 12\nc
    \iint_\edg \left[ \Psi\left(  2\,
    \frac{\dd \bj_t}{\dd \bnu_{\rho_t}}(x,y)\right)
    + \Psi^*\left(-\half 
    \ona \upphi'(u_t) (x,y) \right) \right]  \bnu_{\rho_t}(\dd x\,\dd y).
\label{ineq:deriv-GF}
\end{align}
After time integration we find  that $\mathscr L(\rho,\bj )$ is nonnegative for any pair $(\rho,\bj )$. 

The minimum of $\mathscr L$ is formally achieved at value zero, at pairs $(\rho,\bj )$ satisfying
\begin{align}\label{eq:flux-identity}
  2\bj_t = (\Psi^*)'\left(-
  \thalf
  \ona\upphi'\Bigl(\frac{\dd \rho_t}{\dd\pi}\Bigr)\right)\bnu_{\rho_t}
\qquad \text{and} \qquad \partial_t \rho_t + \odiv \bj_t = 0,
\end{align}
which is an equivalent way of writing the gradient-flow
equation~\eqref{eq:GF-intro}. This can be recognized, as usual for
gradient systems, by observing that achieving equality in the
inequality~\eqref{ineq:deriv-GF} requires equality in the Legendre
duality of $\Psi$ and $\Psi^*$, which reduces to the equations
above. \GGG

\begin{remark}
  \label{rem:alpha-concave}
  It is worth noticing that the joint convexity of the functional
  $\calR$ of \eqref{eq:def:R-intro} (a crucial property for the
  development of our analysis) is equivalent to the \emph{convexity} of
  $\Psi$ and \emph{concavity}
  of the function $\upalpha$.
\end{remark}
\EEE

\begin{remark}
\label{rem:choice-of-2}
  Let us add a comment concerning the choice of
  the factor $1/2$ in front of $\Psi^*$ in \eqref{eq:def:R*-intro}, and the corresponding factors $1/2$ and $2$ in
  \eqref{eq:def:R-intro}. 
  The cosh-entropy combination~\eqref{choice:cosh} satisfies the linear-equation condition $\rmF(u,v) = v-u$ (equation~\eqref{cond:heat-eq-2}) because of the elementary identity
  \[
  2\,\sqrt{uv} \,\sinh \Bigl(\frac12 \log \frac vu\Bigr) = v-u.
  \]
The factor $1/2$ inside the $\sinh$ can be included in different
ways. In~\cite{MielkePeletierRenger14} it was included explicitly, by
writing expressions of the form $\rmD \sfR^*(\rho,-\tfrac12
\rmD\sfE(\rho))$; in this paper we
follow~\cite{LieroMielkePeletierRenger17} and  include this factor in
the definition of  
$\calR^*$. 
\end{remark}

\begin{remark}
  The continuity equation
  $\partial_t \rho_t + \odiv \bj _t = 0 $ is invariant with respect to skew-symmetrization of $\bj$, i.e.\ with respect to the
  transformation  $\bj\mapsto \bj^\flat$ with $\bj^\flat(\dd x,\dd
  y):=
  \frac12 \bigl(\bj(\dd x,\dd y)-\bj(\dd y,\dd x)\bigr)$. \EEE
  Therefore we could also write the second integral in
  \eqref{eq:174}
  as
  \begin{align*}
    &
    \iint_\edg \ona \upphi'(u_t) (x,y) \frac{\dd \bj^\flat_t}{\dd
      \bnu_{\rho_t}}(x,y)\;\bnu_{\rho_t} (\dd x\,\dd y)
                          \\
    &\qquad\geq -
    \frac 12
    \iint_\edg \left[ \Psi\left( 
    \frac{\dd  (2 \bj^\flat_t)\EEE}{\dd \bnu_{\rho_t}}(x,y)\right)
    + \Psi^*\left(-\half 
    \ona \upphi'(u_t) (x,y) \right) \right]  \bnu_{\rho_t}(\dd x\,\dd y).
  \end{align*}
  thus replacing
  $\Psi\left(  2\nc
    \frac{\dd \bj_t}{\dd \bnu_{\rho_t}}(x,y)\right)$ with
  the lower term
  $\Psi\left( 
    \frac{\dd  (2 \bj^\flat_t)\EEE}{\dd \bnu_{\rho_t}}(x,y)\right)$,  cf.\ Remark \ref{rem:skew-symmetric}, \EEE
  and obtaining a corresponding  equation as \eqref{eq:flux-identity}
  for  $(2\bj_t^\flat)$ instead of $2\bj_t$.  \EEE
  This would lead to a weaker gradient system, since the choice \eqref{eq:def:R-intro} forces $\bj_t$ to
  be skew-symmetric, whereas the choice of a dissipation involving
  only $\bj^\flat$ would not control the symmetric part of $\bj$.  
  On the other hand, the evolution equation generated by the gradient system would remain the same.
\end{remark}

Since at least formally equation~\eqref{eq:GF-intro} is
equivalent to the requirement $\mathscr L(\rho,\bj )\leq0$,
we adopt this variational point of view to define
solutions to the  generalized gradient  system $(\calS,\calR,\calR^*)$.
This
inequality is in fact the basis for the variational Definition~\ref{def:R-Rstar-balance}
below. In order to do this in a rigorous manner, however, we will need
\begin{enumerate}
  \item A study of the continuity equation
\begin{equation}
\label{eq:ct-eq-intro}
\partial_t \rho_t + \odiv \bj_t = 0,
\end{equation}
that appears in the definition of the functional $\mathscr L$
(Section~\ref{sec:ct-eq}).
\item A rigorous definition of the
   measure $\bnu_{\rho_t}$ and of the \nc
  functional $\calR$ (Definition~\ref{def:R-rigorous});
\item A class $\CER 0T$ of curves ``of finite action'' in $\calM^+(V)$ 
   along which the functional $\calR$ has finite integral \nc
  (equation~\eqref{adm-curves});
\item  An appropriate \nc
  definition of the \emph{Fisher-information} functional (see
  Definition~\ref{def:Fisher-information})
\begin{equation}\label{eq:formal-Fisher-information}
  \rho \mapsto \Fish(\rho) := \calR^*\bigl(\rho,-\thalf
  \ona \upphi'(\dd \rho/\dd \pi)\bigr);
\end{equation}
\item A proof of the lower bound $\mathscr L\geq 0$ (Theorem~\ref{th:chain-rule-bound}) via a suitable chain-rule inequality.
\end{enumerate}

\subsubsection{Existence of solutions} 
The first test of a new solution concept is whether solutions exist under reasonable conditions. In this paper we provide two existence proofs that complement each other. 

The first existence proof is based on a reformulation of the
equation~\eqref{eq:fokker-planck} as a differential equation
in the  Banach space $L^1(V,\pi)$, driven by
a continuous dissipative operator. \nc
Under general compatibility conditions on $\upphi$,  $\Psi$, and
$\upalpha$,
we show that the solution
provided by this abstract approach    is also \EEE a solution in the
 variational
sense that we discussed 
above. The proof is presented in Section~\ref{s:ex-sg}
and is quite robust for initial data whose density takes value in a compact
interval $[a,b]\subset (0,\infty)$. In order to deal
with a more general class of data, we will
adopt two different viewpoints. A first possibility is to take
advantage of the robust stability properties of the
 $(\calS,\calR, \calR^*)$ Energy-Dissipation balance \EEE when
the Fisher information $\Fish$ is lower semicontinuous. A second
possibility is to exploit the monotonicity properties of \eqref{eq:180}
when the map $\rmF$ in~\eqref{eq:184} exhibits good behaviour
at the boundary of $\R_+^2$ and at infinity.

Since 
we  believe \EEE that the variational formulation
reveals a relevant structure of such systems and
we expect that it may also be useful in
dealing with more singular cases and their stability issues, \nc
we also present a more intrinsic approach  by adapting  the well-established  `JKO-Min\-i\-miz\-ing-Movement'   method to the structure of this equation. This method has been used,  e.g., 
 for metric-space gradient flows~\cite{JordanKinderlehrerOtto98,AmbrosioGigliSavare08}, for rate-independent systems~\cite{Mielke05a},  for some  non-metric systems with formal metric structure~\cite{AlmgrenTaylorWang93,LuckhausSturzenhecker95}, and also for Lagrangian systems with local transport~\cite{FigalliGangboYolcu11}. 

 This approach
 relies on
 the
 \nc
 {\em Dynamical-Variational Transport cost} (DVT)
 $\DVT \tau{\mu}{\nu}$, which is the $\tau$-dependent transport cost
 between two measures $\mu,\nu\in\calM^+(V)$  induced
   by
   the dissipation potential $\calR$ via 
\begin{equation}
\label{def:W-intro}
  \DVT\tau{\mu}{\nu} := \inf\left\{ \int_0^\tau \calR(\rho_t,\bj_t)\,\dd t \, : \, \partial_t \rho_t + \odiv \bj_t = 0, \ \rho_0 = \mu, \text{ and }\rho_
  \tau = \nu\right\}.
\end{equation}
In the
Minimizing-Movement  scheme a single  increment with time step $\tau>0$ is defined by the minimization problem
\begin{equation}
\label{MM-intro}
\rho^n \in \argmin_\rho \, \left( \DVT\tau{\rho^{n-1}}\rho + \calS(\rho)\right) .
\end{equation}
By concatenating such solutions, constructing appropriate
interpolations, and proving a compactness result---all  steps \EEE similar to the
procedure in~\cite[Part~I]{AmbrosioGigliSavare08}---we find a curve
 $(\rho_t,\bj_t)_{t\in [0,T]}$ satisfying
the continuity equation \eqref{eq:ct-eq-intro}
such that 
\begin{equation}
\label{ineq:soln-rel-gen-slope-intro}
 \int_0^t \bigl[\calR(\rho_r,\bj_r) + \nuovorel(\rho_r)\bigr]\, \dd r + \calS(\rho_t) \le \calS(\rho_0)\qquad\text{for all $t\in[0,T]$},
\end{equation}
where $\nuovorel:\rmD(\calS)\to[0,\pinfty)$ is a suitable
{\em relaxed slope} of the energy functional $\calS$ with respect to
 the cost  
 $\DVTn$ (see~\eqref{relaxed-nuovo}).  Under 
 a  lower-semicontinuity \EEE \nc
 condition on $\Fish$
 we show that $\nuovorel\ge \Fish $.
 It then follows that $\rho$ is a solution as defined above  (see Definition~\ref{def:R-Rstar-balance}). 

Section~\ref{s:MM} is devoted to developing 
 the `Minimizing-Movement'  approach for general DVTs.
This requires establishing
\begin{enumerate}[resume]
\item Properties of $\DVTn$ that generalize those of the `metric version' $\DVT\tau\mu\nu = \frac1{2\tau}d(\mu,\nu)^2$ (Section~\ref{ss:aprio});
\item A generalization of the `Moreau-Yosida approximation' and  of  the `De Giorgi variational interpolant' to the non-metric case, and a generalization of their properties (Sections~\ref{ss:MM} and~\ref{ss:aprio});
\item A compactness result as $\tau\to0$,    based on the properties of $\DVTn$ (Section~\ref{ss:compactness});
\item A proof of $\nuovorel\ge \Fish $ (Corollary~\ref{cor:cor-crucial}).
\end{enumerate}
 This procedure leads to  our existence result, Theorem
 \ref{thm:construction-MM}, of solutions  in the sense of  Definition
 \ref{def:R-Rstar-balance}.

\subsubsection{Uniqueness of solutions}
We prove uniqueness of variational solutions
under suitable convexity conditions
of $\Fish $ and $\calS$ (Theorem~\ref{thm:uniqueness}),
following an idea by Gigli~\cite{Gigli10}. \nc

\subsection{Examples}
\label{subsec:examples-intro}
We will use the following two guiding examples to illustrate the
results of this paper. Precise assumptions are given in
Section~\ref{ss:assumptions}.
In both  examples
the
state space consists of measures $\rho$  on a
standard Borel space $(V,\mathfrak B)$ endowed with a reference
Borel measure $\pi$.
The kernel $x\mapsto \kappa(x,\cdot)$ is a measurable family of nonnegative
measures  with uniformly bounded mass, 
such that the pair $(\pi,\kappa)$ satisfies detailed balance (see Section~\ref{ss:assumptions}).
\nc

\medskip
\emph{Example 1:   Linear equations \GGG driven by the Boltmzann
  entropy. \EEE}
This is the example that we have been using in this introduction. The equation is the  linear equation~\eqref{eq:fokker-planck},
\[
\partial_t\rho_t(\dd x) = \int_{y\in V} \rho(\dd y) \kappa(y,\dd x)  - \rho(\dd x)\int_{y\in V} \kappa(x,\dd y),
\]
which can also be written in terms of the density $u =\dd\rho/\dd \pi$ as
\[
\partial_t u_t(x) = \int_{y\in V} \bigl[u_t(y)-u_t(x)\bigr] \, \kappa(x,\dd y),
\]
\GGG and corresponds to the linear field $\rmF$ of
\eqref{cond:heat-eq-2}.
Apart from the classical quadratic setting of \eqref{eq:68}, \EEE
two gradient structures for this equation have recently received
attention in the literature, both driven by the Boltzmann entropy
\eqref{def:phi-log-intro}
$\upphi(s) = s\log s - s + 1$ as described in~\eqref{choices}:
\begin{enumerate}[label=\textit{(\arabic*)}]
\item The `cosh' structure: $\Psi^*(\xi) = 4\bigl(\cosh(\xi/2)
  \nc -1\bigr)$ and $\upalpha(u,v) = \sqrt{uv}$;
\item The `quadratic' structure: $\Psi^*(\xi) = \tfrac12 \xi^2$	 and $\upalpha (u,v) = (u-v)/\log(u/v)$.
\end{enumerate}
However, the approach of this paper applies to more general
combinations $(\upphi,\Psi^*,\upalpha)$ that lead to the same
equation. 
\GGG Due to the particular structure of \eqref{eq:184}, it is clear
that the $1$-homogeneity of the linear map $\rmF$ \eqref{cond:heat-eq-2} and
the $0$-homogeneity of the term $\upphi'(v)-\upphi'(u)$ associated  with \EEE
the Boltzmann entropy \eqref{def:phi-log-intro}
restrict the range of possible $\upalpha$ to \emph{$1$-homogenous
functions}
like the `mean functions'  $\upalpha(u,v) = \sqrt{uv}$ (geometric) and $\upalpha (u,v) = (u-v)/\log(u/v)$ (logarithmic). 

 Confining \EEE the analysis to concave functions (according to Remark
\ref{rem:alpha-concave}), \EEE
we observe that every concave and
$1$-homogeneous function $\upalpha$ can be obtained by
the concave generating function $\frf:(0,\pinfty)\to (0,\pinfty)$
\begin{equation}
  \label{eq:150}
  \upalpha(u,v)=u\frf(v/u)=v\frf(u/v),\quad
  \frf(r):=\alpha(r,1),\quad
  u,v,r>0.
\end{equation}
The symmetry of $\upalpha $ corresponds to the property
\begin{equation}
  \label{eq:151}
  r\frf(1/r)=\frf(r)\quad\text{for every }r>0,
\end{equation}
and shows that the function
\begin{equation}
  \label{eq:152}
  \mathfrak g(s):=\frac{\exp(s)-1}{\frf(\exp(s))}\quad s\in \R, \text{
    is odd}.
\end{equation}
The concaveness of $\frf$ also shows that $\mathfrak g$ is increasing,
so that we can define
  \begin{equation}
    \label{eq:149}
    \Psi^*(\xi):=\int_0^{\xi} \mathfrak g(s)\,\dd s
    =\int_1^{\exp(\xi)}\frac{r-1}{\frf (r)}\frac{\dd r}r,\quad \xi\in \R,
  \end{equation}
  which is convex, even, and superlinear if
  \begin{equation}
    \label{eq:153}
    \upalpha(0,1)=\frf(0)=
    \lim_{r\to0}r\frf\Bigl(\frac1r\Bigr)=0.
  \end{equation}
  A natural class of concave and $1$-homogeneous weight functions is provided by the \OLI {\em Stolarsky means} $\frc_{p,q}(u,v)$ with appropriate $p,q\in\R$, and any $u,v>0$ \cite[Chapter VI]{Bullen2003handbook}:
\[
	\upalpha(u,v) = \frc_{p,q}(u,v) := \begin{cases}
		\Bigl(\frac pq\frac{v^q-u^q}{v^p-u^p}\Bigr)^{1/(q-p)} &\text{if $p\ne q$, $q\ne 0$},\\
		\Bigl( \frac{1}{p}\frac{v^p-u^p}{\log(v) - \log(u)}\Bigr)^{1/p} &\text{if $p\ne 0$, $q= 0$}, \\
		e^{-1/p}\Bigl(\frac{v^{v^p}}{u^{v^p}}\Bigr)^{1/(v^p-u^p)} &\text{if $p= q\ne 0$}, \\
		\sqrt{uv} &\text{if $p= q= 0$},
	\end{cases}
\]
from which we identify other simpler means, such as the {\em power means} $\frm_p(u,v) = \frc_{p,2p}(u,v)$ with $p\in [-\infty, 1]$:
\begin{equation}
  \label{eq:147}
  \frm_p(u,v) =
  \begin{cases}
    \Big(\frac 12\big(u^p+v^p\big)\Big)^{1/p}&\text{if $0<p\le 1$ or
      $-\infty<p<0$ and $u,v\neq0$},\\
    \sqrt{uv}&\text{if }p=0,\\
    \min(u,v)&\text{if }p=-\infty,\\
    0&\text{if }p<0\text{ and }uv=0,
  \end{cases}
  \end{equation}
  and the generalized logarithmic mean $\frl_p(u,v)=\frc_{1,p+1}(u,v)$, $p\in[-\infty,-1]$. \EEE

  The power means are obtained from the concave generating functions
  \begin{equation}
    \label{eq:148}
    \frf_p(r):=2^{-1/p}(r^p+1)^{1/p} \quad \text{if }p\neq 0,\quad
    \frf_0(r)=\sqrt r,\quad
    \frf_{-\infty}(r)=\min(r,1),\quad
    r>0.
  \end{equation}
  We can thus define
  \begin{equation}
    \label{eq:149p}
    \Psi_p^*(\xi):=2^{1/p}\int_1^{\exp \xi}
    \frac{r-1}{(r^p+1)^{1/p}}\,\frac{\dd r}r,\quad \xi\in \R,
    \quad
    p\in (-\infty,1]\setminus 0,
  \end{equation}
  with the obvious changes when $p=0$ (the case
  $\Psi_0^*(\xi)=4(\cosh(\xi/2)-1$))
  or $p=-\infty$ (the case $\Psi_{-\infty}^*(\xi)=
  \exp(|\xi|)-|\xi|$).

  It is interesting to note that the case $p=-1$ (harmonic mean)
  corresponds to
  \begin{equation}
    \label{eq:154}
    \Psi_{-1}^*(\xi)=\cosh(\xi)-1.
  \end{equation}
  We finally note that the arithmetic mean
  $\upalpha(u,v)=\frm_1(u,v)=(u+v)/2$
  would yield $\Psi_1^*(\xi)=4\log(1/2(1+\rme^\xi))-2\xi$, which is not superlinear.


\medskip
\emph{Example 2: Nonlinear equations.}
  We consider a combination of \EEE
$\upphi$, $\Psi^*$, and $\upalpha$ such that the
function $\rmF$ introduced in \eqref{eq:184}
has a continuous extension up to the boundary of
$[0,\pinfty)^2$ and satisfies a
suitable growth and monotonicity condition (see Section~\ref{s:ex-sg}).
The resulting integro-differential equation
is given by \eqref{eq:180}.
Here is a list of some interesting cases (we will neglect all the
 issues \EEE  concerning growth and regularity).
\begin{enumerate}
\item A field of the form $\rmF(u,v)=f(v)-f(u)$ with $f:\R_+\to \R$ monotone
  corresponds to the equation
  \[
\partial_t u_t(x) = \int_{y\in V} \bigl(f(u_t(y))-f(u_t(x))\bigr)\, \kappa(x,\dd y),
\]
and can be classically considered in the framework of the Dirichlet forms, i.e.~$\upalpha   \equiv \EEE 1$,
  $\Psi^*(r)= r^2/2$, with energy $\upphi$ satisfying $\upphi' = f$.
\item
  The case $\rmF(u,v)=g(v-u)$, with $g:\R\to \R$ monotone and odd,
  yields the equation
    \[
      \partial_t u_t(x) = \int_{y\in V} g\bigl(u_t(y)-u_t(x)\bigr)\, \kappa(x,\dd y),
\]
and can be obtained with the choices
$\upalpha  \equiv \EEE 1$, $\upphi(s):=s^2/2$ and $\Psi^*(r):=\int_0^r g(s)\,\dd s$.
\item Consider now the case when $\rmF$ is
  positively $q$-homogeneous, with $q\in [0,1]$.
  It is then natural to consider a $q$-homogeneous $\upalpha$
  and the logarithmic entropy $\upphi(r)=r\log r-r+1$.
  If the function $h:(0,\infty)\to \R$, 
  $h(r):=\rmF(r,1)/\upalpha(r,1)$ is increasing, then setting as in \eqref{eq:149p}
  \begin{displaymath}
    \Psi^*(\xi):=\int_1^{\exp (\xi)}h(r)\,\dd r
  \end{displaymath}
   equation \eqref{eq:180} provides an example of
  generalized gradient system $(\calS,\calR,\calR^*)$.
  Simple examples are $\rmF(u,v)=v^q-u^q$,
  corresponding to the equation
  \[
\partial_t u_t(x) = \int_{y\in V} \bigl(u^q_t(y)-u^q_t(x)\bigr)\, \kappa(x,\dd y),
\]
  with $\upalpha(u,v):=
  \frm_p(u^q,v^q)$ and $\Psi^*(\xi):=\frac 1q\Psi_p^*(q\xi)$,
  where $\Psi^*_p$ has been defined in~\eqref{eq:149p}.
  In the case $p=0$ we get $\Psi^*(\xi)=\frac
  4q\big(\cosh(q\xi/2)-1\big)$.

  As a last example, we can consider
  $\rmF(u,v)=\operatorname{sign} (v-u)|v^m-u^m|^{1/m}$, $m>0$, and $\upalpha(u,v)=\min(u, v)$;
  in this case,  the function $h$ given by \EEE  $h(r)=(r^m-1)^{1/m}$ when $r\ge1$, and
  $h(r)=-(r^{-m}-1)^{1/m}$ when  $r<1$,  satisfies the required
  monotonicity property.
\end{enumerate}

\subsection{Comments}
\label{ss:comments}

\emph{Rationale for studying this structure.}
\GGG We think that the structure of generalized gradient systems
$(\calE,\calR,\calR^*)$ is sufficiently rich and interesting to
deserve a careful analysis.
It provides a genuine extension of the more familiar quadratic
gradient-flow structure of Maas, Mielke, and Chow--Huang--Zhou, which
better fits into the metric framework of \cite{AmbrosioGigliSavare08}.
In Section~\ref{s:ex-sg} we will also show
its connection with the theory of dissipative evolution equations.

Moreover, \EEE
the specific non-homogeneous structure based on the $\cosh$ function~\eqref{choice:cosh} has a number of arguments in its favor, which can be summarized in the statement that it is `natural' in various different ways:
\begin{enumerate}
\item It appears in the characterization of  large deviations of Markov processes; see Section~\ref{ss:ldp-derivation} or~\cite{MielkePeletierRenger14,BonaschiPeletier16};
\item It arises in evolutionary limits of other gradient structures (including quadratic ones) \cite{ArnrichMielkePeletierSavareVeneroni12,Mielke16,LieroMielkePeletierRenger17,MielkeStephan19TR};
\item It `responds naturally' to external forcing \cite[Prop.~4.1]{MielkeStephan19TR};
\item It can be generalized to nonlinear equations \cite{Grmela84,Grmela10}.
\end{enumerate}
We will explore these claims in more detail in a forthcoming paper.
Last but not least, the very fact that non-quadratic, generalized gradient flows may arise in the limit of gradient flows suggests that,
allowing for a broad class of dissipation mechanisms is crucial in order to (1) fully exploit the flexibility of the   gradient-structure \EEE formulation, and  (2) explore its robustness with respect to $\Gamma$-converging
energies and dissipation potentials. \EEE
\medskip

\emph{Potential for generalization.}
In this paper we have chosen to concentrate on the consequences of non-homogeneity of the dissipation potential $\Psi$ for the techniques that are commonly used in gradient-flow theory. Until now, the lack of a sufficiently general rigorous construction of the functional $\calR$ and its minimal integral over curves $\DVTn$ have impeded the use of this variational structure in rigorous proofs, and a main aim of this paper is to provide a way forward by constructing a rigorous framework for these objects, while keeping the setup (in particular, the ambient space $V$) as general as possible. \EEE

In order to restrict the length of this paper, we considered only
simple driving functionals $\calS$, which are of the
local variety
$\calS(\rho) = \thalf
\int \upphi(\dd\rho/\dd\pi)\dd\pi$.
\nc
Many gradient systems appearing in the literature are driven by more general functionals, that include interaction and other nonlinearities~\cite{ErbarFathiLaschosSchlichting16TR,ErbarFathiSchlichting19TR,RengerZimmer19TR,HudsonVanMeursPeletier20TR},
 and we expect that the techniques of this paper will be of use in the study of such systems.

As one specific direction of generalization, we note that the Minimizing-Movement construction  on which the proof of Theorem \ref{thm:construction-MM} is based has a scope wider than  that of the  generalized gradient structure $(\calS, \calR, \calR^*)$ \EEE
under consideration. In fact, as we  show in Section~\ref{s:MM}, Theorem~\ref{thm:construction-MM} yields the existence of (suitably formulated) gradient flows  in 
a general \emph{topological space} endowed with a cost fulfilling suitable properties. While we do not  develop this discussion in this paper, at places  throughout the paper \EEE we hint at this prospective generalization: the `abstract-level' properties of the DVT cost are addressed in Section~\ref{ss:4.5}, and the whole proof of Theorem \ref{thm:construction-MM} is carried out under more general conditions than those required on the `concrete' system 
  set up in Section \ref{s:assumptions}. \EEE
 
\medskip

\emph{Challenges for generalization.}
A well-formed functional framework includes a concept of solutions that behaves well under the taking of limits, and the existence proof is the first test of this. Our existence proof highlights a central challenge here, in the appearance  of \emph{two} slope functionals $\nuovorel$ and $\Fish$ that both represent rigorous versions of the `Fisher information' term $\calR^*\bigl(\rho,-\ona\upphi'(\dd \rho/\dd \pi)\bigr)$. The chain-rule lower-bound inequality holds under general conditions for $\Fish$ (Theorem~\ref{th:chain-rule-bound}), but the Minimizing-Movement construction leads to the more abstract object $\nuovorel$. Passing to the limit in the minimizing-movement approach requires connecting the two through the inequality $\nuovorel\geq \Fish$. 
 We prove it by first obtaining the inequality $\nuovo \geq \Fish$, cf.\  Proposition
\ref{p:slope-geq-Fish}, under the condition that a solution to the $(\calS, \calR, \calR^*)$ system exists (for instance, by the approach developed in Section \ref{s:ex-sg}).
We then deduce the inequality $\nuovorel \geq \Fish$ under the further condition that $\Fish$ be lower semicontinuous, which can be in turn proved under  a suitable convexity condition (cf.\ Prop.\ \ref{PROP:lsc}). 
 \EEE
  We hope that more effective ways of dealing with these issues will be found in the future.

\smallskip
\emph{Comparison with the Weighted Energy-Dissipation method.}
It would be interesting to develop the analogous variational
 approach
 based on
  studying the \EEE
  limit behaviour as $\ep\downarrow0$
 of the minimizers $(\rho_t,\bj_t)_{t\ge0}$ of the Weighted Energy-Dissipation  (\textrm{WED}) \EEE 
 functional
 \begin{equation}
   \label{eq:69}
   \DVTn_\ep(\rho,\bj ):=\int_0^\pinfty
   \mathrm e^{-t/\ep}
   \Big(\calR(\rho_t,\bj_t)+\frac1\ep\calS(\rho_t)\Big)\,\dd t
 \end{equation}
 among the solutions to the continuity equation with initial datum
 $\rho_0$,
 see \cite{RSSS19}.  Indeed, the \emph{intrinsic character} of the \textrm{WED} functional, which only features the dissipation potential $\calR$, makes it suitable to the present non-metric framework. \EEE

\subsection{Notation}

The following table collects the notation used throughout the paper. 

\begin{center}\bigskip
\newcommand{\specialcell}[2][c]{%
  \begin{tabular}[#1]{@{}l@{}}#2\end{tabular}}
\begin{small}
\begin{longtable}{lll}
$\ona$, $\odiv$ & graph gradient and divergence &\eqref{eq:def:ona-div}\\
$\upalpha(\cdot,\cdot)$ & multiplier in flux rate $\bnu_\rho$ & Ass.~\ref{ass:Psi}\\
$\upalpha^\infty$, $\upalpha_*$ & recession function, Legendre transform & Section~\ref{subsub:convex-functionals} \\
$\upalpha[\cdot|\cdot]$, $\hat\upalpha$ &  measure map, perspective function & Section~\ref{subsub:convex-functionals} \\
$\CER ab$ & set of curves $\rho$ with finite action & \eqref{def:Aab}\\
  $ \|\kappa_V\|_\infty$  \EEE & upper bound on $\kappa$ & Ass.~\ref{ass:V-and-kappa}\\
$
  \Cb 
  $ & space of bdd, ct.\ functions with supremum norm\\
$\CE ab$ & set of pairs $(\rho,\bj )$ satisfying the continuity equation & Def.~\ref{def-CE}\\
  $\rmD_\upphi(u,v)$, $\rmD^\pm_\upphi(u,v)$ & integrands
             defining the Fisher information $\Fish$ & \eqref{subeq:D}\\
$\Fish$ & Fisher-information functional & Def.~\ref{def:Fisher-information}\\
$\edg = V\times V$ & space of edges & Ass.~\ref{ass:V-and-kappa}\\
$\calS$, $\rmD (\calS)$ & driving  entropy \EEE functional and its domain & \eqref{eq:def:S} \& Ass.~\ref{ass:S}\\
$\rmF$ & vector field & \eqref{eq:184}\\
  $\teta_\rho^\pm$,
                & $\rho$-adjusted jump rates & \eqref{def:teta}\\
$\tetapi$ & equilibrium jump rate & \eqref{nu-pi}\\
$\kappa$ & jump kernel &\eqref{eq:def:generator} \& Ass.~\ref{ass:V-and-kappa}\\
$\kernel\kappa\gamma$ & $\gamma \otimes \kappa$ & \eqref{eq:84}\\
$\mathscr L$ & Energy-Dissipation balance functional &\eqref{eq:def:mathscr-L}\\
  $\calM(\Omega;\R^m)$, $\calM^+(\Omega)$ & vector (positive) measures on $\Omega$ & Sec.~\ref{ss:3.1}\\
$\bnu_\rho$ & edge measure in definition of $\calR^*$, $\calR$ &\eqref{eq:def:R*-intro}, \eqref{eq:def:R-intro}, \eqref{eq:def:alpha}\\
$Q$, $Q^*$ & generator and dual generator & \eqref{eq:fokker-planck}\\
$\calR$, $\calR^*$ & dual pair of dissipation potentials & \eqref{eq:def:R*-intro}, \eqref{eq:def:R-intro}, Def.~\ref{def:R-rigorous}\\
$\R_+ := [0,\infty)$ \\
$\sfs$ & symmetry map $(x,y) \mapsto (y,x)$ & \eqref{eq:87}\\
$\nuovorel$ & relaxed slope & \eqref{relaxed-nuovo}\\
  $\Upsilon$ & perspective function associated with
               $\Psi$ and $\upalpha$& \eqref{Upsilon}\\
$V$ & space of states & Ass.~\ref{ass:V-and-kappa}\\
$\upphi$ &  density of $\calS$ & \eqref{eq:def:S} \& Ass.~\ref{ass:S}\\
$\Psi$, $\Psi^*$ & dual pair of dissipation functions & Ass.~\ref{ass:Psi}, Lem.~\ref{l:props:Psi}\\
$\DVTn$ & Dynamic-Variational Transport cost & \eqref{def:W-intro} \& Sec.~\ref{sec:cost}\\
  $\VarWn$ & $\DVTn$- action  & \eqref{def-tot-var}\\
$\sfx,\sfy$ & coordinate maps $(x,y) \mapsto x$ and $(x,y)\mapsto y$ & \eqref{eq:87}\\
\end{longtable}
\end{small}
\end{center}

\subsubsection*{\bf Acknowledgements} M.A.P.\ acknowledges support from NWO grant 613.001.552, ``Large Deviations and Gradient Flows: Beyond Equilibrium". R.R.\ and G.S. acknowledge support from the MIUR - PRIN project 2017TEXA3H ``Gradient flows, Optimal Transport and Metric Measure Structures". O.T.\ acknowledges support from NWO Vidi grant 016.Vidi.189.102, ``Dynamical-Variational Transport Costs and Application to Variational Evolutions". Finally, the authors thank Jasper Hoeksema for insightful and valuable comments during the preparation of this manuscript.

\section{Preliminary results}
\label{ss:3.1}

\subsection{Measure theoretic preliminaries}
Let $(Y,\frB)$ be a measurable space.
When $Y$ is endowed  with \EEE a
(metrizable and separable) topology $\tau_Y$
we will often assume that $\frB$ coincides with the Borel $\sigma$-algebra
$\frB(Y,\tau_Y)$ induced
by $\tau_Y$.
We recall
that $(Y,\frB)$ is called a \emph{standard Borel space}
if it is isomorphic (as a measurable space) to a Borel subset
of a complete and separable metric space; equivalently,
one can find
a Polish topology   $\tau_Y$ \EEE on $Y$ such that $\frB=\frB(Y,\tau_Y)$.
 \par 
We will denote by 
 $\calM(Y;\R^m)$  the space of  $\sigma$-additive measures  on
 $\mu: \frB \to \R^m$ 
 of \emph{finite} total variation
 $\|\mu\|_{TV}: =|\mu|(Y)<\pinfty$, where for every $B\in\frB$
 \[
   |\mu|(B): = \sup \left\{ \sum_{i=0}^\pinfty |\mu(B_i)|\, : \ B_i \in \frB,\, \ B_i \text{ pairwise disjoint}, \ B = \bigcup_{i=0}^\pinfty B_i \right\}.
 \]
 The set function  $|\mu|: \frB \to [0,\pinfty)$  is a positive
 finite
 measure on $\frB$ \cite[Thm.\ 1.6]{AmFuPa05FBVF}
 and $(\calM(Y;\R^m),\|\cdot\|_{TV})$ is a Banach space.
 
 In the case $m=1$, we will simply write $\calM(Y)$,
 and we shall denote the space of \emph{positive} finite
measures on $\frB$ by $\calM^+(Y)$.
   For $m>1$,
 we will identify any element $\mu \in \calM(Y;\R^m)$ with  a vector
 $(\mu^1,\ldots,\mu^m)$, with $\mu^i \in \calM(Y)$ for all
 $i=1,\ldots, m$.
If $\varphi
 =(\varphi^1,\ldots,\varphi^m)\in \Bb(Y;\R^m)$, the set of bounded $\R^m$-valued
 $\frB$-measurable
 maps, the duality between $\mu \in \calM(Y;\R^m)$ and $\varphi$
 can be expressed by \nc
\[
 \langle\mu,\varphi\rangle : = \int_{Y} \varphi \cdot \mu (\dd x) =
 \sum_{i=1}^m \int_Y  \varphi^i(x) \mu^i(\dd x).
 \]

 For every $\mu\in \calM(Y;\R^m)$ and $B\in \frB$
 we will denote by $\mu\mres B$ the restriction of $\mu$ to $B$, i.e.\ 
 $\mu\mres B(A):=\mu(A\cap B)$ for every $A\in \frB$.

 Let $(X,\mathfrak A)$ be another measurable space and let $\sfp:X\to
 Y$
 a measurable map. For every $\mu\in \calM(X;\R^m)$ we will denote by
 $\sfp_\sharp\mu$ the push-forward measure obtained by
 \begin{equation}
   \label{eq:82}
   \sfp_\sharp\mu(B):=\mu(\sfp^{-1}(B))\quad\text{for every }B\in \frB.
 \end{equation}
 For every couple $\mu\in \calM(Y;\R^m)$ and $\gamma\in \calM^+(Y)$
 there exist a unique (up to the modification
 in a $\gamma$-negligible set) $\gamma$-integrable map
 $\frac{\dd\mu}{\dd\gamma}:
 Y\to\R^m$, a $\gamma$-negligible set $N\in \frB$
 and a unique measure $\mu^\perp\in \calM(Y;\R^m)$
 yielding the \emph{Lebesgue decomposition}
 \begin{equation}
   \label{eq:Leb}
   \begin{gathered}
     \mu=\mu^a+\mu^\perp,\quad \mu^a=\frac{\dd\mu}{\dd\gamma}\,\gamma=
     \mu\mres(Y\setminus N),\quad \mu^\perp=\mu\mres N,\quad
     \gamma(N)=0\\
     |\mu^\perp|\perp \gamma,\quad |\mu|(Y)=\int_Y
     \left|\frac{\dd \mu}{\dd\gamma}\right|\,\dd\gamma+|\mu^\perp|(Y).
   \end{gathered}
 \end{equation}
 \subsection{Convergence of measures}
 \label{subsub:convergence-measures}
 Besides the topology of convergence in total variation (induced by
 the norm $\|\cdot\|_{TV}$), we will also consider
 \GGG the topology of \emph{setwise convergence},
 i.e.~the coarsest topology on $\calM(Y;\R^m)$ making all the functions
 \begin{displaymath}
   \mu\mapsto \mu(B)\quad B\in \frB
 \end{displaymath}
 continuous. \EEE
 For a sequence $(\mu_n)_{n\in\N}$
 and a candidate limit $\mu$ in $\calM(Y;\R^m)$
 we have the following equivalent characterizations of
 the corresponding convergence \cite[\S 4.7(v)]{Bogachev07}:
 \begin{enumerate}
    \item Setwise convergence:
   \begin{equation}
     \label{eq:71}
     \lim_{n\to\pinfty}\mu_n(B)=\mu(B)\qquad
     \text{for every set $B\in \frB$}.
   \end{equation}
 \item Convergence in duality with $\Bb(Y;\R^m)$:
   \begin{equation}
     \label{eq:70}
     \lim_{n\to\pinfty}\langle \mu_n,\varphi\rangle=
     \langle \mu,\varphi\rangle
     \qquad
     \text{for every $\varphi\in \Bb(Y;\R^m)$}.
   \end{equation}
 \item Weak topology of the Banach space:
   the sequence $\mu_n$ converges to $\mu$ in the weak topology
   of the Banach space $(\calM(Y;\R^m);\|\cdot\|_{TV})$.
 \item Weak $L^1$-convergence of the densities:
   there exists a common dominating measure
   $\gamma\in \Dom(Y)$
   such that $\mu_n\ll\gamma$, $\mu\ll\gamma$ and
   \begin{equation}
     \label{eq:72}
     \frac{\dd\mu_n}{\dd\gamma}\weakto
     \frac{\dd\mu}{\dd\gamma}
     \quad\text{weakly in }L^1(Y,\gamma;\R^m).
   \end{equation}
 \item Alternative form of weak $L^1$-convergence: \eqref{eq:72} holds \emph{for every} common dominating measure $\gamma$.
 \end{enumerate}
 We will refer to 
 \emph{setwise convergence} for sequences satisfying one of the
 equivalent properties above.
 The above topologies also share the same notion of compact subsets,
 as  stated in the following useful theorem, cf.\ 
 \cite[Theorem 4.7.25]{Bogachev07},   where we shall denote by 
 $\sigma(\calM(Y;\R^m) ; \Bb(Y;\R^m) )$
  the weak topology on $\calM(Y;\R^m)$  induced by the duality with $\Bb(Y;\R^m)$. \EEE
 \begin{theorem}
   \label{thm:equivalence-weak-compactness}
   For every set $\emptyset\neq M\subset \calM(Y;\R^m)$ the following properties are
   equivalent:
   \begin{enumerate}
     \item $M$ has a compact closure in the topology of  setwise
       convergence.
   \item $M$ has a compact closure in the topology
     $\sigma(\calM(Y;\R^m) ; \Bb(Y;\R^m) )$.
   \item $M$ has a compact closure 
     in the weak topology
     of $(\calM(Y;\R^m);\|\cdot\|_{TV})$.
     \item Every sequence in $M$ has a subsequence 
      converging \EEE
       on every set of $\frB$.
     \item There exists a measure $\gamma\in \Dom(Y)$
       such that
       \begin{equation}
         \label{eq:73}
         \forall\,\ep>0\ \exists\,\delta>0:
         \quad
         B\in \frB,\ \gamma(B)\le \delta\quad
         \Rightarrow\quad
         \sup_{\mu\in M}\mu(B)\le \eps.
       \end{equation}
     \item
       There exists a measure $\gamma\in \Dom(Y)$
       such that $\mu\ll\gamma$ for every $\mu\in M$ and
       the set $\{\dd\mu/\dd\gamma:\mu\in M\}$
       has compact closure in the weak topology of $L^1(Y,\gamma;\R^m)$.
   \end{enumerate}
 \end{theorem}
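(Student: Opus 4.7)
The plan is to combine the equivalent characterizations of setwise convergence already listed in Section~\ref{subsub:convergence-measures} with the Dunford--Pettis theorem in $L^1$, treating (1)--(4) as restatements of a single relative-compactness condition and (5)--(6) as uniform-integrability dualizations thereof.

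First, I would note that items (1)--(5) in the list of equivalent notions of sequential convergence in \S\ref{subsub:convergence-measures} show that the three topologies appearing in (1), (2), (3) share precisely the same convergent sequences. Therefore relative \emph{sequential} compactness agrees in all three, and condition (4) is simply its explicit reformulation. The Eberlein--\v{S}mulian theorem, applied to the weak topology of the Banach space $(\calM(Y;\R^m),\|\cdot\|_{TV})$, upgrades relative sequential compactness to relative topological compactness, yielding (1) $\Leftrightarrow$ (2) $\Leftrightarrow$ (3) $\Leftrightarrow$ (4).

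Next, assuming a common dominator $\gamma$ is at hand, the identification $\mu \mapsto \dd\mu/\dd\gamma$ embeds $M$ isometrically into $L^1(Y,\gamma;\R^m)$. By the Dunford--Pettis theorem, a bounded subset of $L^1$ is weakly relatively compact if and only if it is uniformly integrable, and uniform integrability with respect to $\gamma$ is exactly the $\varepsilon$--$\delta$ condition (5). Combined with item (4) of the list in \S\ref{subsub:convergence-measures}, which identifies setwise convergence with weak $L^1$-convergence of densities, this gives (5) $\Leftrightarrow$ (6) $\Leftrightarrow$ (1). Boundedness of $M$ in total variation, needed to apply Dunford--Pettis, is guaranteed by the Nikodym--Grothendieck boundedness principle.

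The main obstacle will be the production of the common dominator $\gamma \in \Dom(Y)$ starting from one of the compactness conditions (1)--(4), since \emph{a priori} the variations $|\mu|$, $\mu \in M$, need not be simultaneously absolutely continuous with respect to a single finite measure. The standard remedy is a maximal/exhaustion argument: using the boundedness of $M$ in total variation, one extracts a countable family $\{\mu_n\}_{n\in\N}\subset M$ such that the band generated by $\{|\mu_n|\}$ in $\calM^+(Y)$ absorbs every $|\mu|$, $\mu\in M$; the measure $\gamma := \sum_{n\in\N} 2^{-n}|\mu_n|/(1+\|\mu_n\|_{TV})$ then provides the desired common dominator. This is the one step that goes beyond generic Banach-space duality and relies on the lattice structure of $\calM(Y)$; the full argument is carried out in \cite[Theorem~4.7.25]{Bogachev07}.
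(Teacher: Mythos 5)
The paper does not prove Theorem~\ref{thm:equivalence-weak-compactness}: it is stated as a recollection of \cite[Theorem~4.7.25]{Bogachev07} without proof, so your sketch must be judged on its own merits rather than against a proof in the paper.

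Your plan assembles the right tools (Dunford--Pettis, Eberlein--\v{S}mulian, the Nikodym boundedness principle), but there are two gaps that prevent the logical loop from closing. First, Eberlein--\v{S}mulian gives $(4)\Leftrightarrow(3)$, and Hausdorffness plus coarseness gives $(3)\Rightarrow(2)\Rightarrow(1)$; the return implication $(1)\Rightarrow(4)$ --- that relative compactness in the setwise topology implies relative \emph{sequential} compactness in that same topology --- does \emph{not} follow from Eberlein--\v{S}mulian, which governs the weak topology of a Banach space, not the strictly coarser setwise topology. Compactness in a non-metrizable topology does not in general yield sequential compactness, and obtaining this passage for the setwise topology on $\calM(Y;\R^m)$ is precisely the technical heart of Bogachev's theorem rather than a formal reshuffling. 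Second, and more seriously, the common dominator cannot be produced from boundedness alone: the bounded family $\{\delta_x : x\in[0,1]\}$ admits no finite dominating measure (a finite measure has at most countably many atoms), and the band generated by any countable subfamily misses uncountably many $\delta_x$'s. This family is, consistently, not setwise relatively compact --- no subsequence of $(\delta_{x_n})$ with distinct $x_n$ can converge on every Borel set, as is seen by testing against the set of even-indexed points --- but it shows that your exhaustion argument must be fed something stronger than boundedness, namely uniform countable additivity, which one first derives from the compactness hypothesis (e.g.\ via the Vitali--Hahn--Saks theorem applied to setwise-convergent subsequences) before the exhaustion can run. Without that intermediate step, the chain $(5)\Leftrightarrow(6)\Leftrightarrow(1)$ rests on an unproven premise.
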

 We also recall a useful characterization of weak compactness in
 $L^1$.
 \begin{theorem}
   \label{thm:L1-weak-compactness}
   Let $\gamma\in \Dom(Y)$ and $\emptyset\neq F\subset
   L^1(Y,\gamma;\R^m)$.
   The following properties are equivalent:
   \begin{enumerate}
   \item $F$ has compact closure in the weak topology of
     $L^1(Y,\gamma;\R^m)$;
   \item
     $F$ is bounded in $L^1(Y,\gamma;\R^m)$ and
     \GGG equi-absolutely continuous, i.e.~\EEE
     \begin{equation}
         \label{eq:73bis}
         \forall\,\ep>0\ \exists\,\delta>0:
         \quad
         B\in \frB,\ \gamma(B)\le \delta\quad
         \Rightarrow\quad
         \sup_{f\in F}\int_B |f|\,\dd\gamma\le \eps.
       \end{equation}
     \label{cond:setwise-compactness-superlinear}
     \item There exists a convex and superlinear function
       $\beta:\R_+\to\R_+$
       such that
       \begin{equation}
         \label{eq:74}
         \sup_{f\in F}\int_Y \beta(|f|)\,\dd\gamma<\pinfty.
       \end{equation}
   \end{enumerate}
 \end{theorem}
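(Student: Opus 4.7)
The plan is to establish the cycle $(1)\Rightarrow(2)\Rightarrow(3)\Rightarrow(2')\Rightarrow(1)$, where $(2')$ is the same as $(2)$; effectively, this is the classical Dunford--Pettis characterization of weakly compact subsets of $L^1$ combined with the de la Vall\'ee-Poussin criterion for equi-integrability. Only one of the four implications is truly deep, namely $(2)\Rightarrow(1)$, which I will ultimately reduce to the preceding Theorem~\ref{thm:equivalence-weak-compactness}.

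For $(1)\Rightarrow(2)$, norm-boundedness is immediate from the uniform boundedness principle applied to the evaluation functionals on $(L^1)^*=L^\infty$. For equi-absolute continuity I argue by contradiction: if \eqref{eq:73bis} fails, there exist $\eps>0$, sets $B_n\in\frB$ with $\gamma(B_n)\le 2^{-n}$, and $f_n\in F$ with $\int_{B_n}|f_n|\,\dd\gamma\ge\eps$. Extracting a weakly convergent subsequence $f_{n_k}\weakto f$ in $L^1(Y,\gamma;\R^m)$ and applying the absolute continuity of the measure $A\mapsto\int_A|f|\,\dd\gamma$ to the tail sets $\bigcup_{k\ge K}B_{n_k}$, together with weak convergence tested against $\mathbf 1_{B_{n_k}}v_{n_k}$ for suitable unit vectors $v_{n_k}\in\R^m$, yields the desired contradiction.

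For $(2)\Rightarrow(3)$, which is the technical core, I use the de la Vall\'ee-Poussin construction. From \eqref{eq:73bis} plus boundedness one deduces the uniform tail estimate $\lim_{c\to\infty}\sup_{f\in F}\int_{\{|f|>c\}}|f|\,\dd\gamma=0$ (via Markov's inequality $\gamma(|f|>c)\le \|f\|_1/c$). Hence one can pick an increasing sequence $c_n\uparrow\infty$ with $\sup_{f\in F}\int_{\{|f|>c_n\}}|f|\,\dd\gamma\le 2^{-n}$. Setting $g(t):=\sum_{n\in\N}\mathbf 1_{(c_n,\infty)}(t)$ (nondecreasing, tending to $+\infty$) and $\beta(s):=\int_0^s g(t)\,\dd t$, one obtains a convex, nondecreasing, superlinear function with $\beta(0)=0$. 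A layer-cake computation (Fubini) gives
\[
\int_Y\beta(|f|)\,\dd\gamma=\sum_{n=1}^\infty\int_{c_n}^\infty\gamma(\{|f|>t\})\,\dd t=\sum_{n=1}^\infty\int_{\{|f|>c_n\}}(|f|-c_n)\,\dd\gamma\le\sum_{n=1}^\infty 2^{-n}=1,
\]
uniformly in $f\in F$, which is \eqref{eq:74}.

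For $(3)\Rightarrow(2)$, given superlinearity, for every $K>0$ there is $R_K$ with $\beta(s)\ge Ks$ for $s\ge R_K$, so for any $B\in\frB$
\[
\int_B|f|\,\dd\gamma\le R_K\gamma(B)+\frac{1}{K}\sup_{g\in F}\int_Y\beta(|g|)\,\dd\gamma,
\]
yielding both norm-boundedness (take $B=Y$) and \eqref{eq:73bis} (choose $K$ large, then $\gamma(B)$ small). Finally, for $(2)\Rightarrow(1)$ I do not reprove Dunford--Pettis from scratch; instead I invoke the already established Theorem~\ref{thm:equivalence-weak-compactness}. Identifying each $f\in F$ with the vector measure $\mu_f:=f\gamma\in\calM(Y;\R^m)$, condition~\eqref{eq:73bis} is exactly the criterion \eqref{eq:73} (applied componentwise to $|\mu_f|\le |f|\gamma$), so the family $\{\mu_f:f\in F\}$ has compact closure in the setwise topology; then item~(6) of Theorem~\ref{thm:equivalence-weak-compactness} gives weak $L^1$-compactness of the densities, which is $(1)$. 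The main obstacle I foresee is purely notational, in keeping track of the vector-valued nature of $F$ throughout the de la Vall\'ee-Poussin construction and in justifying the reduction to Theorem~\ref{thm:equivalence-weak-compactness} despite the fact that the latter is stated for signed rather than $L^1$-valued objects.
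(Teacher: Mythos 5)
The paper does not prove this theorem; it is recalled as the classical Dunford--Pettis / de la Vall\'ee-Poussin characterization, with the implicit reference to Bogachev (cf.\ the pointer to \cite[Def.~4.5.2, Th.~4.5.3]{Bogachev07} in the paragraph that follows the statement). So there is no paper proof to compare with, and your proposal must be judged on its own merits. Three of your four implications are sound: $(2)\Rightarrow(3)$ via the de la Vall\'ee-Poussin step-function construction $g=\sum_n\mathbf 1_{(c_n,\infty)}$ and the layer-cake identity; $(3)\Rightarrow(2)$ via the split $\int_B|f|\le R_K\gamma(B)+K^{-1}\sup\int\beta(|f|)$; and $(2)\Rightarrow(1)$ via identifying $f$ with $f\gamma$, noting $|\mu_f|(B)=\int_B|f|\,\dd\gamma$ so that \eqref{eq:73bis} is \eqref{eq:73}, then invoking Theorem~\ref{thm:equivalence-weak-compactness}(5)$\Rightarrow$(6) (keeping track of the fact that setwise limits of $\gamma$-absolutely-continuous measures are again $\gamma$-absolutely-continuous, which indeed follows from~\eqref{eq:73bis}).

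The gap is in $(1)\Rightarrow(2)$. You propose ``weak convergence tested against $\mathbf 1_{B_{n_k}}v_{n_k}$ for suitable unit vectors,'' but this is not a legitimate use of weak convergence: $f_{n_k}\weakto f$ in $L^1$ only gives $\int(f_{n_k}-f)\cdot g\,\dd\gamma\to0$ for a \emph{fixed} $g\in L^\infty$, not uniformly over a sequence of test functions $\mathbf 1_{B_{n_k}}v_{n_k}$ that moves with $k$. No amount of extracting subsequences or controlling the tail sets $\bigcup_{k\ge K}B_{n_k}$ fixes this, because you would need to compare $\int_{B_{n_k}}|f_{n_k}|$ with $\int_{B_{n_k}}|f|$ and the map $f\mapsto|f|$ is not weakly continuous. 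This is in fact the genuinely deep direction of Dunford--Pettis, and the standard proofs go through the Vitali--Hahn--Saks theorem (the sequence of measures $\mu_{n_k}=f_{n_k}\gamma$ is setwise convergent and each $\mu_{n_k}\ll\gamma$, hence uniformly $\gamma$-absolutely continuous, contradicting $|\mu_{n_k}|(B_{n_k})\ge\eps$ while $\gamma(B_{n_k})\to0$), or alternatively through a gliding-hump / Rosenthal construction of a disjoint family of sets carrying mass $\ge\eps/2$ and a contradiction with the Eberlein--\v Smulian characterization. The cleanest repair consistent with the rest of your plan is to reduce $(1)\Rightarrow(2)$ to Theorem~\ref{thm:equivalence-weak-compactness} as well: weak $L^1$-compactness of $F$ gives setwise compactness of $\{f\gamma\}$, hence (1)$\Rightarrow$(5) of that theorem yields equi-absolute continuity with respect to \emph{some} dominating measure; a Vitali--Hahn--Saks argument is then still needed to upgrade this to equi-absolute continuity with respect to the given $\gamma$, using that every $\mu_f\ll\gamma$. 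Either way, the nontrivial ingredient cannot be elided.
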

The name `equi-absolute continuity' above derives from the interpretation that the {measure} $f\gamma$ is absolutely continuous with respect to $\gamma$ in a uniform manner; `equi-absolute continuity' is a shortening of Bogachev's terminology `$F$ has uniformly absolutely continuous integrals'~\cite[Def.~4.5.2]{Bogachev07}. A fourth equivalent property is equi-integrability with respect to $\gamma$~\cite[Th.~4.5.3]{Bogachev07}, a fact that we will not use.

\medskip

 When $Y$ is endowed with a (separable and metrizable) topology   $\tau_Y$, \EEE
 we will use the symbol $\Cb(Y;\R^m) $  to denote the space of
 bounded $\R^m$-valued continuous functions on  $(Y,\tau_Y)$. \EEE
 We will  consider the corresponding weak topology
 $\sigma(\calM(Y;\R^m);\Cb(Y;\R^m))$
 induced by the duality 
 with $\Cb(Y;\R^m)$.
 Prokhorov's Theorem yields that a subset $M\subset \calM(Y;\R^m)$
 has compact closure in this topology
 if it is bounded in the
 total  variation
 norm and it is equally  tight, i.e.
 \begin{equation}
   \label{eq:47}
   \forall\ep>0\
   \exists\, K\text{ compact in $Y$}:
   \quad
   \sup_{\mu\in M}|\mu|(Y\setminus K)\le \ep.
 \end{equation}
 It is obvious that for a sequence $(\mu_n)_{n\in \N}$
 convergence in total variation
 implies setwise convergence (or in duality with bounded measurable
 functions),
 and setwise convergence implies weak convergence in duality with
 bounded continuous functions.
 \subsection{Convex functionals  and concave transformations of measures}
 \label{subsub:convex-functionals}
We will use the following construction several times.
Let 
$\uppsi:\R^m\to [0,\pinfty]$ be convex and lower semicontinuous
and let us denote
by $\uppsi^\infty:\R^m\to [0,\pinfty]$ its recession function
\begin{equation}
  \label{eq:3}
  \uppsi^\infty(z):=\lim_{t\to\pinfty}\frac{\uppsi(tz)}t=\sup_{t>0}\frac{\uppsi(tz)-\uppsi(0)}t,
\end{equation}
which is a convex, lower semicontinuous, and positively $1$-homogeneous map  with $\uppsi^\infty(0)=0$. \EEE
We define the functional
$\calF_\uppsi:\calM(Y;\R^m) \times \calM^+(Y)\mapsto [0,\pinfty]$ by
\begin{equation}
\label{def:F-F}
\calF_\uppsi(\mu|\nu) :=
\int_Y \uppsi \Bigl(\frac{\dd \mu}{\dd \nu}\Bigr)\,\dd\nu+
\int_Y \uppsi^\infty\Bigl(\frac{\dd \mu^\perp}{\dd |\mu^\perp|}\Bigr) \,
\dd |\mu^\perp|,\qquad \text{for }\mu=\frac{\dd \mu}{\dd \nu}\nu+\mu^\perp.
\end{equation}
Note that when $\uppsi$ is superlinear then $\uppsi^\infty(x)=\pinfty$
in $\R^m\setminus\{0\}$. Equivalently, 
\begin{equation}
  \label{eq:5}
  \text{$\uppsi$ superlinear,}\quad
  \calF_\uppsi(\mu|\nu)<\infty\quad\Rightarrow\quad
  \mu\ll\nu,\quad
  \calF_\uppsi(\mu|\nu)=
\int_Y \uppsi \Bigl(\frac{\dd \mu}{\dd \nu}\Bigr)\,\dd\nu.
\end{equation}
We collect in the next Lemma a list of useful properties.
%
\begin{lemma}
  \label{l:lsc-general}\ 
  \begin{enumerate}
  \item\label{l:lsc-general:i3}
    When $\uppsi$ is also positively $1$-homogeneous, then
    $\uppsi\equiv \uppsi^\infty$, $\calF_\uppsi(\cdot|\nu)$ is
    independent of $\nu$ and  will also be denoted by
    $\calF_\uppsi(\cdot)$: it satisfies
    \begin{equation}
      \label{eq:78}
       \calF_\uppsi(\mu) \EEE =\int_Y
      \uppsi\left(\frac{\dd\mu}{\dd\gamma}\right)
      \,\dd\gamma\quad
      \text{for every }\gamma\in \calM^+(Y)\text{ such that } \mu\ll\gamma.
    \end{equation}
  \item If
    $\hat
    \uppsi:\R^{m+1}\to[0,\infty]$ denotes
    the 1-homogeneous, convex, perspective function associated  with \EEE
    $\uppsi$ by
    \begin{equation}
      \label{eq:76}
     \hat \uppsi(z,t):=
      \begin{cases}
        \uppsi(z/t)t&\text{if }t>0,\\
        \uppsi^\infty(z)&\text{if }t=0,\\
        \pinfty&\text{if }t<0,
      \end{cases}
    \end{equation}
    then
    \begin{equation}
      \label{eq:77}
      \calF_\uppsi(\mu|\nu)=\calF_{\hat \uppsi}(\mu,\nu)\quad
      \text{for every }  (\mu,\nu) \EEE \in \calM(Y;\R^m)\times \calM^+(Y)
    \end{equation}
     with $\calF_{\hat \uppsi}$ defined as in \eqref{eq:78}. \EEE
      \item In particular, 
        if $\gamma\in \calM^+(Y)$ is a common dominating measure such
    that $\mu=u\gamma$, $\nu=v\gamma$, and $Y':=\{x\in Y:v(x)>0\}$
    we also have
    \begin{equation}
      \label{eq:67}
      \calF_\uppsi(\mu|\nu)=
      \int_Y \hat\uppsi(u,v)\,\dd\gamma=\int_{Y'} \uppsi(u/v)v\,\dd\gamma+
      \int_{Y\setminus Y'} \uppsi^\infty(u)\,\dd\gamma.
    \end{equation}
  \item The functional $\calF_\uppsi$ is convex;
    if $\uppsi$ is also positively $1$-homogeneous then
    \begin{equation}
      \label{eq:81}
      \begin{aligned}
        \calF_\uppsi(\mu+\mu')&\le \calF_\uppsi(\mu)+\calF_\uppsi(\mu')\\
        \calF_\uppsi(\mu+\mu')&= \calF_\uppsi(\mu)+\calF_\uppsi(\mu')\quad
        \text{if }\mu\perp\mu'.
      \end{aligned}      
    \end{equation}
  \item Jensen's inequality:
    \begin{equation}
      \label{eq:79}
        \hat\uppsi(\mu^a(B),\nu(B)) 
      +\uppsi^\infty(\mu^\perp(B)) \EEE
      \le
      \calF_\uppsi(\mu\mres B\GGG |\EEE\nu\mres B)\quad
      \text{for every }B\in \frB
    \end{equation}
     (with $\mu = \mu^a + \mu^\perp $ the Lebesgue decomposition of $\mu$ w.r.t.\ $\nu$). \EEE
  \item If $\uppsi(0)=0$ then for every $\mu\in \calM(Y,\R^m)$,
    $\nu,\nu'\in \calM^+(Y)$ 
    \begin{equation}
      \label{eq:80}
      \nu\le \nu'\quad\Rightarrow\quad
      \calF_\uppsi(\mu|\nu)\ge \calF_\uppsi(\mu|\nu').
    \end{equation}
  \item \label{l:lsc-general:i1}
        $\calF_\uppsi$ is \GGG sequentially \EEE lower
    semicontinuous in $\calM(Y;\R^m) \times \calM^+(Y)$ with respect
    to the topology of setwise convergence.
  \item \label{l:lsc-general:i2}
    If $\frB$ is the Borel family induced
    by a \GGG Polish 
     topology  $\tau_Y$  on $Y$, \EEE
    $\calF_\uppsi$ is
    lower semicontinuous with respect to ~weak convergence  (in duality with continuous bounded functions). \EEE
  \end{enumerate}
\end{lemma}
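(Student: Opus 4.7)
The organizing idea is that almost everything in the lemma follows once one establishes the perspective formula (2), and then passes to the common dominating measure representation (3). I would therefore begin by verifying (2) directly from the definitions: since $\hat\uppsi$ is positively $1$-homogeneous in $(z,t)$ by construction and satisfies $\hat\uppsi(z,0)=\uppsi^\infty(z)$, choosing any common dominating measure $\gamma$ for $\mu,\nu$ (for instance $\gamma=|\mu|+\nu$) and writing $\mu=u\gamma$, $\nu=v\gamma$, one has $\mu^a=u\One_{\{v>0\}}\gamma$, $\mu^\perp=u\One_{\{v=0\}}\gamma$, and substituting into both \eqref{def:F-F} and the right-hand side of \eqref{eq:77} yields exactly $\int\hat\uppsi(u,v)\,\dd\gamma$, which simultaneously establishes (2) and (3). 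Part (1) is then immediate because $1$-homogeneity forces $\uppsi^\infty=\uppsi$ and $\hat\uppsi(z,t)=\uppsi(z)$ is independent of $t$, so the integral in (3) depends only on $\mu$.

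Properties (4), (5), (6) should fall out of (3) via convex-analysis one-liners. Convexity of $\calF_\uppsi$ reduces to joint convexity of $\hat\uppsi$, which is a standard property of perspective functions; the additivity in (4) when $\uppsi$ is $1$-homogeneous uses that $\uppsi=\uppsi^\infty$ is subadditive and that, for mutually singular $\mu,\mu'$, there is a Borel partition on which the Radon--Nikodym densities with respect to a common dominating measure have disjoint support. Jensen's inequality (5) follows by applying the ordinary Jensen inequality to the $1$-homogeneous convex function $\hat\uppsi$ with the pair of vector measures $(\mu^a\mres B,\nu\mres B)$ and then adding the singular contribution which is already an integral of $\uppsi^\infty$. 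For (6), the monotonicity $v\mapsto \hat\uppsi(u,v)$ nonincreasing when $\uppsi(0)=0$ is the usual perspective monotonicity $\uppsi(u/v)-(u/v)\uppsi'(u/v)\le \uppsi(0)=0$; writing $\nu'=\nu+\sigma$ with $\sigma\ge0$ and choosing $\gamma$ dominating everything, pointwise monotonicity integrates to the claim.

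The main obstacle is the lower semicontinuity statements (7) and (8), because one needs a representation of $\calF_\uppsi$ that is manifestly supremum of continuous/measurable linear functionals. The route I would follow is the dual variational formula
\[
\calF_\uppsi(\mu|\nu)=\sup\Bigl\{\int_Y \xi\cdot\dd\mu-\int_Y \uppsi^*(\xi)\,\dd\nu\ :\ \xi\in \Bb(Y;\R^m)\Bigr\},
\]
with $\uppsi^*$ the Legendre conjugate; the $\le$ direction uses Fenchel's inequality and the identity $\uppsi^\infty(z)=\sup\{\xi\cdot z:\xi\in\dom\uppsi^*\}$, while the $\ge$ direction is obtained by selecting measurable $\xi(x)\in\partial\uppsi(\dd\mu^a/\dd\nu(x))$ on the absolutely continuous part and approximating by truncations on the singular part. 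Once this representation is in force, each functional $(\mu,\nu)\mapsto \int\xi\,\dd\mu-\int\uppsi^*(\xi)\,\dd\nu$ is continuous for setwise convergence by \eqref{eq:70}, giving (7) as a supremum of continuous functionals. For (8) one restricts the supremum to $\xi\in \Cb(Y;\R^m)$; the nontrivial point is to prove that this restricted supremum still equals $\calF_\uppsi$, which is done by a standard Lusin/Tietze-type approximation of bounded measurable $\xi$ by bounded continuous ones in the $L^1(\nu+|\mu|)$-sense, exploiting that $\uppsi^*$ is locally bounded on the interior of its domain. The delicate step is the treatment of the singular part $\mu^\perp$ and of values of $\xi$ near the boundary of $\dom\uppsi^*$ (the recession cone), where one must argue by truncation and monotone convergence so as not to lose the contribution of $\uppsi^\infty$.
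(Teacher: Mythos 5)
Your proposal is correct, and for parts (1)--(6) it is essentially the paper's argument (establish the perspective representation on a common dominating measure; the paper proves (1) directly and derives (2) from it, you do the reverse, but the content is identical; the uses of joint convexity of $\hat\uppsi$, Jensen applied to the mutually singular pieces, and monotonicity in the second variable all match).

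Where you genuinely diverge is in part (7). The paper first reduces to the $1$-homogeneous case via (2), then invokes the equivalence of setwise convergence with weak $L^1$ convergence of densities w.r.t.\ a common dominating measure (Theorem~\ref{thm:equivalence-weak-compactness}), and concludes from the fact that a convex integral functional $u\mapsto\int\Psi(u)\,\dd\gamma$ is strongly $L^1$-l.s.c.\ by Fatou, hence weakly $L^1$-l.s.c.\ by convexity. You instead prove the dual variational representation
\[
\calF_\uppsi(\mu|\nu)=\sup_\xi\Bigl\{\int_Y \xi\cdot\dd\mu-\int_Y \uppsi^*(\xi)\,\dd\nu\Bigr\}
\]
and use it uniformly for both (7) and (8). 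This is a valid alternative; its advantage is that it treats (7) and (8) by the same mechanism, whereas the paper's route for (7) exploits the $L^1$ dictionary and only falls back to duality (citing Ambrosio--Fusco--Pallara and a Polish-space adaptation) for (8), which you instead rebuild by hand via Lusin/Tietze. One caveat you should make explicit in the duality route: for each fixed test function $\xi$ the functional $(\mu,\nu)\mapsto\int\xi\,\dd\mu-\int\uppsi^*(\xi)\,\dd\nu$ is setwise-continuous only when both $\xi$ and $\uppsi^*(\xi)$ are bounded, so the supremum must a priori be restricted to $\xi$ taking values in a compact subset of $\operatorname{int}\dom\uppsi^*$; one then needs the identity $\uppsi^\infty(z)=\sup\{\xi\cdot z:\xi\in\dom\uppsi^*\}$ together with a truncation/monotone-convergence argument to recover the singular contribution from inside the interior. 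You flag this, so the gap is one of exposition rather than substance, but it is exactly the delicate point the paper sidesteps in (7) by going through $L^1$ weak lower semicontinuity.
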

\begin{proof}
  \GGG
  The above properties are mostly well known; we
  give a quick sketch of the proofs
  of the various claims for the ease of the reader.
  
  \medskip\noindent \textit{(1)}
  Let us set $u:=\dd \mu/\dd\nu$, $u^\perp:=\dd\mu^\perp/\dd|\mu|$
  and let $N\in\frB$ $\nu$-negligible such that $\mu^\perp=\mu\mres
  N$. We also set $N':=\{y\in Y\setminus N:|u(y)|> 0\}$; notice that
  $\nu\mres N'\ll |\mu|$. If $v$ is
  the Lebesgue density of $|\mu|$ w.r.t.~$\gamma$, since
  $\uppsi=\uppsi^\infty$ is positively $1$-homogeneous and $\uppsi(0)=0$, we have
  \begin{align*}
    \calF_\uppsi(\mu|\nu)
    &=\int_{N'}\uppsi(u)\,\dd \nu+
      \int_N \uppsi(u^\perp)\,\dd|\mu^\perp|
      =\int_{N'}\uppsi(u)/|u|\,\dd |\mu|+
      \int_N \uppsi(u^\perp)\,\dd|\mu^\perp|
      \\&=\int_{N'}v\uppsi(u)/|u|\,\dd \gamma+
    \int_N v\uppsi(u^\perp)\,\dd\gamma=
    \int_{N'}\uppsi(uv/|u|)\,\dd \gamma+
    \int_N \uppsi(u^\perp v)\,\dd\gamma
    \\&=    \int_{N'}\uppsi(\dd\mu/\dd\gamma)\,\dd \gamma+
    \int_N \uppsi(\dd \mu/\dd\gamma)\,\dd\gamma=
    \int_Y \uppsi(\dd \mu/\dd\gamma)\,\dd\gamma=\calF_\uppsi(\mu|\gamma),
  \end{align*}
  where we also used the fact that $|\mu|(Y\setminus (N\cup N'))=0$,
  so that $\dd \mu/\dd\gamma=0$ $\gamma$-a.e.~on $Y\setminus (N\cup
  N').$

  \medskip
  \noindent\textit{(2)}
  Since $\hat\uppsi$ is $1$-homogeneous, we can apply the previous
  claim and evaluate
  $\calF_{\hat\uppsi}(\mu,\nu)$ by choosing the dominating measure
  $\gamma:=\nu+\mu^\perp$.

  \medskip
  \noindent\textit{(3)} It is an immediate consequence of the first two
  claims.

  \medskip
  \noindent\textit{(4)}
  By \eqref{eq:77} it is sufficient to consider the $1$-homogeneous
  case.
  The convexity then follows by the convexity of $\uppsi$ and
  by choosing a common dominating measure to represent the integrals.
  Relations \EEE \eqref{eq:81} are also immediate.

  \medskip
  \noindent\textit{(5)}
  Using \eqref{eq:77} and selecting a dominating measure $\gamma$ with
  $\gamma(B)=1$, Jensen's inequality applied to the convex functional
  $\hat\uppsi$ yields
  \begin{displaymath}
    \hat\uppsi(\mu(B),\nu(B))=
    \hat\uppsi\Big(\int_B \frac{\dd\mu}{\dd\gamma}\,\dd\gamma,
    \int_B \frac{\dd\nu}{\dd\gamma}\,\dd\gamma\Big)\le
    \int_B
    \hat\uppsi\Big(\frac{\dd\mu}{\dd\gamma},\frac{\dd\nu}{\dd\gamma}\Big)\,\dd\gamma
    =\calF_{\hat\uppsi}(\mu\mres B,\nu\mres B).
  \end{displaymath}
  Applying now the above inequality to the mutally singular couples
  $(\mu^a,\nu)$ and $(\mu^\perp,0)$
  and using the second identity of \eqref{eq:81} we obtain \eqref{eq:79}.

  \medskip
  \noindent\textit{(6)}
  We apply \eqref{eq:77} and the first identity of \eqref{eq:67},
  observing that if $\uppsi(0)=0$ then $\hat\uppsi$ is decreasing with
  respect to its second argument.

  \medskip
  \noindent\textit{(7)}
  By \eqref{eq:77} it is not restrictive to assume that
  $\Psi$ is $1$-homogeneous.
  If $(\mu_n)_n$ is a sequence setwise converging to $\mu$ in
  $\calM(Y;\R^m)$ we can find
  a common dominating measure $\gamma$ such that \eqref{eq:72} holds.
  The claimed property is then reduced to the weak lower semicontinuity of the functional
  \begin{equation}
    \label{eq:121}
    u\mapsto \int_Y \Psi(u)\,\dd\gamma
  \end{equation}
  in $L^1(Y,\gamma;\R^m)$. 
  Since the functional of \eqref{eq:121}
  is convex and strongly lower semicontinuous in $L^1(Y,\gamma;\R^m)$
  (thanks to Fatou's Lemma), it is weakly lower
  semicontinuous as well.
  
  \medskip
  \noindent\textit{(8)}
  It follows by the same argument of \cite[Theorem 2.34]{AmFuPa05FBVF},
  by using a suitable dual formulation which holds
  also in Polish
  spaces, where all the finite Borel measures are Radon
  (see e.g.~\cite[Theorem 2.7]{LMS18} for positive measures).
\end{proof}

\subsubsection*{Concave transformation of vector measures}
\label{subsub:concave-transformation}
Let us set $\R_+:=[0,\pinfty[$, $\R^m_+:=(\R_+)^m$, and let
$\upalpha:\R^m_+\to\R_+$ be a continuous and concave function. It is
obvious that $\upalpha$ is non-decreasing with respect to each variable.
As for \eqref{eq:3}, the recession function $\upalpha^\infty$ is defined
by
\begin{equation}
  \label{eq:1}
  \upalpha^\infty(z):=\lim_{t\to\pinfty}\frac{\upalpha(tz)}t=\inf_{t>0}\frac{\upalpha(tz)-\upalpha(0)}t,\quad
  z\in \R^m_+.
\end{equation}
We define the corresponding map
$\Aalpha:\calM(Y;\R^m_+)\times\calM^+(Y)\to\calM^+(Y)$ by
\begin{equation}
  \label{eq:6}
  \Aalpha[\mu|\gamma]:=
  \upalpha\Bigl(\frac{\dd\mu}{\dd\gamma}\Bigr)\gamma+
  \upalpha^\infty\Bigl(\frac{\dd\mu}{\dd
    |\mu^\perp|}\Bigr)|\mu^\perp|\quad
  \mu\in \calM(Y;\R^m_+),\ \gamma\in \calM_+(Y),
\end{equation}
where as usual $\mu=\frac{\dd\mu}{\dd\gamma}\gamma+\mu^\perp$ is the
Lebesgue decomposition of $\mu$ with respect to ~$\gamma$; 
 in what follows, we will use the short-hand $\mu_\gamma := \frac{\dd\mu}{\dd\gamma}\gamma$.
We also mention in advance that, for shorter notation we will write   $\Aalpha[\mu_1,\mu_2|\gamma]$ in place of
$\Aalpha[(\mu_1,\mu_2)|\gamma]$. 
 \EEE
Like for $\calF$, it is not difficult to check that
$\Aalpha[\mu|\gamma]$
is independent of $\gamma$ if $\upalpha$ is positively $1$-homogeneous
(and thus coincides with $\upalpha^\infty$).
If we define the perspective function $\hat \upalpha:\R_+^{m+1}\to\R_+$
\begin{equation}
  \label{eq:83}
  \hat \upalpha(z,t):=
  \begin{cases}
    \upalpha(z/t)t&\text{if }t>0,\\
    \upalpha^\infty(z)&\text{if }t=0
  \end{cases}
\end{equation}
we also get $\Aalpha[\mu|\gamma]=\hat\upalpha(\mu,\gamma)$.

We denote by $\upalpha_*:\R^m_+\to[-\infty,0]$ the upper semicontinuous concave conjugate of $\upalpha$
\begin{equation}
  \label{eq:4}
  \upalpha_*(y):=\inf_{x\in \R^m_+} \left( y\cdot x-\upalpha(x)\right),\quad
  D(\upalpha_*):=\big\{y\in \R^m_+:\upalpha_*(y)>-\infty\big\}.
\end{equation}
The function $\upalpha_*$ provides simple affine upper bounds  for \EEE $\upalpha$
\begin{equation}
  \label{eq:8}
  \upalpha(x)\le x\cdot y-\upalpha_*(y)\quad\text{for every }y\in D(\upalpha_*)
\end{equation}
and Fenchel duality yields
\begin{equation}
  \label{eq:7}
  \upalpha(x)=\inf_{y\in \R^m_+} \left( y\cdot x-\upalpha_*(y) \right)=
  \inf_{y\in D(\upalpha_*)}\left( y\cdot x-\upalpha_*(y) \right).
\end{equation}
 We will also use that 
\begin{equation}
\label{form-4-alpha-infty}
\upalpha^\infty(z)  = \inf_{y\in D(\upalpha_*)}  y \cdot z\,. 
\end{equation}
Indeed,  on the one hand 
for every $y \in  D(\upalpha_*)$ and $t>0$ we have that 
\[
\upalpha^\infty(z) \leq \frac1t \left( \alpha(tz) - \alpha(0)\right) \leq \frac1t \left( y \cdot (tz) -\upalpha(0) - \upalpha^*(y) \right);
\]
by the arbitrariness of $t>0$, we conclude that $\upalpha^\infty(z) \leq  y \cdot z$ for every $y \in  D(\upalpha_*)$.
On the other hand,  by \eqref{eq:7} we have 
\[
\begin{aligned}
\upalpha^\infty(z)  = \inf_{t>0}\frac{\upalpha(tz)-\upalpha(0)}t  & =  \inf_{t>0} \inf_{y\in D(\upalpha^*)} \frac{y \cdot (tz) -\upalpha^*(y) - \upalpha(0)}t 
\\
& 
= \inf_{y\in D(\upalpha^*)} \left(  y \cdot z +\inf_{t>0} \frac{-\upalpha^*(y) - \upalpha(0)}t  \right)  = \inf_{y\in D(\upalpha^*)} y \cdot z,
\end{aligned}
\]
where we have used that $-\upalpha^*(y) - \upalpha(0) \geq 0$ since $\upalpha(0) = \inf_{y\in  D(\upalpha^*) } ({-}\upalpha^*(y))$. 
 \EEE
\par
For every Borel set $B\subset Y$, Jensen's inequality yields  (recall the notation $\mu_\gamma = \frac{\dd\mu}{\dd\gamma}\gamma$) \EEE
\begin{equation}
  \label{eq:9}
  \begin{aligned}
  \Aalpha[\mu|\gamma](B)&\le
\upalpha\Bigl(\frac{\mu_\gamma(B)}{\gamma(B)}\Bigr)\gamma(B)+
    \upalpha^\infty(\mu^\perp(B))\\
    \Aalpha[\mu|\gamma](B)&\le\upalpha(\mu(B))\quad\text{if
    }\upalpha=\upalpha^\infty
    \text{ is $1$-homogeneous.}
  \end{aligned}
  \end{equation}
  In fact, for every $y,y'\in D(\upalpha_*)$,
  \begin{align*}
  \Aalpha[\mu|\gamma](B)
  &=\int_B  \Aalpha[\mu|\gamma]\le
  \int_B \Bigl(y\cdot
  \frac{\dd\mu}{\dd\gamma}-\upalpha_*(y)\Bigr)\,\dd\gamma+
  \int_B \Bigl(y'\cdot
  \frac{\dd\mu}{\dd |\mu^\perp|}\Bigr)\,\dd\,|\mu^\perp|
  \\&=y\cdot \mu_\gamma(B)-\upalpha_*(y)\gamma(B)
  +y'\cdot \mu^\perp(B).
\end{align*}
Taking the infimum with respect to ~$y$ and $y'$,  and recalling \eqref{eq:7} and
\eqref{form-4-alpha-infty}, \EEE
  we find \eqref{eq:9}.
Choosing $y=y'$ in the previous formula we also obtain the linear
upper bound
\begin{equation}
  \label{eq:31}
  \Aalpha[\mu|\gamma]\le y\cdot\mu-\upalpha_*(y)\gamma
  \quad\text{for every }y\in D(\upalpha_*).
\end{equation}
\subsection{Disintegration and kernels}
\label{subsub:kernels}
Let $(X,\frA)$ and $(Y,\frB)$ be measurable spaces
and let $\big(\kappa(x,\cdot)\big)_{x\in X}$ be a
$\frA$-measurable family of measures in $\calM^+(Y)$, i.e.
\begin{equation}
  \label{eq:17}
  \text{for every $B\in \frB$,}\quad
  x\mapsto \kappa(x,B)\ \text{is $\frA$-measurable}. 
\end{equation}
We will set 
\begin{equation}
  \label{eq:155}
  \kappa_Y(x):=\kappa(x,Y),\quad
   \|\kappa_Y\|_\infty \EEE :=\sup_{x\in X}   |\kappa|(x,Y), \EEE
\end{equation}
and we say that $\kappa$ is a bounded kernel if $\|\kappa_Y\|_\infty$ is finite.
If $\gamma\in \calM^+(X)$
and
\begin{equation}
  \label{eq:89}
  \text{$\kappa_Y$ is $\gamma$-integrable, i.e.}\quad
  \int_X \kappa(x,Y)\,\gamma(\dd x)<\pinfty,
\end{equation}
then Fubini's Theorem \cite[II, 14]{Dellacherie-Meyer78} shows that
there exists a unique measure
$\kernel\kappa\gamma(\dd x,\dd y)=
\gamma(\dd x)\kappa(x,\dd y)$ on $(X\times Y,\frA\otimes \frB)$ such that
\begin{equation}
  \label{eq:84}
  \kernel\kappa\gamma(A\times B)=\int_A \kappa(x,B)\,\gamma(\dd x)\quad\text{for
    every }A\in \frA,\ B\in \frB.
\end{equation}
If $X=Y$, the measure $\gamma$
is called \emph{invariant}
if $\kernel\kappa\gamma$ has the same marginals; equivalently
\begin{equation}
  \label{eq:156}
  \sfy_\sharp \kernel\kappa\gamma(\dd y)=
  \int_X \kappa(x,\dd y)\gamma(\dd x)=
  \kappa_Y(y)\gamma(\dd y),
\end{equation}
 where $
\sfy:\edg \to V  $ denotes the projection on the second component, cf.\ \eqref{eq:87} ahead. 
We say that \EEE
$\gamma$ is \emph{reversible} if it satisfies \emph{the detailed
  balance condition}, i.e.
 $\kernel\kappa\gamma$ is symmetric:
$\sfs_\sharp \kernel\kappa\gamma=\kernel\kappa\gamma$.
The concepts of invariance and detailed balance correspond to the analogous concepts in stochastic-process theory; see Section~\ref{ss:assumptions}.
It is immediate to check that reversibility implies invariance.


If $f:X\times Y\to \R$ is a positive or bounded measurable function,
then
\begin{equation}
  \label{eq:86}
  \text{the map $x\mapsto \kappa f(x):=\int_Y f(x,y)\kappa(x,\dd y)$ is $\frA$-measurable}
\end{equation}
and
\begin{equation}
  \label{eq:85}
  \int_{X\times Y}f(x,y)\,\kernel\kappa\gamma(\dd x,\dd y)=
  \int_X\Big(\int_Y f(x,y)\,\kappa(x,\dd y)\Big)\gamma(\dd x).
\end{equation}
Conversely, if $X,Y$ are standard Borel spaces,
$\boldsymbol\kappa\in \calM^+(X\times Y)$ (with the product
$\sigma$-algebra) and
the first marginal $\sfp^X_\sharp \boldsymbol\kappa $ of $\boldsymbol\kappa$
is absolutely continuous with respect to ~$\gamma\in \calM^+(V)$, then
we may apply the disintegration Theorem \cite[Corollary
10.4.15]{Bogachev07}
to find a $\gamma$-integrable kernel $(\kappa(x,\cdot))_{x\in X}$
such that $\boldsymbol\kappa=\kernel\kappa\gamma$.

We will often apply the above construction in two cases:
when $X=Y:=V$, the main domain of our evolution problems
(see Assumptions \ref{ass:V-and-kappa} below),
and when $X:=I=(a,b)$ is an interval of the real line
endowed with the Lebesgue measure $\Lebone$.
In this case, we will denote by $t$ the variable in $I$
and by $(\mu_t)_{t\in X}$
a measurable family in $\calM(Y)$ parametrized by $t\in I$:
\begin{equation}
  \label{eq:99}
  \text{if }\int_I \mu_t(Y)\,\dd t<\pinfty
  \text{ then we set }
  \mu_\Lebone\in \calM(I\times Y),\quad
  \mu_\Lebone(\dd t,\dd y)=\lambda(\dd t)\mu_t(\dd y).
\end{equation}

\begin{lemma}
  \label{le:kernel-convergence}
  If $\mu_n\in \calM(X)$ is a sequence converging to $\mu$ setwise
  and $(\kappa(x,\cdot))_{x\in X}$ is a \emph{bounded} measurable
  kernel
  in $\calM^+(Y)$, then
  $\kernel\kappa{\mu_n}\to \kernel\kappa\mu$
  setwise in $\calM(X\times Y,\frA\otimes\frB)$.

  If $X,Y$ are Polish spaces and $\kappa$ also satisfies the weak Feller property, i.e.
  \begin{equation}
    \label{eq:Feller}
    x\mapsto \kappa(x,\cdot)\quad\text{is weakly continuous in }\calM^+(Y),
  \end{equation}
   (where `weak' means in duality with continuous bounded functions), \EEE
  then for every weakly converging sequence
  $\mu_n\to\mu$ in $\calM(X)$ we have
  $\kernel\kappa{\mu_n}\to \kernel\kappa\mu$
  weakly as well.
\end{lemma}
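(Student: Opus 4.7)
For the first claim, I would reduce setwise convergence of $\kernel\kappa{\mu_n}\to \kernel\kappa\mu$ to the dual characterization provided by Theorem~\ref{thm:equivalence-weak-compactness}: it suffices to show that
\[
  \int_{X\times Y} f\,\dd\kernel\kappa{\mu_n}\;\longrightarrow\;\int_{X\times Y} f\,\dd\kernel\kappa\mu \qquad\text{for every }f\in \Bb(X\times Y,\frA\otimes\frB).
\]
Applying Fubini's formula \eqref{eq:85}, both sides rewrite as integrals over $X$ of $\kappa f(x)=\int_Y f(x,y)\,\kappa(x,\dd y)$ against $\mu_n$ and $\mu$ respectively. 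By \eqref{eq:86} the map $\kappa f$ is $\frA$-measurable, and since $|\kappa f(x)|\le \|f\|_\infty \kappa_Y(x)\le \|f\|_\infty\|\kappa_Y\|_\infty<\pinfty$ by the boundedness hypothesis, in fact $\kappa f\in \Bb(X)$. The setwise convergence $\mu_n\to\mu$, characterised as convergence in duality with $\Bb(X)$ (see \eqref{eq:70}), then yields the desired limit.

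For the second claim, under the Polish/Feller assumption the strategy is to show that $\kappa f\in \Cb(X)$ whenever $f\in \Cb(X\times Y)$, after which the conclusion follows from weak convergence $\mu_n\to\mu$ tested against $\kappa f$. Fix $x_n\to x$ in $X$; by the Feller property $\kappa(x_n,\cdot)\to \kappa(x,\cdot)$ weakly in $\calM^+(Y)$, so Prokhorov's theorem ensures that the family $\{\kappa(x_n,\cdot)\}_n$ is tight: for any $\eps>0$ there exists a compact $K\subset Y$ with $\sup_n\kappa(x_n,Y\setminus K)\le\eps$. I would then split
\[
  \kappa f(x_n)-\kappa f(x)=\int_Y \bigl(f(x_n,y)-f(x,y)\bigr)\,\kappa(x_n,\dd y)+\int_Y f(x,y)\,\bigl(\kappa(x_n,\dd y)-\kappa(x,\dd y)\bigr).
\]
The second summand tends to zero because $f(x,\cdot)\in \Cb(Y)$ and by the Feller property. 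For the first summand, continuity of $f$ on the compact set $(\{x_n\}_n\cup\{x\})\times K$ gives $\sup_{y\in K}|f(x_n,y)-f(x,y)|\to 0$, so that the integral over $K$ is controlled by this sup times $\|\kappa_Y\|_\infty$, while the integral over $Y\setminus K$ is bounded by $2\|f\|_\infty\eps$. Sending $n\to\pinfty$ and then $\eps\to 0$ gives continuity of $\kappa f$, which (together with obvious boundedness) concludes the argument.

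\textbf{Main obstacle.} The setwise statement is essentially Fubini combined with the duality characterization of setwise convergence, and it is routine. The delicate point lies in the Feller case: the function $f(x,y)$ depends on both variables, so the Feller property (which only controls the $y$-marginal at fixed $x$) does not directly handle the term where $x_n$ varies inside $f$. The key is the tightness extracted from weak convergence of $\kappa(x_n,\cdot)$ via Prokhorov, which allows one to localise $y$ in a compact set and exploit the joint continuity of $f$ there. I expect no further subtlety beyond this compactness/tightness trick.
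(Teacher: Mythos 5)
Your proposal is correct and follows essentially the same route as the paper: reduce to testing against $\kappa f$ via Fubini, and use boundedness of $\kappa$ to conclude $\kappa f\in\Bb(X)$, which is then paired with the duality characterization of setwise convergence. For the Feller part the paper simply writes ``the other statement follows by a similar argument''; your Prokhorov-tightness decomposition is the natural way to fill in that gap and correctly identifies the only subtlety (controlling the term where $x_n$ varies inside $f$, which the Feller property alone does not handle).
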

\begin{proof}
  If $f:X\times Y\to \R$ is a bounded $\frA\otimes\frB$-measurable map,
  then by \eqref{eq:86} also the map $\kappa f$ is bounded and $\frA$-measurable
  so that
  \begin{displaymath}
  \lim_{n\to\pinfty}\int_{X\times Y} f\,\dd \kernel\kappa{\mu_n}=
  \lim_{n\to\pinfty}\int_{X} \kappa f\,\dd \mu_n=
  \int_X \kappa f\,\dd \mu=\int_{X\times Y} f\,\dd \kernel\kappa\mu,
\end{displaymath}
  showing the setwise convergence. The other statement follows by a
  similar argument.
\end{proof}

\nc
\section{Jump processes, large deviations, and their generalized gradient structures}
\label{s:assumptions}

\subsection{The systems of this paper}
\label{ss:assumptions}

In the Introduction we described jump processes on~$V$ with kernel $\kappa$, and showed that the evolution equation $\partial_t\rho_t = Q^*\rho_t$ for the law $\rho_t$ of the process is a generalized gradient flow characterized by a driving functional $\calS$ and a dissipation potential~$\calR^*$. 

The mathematical setup of this paper is slightly different. Instead of starting with an evolution equation and proceeding to the generalized gradient system, our mathematical development  starts with the generalized gradient system; we then consider the equation to be defined by this system. In this Section, therefore, we describe assumptions that we make on~$\calS$ and $\calR^*$ that will allow us to set up the rigorous functional framework for the evolution equation~\eqref{eq:GF-intro}.

We first state the assumptions about the sets $V$ of `vertices' and
$\edg:=V\times V$ of `edges'. `Edges' are identified with ordered
pairs $(x,y)$ of vertices $x,y\in V$. 
We will denote by $\mathsf x,\mathsf y:\edg\to V$
and $\symmapn:\edg\to\edg$ the coordinate and the symmetry maps
defined by
\begin{equation}
\mathsf x(x,y):=x,\quad
\mathsf y(x,y):=y,\quad
\symmapn(x,y): = (y,x)\quad
\label{eq:87}
\text{for every }x,y\in V.
\end{equation}

\begin{Assumptions}{$V\!\pi\kappa$}
\label{ass:V-and-kappa}
We assume that
\begin{equation}
\label{locally-compact}
\begin{gathered}
  \text{$(V,\frB,\pi)$ is a standard Borel measure space,
    $\pi\in \calM^+(V)$,
  }
\end{gathered}
\end{equation}
\medskip
\noindent
$(\kappa(x,\cdot))_{x\in V}$ is bounded kernel in $\calM^+(V)$ (see
\S \ref{subsub:kernels}),
satisfying the \emph{detailed-balance condition} 
\begin{equation}\label{detailed-balance}
\int_{A} \kappa(x,B)\,\pi(\dd x)  = \int_{B} \kappa(y,A)\,\pi(\dd y)
\qquad \text{for all } A,B\in
\frB,
\end{equation}
and the uniform upper bound
 \begin{equation}
 \label{bound-unif-kappa}
 \|\kappa_V\|_\infty= \EEE \sup_{x\in V} \,\kappa(x,V) <\pinfty.
 \end{equation}
\end{Assumptions}

The measure $\pi\in\calM^+(V)$ often is referred to as the
\emph{invariant measure}, and it will be stationary under the evolution generated by the generalized gradient system.
By Fubini's Theorem (see \S\,\ref{subsub:kernels}) we also introduce the measure
$\tetapi$ \nc
on $\edg$ given by
\begin{equation}
\label{nu-pi}
\tetapi(\dd x\,\dd y) =
\kernel\kappa\pi(\dd x,\dd y)=\pi(\dd x)\kappa (x,\dd y),\quad
\tetapi(A{\times}B) = \int_{A} \kappa(x,B)\, \pi(\dd x) \,.
\end{equation}
Note that  the invariance of the  measure $\pi$  \EEE and
\nc the detailed balance condition \eqref{detailed-balance} can be
rephrased in terms of 
 $\tetapi$ 
as 
\begin{equation}
\label{symmetry-nu-pi}
  \mathsf x_\sharp \tetapi=\mathsf y_\sharp\tetapi,\qquad\nc
\symmap \tetapi = \tetapi\,. \EEE
\end{equation}
Conversely, if 
we choose a symmetric measure $\tetapi\in \calM^+(\edg)$
such that
\begin{equation}
  \label{eq:88}
  \mathsf x_\sharp \tetapi\ll\pi,\quad
  \frac{\dd (\mathsf x_\sharp \tetapi)}{\dd \pi}\le  \|\kappa_V\|_\infty \EEE <\pinfty
  \quad\text{$\pi$-a.e.}
\end{equation}
then the disintegration Theorem \cite[Corollary 10.4.15]{Bogachev07}
shows the existence of a bounded
measurable kernel $(\kappa(x,\cdot))_{x\in X}$ satisfying
\eqref{detailed-balance} and \eqref{nu-pi}.
\medskip

\nc We next turn to the driving functional,
 which  is given by the construction in \EEE \eqref{def:F-F} and \eqref{eq:5}
for a superlinear density $\uppsi=\half\upphi$ and for the choice $\gamma=\pi$.
\begin{Assumptions}{$\calS\upphi$}
\label{ass:S}
The driving functional $\calS: \calM^+(V) \to [0,\pinfty]$ is of the form
\begin{equation}
\label{driving-energy}
\calS(\rho): =
 \half\calF_{\upphi}(\rho|\pi)=\nc
\begin{cases}
  \displaystyle
     \half\int_V \upphi\Bigl(\frac{\dd \rho}{\dd\pi}\Bigr)\,\dd\pi
  &\text{if } \rho \ll \pi,
\\
\pinfty &\text{otherwise,}
\end{cases} 
\end{equation}
with
\begin{multline}
\label{cond-phi}
 \upphi\in \mathrm{C}([0,\pinfty))\cap\mathrm{C}^1((0,\pinfty)),\; \min \upphi= 0, \text{ and $\upphi$ is convex}\\
 \text{with superlinear growth at infinity.} 
 \end{multline}
\end{Assumptions}

\noindent Under these assumptions the functional $\calS$ is lower
semicontinuous on $\calM^+(V)$ both with respect to the  topology of 
setwise convergence, and \EEE any compatible weak topology (see Lemma~\ref{l:lsc-general}). 
A central example was already mentioned in the introduction, i.e.\ the Boltzmann-Shannon entropy function 
\begin{equation}
\label{logarithmic-entropy}
\upphi(s) = s\log s - s + 1, \qquad s\geq 0.
\end{equation}
Finally, we state our assumptions on the dissipation.
\begin{Assumptions}{$\calR^*\Psi\upalpha$}
\label{ass:Psi}
We assume that the {dual}  dissipation density $\Psi^*$ satisfies
\begin{equation}
\left.\begin{gathered}
\label{Psi1}
\Psi^*: \R \to [0,\pinfty) \text{ is convex,  differentiable, even, with  $\Psi^*(0)=0$, and} \\ \lim_{|\xi|\to\infty} \frac{\Psi^*(\xi)}{|\xi|} =\pinfty\, .
\end{gathered}\quad\right\}
\end{equation}
The flux density map 
$\upalpha: [0,\pinfty) \times [0,\pinfty) \to [0,\pinfty)$,
with $\upalpha\not\equiv0$,
 is continuous, concave, symmetric:
\begin{equation}
\label{alpha-symm}
\text{} \upalpha(u_1,u_2) = \upalpha(u_2,u_1) \quad\text{  for all } u_1,\, u_2 \in [0,\pinfty),
\end{equation}
and its recession function $\upalpha^\infty$ vanishes on the boundary
of $\R_+^2$:
\begin{equation}
  \label{alpha-0}
  \text{for every }u_1,u_2\in \R^2_+:\quad
  u_1u_2=0\quad\Longrightarrow\quad
  \upalpha^\infty(u_1,u_2) = 0.
\end{equation}
\end{Assumptions}
\medskip
Note that since $\upalpha$ is nonnegative, concave, and not trivially
$0$, it cannot vanish in the interior of $\R^2_+$, i.e.~
\begin{equation}
  \label{eq:38}
  u_1u_2>0\quad\Rightarrow\quad
  \upalpha(u_1,u_2)>0.
\end{equation}
\nc
The examples that we gave in the introduction of the cosh-type dissipation~\eqref{choice:cosh} and the quadratic dissipation~\eqref{choice:quadratic} both fit these assumptions;  other examples are
\[
\upalpha(u,v) = 1 \qquad\text{and}\qquad \upalpha(u,v) = u+v.
\]

In some cases we will use an additional property, namely that $\upalpha$ is positively $1$-homogeneous, i.e.\ $\upalpha(\lambda u_1,\lambda u_2) = \lambda \upalpha(u_1,u_2)$ for all $\lambda \geq0$.
This $1$-homogeneity is automatically satisfied under
 the compatibility condition \EEE
 \eqref{cond:heat-eq-2},
with the Boltzmann entropy function $\upphi(s) = s\log s - s + 1$. 

Concaveness of $\upalpha$ is a natural assumption
in view of the convexity of $\calR$ \nc
(cf.\ \RICKYNEW Remark \ref{rem:alpha-concave}  and \EEE  Lemma \ref{l:alt-char-R} ahead), while $1$-homogeneity
will make the 
definition of $\mathcal{R}$ independent of the choice of a reference measure.
It is interesting to observe that  the
concavity and symmetry conditions, that one has to naturally assume to ensure the aforementioned properties of $\calR$, were singled out for the analog of the function
$\upalpha$ in the construction of the distance yielding the quadratic gradient structure  of \cite{Maas11}.
\medskip

The choices for $\Psi^*$ above generate corresponding properties for the Legendre dual $\Psi$: 
\begin{lemma}
\label{l:props:Psi}
Under Assumption~\ref{ass:Psi}, the function $\Psi: \R\to\R$  is even and  satisfies 
\begin{subequations}
\label{props:Psi}
\begin{gather}
\label{basic-props-Psi}
0=\Psi(0) < \Psi(s) < \pinfty \text{ for all }s\in \R\setminus \{0\}.\\
\Psi\text{ is strictly convex, \GGG strictly increasing, \EEE  and superlinear.}
\label{eq:propsPsi:Psi-strictly-convex}
\end{gather}
\end{subequations}
\end{lemma}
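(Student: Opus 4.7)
All properties follow from standard Legendre-duality arguments applied to the hypotheses collected in Assumption~\ref{ass:Psi}. The plan is to handle each clause separately, using in succession: evenness of $\Psi^*$, the fact that $\Psi^*(0)=0$ is a global minimum (so $(\Psi^*)'(0)=0$), finiteness of $\Psi^*$ on all of $\R$, superlinearity of $\Psi^*$, and differentiability of $\Psi^*$.

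\emph{Evenness, nonnegativity, and the value at $0$.} Since $\Psi^*$ is even, the substitution $\xi\mapsto-\xi$ in $\Psi(s)=\sup_{\xi\in\R}\bigl(s\xi-\Psi^*(\xi)\bigr)$ immediately gives $\Psi(-s)=\Psi(s)$. Taking $\xi=0$ in the supremum gives $\Psi(s)\ge -\Psi^*(0)=0$, so $\Psi\ge 0$; and $\Psi(0)=\sup_\xi(-\Psi^*(\xi))=-\min\Psi^*=0$ because $\Psi^*\ge 0$ with $\Psi^*(0)=0$.

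\emph{Finiteness and superlinearity.} Superlinearity of $\Psi^*$ makes the map $\xi\mapsto s\xi-\Psi^*(\xi)$ coercive (it tends to $-\infty$ as $|\xi|\to\infty$) for every $s\in\R$, so the supremum defining $\Psi(s)$ is attained and finite. Conversely, since $\Psi^*$ is everywhere finite on $\R$, for any fixed $L>0$ we have $\Psi(s)\ge Ls-\Psi^*(L)$, which forces $\liminf_{s\to+\infty}\Psi(s)/s\ge L$; letting $L\to+\infty$ and invoking evenness yields the superlinearity of $\Psi$.

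\emph{Strict positivity off the origin.} Since $\Psi^*$ is differentiable and even, $(\Psi^*)'(0)=0$, so by convexity $\Psi^*(\xi)=o(|\xi|)$ as $\xi\to 0$. Hence for any $s>0$, choosing $\xi>0$ small enough gives $s\xi-\Psi^*(\xi)>0$, whence $\Psi(s)>0$; by evenness the same holds for $s<0$.

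\emph{Strict convexity and strict monotonicity.} Because $\Psi^*$ is a finite-valued differentiable convex function on all of $\R$, it is essentially smooth, so its Legendre conjugate $\Psi=(\Psi^*)^*$ is strictly convex on the interior of its domain, which is all of $\R$ by the finiteness step above. (Equivalently: if $\Psi$ were affine on an interval, its subdifferential would be constant there, contradicting the single-valuedness of $\partial\Psi^*$ at the corresponding dual point.) Combined with $\Psi(0)=0=\min\Psi$ and evenness, strict convexity forces $\Psi$ to be strictly increasing on $[0,+\infty)$.

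The only nontrivial step is the strict positivity $\Psi(s)>0$ for $s\ne 0$, which is where differentiability of $\Psi^*$ at the origin (combined with evenness) is essential; without it, $\Psi^*$ could have a kink at $0$ and $\Psi$ would vanish on a neighbourhood of the origin. All remaining clauses are routine consequences of the Fenchel--Moreau machinery.
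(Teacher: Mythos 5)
Your proof is correct and relies on the same core duality mechanism as the paper's: finiteness $\leftrightarrow$ superlinearity, and differentiability of $\Psi^*$ $\rightarrow$ strict convexity of $\Psi$. The only stylistic difference lies in how the last implication is packaged: the paper invokes the characterization $\argmin_{s}(\Psi(s)-ps)=\partial\Psi^*(p)$ (Rockafellar--Wets, Thm.~11.8) to conclude that every point of the graph of $\Psi$ is an exposed point, while you invoke essential smoothness / essential strict convexity duality (with the equivalent observation that an affine segment of $\Psi$ would make $\partial\Psi^*$ multivalued). These are the same fact expressed in two standard idioms, so the proofs coincide in substance. Your separate derivation of $\Psi(s)>0$ via $\Psi^*(\xi)=o(|\xi|)$ is a small redundancy — once strict convexity and $\Psi(0)=\min\Psi=0$ are in hand, strict positivity for $s\neq0$ is immediate, which is how the paper obtains it — but it is harmless and, as you note, isolates cleanly where differentiability of $\Psi^*$ at the origin is used.
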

\begin{proof}
The superlinearity of $\Psi^*$ implies that $\Psi(s)<\pinfty$ for all $s\in \R$, and similarly the finiteness of $\Psi^*$ on $\R$ implies that $\Psi$ is superlinear. Since $\Psi^*$ is even, $\Psi$ is convex and even, and therefore $\Psi(s) \geq \Psi(0) =  \sup_{\xi\in \R} [-\Psi^*(\xi)] = 0.$
Furthermore,  since for all $p\in \R$, $\argmin_{s\in\R} (\Psi(s) -p
s)= \partial \Psi^*(p)$ (see e.g.~\cite[Thm.\
11.8]{Rockafellar-Wets98}) and $\Psi^*$ is differentiable at every
$p$, we conclude that $\argmin_s (\Psi(s)-ps ) = \{(\Psi^*)'(p)\}$;
therefore each point of the graph of $\Psi$ is an exposed point. It
follows that $\Psi$ is strictly convex, and $\Psi(s)>0$ for all
$s\not=0$.
\end{proof}

As described in the introduction, we use $\Psi$, $\Psi^*$, and
$\upalpha$ to define the dual pair of dissipation potentials $\calR$
and~$\calR^*$, which
for a couple of measures $\rho=u\pi\in \Dom(V)$ and $\bj\in \calM(\edg)$
 are \EEE formally given  by
\begin{equation}\label{eq:dissipation-pair}
  \calR(\rho,\bj ) :=
  \frac12
  \nc \int_{\edg} \Psi\left(2 \frac{\dd \bj }{\dd \bnu_\rho}\right)\dd\bnu_\rho, \qquad
  \calR^*(\rho,\xi) :=
  \frac12\nc\int_{\edg} \Psi^*(\xi) \, \dd\bnu_\rho,
\end{equation}
with
\begin{equation}
	\label{def:nu_rho}
\bnu_\rho(\dd x\,\dd y) := \upalpha\big(u(x),u(y)\big)\,\tetapi(\dd x\,\dd y)
= \upalpha\big(u(x),u(y)\big)\,\pi(\dd x)\kappa(x,\dd y).
\end{equation}
This expression for the edge measure $\bnu_\rho$ also is implicitly present in the structure built in \cite{Maas11,Erbar14}. \EEE
The above  definitions are made rigorous in Definition~\ref{def:R-rigorous}  and in \eqref{eq:27}  below.

The three sets of 
 conditions 
 above, Assumptions~\ref{ass:V-and-kappa}, \ref{ass:S}, and~\ref{ass:Psi}, are the main assumptions of this paper. Under these assumptions, the evolution equation~\eqref{eq:GF-intro} may be linear or nonlinear in $\rho$. 
The equation coincides with the Forward Kolmogorov equation~\eqref{eq:fokker-planck} if and only if condition~\eqref{cond:heat-eq-2} is satisfied,  as shown below.

\subsubsection*{Calculation for \eqref{cond:heat-eq-2}.}
Let us call $\mathscr Q[\rho]$ the right-hand side of
\eqref{eq:GF-intro}
and let us compute
\[
\langle
\mathscr Q[\rho], \varphi \rangle =\bigl \langle -\odiv \Bigl[
\rmD_\xi\calR^*\Bigl(\rho,-
\thalf \ona\upphi'\Bigl(\frac{\dd \rho}{\dd\pi}\Bigr)\Bigr)\Bigr], \varphi \bigr\rangle
\]
for every  $ \varphi \in \Bb(V) $ and $\rho \in \calM^+(V)$ with
$\rho \ll \pi$.
\nc
With $u = \frac{\dd \rho}{\dd \pi} $ we thus obtain
\begin{align}
  \notag
  \langle
  \mathscr Q[\rho], \varphi \rangle&=
  \bigl\langle \rmD_\xi\calR^*\bigl(\rho,-\thalf \ona\upphi'(u) \bigr),\ona \varphi \bigr \rangle 
  \\ & =
       \label{almost-therepre}
\frac12 \iint_{\edg} 
\big(\Psi^*\big)' \left( -\thalf \ona\upphi'(u)(x,y) \right) \ona \varphi (x,y) \bnu_\rho(\dd x,\dd y)\,.
\end{align}
Recalling the definitions~\eqref{def:nu_rho} of $\bnu_\rho$ and~\eqref{eq:184} of $\rmF$, \eqref{almost-therepre}
thus  becomes
 \begin{align}
   \notag
   \langle
  \mathscr Q[\rho], \varphi \rangle&=
  \frac 12\iint_{\edg} \bigl(\Psi^*\bigr)' \bigl( \thalf  \upphi'(u(x)) -
     \upphi'(u(y))   \bigr) \,\ona \varphi(x,y)
 \,\upalpha(u(x),u(y))  \tetapi(\dd x,\dd y)
 \\&=
 \frac12 \iint_{\edg} \rmF(u(x),u(y))
 \big(\varphi(x)-\varphi(y)\big)\,\tetapi(\dd x,\dd y)\label{eq:187}
 \\& \stackrel{(*)}{=} \EEE \iint_{\edg} \rmF(u(x),u(y))\notag
 \varphi(x)\,\tetapi(\dd x,\dd y)=
 \int_V \varphi(x) \Big(\int_V \rmF(u(x),u(y))\kappa(x,\dd
 y)\Big)\pi(\dd x)
\end{align}
where  for $(*)$ \EEE we used the symmetry of $\tetapi$ (i.e.\ the detailed-balance
condition). This calculation justifies \eqref{eq:180}.

In the linear case of \eqref{eq:fokker-planck} it is immediate to see that
\begin{align}
  \langle Q^*\rho,\varphi\rangle\notag
  = \langle \rho,Q\varphi\rangle
  &=\iint_{\edg} [\varphi(y)-\varphi(x)]\,\kappa(x,\dd y)\rho(\dd x )
  \\&= \frac12 \iint_{\edg} \ona \varphi(x,y) \bigl[\kappa(x,\dd y)\rho(\dd x) - \kappa(y,\dd x)\rho(\dd y)\bigr] \notag
\\\label{for-later}
 & =\frac12 \iint_{\edg} \ona \varphi(x,y) \bigl[u(x) -u(y)\bigr] \tetapi(\dd x,\dd y),
\end{align}
Comparing \eqref{for-later} and \eqref{eq:187}
we obtain  that $\rmF$ has to fulfill \EEE \eqref{cond:heat-eq-2}.
\nc
\subsection{Derivation of the cosh-structure from large deviations}
\label{ss:ldp-derivation}

We mentioned in the introduction that the choices 
\begin{equation}
\label{choices:ldp}
\upphi(s) = s\log s - s + 1,\qquad  \Psi^*(\xi) = 4\bigl(\cosh(\xi/2)-1\bigr),
\qquad\text{and}\qquad \upalpha(u,v) = \sqrt{uv}
\end{equation}
arise in the context of large deviations. In this section we  describe this context. Throughout this section we work under Assumptions~\ref{ass:V-and-kappa}, \ref{ass:S}, and~\ref{ass:Psi}, and since we are interested in the choices above, we will also assume~\eqref{choices:ldp}, implying that
\[
	\bnu_\rho(\dd x\,\dd y) = \sqrt{u(x)u(y)}\, \pi(\dd x)\kappa(x,\dd y), \qquad \text{if }\rho  = u\pi \ll \pi.
\]

Consider a sequence of independent and identically distributed stochastic processes $X^i$, $i=1, 2, \dots$ on $V$, each  described by the jump kernel $\kappa$, or equivalently by the generator $Q$ in~\eqref{eq:def:generator}. With probability one, a realization of each process has a countable number of jumps in the time interval $[0,\pinfty)$, and we write  $t^i_k$ for the $k^{\mathrm{th}}$ jump time of $X^i$. We can assume that $X^i$ is a c\`adl\`ag function of time.

We next define the empirical measure $\rho^n$ and the empirical flux $\bj ^n$ by
\begin{align*}
&\rho^n: [0,T]\to \calM^+(V), 
 &\rho^n_t &:= \frac1n \sum_{i=1}^n \delta_{X^i_t},
\\
&\bj^n\in \calM^+((0,T)\times \edg), &\qquad
\bj^n(\dd t\,\dd x\,\dd y) &:= \frac1n \sum_{i=1}^n \sum_{k=1}^\infty \delta_{t^i_k}(\dd t) \delta_{(X^i_{t-},X^i_{t})}(\dd x\,\dd y),
\end{align*}
where $t^i_k$ is the $k^{\mathrm{th}}$ jump time of $X^i$, and $X^i_{t-}$ is the left limit (pre-jump state) of $X^i$ at time~$t$. Equivalently, $\bj^n$ is defined by
\[
\langle \bj^n, \varphi\rangle := \frac1n \sum_{i=1}^n \sum_{k=1}^\infty \varphi\bigl(t_k^i,X^i_{t_k^i-},X^i_{t_k^i}\bigr), 
\qquad \text{for }\varphi\in \CB([0,T]\times \edg).
\]

A standard application of Sanov's theorem yields a large-deviation characterization of the pair $(\rho^n,\bj^n)$ in terms of two rate functions $I_0$ and $I$,
\[
\mathrm{Prob}\bigl((\rho^n,\bj^n)\approx (\rho,\bj )\bigr)
\sim \exp\Bigl[ -n \bigl(I_0(\rho_0) + I(\rho,\bj )\bigr)\Bigr],
\qquad\text{as }n\to\infty.
\]
The rate function $I_0$ describes the large deviations of the initial datum $\rho^n_0$; this functional is  determined by the choices of the initial data of $X^i_0$ and is independent of the stochastic process itself, and we therefore disregard it here. 

The functional $I$ characterizes the large-deviation properties of the dynamics of the pair $(\rho^n,\bj^n)$ conditional on the initial state, and has the expression
\begin{equation}
\label{eq:def:I}
I(\rho,\bj ) = \int_0^T \calF_\upeta(\bj_t | \teta_{\rho_t}^-)\, \dd t.
\end{equation}
In this expression we write $\teta_{\rho_t}^-$ for the measure $\rho_t(\dd x)\kappa(x,\dd y)\in \calM(\edg)$ (see also~\eqref{def:teta} ahead). The function $\upeta$ is the Boltzmann entropy function that we have seen above,
\[
\upeta(s) := s\log s - s+ 1,\qquad\text{for }s\geq0,
\]
and the functional $\calF_\upeta:\calM^+(E)\times \calM^+(E) \to [0,\infty]$ is given by~\eqref{def:F-F}. Even though the function $\upeta$ coincides in this section with $\upphi$, we choose a different notation  to emphasize that the roles of $\upphi$ and $\upeta$ are different: the function $\upphi$ defines the entropy of the system, which is related to the large deviations of the empirical measures $\rho^n$ in equilibrium (see~\cite{MielkePeletierRenger14}); the function $\upeta$ characterizes the large deviations of the time courses of $\rho^n$ and $\bj^n$. 

\begin{remark}
Sanov's theorem can be found in many references on large deviations (e.g.~\cite[Sec.~6.2]{DemboZeitouni98}); the derivation of the expression~\eqref{eq:def:I} is fairly
 well known  and can be found in e.g.~\cite[Eq.~(8)]{MaesNetocny08} or \cite[App.~A]{KaiserJackZimmer18}. Instead of proving~\eqref{eq:def:I} we give an interpretation of the expression~\eqref{eq:def:I} and the function $\upeta$ in terms of exponential clocks. An exponential clock with rate parameter $r$ has large-deviation behaviour given by $r\eta(\cdot/r)$ (see~\cite[Exercise 5.2.12]{DemboZeitouni98} or~\cite[Th.~1.5]{Moerters10}) in the following sense: for each $t>0$, 
\[
\mathrm{Prob}\bigl( \text{ $\approx \beta nt$ firings in time $nt$ }\bigr) 
\sim \exp \Bigl[ -n tr\,\upeta(\beta/r)\Bigr]\qquad\text{as }n\to\infty.
\]

The expression~\eqref{eq:def:I} generalizes this to a field of exponential clocks, one for each edge~$(x,y)$. In this case, the rescaled rate parameter $r$ for the clock at edge $(x,y)$ is equal to $\rho_t(\dd x)\kappa(x,\dd y)$, since it is proportional to the number of particles $n\rho_t(\dd x)$ at $x$ and to the rate of jump $\kappa(x,\dd y)$ from $x$ to $y$. The flux $n\bj_t(\dd x\,\dd y)$ is the observed number of jumps from $x$ to $y$, corresponding to firings of the clock associated with the edge $(x,y)$. In this way, the functional $I$ in~\eqref{eq:def:I} can be interpreted as  characterizing the large-deviation fluctuations in the clock-firings for each edge $(x,y)\in\edg$.
\end{remark}

The expression~\eqref{eq:def:I} leads to the functional $\mathscr L$ in~\eqref{eq:def:mathscr-L} after a symmetry reduction, which we now describe (see also~\cite[App.~A]{KaiserJackZimmer18}). Assuming that we are more interested in the fluctuation properties of $\rho$ than those of $\bj$, we might decide to minimize $I(\rho,\bj )$ over a class of fluxes $\bj $ for a fixed choice of $\rho$. Here we choose to minimize over the class of fluxes with the same skew-symmetric part, 
\[
A_\bj := \bigl\{\bj'\in \calM([0,T]\times \edg): \bj' - \symmap\bj' = \bj  - \symmap \bj \bigr\}.
\]
By the form~\eqref{eq:ct-eq-intro} of the continuity equation and the definition~\eqref{eq:def:div} of the divergence we have  $\odiv \hj = \odiv \bj $  for all $\hj\in A_j$, so that replacing $\bj $ by $\hj$ preserves the continuity equation.


\begin{Flemma}
The minimum of $I(\rho,\hj\,)$ over all $\hj\in A_j$ is achieved for the `skew-sym\-me\-tri\-za\-tion'
 $\bj^\flat = \tfrac12(\bj -\symmap \bj )$, and for 
$\bj^\flat$ \EEE
the result equals $\tfrac12\mathscr L$:
\begin{equation}
\label{eq:infI=infL}
\inf_{\hj\,\in A_j} I(\rho,\hj\,) = \inf_{\hj\in A_j} \tfrac12\mathscr L(\rho,\hj\,) = 
 \tfrac12\mathscr L(\rho,\bj^\flat).
 \EEE 
\end{equation}
Consequently, for a given curve $\rho:[0,T]\to\calM^+(V)$, 
\begin{align*}
\inf_j \Bigl\{ I(\rho,\bj ) : \partial_t \rho + \odiv \bj  = 0\Bigr\}
&= \inf_j \Bigl\{ \tfrac12\mathscr L(\rho,\bj ) : \partial_t \rho + \odiv \bj  = 0\Bigr\},
\\
\noalign{\noindent and in this final expression the flux can be assumed to be   skew-symmetric:} \EEE
%
&=\inf_j \Bigl\{ \tfrac12\mathscr L \bigl(\rho,\bj \bigr): \partial_t \rho + \odiv \bj  = 0 \text{ and } \symmap \bj  = - \bj \Bigr\}.
\end{align*}
\end{Flemma}
This implies that the two functionals $I$ and $\mathscr L$ can be considered to be the same, if one  is only interested in $\rho$, not in $\bj $. By the Contraction Principle (e.g.~\cite[Sec.~4.2.1]{DemboZeitouni98}) the functional $\rho \mapsto \inf_j I(\rho,\bj  ) = \inf_j \tfrac12\mathscr L(\rho,\bj )$ also can be viewed as the large-deviation rate function of the sequence of empirical measures $\rho^n$.

The above lemma is only formal because we have not  given a rigorous definition of the functional~$\mathscr L$. While it would be possible to do so, using the construction of Lemma~\ref{l:lsc-general} and the arguments of the proof below, actually the rest of this paper deals with this question in a more detailed  manner. In addition, at this stage this lemma only serves to explain why we consider this specific class of functionals~$\mathscr L$. Therefore  here we only give heuristic arguments. 

\begin{proof}
We assume throughout this (formal) proof that all measures are absolutely continuous,  strictly positive, and finite where necessary. 
Note that  writing $\rho_t = u_t \pi$ we have $\teta_{\rho_t}^-(\dd x\,\dd y) = u_t(x) \teta(\dd x\,\dd y)$, and using~\eqref{choices:ldp} we  therefore have 
\begin{gather*}
\sqrt{\teta_{\rho_t}^- \, \symmap\teta_{\rho_t}^-}\; (\dd x\,\dd y) = \sqrt{u_t(x)u_t(y)}\;\tetapi(\dd x\,\dd y) = \bnu_{\rho_t}(\dd x\,\dd y), \qquad\text{and}\\
 \log \frac{\dd \symmap \teta_{\rho_t}^-}{\dd \teta_{\rho_t}^-} (x,y)
= 
\log \frac {u_t(y)}{u_t(x)}
= 
\ona \upphi'(u_t)(x,y).
\end{gather*}
For the length of this proof we write $\hat \upeta$ for the perspective function corresponding to $\upeta$ (see
\RICKYNEW \eqref{eq:76} in \EEE
 Lemma~\ref{l:lsc-general})
\[
\RICKYNEW \hat{\upeta}(a,b) \EEE
 := \begin{cases}
	a\log \dfrac ab - a + b & \text{if $a,b>0$,}\\
	0 &\text{if }a = 0,\\
	+\infty &\text{if $a>0$, $b=0$.}
\end{cases}
\]
We now rewrite  $\inf_{\hj\,\in A_j} I(\rho,\hj\,)$ as
\begin{align*}
\inf_{\hj \,\in A_j} &\int_0^T \iint_\edg   \upeta\biggl( \frac{\dd \hj_t}{\dd \teta_{\rho_t}^-}\biggr)\, \dd \teta_{\rho_t}^- \dd t
= \inf_{\hj \,\in A_j} \int_0^T \iint_\edg  u_t\,  \upeta\biggl( \frac1 {u_t} \frac{\dd \hj_t}{\dd  \teta}\biggr)  \,\dd \teta\, \dd t\\
&=\inf_{\hj  =  \zeta \EEE \teta \,\in A_j} \int_0^T \iint_\edg \hat \upeta\bigl(  \zeta_t(x,y), \EEE  u_t(x)\bigr)  \teta(\dd x,\dd y)\,\dd t \\
  &=
      \frac12 \nc
    \inf_{\hj  =  \zeta \EEE \tetapi \,\in A_j} \int_0^T \iint_\edg 
  \Bigl\{\hat \upeta\bigl(  \zeta_t(x,y), \EEE u_t(x)\bigr) 
    + \hat \upeta\bigl(  \zeta_t(y,x), \EEE u_t(y)\bigr)  \Bigr\}\,\teta(\dd x,\dd y)\,\dd t .
\end{align*}
Since 
 $\zeta(x,y) - \zeta(y,x) = 
 \dd (\hj - \symmap \hj\,)/\dd\tetapi$ \EEE  is constrained in $A_j$, we follow the expression inside the second integral and set  
\[
\psi:\R\times[0,\pinfty)^2 \to [0,\pinfty], \qquad
\psi(s\,; c,d) := \inf_{a,b\geq0} \Bigl\{
\bigl[\hat \upeta(a,c) + \hat \upeta(b,d)\bigr] : a-b = 2s\Bigr\},
\]
for which a calculation gives the explicit formula (for $c,d>0$)
\[
\psi(s\,; c,d) = \frac{\sqrt{cd}}2\; \biggl\{\Psi\biggl( \frac { 2s}{\sqrt{cd}}\biggr) +
\Psi^*\biggl( -
\log \frac dc\biggr) \biggr\} + s \;
\log \frac dc,
\]
in terms of the function $\Psi^*(\xi) = 4\bigl(\cosh \xi/2  - 1\bigr)$ and its Legendre dual $\Psi$.  This minimization corresponds to minimizing over all fluxes for which the `net flux' $\bj  - \symmap \bj =  2 \tj $ \EEE is the same; see e.g.~\cite{Renger18,KaiserJackZimmer18} for discussions.

 Let 
$\cw(x,y) :=
 (w(x,y) - w(y,x)) = 
 \frac{\dd(2\tj)}{\dd \tetapi}$ \EEE and $\upalpha_t := \upalpha_t(x,y) = \sqrt{u_t(x)u_t(y)}$.  We find
\begin{align}
\inf_{\hj\, \in A_j} &\int_0^T \calF_\upeta\bigl( \hj_t | \teta_{\rho_t}^-\bigr) \,\dd t
\notag
\\
  &=  \frac12\nc
    \int_0^T \iint_\edg \psi\bigl(
     \tfrac12 \cw_t(x,y) \EEE \,;\, u_t(x),u_t(y)\bigr)\, \tetapi(\dd x,\dd y) \dd t
     \notag\\
&=\frac 12 \int_0^T \iint_\edg \biggl\{ \frac{\upalpha_t}2 \Psi\biggl(\frac{ 	\cw_t}{\upalpha_t}\biggr)
  + \frac{\upalpha_t}2 \Psi^*\biggl(- \ona \upphi'(u_t)\biggr) 
  +  \frac12  \cw_t\, \EEE \ona \upphi'(u_t)  \biggr\}\,\dd\teta\dd t
\notag\\
&= \frac12 \int_0^T \iint_\edg \frac12 \biggl\{ \Psi\biggl(\frac{{2\dd\tj_t} \EEE}{\dd \bnu_{\rho_t}}\biggr)
  + \Psi^*\biggl(- \ona \upphi'(u_t)\biggr)\biggr\}\,\dd\bnu_{\rho_t} \, \dd t
  + \frac12 \calS(\rho_T) - \frac12 \calS(\rho_0).
\label{eq:formal-motivation-L}
\end{align}
In the last identity we used the fact that since   $\odiv \tj_t = -\partial_t \rho_t$, \EEE formally we have
\[
 \int_0^T \iint_\edg    \frac12  \cw_t\, \ona \upphi'(u_t)  \,\dd\teta\dd t = \int_0^T \iint_\edg   \ona \upphi'(u_t) \,    \dd\tj_t \,\dd t \EEE
= \int_0^T \langle \upphi'(u_t) ,\partial_t \rho_t \rangle\,\dd t
= \calS(\rho_T)- \calS(\rho_0).
\]
The expression on the right-hand side of~\eqref{eq:formal-motivation-L} is one half times the functional $\mathscr L$ defined in~\eqref{eq:def:mathscr-L} (see also~\eqref{ineq:deriv-GF}). This proves that 
\[
\inf_{\hj\,\in A_j} I(\rho,\hj\,) =  \frac12 \mathscr L\bigl(\rho,  \tj \EEE\,\bigr).
\]
From convexity of $\Psi$ and symmetry of $\bnu_\rho$ we deduce that   $\mathscr L(\rho,  \tj) \EEE
\leq \mathscr L(\rho,\bj )$ for any $\bj $; see Remark~\ref{rem:skew-symmetric}. The identity   $\mathscr L\bigl(\rho,\tj\,\bigr) = \inf_{\hj\,\in A_j} \mathscr L (\rho,\hj\,)$  \EEE then follows immediately; this proves~\eqref{eq:infI=infL}. 

To prove the second part of the Lemma, we write
\begin{align*}
\inf_j \Bigl\{ I(\rho,\bj ) : \partial_t \rho + \odiv \bj  = 0\Bigr\}
&= \inf_j \Bigl\{ \Bigl[\,\inf _{\hj\in A_j} I(\rho,\hj\,) \Bigr]: \partial_t \rho + \odiv \bj  = 0\Bigr\},\\
&= \inf_j \Bigl\{ \Bigl[\,\inf _{\hj\in A_j} \tfrac12 \mathscr L(\rho,\hj\,) \Bigr]: \partial_t \rho + \odiv \bj  = 0\Bigr\},\\
&= \inf_j \Bigl\{ \tfrac12\mathscr L(\rho,  \tj \EEE ) : \partial_t \rho + \odiv \bj  = 0\Bigr\},\\
&= \inf_j \Bigl\{ \tfrac12\mathscr L(\rho, \tj \EEE) : \partial_t \rho +  \odiv \tj \EEE = 0\Bigr\}.
\end{align*}
This concludes the proof. 
\end{proof}


\section{Curves in \texorpdfstring{$\calM^+(V)$}{M+(V)}}

A major challenge in any rigorous treatment of an equation such as~\eqref{eq:GGF-intro-intro} is finding a way   to deal with the time derivative. The Ambrosio-Gigli-Savar\'e framework for metric-space gradient systems, for instance,  is organized around absolutely continuous curves. These are a natural choice  because on the one hand this class admits a `metric velocity' that generalizes the time derivative, while on the other hand solutions are automatically absolutely continuous by the superlinear growth of the dissipation potential. 

For the systems of this paper, a similar role is played by curves such that the `action' $\int \calR\,\dd t$ is finite; we show below that the superlinearity of $\calR(\rho,\bj )$ in $\bj $  leads to similarly beneficial properties. In order to exploit this aspect, however, a number of intermediate steps need to be taken:
\begin{enumerate}[label=(\alph*)]
\item \label{intr:curves:1}
We define the class $\CE0T$ of solutions  $(\rho,\bj )$ of the continuity equation~\eqref{eq:ct-eq-intro} (Definition~\ref{def-CE}).
\item For such solutions, $t\mapsto \rho_t$ is continuous in
  the total variation distance (Corollary~\ref{c:narrow-ct}).
\item We give a rigorous definition of the functional $\calR$ (Definition~\ref{def:R-rigorous}), and describe its behaviour on absolutely continuous and singular parts of $(\rho,\bj)$ (Lemma~\ref{l:alt-char-R} and Theorem~\ref{thm:confinement-singular-part}).
\item If the action functional $\int \calR$ is finite along a solution
    $(\rho,\bj)$ of the continuity equation in $[0,T]$, then
    the property that $\rho_t$ is absolutely continuous with respect to ~$\pi$
    at some time $t\in [0,T]$ propagates to all the interval $[0,T]$ (Corollary~\ref{cor:propagation-AC}).
\item We prove a chain rule for the derivative of convex entropies along curves of finite $\calR$-action (Theorem~\ref{th:chain-rule-bound}) and derive an estimate involving $\calR$ and a Fisher-information-like term (Corollary~\ref{th:chain-rule-bound2}).
\item\label{intr:curves:5}
    If the action $\int\calR$ is uniformly bounded along a sequence $(\rho^n,\bj^n)\in\CE0T$, then the sequence is compact in an appropriate sense (Proposition~\ref{prop:compactness}).
\end{enumerate}

 Once properties~\ref{intr:curves:1}--\ref{intr:curves:5} have been established,   the next step is to consider finite-action curves that also  connect two given values $\mu,\nu$, leading to the definition of the Dynamical-Variational Transport (DVT) cost 
\begin{equation}
\label{def-psi-rig-intro-section}
\DVT \tau\mu\nu : = \inf\left\{ \int_0^\tau \calR(\rho_t,\bj_t)\, \dd t \, : \,  (\rho,\bj ) \in  \CE 0\tau, \ \rho_0 = \mu, \ \rho_\tau = \nu   \right\}\,. 
\end{equation}
This definition is in the spirit of the celebrated Benamou-Brenier formula for the Wasserstein distance \cite{Benamou-Brenier}, generalized to a broader family of 
transport distances \cite{DolbeaultNazaretSavare09} and to jump processes \cite{Maas11,Erbar14}.
 However, a major difference with those constructions 
 is that $\DVTn$ also depends on the time variable $\tau$ and that
 $\DVT\tau\cdot\cdot$ is not a (power of a) distance,
 since $\Psi$ is not, in general, positively homogeneous of any order.
 Indeed, \EEE
 when $\calR$ is  $p$-homogeneous in $\bj $, for $p\in (1,\pinfty)$,
 we have  (see also the discussion at the beginning of Sec.\ \ref{ss:MM}) 
\begin{equation}
\label{eq:DVT=JKO}
\DVT \tau\mu\nu = \frac1{\tau^{p-1}} \DVT 1\mu\nu=
\frac1{ p \EEE \tau^{p-1}}d_{\calR}^p(\mu,\nu),
\end{equation}
where $d_\calR$ is an extended distance and
\nc
is a central object in the  usual Minimizing-Movement  construction. 
In Section~\ref{s:MM}, the DVT cost~$\DVTn$ will replace 
 the rescaled $p$-power of the distance  
and play a similar role  for the Minimizing-Movement  approach. \EEE

For the rigorous construction of $\DVTn$,
\begin{enumerate}[label=(\alph*),resume]
\item we show that minimizers of~\eqref{def-psi-rig-intro-section} exist (Corollary~\ref{c:exist-minimizers});
\item we establish properties of $\DVTn$ that generalize those of the metric-space version~\eqref{eq:DVT=JKO} (Theorem~\ref{thm:props-cost}). 
\end{enumerate}
Finally, 
\begin{enumerate}[label=(\alph*),resume]
\item we close the loop by showing that from a given functional $\DVTn$ integrals of the form $\int_a^b\calR$ can be reconstructed (Proposition~\ref{t:R=R}).
\end{enumerate}
 Throughout this section we adopt
 \textbf{Assumptions~\ref{ass:V-and-kappa} and  \ref{ass:Psi}}.

\subsection{The continuity equation}
\label{sec:ct-eq}
We now
  introduce the formulation of the continuity equation we will work
  with. Hereafter, for a given function $\mu :I \to \calM(V)$, or
  $\mu : I  \to \calM(\edg)$, with $I=[a,b]\subset\R$,
  we shall often
  write $\mu_t$ in place of $\mu(t)$ for  a given $t\in I$ and denote
  the time-dependent function $\mu $ by  $(\mu_t)_{t\in I}$.
     We will write $\Lebone$ for the Lebesgue measure on $I$.
  The following definition mimics  those  given in
  \cite[Sec.~8.1]{AmbrosioGigliSavare08} and \cite[Def.~4.2]{DNS09}.

  \begin{definition}[Solutions $(\rho,\bj)$ of the continuity equation]
\label{def-CE}
Let $I=[a,b]$ be a closed interval of $\R$. We denote by 
$\CEI I$ the set of pairs $(\rho,\bj)$
given by 
\begin{itemize}
\item  a family of time-dependent measures $\rho=(\rho_t)_{t\in I} \subset \calM^+(V)$,
and
\item a measurable family $(\bj_t)_{t\in I} \subset
\calM(\edg)$
with $\int_0^T |\bj_t|(\edg)\,\dd t <\pinfty$,
satisfying the  continuity equation 
\begin{equation}
\label{eq:ct-eq-def}
\dot\rho + \odiv \bj=0 \quad\text{ in } I\times V,
\end{equation}
in the following sense:
\begin{equation}
    \label{2ndfundthm}
    \int_V \varphi\,\dd \rho_{t_2}-\int_V \varphi\,\dd\rho_{t_1}=
    \iint_{J\times \edg} \dnabla \varphi \,\dd \bj_\lambda
    \quad\text{for all $\varphi\in \Bb(V)$, $J=[t_1,t_2]\subset I$}.
  \end{equation}
  where   
  $\bj_\lambda(\dd t,\dd x,\dd y):=\Lebone(\dd
   t)\bj_t(\dd x,\dd y)$.
\end{itemize}
 Given $\rho_0,\, \rho_1 \in \calM^+(V)$, we will use the notation
\[
\CEIP I{\rho_0}{\rho_1} : = \bigl\{(\rho,\bj ) \in \CEI I\, : \ \rho(a)=\rho_0, \ \rho(b) = \rho_1\bigr\}\,.
\]
\end{definition}

\begin{remark}
  \label{rem:expand}
  The requirement~\eqref{2ndfundthm} shows in particular that  $t\mapsto \rho_t$ is continuous with respect to the total variation metric.  Choosing $\varphi\equiv1$ in~\eqref{2ndfundthm}, one immediately finds that
  \begin{equation}
    \label{eq:91}
    \text{the total mass }\rho_t(V)\text{ is constant in $I$}.
  \end{equation}
  By the disintegration theorem,
  it is equivalent to
  assign the measurable family $(\bj_t)_{t\in I}$ in $\calM(\edg)$ or
  the measure $\bj_\lambda$ in $\calM(I\times \edg)$.
  \end{remark}
  
We can in fact prove a more refined property. The proof of the Corollary below is postponed to
Appendix \ref{appendix:proofs}. 
\begin{cor}
\label{c:narrow-ct}
If $(\rho,\bj )\in\CE0T$,
then  there exist a \emph{common} dominating
  measure $\gamma\in \calM^+(V)$  (i.e., $\rho_t \ll \gamma$ for all $t\in [a,b]$), \EEE and an absolutely continuous map
  $\tilde u:[a,b]\to L^1(V,\gamma)$ such that
  $\rho_t=\tilde u_t\gamma\ll \gamma$ for every $t\in [a,b]$.
\end{cor}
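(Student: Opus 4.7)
The plan is to construct the dominating measure by hand and then read off absolute continuity with respect to $\gamma$ and in time directly from the continuity equation.

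\textbf{Step 1 (choice of $\gamma$).} I would take
\[
\gamma := \rho_a + \mathsf{x}_\sharp|\bj_\lambda| + \mathsf{y}_\sharp|\bj_\lambda| \in \calM^+(V),
\]
which is finite since $\rho_a(V)<\infty$ and $\int_a^b |\bj_t|(\edg)\,\dd t<\infty$ by Definition \ref{def-CE}. The role of the two push-forwards is to control the divergence $\odiv\bj_\lambda$ from \eqref{eq:def:div}.

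\textbf{Step 2 (absolute continuity $\rho_t \ll \gamma$).} For any Borel $B\subset V$ with $\gamma(B)=0$ we have $\rho_a(B)=0$ and $|\bj_\lambda|(B\times V)=|\bj_\lambda|(V\times B)=0$. Since $\mathbbm 1_B\in \Bb(V)$, I can plug $\varphi = \mathbbm 1_B$ into \eqref{2ndfundthm} on $[a,t]$ to get
\[
\rho_t(B) - \rho_a(B) = \iint_{[a,t]\times\edg} \bigl(\mathbbm 1_B(y)-\mathbbm 1_B(x)\bigr)\,\dd\bj_\lambda,
\]
whence
\[
|\rho_t(B)| \le \iint_{[a,t]\times\edg}\bigl(\mathbbm 1_B(x)+\mathbbm 1_B(y)\bigr)\,\dd|\bj_\lambda| = \mathsf{x}_\sharp|\bj_\lambda|(B)+\mathsf{y}_\sharp|\bj_\lambda|(B) = 0.
\]
Hence $\rho_t \ll \gamma$ for every $t\in[a,b]$, and we can set $\tilde u_t := \dd\rho_t/\dd\gamma \in L^1(V,\gamma)$.

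\textbf{Step 3 (absolute continuity in time).} For $a\le s<t\le b$, duality between $L^1(V,\gamma)$ and $L^\infty(V,\gamma)$ gives
\[
\|\tilde u_t - \tilde u_s\|_{L^1(V,\gamma)} = \|\rho_t-\rho_s\|_{TV} = \sup_{\varphi\in\Bb(V),\,\|\varphi\|_\infty\le 1}\bigl|\rho_t(\varphi)-\rho_s(\varphi)\bigr|,
\]
and applying \eqref{2ndfundthm} once more together with $|\dnabla\varphi|\le 2\|\varphi\|_\infty$ yields
\[
\|\tilde u_t-\tilde u_s\|_{L^1(V,\gamma)} \le 2\int_s^t |\bj_r|(\edg)\,\dd r.
\]
Since $r\mapsto |\bj_r|(\edg)$ belongs to $L^1(a,b)$, the map $t\mapsto \tilde u_t$ is absolutely continuous from $[a,b]$ into $L^1(V,\gamma)$, which is precisely the claim.

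There is really no serious obstacle: the argument relies only on the fact that indicator functions of Borel sets sit in $\Bb(V)$ (so that they are legitimate test functions in \eqref{2ndfundthm}) and on the elementary estimate $|\mathbbm 1_B(y)-\mathbbm 1_B(x)|\le \mathbbm 1_B(x)+\mathbbm 1_B(y)$. The one small point to keep in mind is to include both marginals $\mathsf{x}_\sharp|\bj_\lambda|$ and $\mathsf{y}_\sharp|\bj_\lambda|$ in $\gamma$ -- omitting either one would not be enough to kill the contribution of $\bj_\lambda$ on $B$ in Step 2.
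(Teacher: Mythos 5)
Your proof is correct and takes essentially the same approach as the paper: the paper sets $\gamma := \rho_a + \int_a^b |\odiv \bj_t|\,\dd t$ and invokes the estimate already established inside the proof of Lemma~\ref{l:cont-repr}, whereas you choose the slightly larger dominating measure $\rho_a + \sfx_\sharp|\bj_\lambda| + \sfy_\sharp|\bj_\lambda|$ (which dominates the paper's since $|\odiv\bj_t|\le \sfx_\sharp|\bj_t|+\sfy_\sharp|\bj_t|$) and re-derive the absolute-continuity bounds directly by testing indicators in \eqref{2ndfundthm}. Both yield the same Lipschitz-type estimate $\|\tilde u_t-\tilde u_s\|_{L^1(V,\gamma)}\le 2\int_s^t |\bj_r|(\edg)\,\dd r$ and hence the same conclusion.
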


The interpretation of the continuity equation in Definition \ref{def-CE}---in duality with all bounded measurable functions---is quite strong, and in particular much stronger than the more common continuity in duality with \emph{continuous} and bounded functions. However, 
this continuity \RICKYNEW equation \EEE can be recovered starting from a much weaker formulation.
The following result illustrates this; it is a translation of \cite[Lemma 8.1.2]{AmbrosioGigliSavare08} (cf.\ also \cite[Lemma
4.1]{DNS09}) to the present setting. The proof adapts the argument for  \cite[Lemma 8.1.2]{AmbrosioGigliSavare08} and is
given in  Appendix~\ref{appendix:proofs}.

\begin{lemma}[Continuous representative]
  \label{l:cont-repr}
Let 
$(\rho_t)_{t\in I} \subset \calM^+(V)$ and $(\bj_t)_{t\in I}$ be measurable families
that are integrable with respect to ~$\Lebone$
and let $\tau$ be any separable and metrizable
topology inducing
$\frB$. If 
  \begin{equation}
    -\int_0^T \eta'(t) \left( \int_V \zeta(x) \rho_t (\dd x ) \right) \dd
    t = \int_0^T \eta(t)\Big(\iint_\edg \dnabla\zeta(x,y)\,  \bj_t(\dd
    x\,\dd y)\Big)\,\dd t
    \,,\label{eq:90}
  \end{equation}
  holds for every $\eta \in
  \mathrm{C}_\mathrm{c}^\infty((a,b))$ and $\zeta \in \Cb
  (V,\tau)$, then there exists a unique curve $I \ni t
  \mapsto \tilde{\rho}_t \in \calM^+ (V)$ such that $\tilde{\rho}_t =
  \rho_t$ for $\Lebone$-a.e. $t\in I$. The curve $\tilde\rho$ is continuous in the total-variation norm with estimate
  \begin{equation}
  \label{est:ct-eq-TV}	
    \|\tilde \rho_{t_2}-\tilde \rho_{t_1}\|_{TV} \leq 2
  \int_{t_1}^{t_2} |\bj_t|(\edg)\, \dd t \qquad
  \text{ for  all }  t_1 \leq t_2,
\end{equation}
  and satisfies
  \begin{equation}
    \label{maybe-useful}
    \int_V \varphi(t_2,\cdot) \,\dd\tilde\rho_{t_2} - \int_V \varphi(t_1,\cdot) \,\dd\tilde\rho_{t_1}
    = \int_{t_1}^{t_2} \int_V \partial_t \varphi \,\dd\tilde\rho_t\,\dd t
    +
    \int_{J\times \edg} \dnabla \varphi \,\dd \bj_\Lebone
  \end{equation}
  for all $\varphi \in \mathrm{C}^1(I;\Bb(V))$ and $J=[t_1,t_2]\subset
  T$.  
\end{lemma}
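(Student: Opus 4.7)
My plan is to follow the strategy of \cite[Lemma 8.1.2]{AmbrosioGigliSavare08}, adapted to the measurable-framework setting here. First, for each $\zeta \in \Cb(V,\tau)$, the weak identity \eqref{eq:90} says that the distributional derivative of $t\mapsto F_\zeta(t):=\int_V \zeta\,\dd\rho_t$ (well-defined $\Lebone$-a.e.) is the $L^1$ function $t\mapsto G_\zeta(t):=\iint_\edg \dnabla\zeta\,\dd\bj_t$, since $|G_\zeta(t)|\le 2\|\zeta\|_\infty |\bj_t|(\edg)$ and $|\bj_t|(\edg)\in L^1(I)$. Hence $F_\zeta$ has an absolutely continuous representative, equal to $t\mapsto F_\zeta(t_0)+\int_{t_0}^t G_\zeta(s)\,\dd s$ for any fixed $t_0$. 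Since $(V,\tau)$ is separable metrizable, I pick a countable family $\{\zeta_n\}_{n\in\N}\subset \Cb(V,\tau)$ that separates finite Borel measures on $V$. A standard exhaustion of null sets yields a set $\Sigma\subset I$ with $\Lebone(I\setminus\Sigma)=0$ and a fixed $t_0\in\Sigma$ such that, for every $t\in\Sigma$ and every $n\in\N$,
\begin{equation}\label{eq:plan1}
\int_V \zeta_n\,\dd\rho_t=\int_V \zeta_n\,\dd\rho_{t_0}+\int_{t_0}^t \iint_\edg \dnabla\zeta_n\,\dd\bj_s\,\dd s.
\end{equation}

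Next, for $t',t\in\Sigma$ with $t'\le t$, \eqref{eq:plan1} and $|\dnabla\zeta_n|\le 2\|\zeta_n\|_\infty$ give $|F_{\zeta_n}(t)-F_{\zeta_n}(t')|\le 2\|\zeta_n\|_\infty\int_{t'}^t |\bj_s|(\edg)\,\dd s$. Because $(V,\tau)$ is a separable metric space, finite Borel measures on $V$ are inner regular on closed sets and $\Cb(V,\tau)$ is $\|\cdot\|_\infty$-dense enough to compute total variation: for any finite signed Borel measure $\mu$, $\|\mu\|_{TV}=\sup\{|\int\zeta\,\dd\mu|:\zeta\in\Cb(V,\tau),\ \|\zeta\|_\infty\le 1\}$. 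Choosing the $\zeta_n$'s to additionally be dense in the unit ball of $\Cb(V,\tau)$ for bounded pointwise convergence (which exists by separability), I deduce
\[
\|\rho_t-\rho_{t'}\|_{TV}\le 2\int_{t'}^t |\bj_s|(\edg)\,\dd s\quad\text{for all }t',t\in\Sigma.
\]
Since $s\mapsto|\bj_s|(\edg)\in L^1(I)$, the map $\Sigma\ni t\mapsto\rho_t$ is uniformly continuous with values in the Banach space $(\calM(V),\|\cdot\|_{TV})$ and therefore admits a unique continuous extension $\tilde\rho: I\to\calM^+(V)$, with positivity and the total mass preserved in the TV limit; this $\tilde\rho$ agrees with $\rho$ on $\Sigma$, and \eqref{est:ct-eq-TV} follows by continuity.

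Finally, I verify \eqref{maybe-useful}. For product test functions $\varphi(t,x)=\eta(t)\zeta(x)$ with $\eta\in\mathrm{C}^1(I)$ and $\zeta\in\Cb(V,\tau)$, passing \eqref{eq:plan1} (now valid for all $t\in I$ with $\tilde\rho$ in place of $\rho$, by continuity and by density of the $\zeta_n$'s) through integration by parts in $t$ yields \eqref{maybe-useful}; linearity extends this to finite linear combinations, and a standard uniform approximation (exploiting compactness of $J$ and boundedness of the $\zeta$-component) extends it to all $\varphi\in \mathrm{C}^1(I;\Cb(V,\tau))$. To pass from continuous to bounded Borel $\zeta$, I fix $J=[t_1,t_2]\subset I$ and consider the class $\mathcal H$ of $\varphi\in \mathrm{C}^1(I;\Bb(V))$ for which \eqref{maybe-useful} holds. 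Both sides of \eqref{maybe-useful} are continuous with respect to bounded pointwise convergence in $\varphi$ on each slice $\varphi(t,\cdot)$, thanks to dominated convergence (using that $\tilde\rho_t(V)$ is bounded uniformly in $t\in J$ and that $|\bj_s|(\edg)\in L^1(J)$). Since $\mathcal H$ is a vector space containing constants and $\mathrm{C}^1(I;\Cb(V,\tau))$, and $\Cb(V,\tau)$ generates $\frB$, the functional monotone class theorem gives $\mathcal H=\mathrm{C}^1(I;\Bb(V))$. Specializing $\varphi=\zeta\in\Bb(V)$ gives \eqref{2ndfundthm}. The most delicate point is the TV estimate in the second paragraph, which relies on the metrizability of $\tau$ to bridge the duality against $\Cb(V,\tau)$ in the hypothesis and the total-variation norm in the conclusion.
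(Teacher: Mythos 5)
Your proof follows the same overall strategy as the paper's (pass to the $W^{1,1}$ identity for each $\zeta$, work with a countable test family to get a full-measure set $\Sigma$ where things are pinned down, derive the TV modulus of continuity, extend by density, and finish with a monotone-class argument). The place where you diverge from the paper, and where there is a genuine gap, is the passage from the countable family $\{\zeta_n\}$ to the total-variation estimate.

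You claim that the $\zeta_n$ can be chosen ``dense in the unit ball of $\Cb(V,\tau)$ for bounded pointwise convergence (which exists by separability),'' and deduce $\|\rho_t-\rho_{t'}\|_{TV}\le 2\int_{t'}^t|\bj_s|(\edg)\,\dd s$ by taking a supremum over $n$. This is not justified. Separability of $(V,\tau)$ does \emph{not} give sequential density of a countable family in the unit ball of $\Cb(V,\tau)$ for bounded pointwise convergence on all of $V$: for non-$\sigma$-compact separable metric spaces such a countable set need not exist (pointwise convergence on a dense subset does not propagate for bounded continuous functions without equicontinuity). What one can salvage, using that $(V,\frB)$ is standard Borel (so finite Borel measures are tight) together with Stone--Weierstrass on compacta, is a countable family whose integrals against each \emph{fixed} finite measure approach the TV norm --- but that is a different statement and it is not what you invoke. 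Moreover, the ``metrizing'' family you start from (one that separates, or induces weak convergence on, bounded sets of measures) generally does \emph{not} norm the TV distance: weak convergence is strictly weaker than TV convergence, and $\sup_n|\int\zeta_n\,\dd\mu|$ can be much smaller than $\|\mu\|_{TV}$.

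The paper takes a safer route that closes this gap. It keeps the estimate valid for \emph{all} $\zeta\in\Cb(V,\tau)$ (with no reduction to a countable norming set), and it uses the countable family $Z$ only to define the full-measure set $L_Z=\bigcap_{\zeta\in Z}L_\zeta$ of common Lebesgue points together with the metric $\mathsf d$ inducing the weak topology on bounded positive measures. At $t\in L_Z$ the averaged measures $\frac1{2\eps}\int_{t-\eps}^{t+\eps}\rho_r\,\dd r$ converge weakly to $\rho_t$ (because $t$ is a Lebesgue point of all $\zeta_k\in Z$ and $\mathsf d$ metrizes); hence $\rho_t(\zeta)$ equals the absolutely continuous representative $\tilde F_\zeta(t)$ for \emph{every} $\zeta\in\Cb(V,\tau)$, not just for $\zeta\in Z$. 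Only then does one take the supremum over the full (uncountable) unit ball of $\Cb(V,\tau)$, where the identity $\|\mu\|_{TV}=\sup_{\|\zeta\|_\infty\le1}|\int\zeta\,\dd\mu|$ is available by inner regularity. You should replace your ``countable norming family'' claim by this Lebesgue-point/weak-limit argument (or prove the norming-family claim carefully using tightness and Stone--Weierstrass on compacta); as written, the step does not hold up.
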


\begin{remark}
  In \eqref{2ndfundthm} we can always replace $\bj $ with the positive measure \RICKYNEW 
   $\bj ^+:=(\bj -\symmap \bj )_+ = (2\tj)_+$, since $\odiv  \bj =  \odiv \bj^+$  \EEE (see Lemma~\ref{le:A1}); therefore we can assume without loss of generality that $\bj $ is a positive
  measure.
\end{remark}
\nc
As another immediate consequence of \eqref{2ndfundthm}, the concatenation of two solutions of the continuity equation is again a solution; the result below also contains a statement about time rescaling of the solutions, whose proof follows from trivially adapting that of \cite[Lemma 8.1.3]{AmbrosioGigliSavare08} and is thus omitted. 

\begin{lemma}[Concatenation and time rescaling]
\label{l:concatenation&rescaling}
\begin{enumerate}
\item Let $(\rho^i,\bj^i) \in \CE 0{T_i}$, $i=1,2$, with $\rho_{T_1}^1 = \rho_0^2$. Define $(\rho_t,\bj_t)_{t\in [0,T_{1}+T_2]}$ by 
\[
\rho_t: = \begin{cases}
\rho_t^1 & \text{ if } t \in [0,T_1],
\\
\rho_{t-T_1}^2 & \text{ if } t \in [T_1,T_1+T_2],
\end{cases}
\qquad \qquad 
\bj_t: = \begin{cases}
\bj_t^1 & \text{ if } t \in [0,T_1],
\\
\bj_{t-T_1}^2 & \text{ if } t \in [T_1,T_1+T_2]\,.
\end{cases}
\]
Then, $(\rho,\bj ) \in \CE 0{T_1+T_2}$.
\item
Let $\mathsf{t} : [0,\hat{T}] \to [0,T]$ be strictly increasing and absolutely continuous, with inverse $\mathsf{s}: [0,T]\to [0,\hat{T}]$. Then, $(\rho, \bj ) \in \CE 0T$ if and only if 
$\hat \rho: = \rho \circ \mathsf{t}$ and $\hat \bj : = \mathsf{t}' (\bj  {\circ} \mathsf{t})$  fulfill $(\hat \rho, \hat \bj ) \in \CE 0{\hat T}$. 
\end{enumerate}
\end{lemma}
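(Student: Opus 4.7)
Both claims are direct consequences of the integral formulation \eqref{2ndfundthm} of the continuity equation, together with elementary measure-theoretic change of variables. I outline the two verifications separately.

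\textbf{Concatenation.} For the candidate $(\rho,\bj)$ defined on $[0,T_1+T_2]$, measurability of $t\mapsto\rho_t$ and of $t\mapsto\bj_t$ is inherited from the two pieces, and the bound $\int_0^{T_1+T_2}|\bj_t|(\edg)\,\dd t=\sum_{i=1}^2\int_0^{T_i}|\bj_t^i|(\edg)\,\dd t<\pinfty$ is immediate. The crux is \eqref{2ndfundthm}: fix $\varphi\in\Bb(V)$ and $J=[t_1,t_2]\subset[0,T_1+T_2]$. If $J\subset[0,T_1]$ or $J\subset[T_1,T_1+T_2]$, the identity follows from the continuity equation satisfied by $(\rho^1,\bj^1)$ or (after the shift $t\mapsto t-T_1$) by $(\rho^2,\bj^2)$. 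When $t_1<T_1<t_2$, I split the integral over $J\times\edg$ at $T_1$, apply the two continuity equations separately on $[t_1,T_1]$ and $[T_1,t_2]$, and telescope using the matching condition $\rho^1_{T_1}=\rho^2_0$, which is exactly the value of the concatenated curve at $T_1$ from both sides.

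\textbf{Time rescaling.} Since $\sft$ is continuous and strictly increasing, $s\mapsto\rho_{\sft(s)}$ and $s\mapsto\bj_{\sft(s)}$ are measurable; the factor $\sft'$ is $\Lebone$-measurable and nonnegative, and
\[
\int_0^{\hat T}|\hat\bj_s|(\edg)\,\dd s
=\int_0^{\hat T}\sft'(s)\,|\bj_{\sft(s)}|(\edg)\,\dd s
=\int_0^T|\bj_t|(\edg)\,\dd t<\pinfty,
\]
by the standard change of variables for absolutely continuous strictly increasing functions. For $\varphi\in\Bb(V)$ and $[s_1,s_2]\subset[0,\hat T]$, the same change of variables gives
\[
\int_{s_1}^{s_2}\!\!\iint_\edg\dnabla\varphi(x,y)\,\hat\bj_s(\dd x\,\dd y)\,\dd s
=\int_{\sft(s_1)}^{\sft(s_2)}\!\!\iint_\edg\dnabla\varphi(x,y)\,\bj_t(\dd x\,\dd y)\,\dd t,
\]
which by \eqref{2ndfundthm} for $(\rho,\bj)$ equals $\int_V\varphi\,\dd\rho_{\sft(s_2)}-\int_V\varphi\,\dd\rho_{\sft(s_1)}=\int_V\varphi\,\dd\hat\rho_{s_2}-\int_V\varphi\,\dd\hat\rho_{s_1}$. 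Thus $(\hat\rho,\hat\bj)\in\CEI{[0,\hat T]}$. The converse implication is obtained by the same argument applied to the strictly increasing absolutely continuous change of variable $\sfs$, whose role is symmetric (and which is itself absolutely continuous since $\sft$ is strictly increasing).

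\textbf{Main obstacle.} The only delicate point is the change of variables used in the second part: one needs that $s\mapsto\iint_\edg\dnabla\varphi\,\dd\bj_{\sft(s)}$ is a bona fide $\Lebone$-measurable function to which the standard absolutely-continuous substitution $\dd t=\sft'(s)\dd s$ applies. This is justified by the boundedness of $\dnabla\varphi$, the $\Lebone$-integrability of $|\bj_\cdot|(\edg)$, and Fubini, so no substantive difficulty arises beyond careful bookkeeping.
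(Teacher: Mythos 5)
Your treatment of concatenation and of the forward direction of time rescaling is sound and essentially the natural argument. But your parenthetical justification for the converse implication contains a genuine error: the inverse of a strictly increasing, absolutely continuous function need \emph{not} be absolutely continuous. Take $\sft(s)=\int_0^s f(r)\,\dd r$ with $f\ge 0$ integrable, $f>0$ on the complement of a fat Cantor set $C\subset[0,\hat T]$ with $\Lebone(C)>0$, and $f=0$ on $C$. Then $\sft$ is absolutely continuous and strictly increasing (since $C$ is nowhere dense), but $\sft(C)$ has measure zero, so $\sfs=\sft^{-1}$ maps a null set onto $C$, violating Luzin's property~(N) and hence failing to be absolutely continuous. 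Consequently your plan to ``apply the same argument to $\sfs$'' does not work as stated.

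The fix is easy, though, and you should flag that it makes the false claim unnecessary: the converse direction follows directly from the \emph{same} substitution identity
\[
\int_{s_1}^{s_2} g(\sft(s))\,\sft'(s)\,\dd s=\int_{\sft(s_1)}^{\sft(s_2)} g(t)\,\dd t,
\]
which holds for every nonnegative (or integrable) Lebesgue-measurable $g$ whenever $\sft$ alone is absolutely continuous and increasing; one simply reads it from right to left. With $g(t)=\iint_\edg\dnabla\varphi\,\dd\bj_t$ (integrable since $|g(t)|\le 2\|\varphi\|_\infty|\bj_t|(\edg)$) and $g(t)=|\bj_t|(\edg)$, this yields \eqref{2ndfundthm} and the integrability condition for $(\rho,\bj)$ given that $(\hat\rho,\hat\bj)\in\CE 0{\hat T}$. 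It is worth noting that the AGS lemma the paper cites assumes \emph{both} $\sft$ and $\sfs$ are absolutely continuous; the present statement drops the latter hypothesis, which is fine but only because the change of variables is one-sided --- your argument should make this explicit rather than invoke the (false) regularity of $\sfs$.
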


\subsection{Definition of the dissipation potential \texorpdfstring{$\calR$}R}
\label{ss:def-R}
In this section we give a rigorous definition of the dissipation potential $\calR$, following the formal descriptions above. In the special case when $\rho$ and $\bj$ are absolutely continuous,  i.e.
\begin{equation}
\rho=u\pi\ll\pi
\qquad\text{and}\qquad
2\bj = w\tetapi\ll\tetapi,
\end{equation}
we set
 \begin{equation}
   \label{concentration-set}
   \edg': =  \{ (x,y) \in \edg\, :  \upalpha(u(x),u(y))>0 \},
 \end{equation}
and in this case we can define the functional $\calR$
by the direct formula
\begin{equation}
  \label{eq:21}
  \calR(\rho,\bj )=
  \begin{cases}
    \displaystyle
    \frac12\int_{E'}
    \Psi\Bigl(\frac{w(x,y)}{\upalpha(u(x),u(y))}\Bigr)\upalpha(u(x),u(y))\,\tetapi(\dd
    x,\dd y) &\text{if }|\bj |(\edg\setminus E')
    =0,\\
    \pinfty&\text{if }|\bj |(\edg\setminus E')>0. 
  \end{cases}
\end{equation}
Recalling the definition of the perspective function $\hat\Psi$ \eqref{eq:76},
we can also write \eqref{eq:21} in the equivalent and more compact form
\begin{equation}
  \label{eq:92}
  \calR(\rho,\bj )=
  \frac12\iint_\edg \hat\Psi\big(w(x,y), \upalpha(u(x),u(y)) \big)\, \tetapi(\dd
  x,\dd y),\quad
  2\bj=w\tetapi\,.
\end{equation}
so that it is natural to introduce the function
$\Upsilon : [0,\pinfty)\times[0,\pinfty)\times\R\to[0,\pinfty] $,
	 \begin{equation}
	 \label{Upsilon}
         \Upsilon (u,v,w) :=
         \hat\Psi(w,\upalpha(u,v)),
       \end{equation}
      observing that
      \begin{equation}
        \label{eq:22}
        \calR(\rho,\bj )=
        \frac12\iint_\edg
        \Upsilon(u(x),u(y),w(x,y))\,\tetapi(\dd x,\dd y)\quad\text{for }
        2\bj=w\tetapi.
      \end{equation}
      \nc
%
%
\begin{lemma}\label{lem:Upsilon-properties}
  The function $\Upsilon:[0,\pinfty)\times[0,\pinfty)\times\R\to[0,\pinfty]$ defined above
  is convex and lower semicontinuous, with recession functional
  \begin{equation}
    \label{eq:23}
    \Upsilon^\infty(u,v,w)=
    \hat\Psi(w, \upalpha^\infty(u,v))=
	    \begin{cases}
	        \displaystyle
	         \Psi\left( \frac{w}{\upalpha^\infty(u,v)}\right) \upalpha^\infty(u,v) &
	         \text{
	           if $\upalpha^\infty(u,v)>0$}
	         \\
	         0 &\text{ if $w=0$}
	         \\
	         \pinfty & \text{ if $w\ne 0$ and $\upalpha^\infty(u,v)=0$.}
	         \end{cases}
  \end{equation}
  For any $u,v\in [0,\infty)$ with $\upalpha^\infty(u,v)>0$, the map $w\mapsto \Upsilon(u,v,w)$ is strictly convex.
  
  If $\upalpha$ is positively 1-homogeneous then $\Upsilon$ is positively 1-homogeneous as well.
\end{lemma}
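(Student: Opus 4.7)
The plan is to derive every claim from two core facts: the perspective $\hat\Psi$ is convex, lower semicontinuous, positively $1$-homogeneous on $\R\times\R_+$ (with $\hat\Psi(0,0)=0$), and moreover $\hat\Psi(w,\cdot)$ is \emph{non-increasing} in its second argument. The latter follows from the convexity of $\Psi$ together with $\Psi(0)=0=\min\Psi$: differentiating $t\mapsto t\Psi(w/t)$ for $t>0$ yields $\Psi(w/t)-(w/t)\Psi'(w/t)$, which is $\le \Psi(0)=0$ by the subgradient inequality applied at $s=w/t$.

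For joint convexity of $\Upsilon$, I would argue by two-step composition. Given $(u_i,v_i,w_i)$ and $\lambda\in[0,1]$, concavity of $\upalpha$ gives $\upalpha(\lambda(u_1,v_1)+(1-\lambda)(u_0,v_0))\ge \lambda\upalpha(u_1,v_1)+(1-\lambda)\upalpha(u_0,v_0)$; monotonicity of $\hat\Psi$ in its second argument then lets us replace the left by the right inside $\hat\Psi$, and joint convexity of $\hat\Psi$ finishes. Lower semicontinuity is immediate from continuity of $\upalpha$ and lsc of $\hat\Psi$ on $\R\times\R_+$ (including the boundary $t=0$).

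For the recession function, I would exploit the $1$-homogeneity of $\hat\Psi$ to rewrite
\[
\frac{\Upsilon(su,sv,sw)}{s}=\frac{\hat\Psi(sw,\upalpha(su,sv))}{s}=\hat\Psi\Bigl(w,\tfrac{\upalpha(su,sv)}{s}\Bigr),
\]
noting that $\Upsilon(0,0,0)=0$ so no subtraction is needed. Since $\upalpha(su,sv)/s\to \upalpha^\infty(u,v)$ as $s\to\infty$, if $\upalpha^\infty(u,v)>0$ one passes to the limit by continuity of $\hat\Psi$ on $\R\times(0,\infty)$. If $\upalpha^\infty(u,v)=0$, superlinearity of $\Psi$ (Lemma~\ref{l:props:Psi}) gives $\Psi^\infty(w)=+\infty$ for $w\neq 0$ and $0$ for $w=0$; by the very definition of $\hat\Psi(w,0)=\Psi^\infty(w)$ and lsc, the correct limit is reached. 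This yields the three-case formula \eqref{eq:23}. The delicate step here is the boundary case $\upalpha^\infty(u,v)=0$, which is the main obstacle; it is handled by combining the lsc of $\hat\Psi$ (for the $\ge$ direction) with the explicit monotonicity $t\mapsto \hat\Psi(w,t)$ is non-increasing (giving the $\le$ direction, since $\upalpha(su,sv)/s\downarrow \upalpha^\infty(u,v)$ when $\upalpha(0,0)=0$, in particular under the standing concavity).

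Strict convexity in $w$ when $\upalpha^\infty(u,v)>0$ reduces to scalar strict convexity of $\Psi$: from $\upalpha^\infty(u,v)\le \upalpha(u,v)-\upalpha(0,0)\le \upalpha(u,v)$ (cf.\ \eqref{eq:1}) we get $\upalpha(u,v)>0$, hence $\Upsilon(u,v,w)=\upalpha(u,v)\,\Psi(w/\upalpha(u,v))$, which is strictly convex in $w$ by Lemma~\ref{l:props:Psi}\eqref{eq:propsPsi:Psi-strictly-convex}. Finally, if $\upalpha$ is positively $1$-homogeneous, then so is $\Upsilon$: $\Upsilon(\lambda u,\lambda v,\lambda w)=\hat\Psi(\lambda w,\lambda\upalpha(u,v))=\lambda\hat\Psi(w,\upalpha(u,v))=\lambda\Upsilon(u,v,w)$, using $1$-homogeneity of $\hat\Psi$.
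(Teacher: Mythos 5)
Your proof is correct, but the convexity argument follows a genuinely different route from the paper's. The paper proves joint convexity and lower semicontinuity in one stroke via the dual formula
\[
\Upsilon(u,v,w)=\sup_{\xi\in\R}\bigl\{\xi w-\upalpha(u,v)\Psi^*(\xi)\bigr\},
\]
writing $\Upsilon$ as a pointwise supremum of jointly convex (indeed affine-in-$w$, concave-composed) continuous functions. You instead use a monotone-composition argument: $\hat\Psi$ is jointly convex and non-increasing in its second slot, $\upalpha$ is concave, so $\hat\Psi(w,\upalpha(u,v))$ is convex; lsc follows separately from composing the lsc function $\hat\Psi$ with the continuous map $(u,v,w)\mapsto(w,\upalpha(u,v))$. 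Both are standard and correct; the dual formula is slightly slicker (one line, both properties at once), while your route makes the structural role of the monotonicity of $\hat\Psi(w,\cdot)$ explicit, which is reused in your treatment of the recession function. Your derivation of that monotonicity via the subgradient inequality $\Psi(s)-s\Psi'(s)\le\Psi(0)=0$ is fine; if one worries about differentiability of $\Psi$ (the paper only guarantees strict convexity), the same monotonicity drops out of the dual expression $t\Psi(w/t)=\sup_\xi[\xi w-t\Psi^*(\xi)]$ since $\Psi^*\ge0$.

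The remaining pieces (recession via the $1$-homogeneity of $\hat\Psi$ together with $\Upsilon(0,0,0)=0$, strict convexity in $w$ from $\upalpha^\infty>0\Rightarrow\upalpha>0$ and strict convexity of $\Psi$, and $1$-homogeneity when $\upalpha$ is $1$-homogeneous) are essentially identical to the paper's. One small clarification on your boundary case: you write that $\upalpha(su,sv)/s\downarrow\upalpha^\infty(u,v)$ ``when $\upalpha(0,0)=0$, in particular under the standing concavity,'' but the paper does not assume $\upalpha(0,0)=0$ (e.g.\ $\upalpha\equiv1$ is admitted). The monotone decrease still holds, because $\upalpha(sz)/s=\bigl(\upalpha(sz)-\upalpha(0)\bigr)/s+\upalpha(0)/s$ is the sum of two non-increasing functions of $s$ (the first by concavity, the second by $\upalpha(0)\ge0$). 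With this fixed, your boundary argument (lsc for the lower bound, monotonicity for the upper bound, superlinearity of $\Psi$ to identify $\Psi^\infty(w)=+\infty$ for $w\ne0$) is sound.
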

\begin{proof}
	Note that $\Upsilon$ may be equivalently represented in the form
	\begin{equation}
	\label{eq:dual-formulation-Upsilon}
	 \Upsilon(u,v,w) = \sup_{\xi\in\R} \bigl\{\xi w - \upalpha(u,v)\Psi^*(\xi)\bigr\} =: \sup_{\xi\in\R} f_\xi(u,v,w)\,.
	\end{equation}
	The convexity of $f_\xi$ for each $\xi\in\R$ readily follows
        from its linearity in $w$ and the convexity of $-\upalpha$ in
        $(u,v)$. Therefore, $\Upsilon$ is convex
        and lower semicontinuous as the pointwise supremum of a family of convex
        continuous functions.

       The characterization~\eqref{eq:23} of $\Upsilon^\infty$ follows from observing that $\Upsilon(0,0,0)=\hat\Psi(0,0)=0$ and using the $1$-homogeneity of $\hat\Psi$:
        \begin{align*}
          \lim_{t\to\pinfty}
          t^{-1}\Upsilon(tu,tv,tw)
          &=
            \lim_{t\to\pinfty} t^{-1}
            \hat\Psi\Big(tw,\upalpha( tu,tv)\Big)
            =\lim_{t\to\pinfty}
            \hat\Psi\Big(w,t^{-1}\upalpha( tu,tv)\Big)
          \\&=
           \hat\Psi\Big(w,\upalpha^\infty(u,v)\Big)\,,
        \end{align*}
        where the last equality follows from the continuity of $r\mapsto \hat\Psi(w,r)$ for all $w\in \R$.
        
        The strict convexity of $w\mapsto \Upsilon(u,v,w)$ for any $u,v\in [0,\infty)$ with $\upalpha^\infty(u,v)>0$ follows directly from the strict convexity of $\Psi$ (cf. Lemma~\ref{l:props:Psi}).
\end{proof}

\bigskip

The choice~\eqref{eq:22} provides a rigorous definition of $\calR$ for couples of measures $(\rho,\bj)$ that are absolutely continuous with respect to $\pi$ and $\teta$.
In order to extend $\calR$ to  pairs $(\rho,\bj)$ that are not absolutely continuous, it is
useful to interpret the measure
\begin{equation}
\bnu_\rho(\dd x,\dd
y):=\upalpha(u(x),u(y))\tetapi(\dd x,\dd y)
\label{eq:26}
\end{equation}
in the integral of \eqref{eq:21}
in terms of a suitable concave transformation
as in \eqref{eq:6} of two couplings
generated by $\rho$. We therefore introduce the measures
\begin{equation}
\label{def:teta}
\begin{aligned}
  \teta_{\rho}^-(\dd x\,\dd y) :=
  \rho(\dd x)\kappa(x,\dd y),\qquad
  \teta_{\rho}^+(\dd x\,\dd y) :=
  \rho(\dd y)\kappa(y,\dd x)=
  s_{\#}\teta_\rho^-(\dd x\,\dd y),
\end{aligned}
\end{equation}
observing that
\begin{equation}
  \label{eq:24}
  \rho=u\pi\ll\pi\quad\Longrightarrow\quad
  \teta^\pm_\rho\ll\tetapi,\qquad
  \frac{\dd \teta_\rho^-}{\dd \tetapi}(x,y) = u(x), \quad \frac{\dd \teta_\rho^+}{\dd \tetapi}(x,y) = u(y).
\end{equation}
We thus obtain that \eqref{eq:26}, \eqref{eq:21} and
\eqref{eq:22} can be equivalently written as
\begin{equation}
  \label{eq:27}
  \bnu_\rho=\Aalpha[\teta^-_\rho,\teta^+_\rho|\tetapi],\quad
  \calR(\rho,\bj )=\frac12\calF_\Psi(2\bj |\bnu_\rho)\,,
\end{equation}
where $\Aalpha[\teta^-_\rho,\teta^+_\rho|\tetapi]$ stands for $\Aalpha[(\teta^-_\rho,\teta^+_\rho)|\tetapi]$, and the functional $ \calF_\psi(\cdot | \cdot)$ is from 
\eqref{def:F-F}, and also
\begin{equation}
  \label{eq:28}
  \calR(\rho,\bj )=\frac12\calF_\Upsilon(\teta^-_\rho,\teta^+_\rho,2\bj |\tetapi)\,,
\end{equation}
again writing for shorter notation $\calF_\Upsilon(\teta^-_\rho,\teta^+_\rho,2\bj |\tetapi)$ in place of $\calF_\Upsilon((\teta^-_\rho,\teta^+_\rho,2\bj) |\tetapi)$.

Therefore we can use the same expressions \eqref{eq:27} and \eqref{eq:28}
to extend the functional $\calR$ to measures $\rho$ and $\bj $
that need  not be absolutely
continuous with respect to ~$\pi$ and $\tetapi$; the next lemma shows that they provide equivalent characterizations.
We introduce the functions $u^\pm:\edg\to\R$, adopting the notation
\begin{multline}
	  \label{eq:93}
  u^-:=u\circ \sfx\quad \text{and}\quad    u^+:=u\circ \sfy,\\
  \text{or equivalently} \quad 
    u^-(x,y):=u(x),\quad
u^+(x,y):=u(y).
\end{multline}
(Recall that $\sfx$ and $\sfy$ denote the coordinate maps from $\edg$ to $V$).

\begin{lemma}
\label{l:4.8}
  For every $\rho\in \calM^+(V)$ and $\bj \in \calM(\edg)$ we have
  \begin{equation}
    \label{eq:29}
    \calF_\Upsilon(\teta^-_\rho,\teta^+_\rho, 2\bj \EEE |\tetapi)  =\calF_\Psi( 2\bj \EEE |\bnu_\rho).
  \end{equation}
  If $\rho=\rho^a+\rho^\perp$ and $\bj=\bj^a+\bj^\perp$
  are the Lebesgue decompositions of $\rho$ and $\bj$ with respect to ~$\pi$ and
  $\tetapi$, respectively, we have
  \begin{equation}
    \label{eq:94}
    \calF_\Upsilon(\teta^-_\rho,\teta^+_\rho, 2\bj \EEE |\tetapi)=
    \calF_\Upsilon(\teta^-_{\rho^a},\teta^+_{\rho^a}, 2\bj^a \EEE |\tetapi)+
    \calF_{\Upsilon^\infty}(\teta^-_{\rho^\perp},\teta^+_{\rho^\perp}, 2\bj^\perp). \EEE
  \end{equation}
\end{lemma}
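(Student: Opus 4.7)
The strategy for both identities is to exploit the perspective-function representations \eqref{eq:76}--\eqref{eq:77} together with the $1$-homogeneity of $\hat\Psi$, which (via item~(1) of Lemma~\ref{l:lsc-general}) makes $\calF_{\hat\Psi}$ and $\calF_{\Upsilon^\infty}$ independent of the chosen dominating measure. This will let me reduce both sides of \eqref{eq:29} to the same integral against a common dominator, and derive \eqref{eq:94} by tracking supports.

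For \eqref{eq:29}, I would fix a common dominator $\gamma\in\calM^+(\edg)$ for $\tetapi$, $|\teta^\pm_\rho|$, and $|\bj|$ (say their sum), and write $\tetapi=v\gamma$, $\teta^\pm_\rho=u^\pm\gamma$, $2\bj=b\gamma$. For the right-hand side, \eqref{eq:77} and the dominator-invariance of $\calF_{\hat\Psi}$ give
\[
\calF_\Psi(2\bj|\bnu_\rho)=\int_\edg \hat\Psi\bigl(b,\tfrac{\dd\bnu_\rho}{\dd\gamma}\bigr)\,\dd\gamma,
\]
and since $\bnu_\rho=\Aalpha[\teta^-_\rho,\teta^+_\rho|\tetapi]$, the perspective identity $\Aalpha[\mu|\gamma]=\hat\upalpha(\mu,\gamma)$ from \eqref{eq:83} gives $\dd\bnu_\rho/\dd\gamma=\hat\upalpha(u^-,u^+,v)$. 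For the left-hand side, the Lebesgue decomposition of the triple $(\teta^-_\rho,\teta^+_\rho,2\bj)$ with respect to $\tetapi$ splits $\edg$ into $\{v>0\}$ (ac part) and $\{v=0\}$ (singular part). On $\{v>0\}$, the ac densities with respect to $\tetapi$ are $(u^-/v,u^+/v,b/v)$, contributing
\[
\int_{\{v>0\}}v\,\hat\Psi\bigl(b/v,\upalpha(u^-/v,u^+/v)\bigr)\,\dd\gamma
=\int_{\{v>0\}}\hat\Psi\bigl(b,\hat\upalpha(u^-,u^+,v)\bigr)\,\dd\gamma,
\]
where I used the $1$-homogeneity of $\hat\Psi$ and the definition of $\hat\upalpha$. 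On $\{v=0\}$, using $\gamma\mres\{v=0\}$ as dominator for the singular triple and the $1$-homogeneity of $\Upsilon^\infty$ (Lemma~\ref{lem:Upsilon-properties}), the singular contribution reads
\[
\int_{\{v=0\}}\hat\Psi\bigl(b,\upalpha^\infty(u^-,u^+)\bigr)\,\dd\gamma
=\int_{\{v=0\}}\hat\Psi\bigl(b,\hat\upalpha(u^-,u^+,0)\bigr)\,\dd\gamma.
\]
Summing the two pieces recovers the right-hand side.

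For \eqref{eq:94}, I would observe that $\rho^\perp\perp\pi$ yields $N\in\frB$ with $\pi(N)=0$ and $\rho^\perp(V\setminus N)=0$; then $\teta^-_{\rho^\perp}$ is concentrated on $N\times V$ and $\teta^+_{\rho^\perp}$ on $V\times N$, both $\tetapi$-negligible by \eqref{nu-pi}. Together with a $\tetapi$-null set carrying $\bj^\perp$, this identifies the Lebesgue decomposition of $(\teta^-_\rho,\teta^+_\rho,2\bj)$ with respect to $\tetapi$ as having ac part $(\teta^-_{\rho^a},\teta^+_{\rho^a},2\bj^a)$ and singular part $(\teta^-_{\rho^\perp},\teta^+_{\rho^\perp},2\bj^\perp)$. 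The additive decomposition \eqref{eq:94} is then a direct reading of \eqref{def:F-F}, with the singular integral giving exactly $\calF_{\Upsilon^\infty}(\teta^-_{\rho^\perp},\teta^+_{\rho^\perp},2\bj^\perp)$.

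The main delicacy will be the bookkeeping needed to identify the absolutely continuous and singular components of the vector-valued triple with respect to $\tetapi$ on disjoint pieces of $\edg$, and to apply the $1$-homogeneity factorization of $\hat\Psi$ correctly in each piece; once the common dominator is chosen, the remainder reduces to elementary manipulation of the perspective functions $\hat\Psi$ and $\hat\upalpha$.
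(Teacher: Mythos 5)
Your proof is correct and uses essentially the same ingredients as the paper's: the perspective-function representations \eqref{eq:76}--\eqref{eq:77}, the dominator-invariance of $\calF_{\hat\Psi}$ and $\calF_{\hat\upalpha}$ from Lemma~\ref{l:lsc-general}(1), the $1$-homogeneity factorization $v\,\hat\Psi(b/v,\upalpha(u^-/v,u^+/v))=\hat\Psi(b,\hat\upalpha(u^-,u^+,v))$, and for~\eqref{eq:94} the observation that $\teta^\pm_{\rho^\perp}$ is concentrated on $N\times V$ (resp.\ $V\times N$) with $\pi(N)=0$, hence $\tetapi$-singular. The only stylistic difference is that you work directly with a single common dominator $\gamma$ and split $\edg$ into $\{v>0\}$ vs.\ $\{v=0\}$, whereas the paper explicitly introduces an auxiliary singular measure $\bsigma$ and invokes the additivity property~\eqref{eq:81}; these are two presentations of the same decomposition.
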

\begin{proof}
  Let us consider the Lebesgue decomposition $\rho=\rho^a+\rho^\perp$,
  $\rho^a=u\pi$, 
  and a corresponding partition of $V$ in two disjoint Borel sets
  $R,P$ such that $\rho^a=\rho\mres R$, $\rho^\perp=\rho\mres P$ and
  $\pi(P)=0$,
  which yields
  \begin{equation}
    \label{eq:30}
    \teta^\pm_\rho=\teta^\pm_{\rho^a}+\teta^\pm_{\rho^\perp},\quad
    \teta^\pm_{\rho^a}\ll\tetapi,\quad
    \teta^-_{\rho^\perp}:=\teta^-_\rho\mres{P\times V},
    \quad
    \teta^+_{\rho^\perp}:=\teta^+_\rho\mres{V\times P}.
  \end{equation}
  Since $\tetapi(P\times V)=\tetapi(V\times P) \le   \|\kappa_V\|_\infty \EEE  \pi(P)=0$,
  $\teta^\pm_{\rho^\perp}$ are singular
  with respect to ~$\tetapi$.

  Let us also consider the Lebesgue decomposition
  $\bj=\bj^a+\bj^\perp$
  of $\bj$ with respect to ~$\tetapi$.
  We can select a measure $\bsigma\in \calM^+(\edg)$ such that 
  $\teta^\pm_{\rho^\perp}=z^\pm\bsigma\ll\bsigma$,
  $\bj^\perp\ll\bsigma$ and $\bsigma\perp\tetapi$,
  obtaining
  \begin{equation}
    \label{eq:34}
    \begin{aligned}
      \bnu_\rho=\Aalpha[\teta_\rho^-,\teta_\rho^+|\tetapi]=
      \bnu_\rho^1+\bnu_\rho^2,
      \quad 
      \bnu_\rho^1:=\upalpha(u^-,u^+)\tetapi,\quad
      \bnu_\rho^2:=\upalpha^\infty(z^-,z^+)\bsigma.
    \end{aligned}
  \end{equation}
  Since $\bj \ll \tetapi+\bsigma$,
  we can decompose 
  \begin{equation}
  \label{decomp-DF}
   2\bj =w\tetapi+w'\bsigma, \EEE
  \end{equation}
  and by the additivity property \eqref{eq:81}
  we obtain
  \begin{equation}
  \label{heartsuit}
  \begin{aligned}
    \calF_\Psi
    &(   2\bj \EEE |\bnu_\rho)
      =
      \calF_{\hat \Psi}(   2\bj, \EEE\bnu_\rho)=
      \calF_{\hat\Psi}(w\teta,\bnu_\rho^1)+
      \calF_{\hat\Psi}(w'\bsigma,\bnu_\rho^2)
      \\&\stackrel{(*)}=
      \iint_\edg \Upsilon(u(x),u(y),w(x,y))\,\tetapi(\dd x,\dd y)+
      \iint_\edg
    \Upsilon^\infty(z^-(x,y),z^+(x,y),w'(x,y))\,\bsigma(\dd x,\dd y)
    \\&=
    \calF_\Upsilon(\teta^-_{\rho^a},\teta^+_{\rho^a},    2\bj^a |\tetapi)+ \EEE
    \calF_{\Upsilon^\infty}(\teta^-_{\rho^\perp},\teta^+_{\rho^\perp},   2\bj^\perp) \EEE
    =\calF_\Upsilon(\teta^-_\rho,\teta^+_\rho,    2\bj |\tetapi). \EEE
\end{aligned}
\end{equation}

Indeed, identity (*) follows from the fact that, since $\hat{\Psi}$ is $1$-homogeneous,
\[
 \calF_{\hat\Psi}(w\teta,\bnu_\rho^1) = \iint_{\GGG \edg} \GGG
 \hat{\Psi}
 \left( \frac{\dd(w\teta,\bnu_\rho^1)}{\dd \gamma}\right)  \dd \gamma
\]
for every $\gamma \in \calM^+(\edg)$ such that $w\teta \ll \gamma$ and $\bnu_\rho^1 \ll \gamma$, cf.\ \eqref{eq:78}. Then, it suffices to observe that  $w\teta \ll \teta$ and $\bnu_\rho^1 
\ll \teta$ with $\frac{\dd \bnu_\rho^1}{\dd \teta} = \upalpha(u^-,u^+)$. The same argument applies to $      \calF_{\hat\Psi}(w'\bsigma,\bnu_\rho^2)$, 
cf.\ also   Lemma~\ref{l:lsc-general}(3). \EEE
%
\end{proof}
\begin{definition}
	\label{def:R-rigorous}
	The \textit{dissipation potential} $\calR: \calM^+(V)\times\calM(\edg) \to [0,\pinfty]$  is defined by
	\begin{equation}
	\label{def:action}
	\calR(\rho,\bj ) :=
          \frac12 \calF_\Upsilon(\teta^-_\rho,\teta^+_\rho,2\bj
          |\tetapi)  =
          \frac12 \calF_\Psi(2\bj |\bnu_\rho).
	\end{equation}
        where $\teta_{\rho}^\pm$ are defined by \eqref{def:teta}.
        If $\upalpha$ is $1$-homogeneous, then
        $\calR(\rho,\bj)$ is independent of $\tetapi$.
 \end{definition}


\begin{lemma}
\label{l:alt-char-R}
Let $\rho=\rho^a+\rho^\perp\in \calM^+(V)$ and $\bj =\bj^a+\bj^\perp\in
\calM(\edg)$,
with $\rho^a=u\pi$, $2\bj^a=w\tetapi$,
 and $\rho^\perp$, $j^\perp$ as in Lemma \ref{l:4.8}, \EEE satisfy
$\calR(\rho,\bj )<\pinfty$, and let $P\in \calB(V)$ be a $\pi$-negligible set
such that $\rho^\perp=\rho\mres P$.
\begin{enumerate}[label=(\arabic*)]
\item We have 
  $|\bj |(P\times (V\setminus P))=
  |\bj |((V\setminus P)\times P)=0$, $\bj^\perp=\bj \mres(P\times P)$, and 
  \begin{equation}
    \label{eq:37}
    \calR(\rho,\bj )=
    \calR(\rho^a,\bj^a)+
    \frac12 \calF_{\Upsilon^\infty}(\teta^-_{\rho^\perp},\teta^+_{\rho^\perp},2\bj^\perp).
  \end{equation}
  In particular, if $\upalpha$ is $1$-homogeneous we have
  the decomposition
  \begin{equation}
    \label{eq:175}
     \calR(\rho,\bj )=
     \calR(\rho^a,\bj^a)+
     \calR(\rho^\perp,\bj^\perp).
  \end{equation}
\item 
\label{l:alt-char-R:i2}
If $\rho\ll \pi$ or $\upalpha$ is sub-linear,
  i.e.~$\upalpha^\infty\equiv0$,
  or
  $\kappa(x,\cdot)\ll\pi$ for every $x\in V$, 
  then $\bj \ll\tetapi$ and $\bj^\perp\equiv0$. In any of these three cases,
  $\calR(\rho,\bj )  
=
\calR(\rho^a,\bj)$, and 
setting
$\edg'$ as in \eqref{concentration-set}
we have $w=0$ $\tetapi$-a.e.\ on $\edg\setminus\edg'$, and
\eqref{eq:21} holds.
\item
Furthermore, $\calR$ is
convex and lower semicontinuous with respect to ~setwise convergence in $(\rho,\bj)$. If $\kappa$ satisfies the weak Feller
property, then $\calR$ is also lower semicontinuous with respect to weak
convergence  in duality with continuous bounded functions. \EEE
\end{enumerate}
 \end{lemma}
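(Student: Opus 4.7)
\textbf{Proof plan for Lemma \ref{l:alt-char-R}.}

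The three parts all rely on the decomposition \eqref{eq:94} of Lemma~\ref{l:4.8}, the explicit form \eqref{eq:23} of the recession function $\Upsilon^\infty$, and the boundary-vanishing property \eqref{alpha-0} of $\upalpha^\infty$.

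For part~(1), the plan is to track how $\teta_\rho^\pm$ and $\bj$ split across the four quadrants $(V\setminus P)\times(V\setminus P)$, $(V\setminus P)\times P$, $P\times(V\setminus P)$, and $P\times P$. Detailed balance and $\pi(P)=0$ give $\tetapi(P\times V) = \tetapi(V\times P) = 0$; moreover, the identity $\int_{V\setminus P}\kappa(x,P)\,\pi(\dd x) = \int_P \kappa(y, V\setminus P)\,\pi(\dd y) = 0$ (detailed balance again) shows that $\teta_{\rho^a}^\pm$ is concentrated on $(V\setminus P)\times(V\setminus P)$. On $(V\setminus P)\times P$ only $\teta_{\rho^\perp}^+$ has mass while $\teta_\rho^-$ vanishes, so in the singular integrand $\Upsilon^\infty(z^-,z^+,w)$ one has $z^- = 0$; by \eqref{alpha-0} this gives $\upalpha^\infty(0,z^+)=0$, and \eqref{eq:23} forces $\Upsilon^\infty = +\infty$ wherever $w\neq 0$. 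Finiteness of $\calR(\rho,\bj)$ yields $|\bj|((V\setminus P)\times P) = 0$, and symmetrically $|\bj|(P\times(V\setminus P)) = 0$. The same reasoning applied on $(V\setminus P)\times(V\setminus P)$, where any singular part of $\bj$ has $z^\pm = 0$, forces that singular part to vanish; hence $\bj^\perp = \bj\mres(P\times P)$. The decomposition \eqref{eq:37} then follows by restricting \eqref{eq:94} to the surviving quadrants; the $1$-homogeneous case reduces to \eqref{eq:175} since $\Upsilon\equiv\Upsilon^\infty$ is then independent of the reference measure.

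For part~(2), I would show $\bj\mres(P\times P)=0$ in each of the three cases. If $\rho\ll\pi$ then $P$ can be taken empty. If $\upalpha$ is sub-linear then $\upalpha^\infty\equiv 0$ identically, so $\Upsilon^\infty(z^-,z^+,w)=+\infty$ unless $w=0$. If $\kappa(x,\cdot)\ll\pi$ for every $x$, writing $\kappa(x,\dd y)=k(x,y)\,\pi(\dd y)$ gives $\teta_{\rho^\perp}^\pm\mres(P\times P)=0$ (since $\pi(P)=0$), so on $P\times P$ we again have $z^\pm=0$ and $\upalpha^\infty(0,0)=0$ forces $\bj=0$ there. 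Once $\bj\ll\tetapi$ and $2\bj = w\,\tetapi$, on $\edg\setminus\edg'$ we have $\upalpha(u^-,u^+)=0$, whence $\Upsilon(u^-,u^+,w)=\hat\Psi(w,0)=+\infty$ when $w\neq 0$; hence $w=0$ $\tetapi$-a.e.\ on $\edg\setminus\edg'$, and \eqref{eq:21} drops out of \eqref{eq:22}.

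For part~(3), convexity of $\calR$ is immediate from joint convexity of $\calF_\Upsilon$ (item~(4) of Lemma~\ref{l:lsc-general}) composed with the linear map $(\rho,\bj)\mapsto(\teta_\rho^-,\teta_\rho^+,2\bj)$. Setwise lower semicontinuity of $\calR$ follows from setwise continuity of $\rho\mapsto\teta_\rho^\pm$ (Lemma~\ref{le:kernel-convergence}, using boundedness of $\kappa$) combined with setwise lower semicontinuity of $\calF_\Upsilon$ (item~\ref{l:lsc-general:i1} of Lemma~\ref{l:lsc-general}). Under the weak Feller hypothesis, $\rho\mapsto\teta_\rho^\pm$ is weakly continuous in duality with bounded continuous functions (the second assertion of Lemma~\ref{le:kernel-convergence}), and item~\ref{l:lsc-general:i2} of Lemma~\ref{l:lsc-general} then yields the corresponding weak lower semicontinuity of $\calR$. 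The main technical obstacle is the quadrant-by-quadrant support analysis in part~(1): one has to identify which of $\teta_\rho^\pm$ contributes on each region, and invoke \eqref{alpha-0} precisely on the off-diagonal pieces so that the singular part of $\bj$ is forced to vanish everywhere except on $P\times P$.
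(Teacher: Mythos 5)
Your proposal is correct and follows essentially the same route as the paper: part (1) reduces to the support/decomposition analysis of $\teta^\pm_{\rho^\perp}$ (which are concentrated on $P\times V$ and $V\times P$ respectively by \eqref{eq:30}) combined with the vanishing property \eqref{alpha-0} of $\upalpha^\infty$ and the form \eqref{eq:23} of $\Upsilon^\infty$ to force the singular flux $\bj^\perp$ onto $P\times P$; part (2) handles each hypothesis in the same way (the sub-linear case via $\Psi$'s superlinearity or equivalently $\Upsilon^\infty\equiv+\infty$ off $w=0$, the $\kappa(x,\cdot)\ll\pi$ case via the push-forwards of $\teta^\pm_{\rho^\perp}$ vanishing on $P$); and part (3) invokes Lemma~\ref{l:lsc-general} and Lemma~\ref{le:kernel-convergence} exactly as the paper does. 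The detailed-balance observation that $\teta^\pm_{\rho^a}$ is concentrated on $(V\setminus P)^2$ is correct but an unnecessary detour, since the argument only needs the support of the singular parts $z^\pm$; otherwise the reasoning matches the paper's.
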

 \begin{proof}
   \textit{(1)}
   Equation~\eqref{eq:37} is an immediate consequence of \eqref{eq:94}.
   
   To prove the properties of $\bj$, set $R = V\setminus P$ for convenience. By using the decompositions
   $\bj =w\tetapi+w'\bsigma$ and $\teta_{\rho}^\pm = \teta_{\rho^a}^\pm + \teta_{\rho^\perp}^
   \pm = \teta_{\rho^a}^\pm + z^\pm \bsigma$
   introduced in the proof of the previous Lemma,
   the definition~\eqref{eq:30} implies that
   $\teta^+_{\rho^\perp}(P\times R)=0$, so that $z^+=0$
   $\bsigma$-a.e.~in $P\times R$; analogously $z^-=0$
   $\bsigma$-a.e.~in $R\times P$.
   By \eqref{alpha-0} we find that $\upalpha^\infty(z^-,z^+)=0$,
   $\bsigma$-a.e.~in $(P\times R)\cup (R\times P)$
   and therefore $w'=0$
    as well, since
   $\Upsilon^\infty(z^-,z^+,w')<\pinfty$
   $\bsigma$-a.e  (see \eqref{heartsuit}). \EEE
   We eventually deduce that $\bj^\perp=\bj \mres P\times P$.

   \textit{(2)}
   When $\rho\ll\pi$ we can choose $P=\emptyset$ so that
   $\bj^\perp=\bj\mres P=0$.
   When $\upalpha$ is sub-linear then
   $\bnu_\rho\ll \tetapi$ so that $\bj \ll\tetapi$ since $\Psi$ is
   superlinear.
   
   If $\kappa(x,\cdot)\ll \pi$ for every $x\in V$, then
   $\sfy_\sharp \teta^-_{\rho^\perp}\ll \pi$ and
   $\sfx_\sharp \teta^+_{\rho^\perp}\ll \pi$,
   so that
   $\teta^\pm_{\rho^\perp}(P\times P)=0$,
   since $P$ is
   $\pi$-negligible. We deduce that $\bj^\perp(P\times P)=0$ as well.
      
   \noindent

   \textit{(3)}
   The convexity of $\calR$ follows by the convexity of the functional
   $\calF_\Upsilon$. 
   The lower semicontinuity follows by combining Lemma
   \ref{le:kernel-convergence}
   with Lemma \ref{l:lsc-general}. \nc
 \end{proof}
 \begin{cor}
   \label{cor:decomposition}
   Let $\pi_1,\pi_2\in \calM^+(V)$ be
   mutually singular measures satisfying the detailed balance
   condition with respect to ~$\kappa$, and let $\tetapi_i=\boldsymbol \kappa_{\pi_i}$
   be the corresponding symmetric measures in $\calM^+(\edg)$ (see Section~\ref{subsub:kernels}).
   For every pair $(\rho,\bj)$ with $\rho=\rho_1+\rho_2$,
   $\bj=\bj_1+\bj_2$ for $\rho_i\ll\pi_i$ and $\bj_i\ll\tetapi_i$,
   we have
   \begin{equation}
     \label{eq:176}
     \calR(\rho,\bj)=\calR_1(\rho_1,\bj_1)+\calR_2(\rho_2,\bj_2),
   \end{equation}
   where $\calR_i$ is the dissipation functional induced by
   $\tetapi_i$. When $\upalpha$ is $1$-homogeneous, $\calR_i=\calR$.
 \end{cor}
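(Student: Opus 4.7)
The plan is to reduce the claim to the additivity property \eqref{eq:81} for the $1$-homogeneous perspective function $\hat\Psi$, once we have shown that $\bj$, $\bnu_\rho$, $\teta_\rho^\pm$ and $\tetapi$ all split as sums of mutually singular pieces concentrated on the respective ``diagonal blocks'' $P_i\times P_i$.

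First I would choose disjoint Borel sets $P_1,P_2\subset V$ such that $\pi_i$ is concentrated on $P_i$ (possible by mutual singularity). The first key observation is that the detailed balance condition \eqref{detailed-balance} for $\pi_i$ forces $\tetapi_i$ to be concentrated on $P_i\times P_i$: if $i\neq j$ then $\int_V \kappa(x,P_j)\,\pi_i(\dd x)=\int_{P_j}\kappa(y,V)\,\pi_i(\dd y)=0$, so $\kappa(x,P_j)=0$ for $\pi_i$-a.e.~$x$, and since $\pi_i$ is concentrated on $P_i$, indeed $\tetapi_i(\edg\setminus (P_i\times P_i))=0$. Consequently $\tetapi=\pi\otimes\kappa=\tetapi_1+\tetapi_2$ decomposes with mutually singular summands.

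Next, from $\rho_i\ll\pi_i$ we obtain that $\rho_i$ is concentrated on $P_i$, and hence (using the previous step) that $\teta_{\rho_i}^\pm=\rho_i\otimes\kappa$ or $\sfs_\sharp(\rho_i\otimes\kappa)$ is concentrated on $P_i\times P_i$; likewise $\bj_i\ll\tetapi_i$ is concentrated on $P_i\times P_i$. Writing $u_i=\dd\rho_i/\dd\pi_i$, on $P_i\times P_i$ the density $\dd\rho/\dd\pi$ coincides with $u_i$, so formula \eqref{eq:27} yields $\bnu_\rho\restr{P_i\times P_i}=\upalpha(u_i\circ\sfx,u_i\circ\sfy)\,\tetapi_i=\bnu_{\rho_i}$, whence $\bnu_\rho=\bnu_{\rho_1}+\bnu_{\rho_2}$ with mutually singular summands. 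The full hypotheses of absolute continuity ensure there is no singular part of $(2\bj,\bnu_\rho)$ relative to $\tetapi$ to worry about.

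Now using the perspective-function representation (Lemma~\ref{l:lsc-general}(2)),
\[
\calR(\rho,\bj)=\tfrac12\calF_\Psi(2\bj|\bnu_\rho)=\tfrac12\calF_{\hat\Psi}(2\bj,\bnu_\rho),
\]
and since $\hat\Psi$ is $1$-homogeneous while the pair $(2\bj_1,\bnu_{\rho_1})$ is singular with respect to $(2\bj_2,\bnu_{\rho_2})$ in $\calM(\edg;\R^2)$ (they are concentrated on the disjoint sets $P_1\times P_1$ and $P_2\times P_2$), the additivity formula \eqref{eq:81} gives
\[
\calF_{\hat\Psi}(2\bj,\bnu_\rho)=\calF_{\hat\Psi}(2\bj_1,\bnu_{\rho_1})+\calF_{\hat\Psi}(2\bj_2,\bnu_{\rho_2}),
\]
which is precisely $2\calR_1(\rho_1,\bj_1)+2\calR_2(\rho_2,\bj_2)$ and establishes \eqref{eq:176}. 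For the last assertion, when $\upalpha$ is positively $1$-homogeneous, Lemma~\ref{lem:Upsilon-properties} shows $\Upsilon$ itself is $1$-homogeneous, so by Definition~\ref{def:R-rigorous} the value $\calR(\rho_i,\bj_i)$ does not depend on the choice of reference symmetric measure, giving $\calR_i=\calR$.

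The only genuinely delicate step is the localization of $\tetapi_i$ on $P_i\times P_i$; once that structural fact is in hand via detailed balance, the remainder is a routine bookkeeping argument exploiting the $1$-homogeneity of $\hat\Psi$.
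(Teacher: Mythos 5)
The paper states this corollary without proof, so there is no internal argument to compare against; your proposal is the natural derivation, and it is correct up to two small points worth flagging.

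First, your localization step shows $\tetapi_i(V\times P_j)=0$ and $\tetapi_i(P_j\times V)=0$, but to conclude $\tetapi_i(\edg\setminus(P_i\times P_i))=0$ you need these to exhaust $\edg\setminus(P_i\times P_i)$, i.e.\ you need $P_1\cup P_2=V$. This is harmless but not automatic from "$\pi_i$ concentrated on $P_i$"; either replace the pair by $P_1$ and $P_2:=V\setminus P_1$ at the outset (this still carries both $\pi_1$ and $\pi_2$ by mutual singularity), or run the detailed-balance identity directly with $B=V\setminus P_i$: $\int_V\kappa(x,V\setminus P_i)\,\pi_i(\dd x)=\int_{V\setminus P_i}\kappa(y,V)\,\pi_i(\dd y)=0$.

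Second, the line "$\tetapi=\pi\otimes\kappa=\tetapi_1+\tetapi_2$" silently assumes $\pi=\pi_1+\pi_2$, which the corollary does not actually state. This assumption is in fact needed: without it one only has $\tetapi\mres(P_i\times P_i)=\kernel\kappa{\pi\mres P_i}$, which differs from $\tetapi_i$ unless $\pi\mres P_i=\pi_i$, and then $\bnu_\rho\mres(P_i\times P_i)$ does not reduce to $\bnu_{\rho_i}$ (one can check on a two-point space with $\upalpha\equiv 1$ that the claimed identity then fails). Given how Corollary~\ref{cor:decomposition} is invoked in the proof of Theorem~\ref{thm:decomposition}, the intended hypothesis is indeed $\pi=\pi_1+\pi_2$, so you have read the statement correctly — but it would be worth making the hypothesis explicit in your write-up, since the corollary as literally stated leaves it implicit. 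With these two remarks added, the argument — localize by detailed balance, split $(2\bj,\bnu_\rho)$ into mutually singular pieces on $P_i\times P_i$, apply the additivity~\eqref{eq:81} for the $1$-homogeneous $\hat\Psi$ — is complete and is exactly the kind of proof the placement after Lemma~\ref{l:alt-char-R} suggests.
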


\subsection{Curves with finite \texorpdfstring{$\calR$}R-action}

In this section, we study the properties of curves with finite $\calR$-action, i.e.,
elements of
\begin{equation}
\label{def:Aab}
\CER ab: = \biggl\{ 
(\rho,\bj ) \in \CE ab:\ 
\int_a^b \calR(\rho_t,\bj_t)\, \dd t <\pinfty 
\biggr\}.
\end{equation}
%
The finiteness of the $\calR$-action leads to the following remarkable property: 
A curve $(\rho,\bj)$ with finite $\calR$-action can be separated into two mutually singular curves $(\rho^a,\bj^a),\ (\rho^\perp,\bj^\perp)\in \CER ab$ that evolve independently, and contribute independently to~$\calR$. Consequently, finite $\calR$-action preserves $\pi$-absolute continuity of $\rho$: if $\rho_t\ll\pi$ at any $t$, then $\rho_t\ll\pi$ at all~$t$. 
These properties and others are proved in Theorem~\ref{thm:confinement-singular-part} and Corollary~\ref{cor:propagation-AC} below.

\begin{remark}\label{rem:skew-symmetric}
  If $(\rho,\bj )\in \CER ab$ then the `skew-symmetrization' 
$  \tj=(\bj -\symmap \bj )/2$ \EEE  of $\bj $ gives rise to a pair
 $(\rho,\tj)\in \CER ab$ \EEE as well, and  it
has lower $\calR$-action:
\[
\int_a^b \calR(\rho_t,  \tj_t)\, \dd t \EEE
\leq 
\int_a^b \calR(\rho_t,\bj_t)\, \dd t.
\]
This follows from the convexity of $w\mapsto\Upsilon(u_1,u_2,w)$, the symmetry of $(u_1,u_2)\mapsto\Upsilon(u_1,u_2,w)$, and the invariance of the continuity equation~\eqref{eq:ct-eq-def} under the `skew-symmetrization' $\bj  \mapsto   \tj$
\RICKYNEW (cf.\ also the calculations in the proof of Corollary \ref{th:chain-rule-bound2}). 
\EEE 
As a result, we can often assume without loss of generality that a flux $\bj $ is skew-symmetric, i.e.\ that $\symmap \bj  = -\bj $. 
\end{remark}

\begin{theorem}
  \label{thm:confinement-singular-part}
  Let $(\rho,\bj )\in \CER ab$
  and let us consider the Lebesgue decompositions
  $\rho_t=\rho_t^a+\rho_t^\perp$
  and
  $\bj_t =\bj_t^a+\bj_t^\perp$ of $\rho_t$ with respect to ~$\pi$ and
  of $\bj_t $ with respect to ~$\tetapi$.
  \begin{enumerate}
  \item We have $(\rho^a,\bj^a)\in \CER ab$ with
    \begin{equation}
      \label{eq:55}
      \int_a^b \calR(\rho^a_t,\bj^a_t)\, \dd t \le \int_a^b \calR(\rho_t,\bj_t)\, \dd t .
    \end{equation}
    In particular $t\mapsto \rho_t^a(V)$ and $t\mapsto\rho_t^\perp(V)$ are  constant.
  \item If $\upalpha$ is $1$-homogeneous then also
    $(\rho^\perp,\bj^\perp)\in \CER ab$ and
        \begin{equation}
      \label{eq:55bis}
      \int_a^b \calR(\rho^a_t,\bj^a_t)\, \dd t +
      \int_a^b \calR(\rho^\perp_t,\bj^\perp_t)\, \dd t=
      \int_a^b \calR(\rho_t,\bj_t)\, \dd t .
    \end{equation}
  \item If $\upalpha$ is sub-linear or
    $\kappa(x,\cdot)\ll\pi$ for every $x\in V$,
    then $\rho_t^\perp$ is constant in
      $[a,b]$ and $\bj^\perp\equiv0$.
     \end{enumerate}
   \end{theorem}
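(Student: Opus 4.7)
The plan is to reduce the three claims to the pointwise decomposition results of Lemma~\ref{l:alt-char-R} by finding a \emph{single} $\pi$-null Borel set $N \subset V$ that realizes the Lebesgue decomposition $\rho_t = \rho_t^a + \rho_t^\perp$ simultaneously for every $t \in [a,b]$. By Corollary~\ref{c:narrow-ct}, fix a common dominating measure $\gamma \in \calM^+(V)$ with $\rho_t = \tilde u_t \gamma$, and let $N$ be a $\pi$-null Borel set carrying the singular part of $\gamma$ with respect to $\pi$ (so that $\gamma \mres N \perp \pi$ and $\gamma \mres (V\setminus N) \ll \pi$). Then $\rho_t^a = \rho_t \mres (V \setminus N)$ and $\rho_t^\perp = \rho_t \mres N$ hold \emph{uniformly} in $t$. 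This resolves what I see as the main obstacle: without this uniformization, the singular sets of the Lebesgue decompositions of $\rho_t$ could depend on $t$ and obstruct testing the continuity equation against a time-independent indicator of $N$.

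For Claim~(1), since $\int_a^b \calR(\rho_t, \bj_t)\,\dd t < \pinfty$, Lemma~\ref{l:alt-char-R}(1) applied pointwise with $P = N$ yields, for a.e. $t \in [a,b]$, that $|\bj_t|(N \times (V \setminus N)) = |\bj_t|((V \setminus N) \times N) = 0$, so $\bj_t^a = \bj_t \mres ((V \setminus N) \times (V \setminus N))$ and $\bj_t^\perp = \bj_t \mres (N \times N)$. For any $\varphi \in \Bb(V)$, I test~\eqref{2ndfundthm} against $\varphi \mathbbm{1}_{V \setminus N} \in \Bb(V)$: the left-hand side becomes $\int \varphi \,\dd \rho_{t_2}^a - \int \varphi \,\dd \rho_{t_1}^a$, while $\ona(\varphi \mathbbm{1}_{V \setminus N})$ vanishes on $N \times N$ and coincides with $\ona \varphi$ on $(V \setminus N) \times (V \setminus N)$, so the cross-term vanishing collapses the right-hand side to $\int_{t_1}^{t_2} \iint \ona \varphi \,\dd \bj_t^a\,\dd t$. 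This proves $(\rho^a, \bj^a) \in \CE ab$. The action bound~\eqref{eq:55} then follows by integrating the pointwise identity~\eqref{eq:37} over $[a,b]$ and dropping the nonnegative singular term, while constancy of $\rho_t^a(V)$ and $\rho_t^\perp(V) = \rho_t(V) - \rho_t^a(V)$ is inherited from~\eqref{eq:91} applied to each piece.

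Claim~(2) is immediate from the refined pointwise identity~\eqref{eq:175} available under $1$-homogeneity of $\upalpha$: by linearity $(\rho^\perp, \bj^\perp) = (\rho - \rho^a, \bj - \bj^a) \in \CE ab$, and time-integrating~\eqref{eq:175} yields~\eqref{eq:55bis} together with finiteness of the singular action. For Claim~(3), under either hypothesis Lemma~\ref{l:alt-char-R}\ref{l:alt-char-R:i2} gives $\bj_t^\perp = 0$ for a.e. $t$ where $\calR(\rho_t,\bj_t) < \pinfty$, hence $\bj^\perp \equiv 0$ as a measurable family. By linearity $(\rho^\perp, \bj^\perp) \in \CE ab$, and the continuity equation with vanishing flux forces $\int \varphi \,\dd \rho_{t_2}^\perp = \int \varphi \,\dd \rho_{t_1}^\perp$ for every $\varphi \in \Bb(V)$, proving that $\rho^\perp$ is constant on $[a,b]$.
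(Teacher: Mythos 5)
Your proof is correct and follows essentially the same approach as the paper: both construct the time-uniform $\pi$-null set $N$ (the paper's $P$) from the common dominating measure $\gamma$ of Corollary~\ref{c:narrow-ct}, apply Lemma~\ref{l:alt-char-R} pointwise in $t$ to decompose the flux and obtain the vanishing of cross-terms, and test the continuity equation against $\varphi\mathbbm 1_{V\setminus N}$ using that $\ona(\varphi\mathbbm 1_{V\setminus N})$ vanishes on $N\times N$ and equals $\ona\varphi$ on $(V\setminus N)\times(V\setminus N)$. Parts~(2) and~(3) likewise match the paper's argument verbatim in structure (linearity of the continuity equation plus~\eqref{eq:175}, respectively Lemma~\ref{l:alt-char-R}\ref{l:alt-char-R:i2}).
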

   \begin{proof}
     \textit{(1)} Let $\gamma\in \calM^+(V)$ be a dominating measure
     for the curve $\rho$ according to Corollary~\ref{c:narrow-ct}
     and let us denote by $\gamma=\gamma^a+\gamma^\perp$ the
     Lebesgue
     decomposition of $\gamma$ with respect to ~$\pi$; we also denote by
     $P\in\calB(V)$ a $\pi$-negligible Borel set such that
     $\gamma^\perp=\gamma\mres P$. Setting
     $R:=V\setminus P$, since $\rho_t\ll \gamma$
     we thus obtain $\rho^a_t=\rho_t\mres R$,
     $\rho^\perp_t=\rho_t\mres P$. By Lemma \ref{l:alt-char-R}
     for $\Lebone$-a.e.~$t\in (a,b)$ we obtain
     $\bj^\perp_t=\bj \mres(P\times P)$ and
     $\bj^a_t=\bj \mres(R\times R)$ with $|\bj_t|(R\times P)=|\bj_t|(P\times
     R)=0$.
     For every function $\varphi\in\Bb$
     we have $\ona(\varphi\chi_R)\equiv0$ on $P\times P$ so that
     we get
     \begin{align*}
       \int_V \varphi\,\dd \rho_{t_2}^a-
       \int_V \varphi\,\dd \rho_{t_1}^a
       &=
         \int_R \varphi\,\dd \rho_{t_2}-
         \int_R \varphi\,\dd \rho_{t_1}=
         \int_{t_1}^{t_2} \iint_{\edg}
         \ona(\varphi \chi_R)\,\dd (\bj^a_t+\bj^\perp_t)\,\dd t
       \\&=\int_{t_1}^{t_2} \iint_{R\times R}
       \ona(\varphi \chi_R)\,\dd \bj^a_t\,\dd t
       =\int_{t_1}^{t_2} \iint_{\edg}
       \ona\varphi\,\dd \bj^a_t\,\dd t,
     \end{align*}
     showing that $(\rho^a,\bj^a)$ belongs to $\CE ab$. Estimate
     \eqref{eq:55} follows by \eqref{eq:37}.
     \OLI From Lemma~\ref{l:cont-repr} \EEE we deduce that $\rho_t^a(V)$ and $\rho_t^\perp(V)$ are constant.

\medskip

     \textit{(2)} This follows by the linearity of the continuity
     equation and \eqref{eq:175}.
     
\medskip     
     
     \textit{(3)} If $\upalpha$ is sub-linear or $\kappa(x,\cdot)\ll\pi$ for every
     $x\in V$, then 
     Lemma~\ref{l:alt-char-R}
     shows that $\bj^\perp\equiv 0$. Since by linearity
     $(\rho^\perp,\bj^\perp)\in \CE ab$, we deduce that $\rho^\perp_t$ is constant.
   \end{proof}
   
   \begin{cor}\label{cor:propagation-AC}
    \OLI Let $(\rho,\bj )\in \CER ab$. \EEE If
    there exists $t_0\in [a,b]$ such that
    $\rho_{t_0}\ll\pi$, then we have $\rho_t\ll\pi$ for every $t\in [a,b]$,
    $\bj^\perp\equiv0$, and
    $\odiv \bj_t\ll \pi$ for $\Lebone$-a.e.~$t\in (a,b)$.
    In particular, there exists an absolutely continuous and
    a.e.~differentiable map $u:[a,b]\to L^1(V,\pi)$ and
    a map $w\in L^1(\edg,\lambda\otimes\tetapi)$ 
    such that 
    \begin{equation}
      \label{eq:42}
      2\bj_\lambda=w \lambda\otimes\tetapi,\quad
      \partial_t u_t(x)=\frac12\int_V
      \big(w_t(y,x)-w_t(x,y)\big)\,\kappa(x,\dd y)
      \quad\text{for a.e.~}t\in (a,b).
    \end{equation}
    Moreover there exists a measurable map
    $\xi:(a,b)\times \edg\to \R$
    such that 
    $w=\xi\upalpha(u^-,u^+)$ $\Lebone\otimes\tetapi$-a.e.~and
    \begin{equation}
      \label{eq:45}
      \calR(\rho_t,\bj_t)=
      \frac12\iint_\edg
      \Psi(\xi_t(x,y))\upalpha(u_t(x),u_t(y))\,\tetapi(\dd x,\dd y)
      \quad\text{for a.e.~$t\in (a,b)$.}
    \end{equation}
    If $w$ is skew-symmetric, then $\xi$ is skew-symmetric as well and
    \eqref{eq:42} reads as
    \begin{equation}
      \label{eq:177}
       \partial_t u_t(x)=\int_V
       w_t(y,x)\,\kappa(x,\dd y)=
       \int_V
       \xi_t(y,x)\upalpha(u_t(x),u_t(y))\,\kappa(x,\dd y)
       \quad\text{a.e.~in }(a,b).
    \end{equation}
  \end{cor}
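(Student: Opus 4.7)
The strategy is to first exploit the mass-conservation statement of Theorem~\ref{thm:confinement-singular-part}(1) to rule out any singular part, then upgrade the total-variation continuity of Lemma~\ref{l:cont-repr} to $L^1(V,\pi)$-absolute continuity, and finally pass from measures to densities and read off the pointwise form of the continuity equation and of $\calR$.

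First, Theorem~\ref{thm:confinement-singular-part}(1) yields $(\rho^a,\bj^a)\in\CER ab$ and the constancy of $t\mapsto \rho_t^\perp(V)$. The hypothesis $\rho_{t_0}\ll\pi$ gives $\rho_{t_0}^\perp=0$, hence $\rho_t^\perp(V)\equiv 0$; since $\rho_t^\perp\ge0$ this forces $\rho_t^\perp\equiv0$, so $\rho_t\ll\pi$ for every $t\in[a,b]$. Invoking now Lemma~\ref{l:alt-char-R}\ref{l:alt-char-R:i2} for a.e.~$t$ we conclude $\bj_t^\perp\equiv0$, i.e.~$\bj_t\ll\tetapi$.

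Next, setting $u_t:=\dd\rho_t/\dd\pi$, the continuity estimate \eqref{est:ct-eq-TV} gives, for every $t_1\le t_2$ in $[a,b]$,
\[
\|u_{t_2}-u_{t_1}\|_{L^1(V,\pi)} = \|\rho_{t_2}-\rho_{t_1}\|_{TV} \le 2\int_{t_1}^{t_2}|\bj_t|(\edg)\,\dd t,
\]
so that $t\mapsto u_t$ is absolutely continuous as a curve in $L^1(V,\pi)$ and thus $\Lebone$-a.e.~differentiable. Since $\int_a^b|\bj_t|(\edg)\,\dd t<\pinfty$ and $\bj_t\ll\tetapi$ for a.e.~$t$, the disintegration theorem applied to $\bj_\lambda$ (whose first marginal is dominated by $\lambda$) produces a jointly measurable density $w:(a,b)\times\edg\to\R$ with $2\bj_\lambda=w\,\lambda\otimes\tetapi$ and $w\in L^1(\lambda\otimes\tetapi)$.

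The pointwise continuity equation \eqref{eq:42} is now an elementary computation: from \eqref{eq:def:div}, $\odiv\bj_t=\sfx_\sharp\bj_t-\sfy_\sharp\bj_t$, and using the detailed-balance symmetry $\pi(\dd x)\kappa(x,\dd y)=\pi(\dd y)\kappa(y,\dd x)$ one finds
\[
2\,\odiv\bj_t(\dd x)=\pi(\dd x)\int_V\bigl[w_t(x,y)-w_t(y,x)\bigr]\,\kappa(x,\dd y),
\]
which, inserted into $\partial_t(u_t\pi)+\odiv\bj_t=0$, yields \eqref{eq:42}; in particular $\odiv\bj_t\ll\pi$. Finally, Lemma~\ref{l:alt-char-R}\ref{l:alt-char-R:i2} ensures that, for a.e.~$t$, $w_t=0$ $\tetapi$-a.e.~on $\edg\setminus\edg'_t$ where $\edg'_t:=\{(x,y):\upalpha(u_t(x),u_t(y))>0\}$; we may therefore define the measurable field
\[
\xi_t(x,y):=\frac{w_t(x,y)}{\upalpha(u_t(x),u_t(y))}\text{ on }\edg'_t,\qquad \xi_t(x,y):=0\text{ otherwise},
\]
so that $w=\xi\,\upalpha(u^-,u^+)$ $\Lebone\otimes\tetapi$-a.e. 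Identity \eqref{eq:45} is then \eqref{eq:21} rewritten in terms of $\xi$. If $w$ is skew-symmetric, the symmetry \eqref{alpha-symm} of $\upalpha$ transfers skew-symmetry to $\xi$; then $w_t(y,x)-w_t(x,y)=2w_t(y,x)$, and \eqref{eq:42} collapses to \eqref{eq:177}. The main technical point is the joint measurability of the density $w$ obtained from the disintegration, together with the bookkeeping that guarantees $w/\upalpha(u^-,u^+)$ is well defined where it needs to be; both are handled by combining the disintegration theorem with the $\tetapi$-a.e.~vanishing of $w_t$ on $\{\upalpha=0\}$ provided by the finiteness of the $\calR$-action.
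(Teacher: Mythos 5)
Your proof is correct and follows essentially the same route as the paper's: Theorem~\ref{thm:confinement-singular-part}(1) gives $\rho_t\ll\pi$, Lemma~\ref{l:alt-char-R}\ref{l:alt-char-R:i2} then kills $\bj^\perp$, a Radon--Nikodym/disintegration argument yields the density $w$, and the pointwise form of the continuity equation and the field $\xi$ come out of the usual computations. The only surface-level difference is that you check $\odiv\bj_t\ll\pi$ by writing out the integral and invoking detailed balance, whereas the paper gets the same thing via the push-forward identity $\odiv\bj=\sfx_\sharp(2\bj^\flat)\ll\sfx_\sharp(\Lebone\otimes\tetapi)\ll\Lebone\otimes\pi$; these are the same calculation in two notations.
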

  \begin{remark}
    \label{rem:general-fact}
  Relations   \eqref{eq:42} and \eqref{eq:177} hold both in the sense of a.e.~differentiability
    of maps with values in $L^1(V,\pi)$ and pointwise
    a.e.~with respect to ~$x\in V$: more precisely, there exists a set $U\subset
    V$ of
    full $\pi$-measure such that for every $x\in U$ the map
    $t\mapsto u_t(x)$ is absolutely continuous and  equations
    \eqref{eq:42} and \eqref{eq:177} hold for every $x\in U$,
    a.e.~with respect to ~$t\in (0,T)$.
  \end{remark}
  \begin{proof}
    The first part of the statement is an immediate consequence of
    Theorem~\ref{thm:confinement-singular-part},
    which yields $\rho^\perp_t(V)= 0$ for every $t\in [a,b]$.
    We can thus write $2\bj =w(\Lebone\otimes \tetapi)$ for some
    measurable map $w:(a,b)\times \edg\to \R$.
    Moreover
    $\odiv \bj \ll\Lebone\otimes \pi$, since 
    $\sfs_\sharp\bj\ll\sfs_\sharp (\Lebone\otimes\tetapi)=
    \Lebone\otimes\tetapi$, and therefore
    \begin{equation}
      \label{eq:46}
     2 \bj^\flat=\bj-\sfs_\sharp\bj\ll \Lebone\otimes\tetapi\quad\Longrightarrow\quad
      \odiv \bj=
      \sfx_\sharp (2\bj^\flat)\ll \EEE \sfx_\sharp(\Lebone\otimes\tetapi)\OLI \ll \EEE\Lebone\otimes\pi.
    \end{equation}
    Setting $z_t=\dd(\odiv \bj_t)/\dd\pi$ we get
    for a.e.~$t\in (a,b)$
    \begin{align*}
      \partial_t u_t&=-z_t,\\
      -2\int_V \varphi \,z_t\,\dd\pi&=
                                    \iint_\edg
                   (\varphi(y)-\varphi(x))w_t(x,y)\tetapi(\dd x,\dd y)
                   =
                   \iint_\edg  \varphi(x)
                                    (w_t(y,x)-w_t(x,y))\tetapi(\dd x,\dd y)
                   \\&=
                   \int_V \varphi(x)
                   \Big(\int_V (w_t(y,x)-w_t(x,y))
                   \kappa(x,\dd
      y)\Big)\pi(\dd x),
    \end{align*}
    The existence of $\xi$ and  formula~\eqref{eq:45} follow from
    Lemma \ref{l:alt-char-R}\ref{l:alt-char-R:i2}.
  \end{proof}

  \subsection{Chain rule for convex entropies}
  \label{subsec:chain-rule}
  Let us now consider a continuous convex function $\upbeta:\R_+\to\R_+$
  that is differentiable in $(0,+\infty)$. The main choice for $\upbeta$ will be the function~$\upphi$ that appears in the definition of the driving functional~$\calS$ (see Assumption~\ref{ass:S}), and the  example of the Boltzmann-Shannon entropy function~\eqref{logarithmic-entropy} illustrates why we only assume differentiability away from zero.
  
  By setting $\upbeta'(0)=\lim_{r\downarrow0} \upbeta'(r)\in
  [-\infty,\pinfty)$,
  we define 
  the function $\rmA_\upbeta:\R_+\times \R_+\to[-\infty,+\infty]$   by
  \begin{equation}
    \label{eq:102}
    \rmA_\upbeta(u,v):=
    \begin{cases}
      \upbeta'(v)-\upbeta'(u)&\text{if }u,v\in
      \R_+\times \R_+\setminus \{(0,0)\},\\
      0&\text{if }u=v=0.      
    \end{cases}
  \end{equation}
  Note that $\rmA_\upbeta$ is continuous
  (with extended real values) in $\R_+\times \R_+\setminus\{(0,0)\}$
  and is finite and continuous whenever $\upbeta'(0)>-\infty$.
  When $\upbeta'(0)=-\infty$ we have
  $\rmA_\upbeta(0,v)=-\rmA_\upbeta(u,0)=\pinfty$ for every $u,v>0$.

  In the following we will adopt the convention
  \begin{equation}
    \label{eq:75}
    |\pm\infty|=\pinfty,\quad
    a\cdot (\pinfty):=
    \begin{cases}
      \pinfty&\text{if }a>0,\\
      0&\text{if }a=0,\\
      -\infty&\text{if }a<0
    \end{cases}
    \quad
    a\cdot(-\infty)=-a\cdot (\pinfty),
  \end{equation}
  for every $a\in [-\infty,+\infty]$ and, using this convention,  we define the extended
  valued function
  $\rmB_\upbeta:\R_+\times\R_+\times \R\to [-\infty,+\infty]$ by
  \begin{equation}
    \label{eq:105}
    \rmB_\upbeta(u,v,w):=\rmA_\upbeta(u,v)w.
  \end{equation}
  We want to study the differentiability properties of the
  functional $\calF_\upbeta(\cdot|\pi)$ along
  solutions $(\rho,\bj)\in \CEI I$ of the continuity equation.
  Note that if $\upbeta$ is superlinear
  and $\calF_\upbeta$ is finite at a time $t_0\in I$,
  then Corollary \ref{cor:propagation-AC}
  shows that $\rho_t\ll\pi$ for every $t\in I$.
  If $\upbeta$ has linear growth then
  \begin{equation}
    \label{eq:100}
    \calF_\upbeta(\rho_t|\pi)=
    \int_V \upbeta(u_t)\,\dd\pi+\upbeta^\infty(1)\rho^\perp(V),\quad
    \rho_t=u_t\pi+\rho_t^\perp,
  \end{equation}
   where we have used that  $t \mapsto \rho_t^\perp(V)$ is constant. Thus,  we are  reduced to  studying 
  $\calF_\upbeta$ along $(\rho^a,\bj^a)$, which is still a solution
  of the continuity equation. 
  The absolute continuity property of $\rho_t$ with respect to ~$\pi$
  is therefore quite a natural assumption
  in the next result.

  \begin{theorem}[Chain rule I]
\label{th:chain-rule-bound}
Let $(\rho,\bj )\in \CER ab$ with $\rho_t=u_t\pi\ll \pi$
and   let $2\bj^\flat=\bj-\sfs_\sharp \bj=w^\flat\Lebone\otimes\tetapi$ \EEE as in Corollary \ref{cor:propagation-AC}
satisfy 
\begin{equation}
\label{ass:th:CR}
\int_V \upbeta(u_a)\,\dd\pi<\pinfty,\quad
\int_a^b\iint_\edg\Big(\rmB_\upbeta(u_t(x),u_t(y),w^\flat_t(x,y))\Big)_+\,\tetapi(\dd x,\dd y)\,\dd t<\pinfty
\end{equation}
Then the map $t\mapsto \int_V \upbeta(u_t)\,\dd\pi$ is absolutely
continuous in $[a,b]$,
the map $\rmB_\upbeta(u^-,u^+,w^\flat)$ is $\Lebone\otimes\tetapi$-integrable
and
\begin{equation}
\label{eq:CR}
  \frac{\dd}{\dd t}\int_V \upbeta(u_t)\,\dd\pi
  =
\frac12\iint_{\edg} \rmB_\upbeta(u_t(x),u_t(y),w^\flat_t(x,y)) \tetapi(\dd x,\dd y)
  \quad\text{for a.e.~}t\in (a,b).
\end{equation}
\end{theorem}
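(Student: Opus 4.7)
The plan is a regularization–limit argument: approximate $\upbeta$ by globally Lipschitz, $C^1$ convex functions $\upbeta_k \nearrow \upbeta$, prove the chain rule in the Lipschitz regime via the $L^1$-absolute continuity of $t\mapsto u_t$, and then pass to the limit by a monotone convergence argument driven by the one-sided assumption in \eqref{ass:th:CR}. Invoking Remark \ref{rem:skew-symmetric}, I would first reduce without loss of generality to the case of a skew-symmetric $\bj$, so that $w = w^\flat$ throughout.

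For the regularization, I would define $\upbeta_k$ to coincide with $\upbeta$ on $[1/k,k]$ and extend affinely outside, with slopes $\upbeta'(1/k)$ on $[0,1/k]$ and $\upbeta'(k)$ on $[k,\pinfty)$. Each $\upbeta_k$ is convex, $C^1$ on $[0,\pinfty)$, globally Lipschitz, dominated by $\upbeta$, and satisfies $\upbeta_k \nearrow \upbeta$ pointwise. The crucial monotonicity property is that $|\upbeta_k'(v) - \upbeta_k'(u)|$ is non-decreasing in $k$ for every $u,v\ge 0$, which makes both $(\rmB_{\upbeta_k})_+$ and $(\rmB_{\upbeta_k})_-$ pointwise non-decreasing in $k$ for every fixed $w$.

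To prove the chain rule for $\upbeta_k$, I would invoke the $L^1$-AC lift of $u$ from Corollary \ref{cor:propagation-AC}. Since $\upbeta_k$ is Lipschitz, $t\mapsto \upbeta_k(u_t)\in L^1(V,\pi)$ is absolutely continuous with pointwise-a.e.\ derivative $\upbeta_k'(u_t)\partial_t u_t$, and by dominated convergence on difference quotients the scalar map $t\mapsto\int_V \upbeta_k(u_t)\,\dd\pi$ is absolutely continuous with derivative $\int_V \upbeta_k'(u_t)\,\partial_t u_t\,\dd\pi$. Substituting the pointwise formula \eqref{eq:42} for $\partial_t u_t$, applying Fubini (justified because $\upbeta_k'$ is bounded and $w\in L^1(\lambda\otimes\tetapi)$ for a.e.\ $t$), and then symmetrizing via the symmetry of $\tetapi$ together with the antisymmetry of $\rmA_{\upbeta_k}$, I obtain the claimed identity with $\upbeta_k$ in place of $\upbeta$.

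Integrating on $[a,b]$ and letting $k\to\pinfty$ is the main step, and I expect this to be the only real obstacle. Monotone convergence gives $\int \upbeta_k(u_a)\,\dd\pi \nearrow \int \upbeta(u_a)\,\dd\pi < \pinfty$ and $\int_a^b \iint (\rmB_{\upbeta_k})_+\,\tetapi\,\dd t \nearrow \int_a^b \iint (\rmB_\upbeta)_+\,\tetapi\,\dd t < \pinfty$, and the $\upbeta_k$-chain rule yields the uniform bound $\int \upbeta_k(u_b)\,\dd\pi \le \int \upbeta(u_a)\,\dd\pi + \tfrac12 \int_a^b \iint (\rmB_\upbeta)_+\,\tetapi\,\dd t$; hence $\int \upbeta(u_b)\,\dd\pi < \pinfty$ by monotone convergence. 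Isolating $\int_a^b \iint (\rmB_{\upbeta_k})_-\,\tetapi\,\dd t$ in the identity and passing to the limit on the other three terms then forces $(\rmB_\upbeta)_- \in L^1(\lambda\otimes\tetapi)$, after which the whole chain-rule identity passes to the limit. Applying the same reasoning on arbitrary subintervals $[t_1,t_2]\subset[a,b]$ yields absolute continuity of $t\mapsto \int_V \upbeta(u_t)\,\dd\pi$ together with the pointwise derivative formula \eqref{eq:CR}. The hard part is precisely this two-sided integrability extracted from a one-sided hypothesis, which is recovered only a posteriori by closing the loop through the finite entropy at time $b$.
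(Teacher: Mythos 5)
Your proposal takes essentially the same route as the paper's proof: regularize $\upbeta$ by a monotone increasing sequence of globally Lipschitz convex approximants $\upbeta_k$, prove the chain rule for each $\upbeta_k$ via the $L^1(V,\pi)$-absolute continuity of $t\mapsto u_t$ from Corollary~\ref{cor:propagation-AC}, then close the loop by a Beppo Levi / monotone convergence argument that first extracts $(\rmB_\upbeta)_- \in L^1$ from the one-sided hypothesis~\eqref{ass:th:CR} together with the finite entropy at time $a$ and the lower bound $\upbeta\ge 0$, and finally passes to the limit in the integrated identity to obtain absolute continuity and~\eqref{eq:CR}. The only genuine difference is the choice of regularization: the paper truncates the derivative in its range, setting $\upbeta_k'(\sigma) := \max\{-k,\min\{\upbeta'(\sigma),k\}\}$, whereas you truncate in the domain by freezing $\upbeta$ affinely outside $[1/k,k]$; both give the key inequalities $0\le \upbeta_k'(v)-\upbeta_k'(u)\le \rmA_\upbeta(u,v)$ for $u\le v$ and monotone increase of $|\upbeta_k'(v)-\upbeta_k'(u)|$ in $k$, so either works. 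This is a cosmetic variation rather than a different argument, and the structure of the proof --- intermediate Lipschitz chain rule, one-sided domination $(B_k)_+\le B_+$, isolation of the negative part, a posteriori two-sided integrability --- is exactly what the paper does.
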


\begin{remark}
At first sight  condition~\eqref{ass:th:CR} on the positive part of $\rmB_\upbeta$ is remarkable: we only require the positive part of $\rmB_\upbeta$ to be integrable, but in the assertion we obtain integrability of the negative part as well. This integrability arises from the combination of the upper bound on $\int_V \upbeta(u_a)\,\dd \pi$ in~\eqref{ass:th:CR} with the lower bound $\upbeta\geq0$.
\end{remark}

\begin{proof}
{\em Step 1:\ Chain rule for an approximation.}
Define for $k\in \N$ an approximation $\upbeta_k$ of $\upbeta$ as follows:\ Let $\upbeta_k'(\sigma):=\max\{-k,\min\{\upbeta'(\sigma),k\}\}$ be the truncation of $\upbeta'$ to the interval $[-k,k]$. Due to the assumptions on $\upbeta$, we may assume that $\upbeta$ achieves a minimum at the point $s_0\in[0,\pinfty)$. Now set $\upbeta_k(s) := \upbeta(s_0) + \int_{s_0}^s \upbeta_k'(\sigma)\,\dd \sigma$. Note that $\upbeta_k$ is differentiable and globally Lipschitz, and converges monotonically to $\upbeta(s)$ for all $s\geq0$ as $k\to\infty$.

For each $k\in\N$ and $t\in [a,b]$
we define 
\[
  S_{k}(t): = \int_{V}  \upbeta_k(u_t)\, \dd \pi,\quad
  S(t): = \int_{V}  \upbeta(u_t)\, \dd\pi.
\]
By convexity and Lipschitz continuity  of $\upbeta_k$, we have that
\begin{align*}
	\upbeta_k(u_t(x))-\upbeta_k(u_s(x)) \le \upbeta_k'(u_t(x))( u_t(x)-u_s(x)) \le k| u_t(x)-u_s(x)|\,.
\end{align*}
Hence, we deduce by Corollary \ref{cor:propagation-AC}
that for every $a\le s<t\le b$
\begin{align*}
  S_{k}(t) - S_{k}(s) &= 
                        \int_{V} \bigl[\upbeta_k(u_t(x))-\upbeta_k(u_s(x))\bigr]\pi(\dd x) \\
                            &\le
                              k\|u_t-u_s\|_{L^1(V;\pi)}
                              \le k\int_s^t \|\partial_r
                              u_r\|_{L^1(V;\pi)}\,\dd r.
\end{align*}
%
We conclude that the function $t\mapsto S_k(t)$ is absolutely
continuous.
Let us pick a point $t\in (a,b)$ of differentiability for $t\mapsto
S_k(t)$:
it easy to check that 
\begin{align*}
  \frac{\dd }{\dd t}S_k(t) &= 
                             \int_{V}  \upbeta'_k(u_t)\,\partial_t  u_t
                             \,\dd\pi =
                             \frac12\iint_{\edg} \ona \upbeta'_k(u_t)w^\flat_t \, \dd\tetapi\,,
\end{align*}
which by integrating over time yields
\begin{equation}\label{ineq:est-S-k}
  S_k(t) - S_k(s) =
  \frac12\int_s^t\iint_{\edg} \ona \upbeta'_k(u_r)w^\flat_r\, \dd \tetapi\,\dd r  \qquad \text{for all } a \leq s \leq t \leq b. 
\end{equation}
\smallskip
\paragraph{\em Step 2:\ The limit $k\to\infty$}
Since $0\leq \upbeta_k''\leq \upbeta''$ we have
\begin{equation}
  \label{eq:103}
  0\le \rmA_{\upbeta_k}(u,v)=\upbeta_k'(v)-\upbeta_k'(u)\le
  \upbeta'(v)-\upbeta'(u)=\rmA_\upbeta(u,v)\quad\text{whenever }0\le
  u\le v
\end{equation}
and
\begin{equation}
  \label{eq:104}
  |\upbeta_k'(v)-\upbeta_k'(u)|\le |\rmA_\upbeta(u,v)|\quad
  \text{for every }u,v\in \R_+.
\end{equation}
We can thus estimate 
the right-hand side in \eqref{ineq:est-S-k} 
\begin{align}
  (B_k)_+=\left(
  \ona
  \upbeta'_k(u)\, w^\flat\right)_+
  &	
    \le
    \left(\rmA_\upbeta(u^-,u^+) w^\flat\right)_+=B_+
	\label{est:CR:w-ona-phi-k}
\end{align}
where we have used the short-hand notation 
\begin{equation}
  \label{eq:106}
  B_k(r,x,y)=\rmB_{\upbeta_k}(u_r(x),u_r(y),w^\flat_r(x,y)),\quad
  B(r,x,y):=\rmB_\upbeta(u_r(x),u_r(y),w^\flat_r(x,y)).
\end{equation}
Assumption~\eqref{ass:th:CR} implies that the right-hand side
in~\eqref{est:CR:w-ona-phi-k} is an element of $L^1([a,b]\times
\edg;\Lebone\otimes\tetapi)$,
so that in particular $B_+\in \R$ for
$(\Lebone\otimes\tetapi)$-a.e.~$(t,x,y)$.

Moreover, \eqref{ineq:est-S-k} yields
\begin{align}
  \int_a^b \iint_\edg (B_k)_-\,\dd \tetapi_\Lebone
  &=\notag
    \int_a^b \iint_\edg (B_k)_+\,\dd \tetapi_\Lebone+
    S_k(a)-S_k(b)
  \\
  &\le \int_a^b \iint_\edg (B)_+\,\dd \tetapi_\Lebone+
    S(a)<\pinfty.\label{eq:193}
\end{align}
Note that the sequence $k\mapsto (B_k)_-$ is definitely $0$ or
is monotonically increasing to $B_-$. Beppo Levi's Monotone
Convergence Theorem and the uniform estimate
\eqref{eq:193}
then yields that $B_-\in L^1((a,b)\times \edg,\Lebone\otimes\tetapi)$,
thus showing that $\rmB_\upbeta(u^-,u^+,w^\flat)$ is
$(\Lebone\otimes\tetapi)$-integrable
as well.

We can thus pass to the limit in \eqref{ineq:est-S-k} as
$k\to\pinfty$ and we have
\begin{equation}
\lim_{k\to\pinfty} \ona
  \upbeta'_k(u)\, w^\flat =B
\quad
\text{$\Lebone\otimes \tetapi$-a.e.~in
$(a,b)\times \edg$.}
\label{eq:57}
\end{equation}
The identity~\eqref{eq:57} is obvious if $\upbeta'(0)$ is finite, and if
$\upbeta'(0)=-\infty$ then it follows by the upper bound \eqref{est:CR:w-ona-phi-k}
and the fact that the right-hand side of \eqref{est:CR:w-ona-phi-k}
is finite almost everywhere.

\nc
The Dominated Convergence Theorem then implies that 
\[
\int_s^t\iint_{\edg} \ona \upbeta'_k(u_r)\, w^\flat_r \,\dd \tetapi\,\dd r 
\quad\longrightarrow\quad 
\int_s^t\iint_{\edg}B\, \dd \tetapi\,\dd r
\qquad\text{as}\quad k\to\infty\,.
\]
By the monotone convergence theorem  $S(t) = \lim_{k\to \pinfty}
S_k(t)\in [0,\pinfty]$
for all $t\in [a,b]$ and the limit is finite for $t=0$.
For all $t\in [a,b]$, therefore,
\[
S(t)
= S(a)+ \frac12\int_a^t \iint_\edg B \, \dd \tetapi  \,\dd  r,
\]
which shows that $S$ is absolutely continuous and \eqref{eq:CR} holds.
\end{proof}
%


\par
 We now introduce three functions associated with the (general) continuous convex function $\upbeta:\R_+\to\R_+$,
   differentiable in $(0,+\infty)$, that we have considered so far, and whose main example  will be the entropy density 
$\upphi$ from \eqref{cond-phi}. \GGGO
Recalling the definition \eqref{eq:102}, the convention \eqref{eq:75},
and setting $\Psi^*(\pm\infty):=\pinfty$, let us now introduce
the functions $\rmD^+_\upbeta, \rmD^-_\upbeta,\rmD_\upbeta:\R_+^2\to[0,\pinfty]$
\begin{subequations}
  \label{subeq:D}
  \begin{align}
    \label{eq:181}
    \rmD^-_\upbeta(u,v)&:=
      \Psi^*(\rmA_\upbeta(u,v))\upalpha(u,v)\\
      &\phantom:= \begin{cases}
        \Psi^*(\rmA_\upbeta(u,v))\upalpha(u,v)&\text{if
        }\upalpha(u,v)>0,\\
        0&\text{otherwise,}
         \end{cases}\notag\\[2\jot]
    \label{eq:52}
    \rmD^+_\upbeta(u,v)&:=
      \begin{cases}
        \Psi^*(\rmA_\upbeta(u,v))\upalpha(u,v)&\text{if
        }\upalpha(u,v)>0,\\
        0&\text{if }u=v=0,\\
        \pinfty&\text{otherwise, i.e.~if }\upalpha(u,v)=0,\ u\neq v,
      \end{cases}\\[2\jot]
    \label{eq:182}
    \rmD_\upbeta(\cdot,\cdot)&:=
    \text{the lower semicontinuous envelope of
      $\rmD_\upbeta^+$ in $\R_+^2$}.
  \end{align}
\end{subequations}
 The function $\rmD_\upphi$ corresponding to the choice $\upbeta = \upphi$ shall feature in the (rigorous) definition of the \emph{Fisher information} functional $\Fish$, cf.\ 
\eqref{eq:def:D} ahead. Nonetheless, it is significant to introduce the functions $ \rmD^-_\upphi$ and $ \rmD^+_\upphi$ as well, cf.\ Remarks \ref{rmk:why-interesting-1} 
and  
 \ref{rmk:Mark} 
ahead. \EEE
\begin{example}[The functions $\rmD^\pm_\upphi$ and $\rmD_\upphi$ in the quadratic and in the  $\cosh$ case]
 \label{ex:Dpm}
In the two examples of the linear equation~\eqref{eq:fokker-planck}, with Boltzmann entropy function~$\upphi$, and  with quadratic and cosh-type potentials $\Psi^*$ (see~\eqref{choice:cosh} and~\eqref{choice:quadratic}), the functions $\rmD^\pm_\upphi$ and $\rmD_\upphi$ take the following forms:
    \begin{enumerate}
        \item If $\Psi^*(s)=s^2/2$ and, accordingly,  $\upalpha(u,v)=(u-v)/(\log(u)-\log(v))$ for all $u,v >0$ (with $\upalpha(u,v)=0$ otherwise), then        \begin{align*}
        \OrmD^-_\upphi(u,v) &= \begin{cases}
                  \frac{1}{2}(\log(u)-\log(v))(u-v)  & \text{if }  u,\, v>0,
                  \\
                  0 & \text{if $u=0$ or $v=0$},
                  \end{cases}\\
        \OrmD_\upphi(u,v) = \OrmD^+_\upphi(u,v)&= \begin{cases}
                  \frac{1}{2}(\log(u)-\log(v))(u-v)  & \text{if }  u,\, v>0,
                  \\
                  0 & \text{if } u=v=0,
                  \\
                  \pinfty & \text{if } u=0 \text{ and } v \neq 0, \text{ or vice versa}.
                  \end{cases}
        \end{align*}
        {For this example $\OrmD_\upphi^+$ and $\OrmD_\upphi$ are convex, and all three functions are lower semicontinuous.}

        \item For the case $\Psi^*(s)=4\bigl(\cosh(s/2)-1\bigr)$ and, accordingly, $\upalpha(u,v)=\sqrt{u v}$ for all $u,v \geq 0$, then         \begin{align*}
       \OrmD^-_\upphi(u,v) &= \begin{cases}
                  2\Bigl(\sqrt{u}-\sqrt{v}\Bigr)^2 & \text{if }  u,\, v>0,
                  \\
                  0 & \text{if $u=0$ or $v=0$},
                  \end{cases}\\
          \OrmD_\upphi(u,v) &= 2\Bigl(\sqrt{u}-\sqrt{v}\Bigr)^2\qquad {\text{for all }u,v\geq 0,}\\
          \OrmD^+_\upphi(u,v) &= \begin{cases}
                  2\Bigl(\sqrt{u}-\sqrt{v}\Bigr)^2 & \text{if  $u, v>0$ or $u=v=0$},
                  \\
                  \pinfty & \text{if } u=0 \text{ and } v \neq 0, \text{ or vice versa}.
                  \end{cases} 
        \end{align*}
        For this example,  $\OrmD_\upphi^+$ and $\OrmD_\upphi$ again are convex, but only $\rmD^-_\upphi$ and $\rmD_\upphi$ are lower semicontinuous.
      \end{enumerate}
  \end{example}

We collect a number of general properties of $\rmD_\upbeta$ and $\rmD_\upbeta^\pm$.

%
  \begin{lemma}
  \label{le:trivial-but-useful}
  \begin{enumerate}[ref=(\arabic*)]
  \item $\rmD_\upbeta^-\leq \rmD_\upbeta\leq \rmD_\upbeta^+$;
  \item $\rmD_\upbeta^-$ and $\rmD_\upbeta$ are  lower semicontinuous;
   \item  \label{le:trivial-but-useful:ineq}
    For every $u,v\in \R_+$ and $w\in \R$ we have
    \begin{equation}
      \label{eq:107}
      \bigl|\rmB_\upbeta(u,v,w)\bigr|
      \le
      \Upsilon(u,v,w)+\rmD^-_\upbeta(u,v).
    \end{equation}
    \item 
    Moreover, when the right-hand side of \eqref{eq:107}
    is finite, then the equality 
    \begin{equation}
      \label{eq:107a}
        -\rmB_\upbeta(u,v,w)
      =
      \Upsilon(u,v,w)+\rmD^-_\upbeta(u,v)
    \end{equation}
    is equivalent to the condition
    \begin{equation}
      \label{eq:109}
      \upalpha(u,v)=w=0\quad\text{or}\quad\biggl[
      \upalpha(u,v)>0,\ \rmA_\upbeta(u,v)\in \R,\
      -w=(\Psi^*)'\big(\rmA_\upbeta(u,v)\big)\upalpha(u,v)\biggr].
    \end{equation}
      \end{enumerate}
  \end{lemma}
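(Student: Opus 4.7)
The strategy is to handle the four claims in the order (2), (1), (3), (4), since the first two are essentially bookkeeping around the definitions and the real work sits in the Young-inequality arguments of (3) and (4). The main technical points to watch are the boundary behaviour on $\{uv=0\}$ and the conventions for $\pm\infty\cdot 0$, since $\rmA_\upbeta$ may be infinite and $\upalpha$ may vanish there.

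\medskip\noindent\textbf{Plan for (2).} The function $\rmD_\upbeta$ is lower semicontinuous by the very definition \eqref{eq:182}. For $\rmD_\upbeta^-$, I would split $\R_+^2$ into the open interior $(0,\infty)^2$ and the boundary $\{uv=0\}$. On the interior $\upalpha$ is strictly positive by \eqref{eq:38}, $\rmA_\upbeta$ is finite and continuous, and $\Psi^*$ is continuous, so $\rmD_\upbeta^-$ is continuous there. On the boundary $\rmD_\upbeta^-\equiv 0$, and since $\rmD_\upbeta^-\ge0$ everywhere, the inequality $\liminf_{(u_n,v_n)\to(u,v)}\rmD_\upbeta^-(u_n,v_n)\ge 0=\rmD_\upbeta^-(u,v)$ is automatic.

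\medskip\noindent\textbf{Plan for (1).} The inequality $\rmD_\upbeta\le \rmD_\upbeta^+$ is immediate from \eqref{eq:182}. For $\rmD_\upbeta^-\le\rmD_\upbeta$ I would first note the pointwise bound $\rmD_\upbeta^-\le\rmD_\upbeta^+$: the two coincide on $\{\upalpha(u,v)>0\}$ and on the rest of $\R_+^2$ one has $\rmD_\upbeta^-=0\le\rmD_\upbeta^+$. Since $\rmD_\upbeta^-$ is lower semicontinuous by (2) and majorised by $\rmD_\upbeta^+$, it is below the lower semicontinuous envelope $\rmD_\upbeta$.

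\medskip\noindent\textbf{Plan for (3).} Separate the cases by $\upalpha(u,v)$. If $\upalpha(u,v)>0$ and $\rmA_\upbeta(u,v)\in\R$, apply the Fenchel–Young inequality with the pair $\bigl(w/\upalpha(u,v),\,\pm\rmA_\upbeta(u,v)\bigr)$ and multiply through by $\upalpha(u,v)$; evenness of $\Psi^*$ gives $\Psi^*(\pm\rmA_\upbeta)=\Psi^*(\rmA_\upbeta)$, so both signs yield
\[
\pm\rmA_\upbeta(u,v)\,w\le \Psi\Bigl(\tfrac{w}{\upalpha(u,v)}\Bigr)\upalpha(u,v)+\Psi^*(\rmA_\upbeta(u,v))\upalpha(u,v)=\Upsilon(u,v,w)+\rmD_\upbeta^-(u,v).
\]
If $|\rmA_\upbeta|=\infty$ with $\upalpha(u,v)>0$, then $\rmD_\upbeta^-=\infty$ and the bound is trivial. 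If $\upalpha(u,v)=0$, then $uv=0$ by \eqref{eq:38}, hence $\upalpha^\infty(u,v)=0$ by \eqref{alpha-0}, and using Lemma~\ref{lem:Upsilon-properties} together with the superlinearity of $\Psi$ (Lemma~\ref{l:props:Psi}) one has $\Upsilon(u,v,w)=\Psi^\infty(w)=+\infty$ for $w\neq 0$, making the inequality trivial; for $w=0$ the convention \eqref{eq:75} gives $\rmB_\upbeta(u,v,0)=0$, while the right-hand side is $0$ as well.

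\medskip\noindent\textbf{Plan for (4).} Under the finiteness of the right-hand side, argue case by case. If $\upalpha(u,v)=0$, finiteness of $\Upsilon(u,v,w)=\Psi^\infty(w)$ forces $w=0$, and then the reverse equality $-\rmB_\upbeta=0=\Upsilon+\rmD_\upbeta^-$ is automatic and matches the first alternative of \eqref{eq:109}. If $\upalpha(u,v)>0$, finiteness of $\rmD_\upbeta^-$ forces $\Psi^*(\rmA_\upbeta)<\infty$ and in particular $\rmA_\upbeta\in\R$. Equality in the Fenchel–Young bound applied above is then equivalent to $w/\upalpha(u,v)\in\partial\Psi^*(-\rmA_\upbeta(u,v))$; since $\Psi^*$ is differentiable by Assumption~\ref{ass:Psi} and even (so $(\Psi^*)'$ is odd), this subdifferential reduces to the singleton $\{-(\Psi^*)'(\rmA_\upbeta(u,v))\}$, giving precisely $-w=(\Psi^*)'(\rmA_\upbeta(u,v))\upalpha(u,v)$, which is the second alternative of \eqref{eq:109}.

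\medskip\noindent The main obstacle is not any single estimate but the bookkeeping of boundary cases and infinite values: the pairs $(\rmA_\upbeta=\pm\infty,\ w=0)$ and $(\upalpha(u,v)=0,\ w\neq0)$ have to be handled by the conventions \eqref{eq:75} and by the superlinearity of $\Psi$, and the equality discussion in (4) must use both the differentiability and the evenness of $\Psi^*$ to collapse the subdifferential to a single value.
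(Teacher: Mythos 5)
Your overall structure and parts (1), (3), (4) follow essentially the same route as the paper; the issue is in your argument for the lower semicontinuity of $\rmD_\upbeta^-$ in part (2).

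You split $\R_+^2$ into the interior $(0,\infty)^2$ and the boundary $\{uv=0\}$ and assert that $\rmD_\upbeta^-\equiv 0$ on the boundary. That assertion is false in general: $\rmD_\upbeta^-(u,v)$ is zero precisely where $\upalpha(u,v)=0$ (by the convention $0\cdot(+\infty)=0$ from \eqref{eq:75}), and the set $\{\upalpha=0\}$ need not coincide with $\{uv=0\}$. Assumption \ref{ass:Psi} only forces the \emph{recession function} $\upalpha^\infty$ to vanish on the boundary, not $\upalpha$ itself. For example, the admissible choice $\upalpha\equiv 1$ satisfies the assumptions and has $\upalpha>0$ on all of $\R_+^2$; taking $\upbeta(s)=\tfrac12 s^2$, one gets $\rmD_\upbeta^-(u,0)=\Psi^*(-u)>0$ for $u>0$. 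Your lsc argument gives nothing at such boundary points. The correct dichotomy is $\{\upalpha=0\}$ versus $\{\upalpha>0\}$: where $\upalpha$ vanishes, $\rmD_\upbeta^-=0$ and lsc is automatic; where $\upalpha>0$ (including points with $uv=0$), one uses that $\rmA_\upbeta$ is continuous with values in $[-\infty,+\infty]$ on $\R_+^2\setminus\{(0,0)\}$, that $\Psi^*$ is continuous with values in $[0,+\infty]$, and that $\upalpha$ is continuous and positive there — so $\rmD_\upbeta^-$ is actually continuous (into $[0,+\infty]$) on this set; the isolated point $(0,0)$ is handled by $\rmD_\upbeta^-(0,0)=\Psi^*(0)\upalpha(0,0)=0$ and nonnegativity. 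With this replaced, your part (1) (which borrows lsc of $\rmD_\upbeta^-$ from (2)) becomes sound.

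One minor point on (3): when $\upalpha(u,v)=0$, you only need the direct identity $\Upsilon(u,v,w)=\hat\Psi(w,\upalpha(u,v))=\hat\Psi(w,0)=\Psi^\infty(w)$; the detour through $\upalpha^\infty$ and Lemma~\ref{lem:Upsilon-properties} (which concerns $\Upsilon^\infty$, not $\Upsilon$) is not what does the work, though your final conclusion is correct. Your treatment of the Fenchel–Young cases in (3) and the equality discussion in (4), including the use of differentiability and oddness of $(\Psi^*)'$ to collapse the subdifferential, matches the paper's argument.
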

  \begin{proof}
  It is not difficult
  to check that 
  $\rmD^-_\upbeta$ is lower
  semicontinuous: such a property is trivial
  where $\upalpha$ vanishes, and  in all the other cases
  it is sufficient to use the positivity and the
  continuity of $\Psi^*$ in $[-\infty,+\infty]$,
  the continuity of $\rmA_\upbeta$ in $\R_+^2\setminus\{(0,0)\}$, and
  the continuity and the positivity of $\upalpha$.
  It is also obvious that $\rmD^-_\upbeta\le \rmD^+_\upbeta$,
  and therefore  $\rmD^-_\upbeta\le \rmD_\upbeta\le \rmD^+_\upbeta$.

For the inequality~\eqref{eq:107}, let us distinguish the various cases:
    \begin{itemize}
    \item If $w=0$ or $u=v=0$,
      then $\rmB_\beta(u,v,w) =0$ so that
      \eqref{eq:107} is trivially satisfied.
      We can thus assume
      $w\neq0$ and $u+v>0$.
    \item  When $\upalpha(u,v)=0$ then
      $\Upsilon(u,v,w) =\pinfty$ so that \eqref{eq:107} is trivially
      satisfied as well. We can thus assume $\upalpha(u,v)>0$.
    \item 
      If
      $\rmA_\upbeta(u,v)\in \{\pm\infty\}$ then
      $\rmD_\upbeta^-(u,v)=\pinfty$ and the right-hand side of
      \eqref{eq:107} is infinite.
    \item
      It remains to consider the case when
      $\rmA_\upbeta(u,v)\in \R$, $\upalpha(u,v)>0$ and $w\neq0$.
      In this situation
      \begin{align}
          \bigl|\rmB(u,v,w)\bigr|&=\bigl|\rmA_\upbeta(u,v)w\bigr|=
          \bigg|\rmA_\upbeta(u,v)\frac{w}{\upalpha(u,v)}\bigg| \upalpha(u,v)
          \notag \\&\le
          \Psi\Big(\frac w{\upalpha(u,v)}\Big) \upalpha(u,v)+
          \Psi^*\Big(\rmA_\upbeta(u,v)\Big) \upalpha(u,v)
          \notag\\
          &=\Upsilon(u,v,w)+\rmD_\upbeta^-(u,v).
                  \label{eq:108}
      \end{align}
This proves~\eqref{eq:107}.
    \end{itemize}
    
    It is now easy to study the case of equality in~\eqref{eq:107a}, when the right-hand side of
    \eqref{eq:107} and \eqref{eq:107a}
    is finite. This in particular implies
    that $\upalpha(u,v)>0$ and $\rmA_\upbeta(u,v)\in \R$ or
    $\upalpha(u,v)=0$ and $w=0$.
    In the  former  case, 
     calculations similar to \eqref{eq:108} show
    that $-w=(\Psi^*)'\big(\rmA_\upbeta(u,v)\big)\upalpha(u,v)$.   In the latter case, $\alpha(u,v) = w =0$ yields that 
     $   \rmB_\upbeta(u,v,w)=0$, $\Upsilon (u,v,w) = \hat{\Psi}(w,\alpha(u,v))  = \hat{\Psi}(0,0) = 0$, and 
     $\rmD_\upbeta(u,v) = \Psi^*(\rmA_\upbeta(u,v)) \alpha(u,v)=0$. \EEE
  \end{proof}
 \par As a consequence of Lemma \ref{le:trivial-but-useful}, we conclude a chain-rule inequality involving the smallest functional $\rmD_\upbeta^-$ and thus, a fortiori, the functional
$\rmD_\upbeta$ which, for $\upbeta=\upphi$, shall enter into the definition of the Fisher information $\Fish$.  \GGGO
  \begin{cor}[Chain rule II]
\label{th:chain-rule-bound2}
Let $(\rho,\bj )\in \CER ab$ with $\rho_t=u_t\pi\ll \pi$ and
$2\bj_\Lebone=w
(\Lebone\otimes\tetapi)$
satisfy 
\begin{equation}
\label{ass:th:CR2}
\int_V \upbeta(u_a)\,\dd\pi<\pinfty,
\quad
\int_a^b \iint_\edg \rmD^-_\upbeta(u_t(x),u_t(y))\,\tetapi(\dd x,\dd y)\dd t<\pinfty.
\end{equation}
Then the map $t\mapsto \int_V \upbeta(u_t)\,\dd\pi$ is absolutely
continuous in $[a,b]$ and
\begin{equation}
\label{eq:CR2}
\begin{aligned}
  \left|\frac{\dd}{\dd t}\int_V \upbeta(u_t)\,\dd\pi\right|
  \le
  \calR(\rho_t,\bj_t)+\frac12\iint_\edg \rmD^-_\upbeta(u_t(x),u_t(y))\,\tetapi(\dd
  x,\dd y)\quad\text{for
    a.e.~}t\in (a,b).
\end{aligned}
\end{equation}
If moreover 
\[
  -\frac{\dd}{\dd t}\int_V \upbeta(u_t)\,\dd\pi
 =\calR(\rho_t,\bj_t)+\frac12\iint_\edg \rmD^-_\upbeta(u_t(x),u_t(y))\,\tetapi(\dd
  x,\dd y)
\]
then $2\bj=\bj^\flat$ and
\begin{equation}
  \label{eq:110}
  -w_t(x,y)=(\Psi^*)'\big(\rmA_\upbeta(u_t(x),u_t(y))\big)\upalpha(u_t(x),u_t(y))
  \quad\text{for $\tetapi$-a.e.~$(x,y)\in \edg$}.
\end{equation}
In particular,
$  w_t=0$ $\tetapi$-a.e.~in $\big\{(x,y)\in \edg:
\upalpha(u_t(x),u_t(y))=0\big\}.$
\end{cor}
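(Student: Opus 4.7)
\textbf{Proof plan for Corollary \ref{th:chain-rule-bound2}.}
The strategy is to deduce the result from the sharper equality provided by the Chain rule I (Theorem~\ref{th:chain-rule-bound}), using the pointwise bound $|\rmB_\upbeta|\le \Upsilon+\rmD_\upbeta^-$ of Lemma~\ref{le:trivial-but-useful}\ref{le:trivial-but-useful:ineq} to both verify its hypothesis and estimate the resulting derivative. First I would observe that, by Corollary~\ref{cor:propagation-AC}, the curve admits a density $u_t$ with $\partial_t u_t$ expressible in terms of $w^\flat_t=\dd (2\bj_t^\flat)/\dd\tetapi$. Applying the inequality of Lemma~\ref{le:trivial-but-useful}\ref{le:trivial-but-useful:ineq} to $(u_t(x),u_t(y),w_t^\flat(x,y))$ and integrating over $(a,b)\times\edg$ gives
\begin{equation*}
\int_a^b\!\!\iint_\edg (\rmB_\upbeta(u^-,u^+,w^\flat))_+\,\dd\tetapi\,\dd t
\le \int_a^b \!\calR(\rho_t,\bj_t^\flat)\,\dd t+\frac12\int_a^b\!\!\iint_\edg \rmD_\upbeta^-(u_t^-,u_t^+)\,\dd\tetapi\,\dd t.
\end{equation*}
By Remark~\ref{rem:skew-symmetric} one has $\calR(\rho_t,\bj_t^\flat)\le \calR(\rho_t,\bj_t)$, so both terms on the right are finite by the two assumptions in~\eqref{ass:th:CR2}. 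Hence the hypothesis~\eqref{ass:th:CR} of Theorem~\ref{th:chain-rule-bound} is met, which yields the absolute continuity of $t\mapsto\int_V\upbeta(u_t)\,\dd\pi$ and the identity
$\frac{\dd}{\dd t}\int_V\upbeta(u_t)\,\dd\pi=\tfrac12\iint_\edg\rmB_\upbeta(u_t^-,u_t^+,w_t^\flat)\,\dd\tetapi$.

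Next I would take absolute values inside the integral and apply~\eqref{eq:107} pointwise to obtain
\begin{equation*}
\Bigl|\tfrac{\dd}{\dd t}{\textstyle\int_V}\upbeta(u_t)\,\dd\pi\Bigr|
\le \tfrac12\iint_\edg \Upsilon(u_t^-,u_t^+,w_t^\flat)\,\dd\tetapi
+\tfrac12\iint_\edg \rmD^-_\upbeta(u_t^-,u_t^+)\,\dd\tetapi
=\calR(\rho_t,\bj_t^\flat)+\tfrac12\iint_\edg \rmD^-_\upbeta\,\dd\tetapi.
\end{equation*}
Combining with $\calR(\rho_t,\bj_t^\flat)\le\calR(\rho_t,\bj_t)$ yields the desired estimate~\eqref{eq:CR2}.

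For the equality case, suppose the chain of inequalities above is saturated. Then, for a.e.\ $t$, two things must happen simultaneously. First, $\calR(\rho_t,\bj_t^\flat)=\calR(\rho_t,\bj_t)$. Writing $2\bj_t=w_t\,\tetapi$ and $2\bj_t^\flat(x,y)=\tfrac12(w_t(x,y)-w_t(y,x))\tetapi$, the symmetry of $\tetapi$ together with the strict convexity of $w\mapsto\Upsilon(u,v,w)$ where $\upalpha(u,v)>0$ (Lemma~\ref{lem:Upsilon-properties}) and the evenness of $\Psi$ (Lemma~\ref{l:props:Psi}) force $w_t(x,y)=-w_t(y,x)$ $\tetapi$-a.e., i.e.\ $2\bj_t=2\bj_t^\flat$. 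Second, the pointwise inequality $|\rmB_\upbeta(u_t^-,u_t^+,w_t^\flat)|\le \Upsilon(u_t^-,u_t^+,w_t^\flat)+\rmD^-_\upbeta(u_t^-,u_t^+)$ must hold with equality and with the correct sign, which by Lemma~\ref{le:trivial-but-useful}\ref{le:trivial-but-useful:ineq} (condition~\eqref{eq:109}) yields~\eqref{eq:110} $\tetapi$-a.e. Finally, whenever $\upalpha(u_t(x),u_t(y))=0$, the right-hand side of~\eqref{eq:110} is $0$, giving $w_t=0$ there.

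The only delicate point is tracking the distinction between $w$ and $w^\flat$: the chain-rule formula is naturally written in terms of $w^\flat$ (because it comes from the continuity equation), while $\calR$ involves $w$. The skew-symmetrization inequality $\calR(\rho,\bj^\flat)\le\calR(\rho,\bj)$ provides the bridge and, via strict convexity, governs the rigidity in the equality case.
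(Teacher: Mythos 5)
Your proposal is correct and follows essentially the same route as the paper: both deduce the estimate from Theorem~\ref{th:chain-rule-bound} together with the pointwise bound $|\rmB_\upbeta|\le\Upsilon+\rmD_\upbeta^-$ of Lemma~\ref{le:trivial-but-useful}, and both use the convexity/symmetry argument $\Upsilon(u,v,w^\flat)\le\frac12\Upsilon(u,v,w(x,y))+\frac12\Upsilon(u,v,w(y,x))$ (integrated against the symmetric $\tetapi$) to pass from $w^\flat$ to $w$; you also spell out the equality case, which the paper leaves implicit but which indeed follows from condition~\eqref{eq:109} plus the rigidity supplied by strict convexity of $\Psi$. One small slip: in your first display, $\iint_\edg\Upsilon(\cdot,\cdot,w^\flat)\,\dd\tetapi$ equals $2\calR(\rho_t,\bj_t^\flat)$ (not $\calR$), so the correct bound reads $\int_a^b\!\iint(\rmB_\upbeta)_+\le 2\int_a^b\calR(\rho_t,\bj_t^\flat)\,\dd t+\int_a^b\!\iint\rmD^-_\upbeta\,\dd\tetapi\,\dd t$; the factors do not matter for the finiteness conclusion, and your second display has the correct constants.
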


\begin{proof}
  We recall that for $\Lebone$-a.e.~$t\in (a,b)$
  \begin{displaymath}
    \calR(\rho_t,\bj_t)=
    \frac12\iint_\edg \Upsilon(u_t(x),u_t(y),w_t(x,y))\,\tetapi(\dd x,\dd y).
  \end{displaymath}
  We can then apply Lemma \ref{le:trivial-but-useful} and
  Theorem \ref{th:chain-rule-bound}, observing that
  \begin{equation}
    \label{eq:178}
    \iint_\edg \Upsilon(u_t(x),u_t(y),w^\flat_t(x,y))\,\tetapi(\dd
    x,\dd y)\le
    \iint_\edg \Upsilon(u_t(x),u_t(y), w(x,y))\,\tetapi(\dd
    x,\dd y)
  \end{equation}
  since
  \begin{align*}
    \Upsilon(u_t(x),u_t(y),w^\flat_t(x,y))&=
                                            \Upsilon(u_t(x),u_t(y),\frac12 (w_t(x,y)-w_t(y,x)))
                                            \\&\le
    \frac12\Upsilon(u_t(x),u_t(y),w_t(x,y))
    +\frac12\Upsilon(u_t(x),u_t(y),w_t(y,x))
  \end{align*}
  and the integral of the last term coincides with
  the right-hand side of \eqref{eq:178} thanks to the symmetry of $\tetapi$.
\end{proof}

\subsection{Compactness properties of curves with uniformly bounded
  $\calR$-action}
\label{subsec:compactness}
The next result shows an important compactness
property for collections of curves in $\CER ab$ with bounded
action.
Recalling the discussion and the notation of Section~\ref{subsub:kernels}, we will systematically associate with a given $(\rho,\bj)\in
\CEIR I$, $I=[a,b]$, a couple of measures
$\rho_\Lebone\in \calM^+(I\times V)$,
$\bj_\Lebone\in \calM(I\times \edg)$ by integrating with respect to ~the
Lebesgue measure $\Lebone$
in $I$:
\begin{equation}
  \label{eq:95}
  \rho_\Lebone(\dd t,\dd x)=
  \lambda(\dd t)\rho_t(\dd x),\quad
  \bj_\Lebone(\dd t,\dd x,\dd y)=\lambda(\dd t)\bj_t(\dd x,\dd y).
\end{equation}
Similarly, we define 
\begin{equation}
  \label{eq:97}
  \begin{aligned}
    \teta_{\rho,\Lebone}^\pm(\dd t,\dd x,\dd y):={}&
    (\teta_{\rho}^\pm)_\Lebone(\dd t,\dd x,\dd y)= \Lebone(\dd
    t)\teta_{\rho_t}^\pm(\dd x,\dd y)
    \\={}& \Lebone(\dd t)\rho_t(\dd
    x)\kappa(x,\dd y) =\teta_{\rho_\Lebone}^\pm(\dd t,\dd x,\dd y).
  \end{aligned}
\end{equation}
It is not difficult to check that
\begin{equation}
  \label{eq:96}
  \int_I \calR(\rho_t,\bj_t)\,\dd t=
  \frac12\calF_\Upsilon(\teta_{\rho,\Lebone}^-,\teta_{\rho,\Lebone}^+,2\bj_\Lebone|\Lebone\otimes\tetapi).
\end{equation}

\begin{prop}[Bounded $\int\calR$ implies compactness and lower semicontinuity]
\label{prop:compactness}
Let $(\rho^n,\bj^n)_n \subset \CER ab$ be a sequence such that
the initial states $\rho^n_a$ are $\pi$-absolutely-continuous and relatively compact with respect to setwise convergence. Assume that 
\begin{gather}
\label{eq:lem:compactness:assumptions}
M:=\sup_{n\in\N}\int_a^b \calR(\rho_t^n, \bj_t^n) \,\dd t<\pinfty.
\end{gather}
Then,
there exist a subsequence (not relabelled) and a pair $(\rho,\bj )\in
\CER ab$ such that,  for the measures $\bj_\Lebone^n \in
\calM([a,b]\times\edg)$ defined as in \eqref{eq:95} there holds 
 \begin{subequations}
 \label{cvs-rho-n-j-n}
\begin{align}
&
\label{converg-rho-n}
 \rho_t^n\to \rho_t\quad\text{setwise in $\calM^+(V)$ for all  $t\in[a,b]$}\,,\\
 &
\label{converg-j-n}
  \bj_\Lebone^n\weakto \bj_\Lebone \quad\text{setwise in $\calM([a,b]\times\edg)$}\,,
\end{align}
\end{subequations}
where $\bj_\Lebone$ is induced (in the sense of \eqref{eq:95}) 
 by
a $\Lebone$-integrable family $(\bj_t)_{t\in [a,b]}\subset \calM(\edg)$.
In addition, for any sequence $(\rho^n,\bj^n)$ converging to $(\rho,\bj )$ in the sense of~\eqref{cvs-rho-n-j-n}, we have 
\begin{equation}
\label{ineq:lsc-R}
\int_a^b \calR(\rho_t,\bj_t)\, \dd t \leq 
\liminf_{n\to\infty} \int_a^b \calR(\rho^n_t,\bj^n_t)\, \dd t .
\end{equation}
\end{prop}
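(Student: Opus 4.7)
The plan is three-fold: extract a setwise limit for the flux family $\{\bj^n_\Lebone\}_n$ in $\calM([a,b]\times\edg)$; deduce pointwise-in-$t$ setwise convergence of $\rho^n_t$ essentially for free from the continuity equation, once the flux and initial-data limits are in hand; and close the argument with lower-semicontinuity of the convex lsc integrand $\Upsilon$ (Lemma~\ref{lem:Upsilon-properties}) via Lemma~\ref{l:lsc-general}(\ref{l:lsc-general:i1}) and the representation \eqref{eq:96}.

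For the flux compactness I would first exploit the assumptions to bound the reference measures uniformly. Mass conservation \eqref{eq:91} and the setwise compactness of $\{\rho^n_a\}$ force $\sup_{n,t}\rho^n_t(V)<\infty$, while Corollary~\ref{cor:propagation-AC} gives $\rho^n_t\ll\pi$ throughout $[a,b]$. Concavity of $\upalpha$ yields an affine majorant $\upalpha(u,v)\le a_0+a_1(u+v)$, which together with $\|\kappa_V\|_\infty<\infty$ delivers the uniform total-variation bound $\sup_n \bnu_{\rho^n,\Lebone}([a,b]\times\edg)<\infty$. Superlinearity of $\Psi$ now enters through the de la Vall\'ee--Poussin-type estimate $r\le\Psi(r)/R+c_R$: combined with $\tfrac12\calF_\Psi(2\bj^n_\Lebone|\bnu_{\rho^n,\Lebone})\le M$ this yields
\[
|2\bj^n_\Lebone|(A)\le \tfrac{2M}{R}+c_R\,\bnu_{\rho^n,\Lebone}(A)\quad\text{for every Borel }A\subset[a,b]\times\edg.
\]
The crux is to combine this bound with a careful choice of common dominating measure---built from $\Lebone\otimes\tetapi$ and measures constructed from the marginals of $\rho^n_\Lebone$ coupled with $\kappa$---to invoke Theorem~\ref{thm:equivalence-weak-compactness} and extract a subsequence with $\bj^n_\Lebone\to\bj_\Lebone$ setwise; disintegration then produces a $\Lebone$-integrable family $(\bj_t)_{t\in[a,b]}$.

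With this in hand, extract a further subsequence such that $\rho^n_a\to\rho_a$ setwise (using the compactness assumption on initial data). For every Borel $B\subset V$ and every $t\in[a,b]$, the continuity equation \eqref{2ndfundthm} applied with $\varphi=\chi_B$ reads $\rho^n_t(B)-\rho^n_a(B)=\bj^n_\Lebone([a,t]\times(B^c\times B))-\bj^n_\Lebone([a,t]\times(B\times B^c))$; passing to the limit using the two setwise convergences already obtained yields
\[
\rho^n_t(B)\longrightarrow\rho_a(B)+\bj_\Lebone([a,t]\times(B^c\times B))-\bj_\Lebone([a,t]\times(B\times B^c))=:\rho_t(B),
\]
which is precisely setwise convergence $\rho^n_t\to\rho_t$ for every $t\in[a,b]$; the limit $(\rho,\bj)$ belongs to $\CEI{[a,b]}$ by construction.

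For the final stage, Lemma~\ref{le:kernel-convergence} applied to the bounded kernel $\kappa$ converts the setwise convergence $\rho^n_\Lebone\to\rho_\Lebone$---obtained from the pointwise-in-$t$ convergence by dominated convergence, thanks to the uniform mass bound---into setwise convergence $\teta^\pm_{\rho^n,\Lebone}\to\teta^\pm_{\rho,\Lebone}$. Then Lemma~\ref{l:lsc-general}(\ref{l:lsc-general:i1}) applied to $\calF_\Upsilon$, together with the representation \eqref{eq:96}, delivers \eqref{ineq:lsc-R}. The main technical obstacle is Stage~1: the natural reference measures $\bnu_{\rho^n,\Lebone}$ depend on $n$, so producing a \emph{single} dominating measure with respect to which $\{\bj^n_\Lebone\}$ is equi-absolutely continuous requires more than the pointwise bound on $\bnu_{\rho^n,\Lebone}$; the concavity of $\upalpha$ combined with the kernel boundedness $\|\kappa_V\|_\infty<\infty$ is exactly what is needed to manufacture such a measure. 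Once this has been pulled off, the remaining two stages run smoothly.
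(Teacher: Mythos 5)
Your plan inverts the order of the paper's argument: you want to extract a setwise limit of the fluxes first and then read off pointwise-in-$t$ convergence of the states from the continuity equation. The paper goes the other way: it first establishes pointwise setwise convergence of $\rho^n_t$, and only then deduces equi-absolute continuity (and hence compactness) for the fluxes. Your reversal would be nice if it worked, but Stage~1 contains a genuine gap, and the gap is precisely the piece of the paper's proof that you have dropped.

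Your estimate $|2\bj^n_\Lebone|(A)\le 2M/R + c_R\,\bnu_{\rho^n,\Lebone}(A)$ is fine, and concavity of $\upalpha$ together with $\|\kappa_V\|_\infty<\infty$ does give the affine domination $\bnu_{\rho^n,\Lebone}\le c\bigl(\tetapi_\Lebone+\teta^+_{\rho^n,\Lebone}+\teta^-_{\rho^n,\Lebone}\bigr)$ and hence a uniform total-variation bound. But a TV bound is \emph{not} equi-absolute continuity. The density of $\teta^-_{\rho^n,\Lebone}$ with respect to $\tetapi_\Lebone$ is $u^n_t(x)$, so equi-absolute continuity of $\teta^\pm_{\rho^n,\Lebone}$ is equivalent to equi-integrability of the family $\{u^n_t\}$ in $L^1(\Lebone\otimes\pi)$, and this does not follow from the mass bound $\sup_{n,t}\rho^n_t(V)<\infty$ alone. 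Your claim that concavity of $\upalpha$ and $\|\kappa_V\|_\infty<\infty$ are ``exactly what is needed'' is an overclaim: they reduce the question to equi-integrability of the state densities, but they do not supply it. So when you invoke Theorem~\ref{thm:equivalence-weak-compactness}(6) for $\{\bj^n_\Lebone\}$ you do not actually have a fixed dominating measure with the uniform absolute-continuity estimate in hand, and the extraction of a setwise-converging subsequence fails at this point.

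What fills the hole in the paper is a non-trivial entropy argument: from the setwise relative compactness of $\{\rho^n_a\}$ one gets a superlinear $\upbeta$ with $\sup_n\int_V\upbeta(u^n_a)\,\dd\pi<\infty$ via Theorem~\ref{thm:L1-weak-compactness}(3); Lemma~\ref{le:slowly-increasing-entropy} then manufactures a slowly-growing convex superlinear $\upomega\le\upbeta$ for which $\Psi^*(\upomega'(s)-\upomega'(r))\upalpha(r,s)\le r+s$; and the chain rule of Corollary~\ref{th:chain-rule-bound2} propagates the bound $\sup_n\int_V\upomega(u^n_t)\,\dd\pi<\infty$ uniformly in $t$ using the hypothesis~\eqref{eq:lem:compactness:assumptions}. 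This delivers the equi-integrability of $\{u^n_t\}$ that your Stage~1 needs but does not produce. The paper then uses this (together with your estimate~\eqref{eq:64} and the $\AC$-estimate~\eqref{eq:65}) to get pointwise-in-$t$ weak $L^1$ convergence of $u^n_t$ directly via \cite[Prop.~3.3.1]{AmbrosioGigliSavare08}, and only afterwards derives equi-absolute continuity of the fluxes. Your Stages~2 and~3 are essentially sound (the formula $\rho^n_t(B)-\rho^n_a(B)=\bj^n_\Lebone([a,t]\times(B^c\times B))-\bj^n_\Lebone([a,t]\times(B\times B^c))$ is correct and passes to the limit setwise, and Lemma~\ref{l:lsc-general}(\ref{l:lsc-general:i1}) applied to the representation~\eqref{eq:96} does give~\eqref{ineq:lsc-R}), but they cannot be reached without repairing Stage~1. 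Once you add the $\upomega$-entropy bound you could, if you wished, keep your flux-first presentation, since equi-integrability of the densities then gives equi-absolute continuity of $\bnu_{\rho^n,\Lebone}$ directly; but you must prove that equi-integrability, and you cannot get it for free from concavity and kernel boundedness.
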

%

\begin{proof}
  Let us first remark that the mass conservation property of the
  continuity equation yields 
  \begin{equation}
    \label{eq:62}
    \rho_t^n(V)=\rho_a^n(V)\le M_1\quad\text{for every }t\in [a,b],\
    n\in \N
  \end{equation}
  for a suitable finite constant $M_1$ independent of $n$.
  We deduce that for every $t\in [a,b]$ the measures
  $\tetapi_{\rho_t^n}^\pm$
  have total mass bounded by   $M_1 \|\kappa_V\|_\infty$, \EEE so that  estimate \eqref{eq:31} for
  $y=(c,c)\in D(\upalpha_*)$ yields
  \begin{equation}
    \label{eq:63}
    \bnu_{\rho^n_t}(\edg)=  \Aalpha[\tetapi^+_{\rho_t^n},\tetapi^-_{\rho_t^n}|\tetapi](\edg)  \le
    M_2
    \quad
    \text{for every }t\in [a,b],\
    n\in \N,
  \end{equation}
  where
  $M_2:=2 c\,M_1 \|\kappa_V\|_\infty \EEE-\upalpha_*(c,c)\tetapi(\edg)$.
  Jensen's inequality \eqref{eq:79}
  and the monotonicity property \eqref{eq:80} yield
  \begin{equation}
    \label{eq:64}
    \calR(\rho^n_t,\bj^n_t)\ge
    \frac12
    \hat\Psi\Bigl(2\bj_t^n(\edg),\bnu_{\rho^n_t}(\edg)\Bigr)\ge
    \frac12 \hat\Psi\Bigl(2\bj_t^n(\edg),M_2\Bigr)=
    \frac12 \Psi\Bigl(\frac{2\bj_t^n(\edg)} {M_2}\Bigr) M_2,
  \end{equation}
   with $\hat\Psi$ the perspective function associated with $\Psi$, cf.~\eqref{eq:76}. 
  Since $\Psi$ has superlinear growth,
  we deduce that the sequence of functions $t\mapsto |\bj_t^n|(\edg)$
  is equi-integrable.
  
  Since
  the sequence $(\rho_a^n)_n$, with $\rho_a^n = u_a^n \pi \ll \pi$, is relatively compact 
  with respect to setwise convergence, 
   by Theorems \ref{thm:equivalence-weak-compactness}(6) and \ref{thm:L1-weak-compactness}(3) 
  there exist a  
  convex
  superlinear function $\upbeta:\R_+\to\R_+$ and a constant $M_3<\pinfty$ such that
  \begin{equation}
    \label{eq:58}
    \mathscr F_\upbeta(\rho^n_a|\pi)=
    \int_V \upbeta(u_a^n)\,\dd\pi\le M_3\quad
    \text{for every }n\in \N.
  \end{equation}
  Possibly adding $M_1$ to $M_3$,
  it is not restrictive
  to assume that $\upbeta'(r)\ge1$.
  We can then apply Lemma \ref{le:slowly-increasing-entropy}
  and we can find a smooth convex superlinear function
  $\upomega:\R_+\to\R_+$ such that \eqref{eq:41} holds.
  In particular
  \begin{align}
    \label{eq:59}
    \int_V \upomega(u_a^n)\,\dd\pi&\le M_1,\\
    \notag
      \int_a^b \iint_\edg \rmD^-_\upomega(u_r^n(x),u_r^n(y))\,\tetapi(\dd
      x,\dd y)\,\dd r&\le \int_a^b \iint_\edg
      (u_r^n(x)+u^n_r(y))\,\tetapi(\dd x,\dd y)\,\dd r
      \\&\leq  2(b-a)M_1\|\kappa_V\|_\infty.\label{eq:60}
  \end{align}
  By Corollary~\ref{th:chain-rule-bound2} we obtain 
  \begin{equation}
    \label{eq:61}
    \int_V \upomega(u_t^n)\,\dd\pi\le M+(b-a)M_1 \|\kappa_V\|_\infty + M_1\quad
    \text{for every }t\in [a,b].
  \end{equation}
  By \eqref{est:ct-eq-TV} we deduce that
  \begin{equation}
    \label{eq:65}
    \|u^n_t-u^n_s\|_{L^1(V,\pi)}\le \zeta(s,t)\quad
    \text{where}\quad
    \zeta(s,t): = 2\sup_{n\in \N} \int_{s}^{t} |\bj_r^n|(\edg)\,\dd r \,.
  \end{equation}
 Since $t\mapsto |\bj^n_t|(\edg)$ is equi-integrable we have  
 \[
 \lim_{(s,t)\to (r,r)} \zeta(s,t) =0 \qquad \text{for all } r \in [a,b]\,,
 \]
We conclude that the sequence of maps $(u_t^n)_{t\in [a,b]}$ satisfies 
 the conditions of the compactness result \cite[Prop.\ 3.3.1]{AmbrosioGigliSavare08}, which yields the existence of a (not relabelled) subsequence  and of a
 $L^1(V,\pi)$-continuous (thus also weakly-continuous)
 function $[a,b]\ni t \mapsto u_t\in L^1(V,\pi)$
 such that $u^n_t\weakto u_t$ weakly in $L^1(V,\pi)$ for every
 $t\in [a,b]$. By \eqref{eq:72} 
  we also deduce that
 \eqref{converg-rho-n} holds, i.e.
\[
\rho_t^n\to \rho_t=u_t\pi\quad\text{setwise in $\calM(V)$ for all $t\in[a,b]$}.
\] 
It is also clear that for every $t\in [a,b]$ we have
$\tetapi_{\rho_t^n}^\pm\to\tetapi_{\rho_t}^\pm$ setwise.
The Dominated Convergence Theorem and~\eqref{eq:70}, \RICKYNEW \eqref{eq:85} \EEE imply that the corresponding measures
$\tetapi_{\rho^n,\Lebone}^\pm$ converge setwise  to
$\tetapi_{\rho,\Lebone}^\pm$, and are therefore equi-absolutely continuous
with respect to ~$\tetapi_\Lebone=\Lebone\otimes\tetapi$
(recall \eqref{eq:73bis}).

\GGG Let us now show that also the sequence $(\bj^n_\Lebone)_{n}$ is
equi-absolutely continuous
with respect to ~$\tetapi_\Lebone$, so that \eqref{converg-j-n}
holds up to extracting a further subsequence.

Selecting a constant $c>0$ sufficiently large so that
$\upalpha(u_1,u_2)\le c(1+u_1+u_2)$,
the trivial estimate $\bnu_\rho\le
c(\tetapi+\teta_\rho^-+\teta_\rho^+)$
 and the monotonicity property \eqref{eq:80}
 yield \GGG
\begin{equation}
  \label{eq:66}
  M\ge \int_a^b\calR(\rho^n_t,\bj^n_t )\,\dd t=
  \frac12\calF_\Psi(2\bj^n_\Lebone |\bnu_{\rho^n_\Lebone})\ge
  \calF_\Psi(\bj^n_\Lebone |\bsigma^n ),\
  \bsigma^n :=c(\tetapi_\Lebone+\tetapi_{\rho^n,\Lebone}^++\tetapi_{\rho^n,\Lebone}^-).
\end{equation}
%
For every $B\in \frA\otimes \frB$, $\frA$ being
the Borel $\sigma$-algebra of $[a,b]$, with $\tetapi_\Lebone(B)>0$,
Jensen's inequality \eqref{eq:79} yields
\begin{equation}
  \label{eq:111}
  \Psi\biggl(\frac{\bj_\Lebone^n(B)}{\bsigma^n(B)}\biggr) \bsigma^n(B)\le
  \calF_\Psi(\bj_\Lebone^n\mres B|\bsigma^n\mres B)\le M.
\end{equation}
Denoting by $U:\R_+\to\R_+$ the inverse function of $\Psi$,
we thus find
\begin{equation}
  \label{eq:112}
  \bj_\Lebone^n(B)\le \bsigma^n(B)\,U\biggl(\frac{M}{\bsigma^n(B)}\biggr).
\end{equation}
Since $\Psi$ is superlinear, $U$ is sublinear so that
\begin{equation}
  \label{eq:118}
  \lim_{\delta\downarrow0}\delta U(M/\delta)=0.
\end{equation}
For every $\eps>0$ there exists 
$\delta_0>0$ such that $\delta U(M/\delta)\le \eps$ for every
$\delta\in (0,\delta_0)$. Since $\bsigma^n$ is equi absolutely
continuous with respect to~$\tetapi_\Lebone$ we can also find $\delta_1>0$ such
that $\tetapi_\Lebone (B)<\delta_1$ yields
$\bsigma^n(B)\le \delta_0$. By \eqref{eq:112} we eventually conclude
that
$\bj^n_\Lebone(B)\le \eps$. \EEE
%

It is then easy to pass to the limit in the integral formulation
\eqref{2ndfundthm} of the continuity equation.
Finally, concerning \eqref{ineq:lsc-R},
it is sufficient to use the equivalent representation given by \eqref{eq:96}.
\end{proof}
\nc

\subsection{Definition and properties of the cost}\label{sec:cost}
We now define the Dynamical-Variational Transport cost $\mathcal{W} : (0,\pinfty) \times \calM^+(V)\times \calM^+(V) \to [0,\pinfty)$ by
\begin{equation}
\label{def-psi-rig}
\DVT \tau{\rho_0}{\rho_1} : = \inf\left\{ \int_0^\tau \calR(\rho_t,\bj_t) \,\dd t \, : \,  (\rho,\bj ) \in  \CEP 0\tau{\rho_0}{\rho_1}   \right\}\,. 
\end{equation}
In studying the properties of 
$\calW$, we will also often use the notation
\begin{equation}
\label{adm-curves}
\ADM 0\tau{\rho_0}{\rho_1}: = \biggl\{
(\rho,\bj )\in\CER 0{\tau}\, : \nc
\ \rho(0)=\rho_0, \ \rho(\tau) = \rho_1
\biggr\}\,,
\end{equation}
with $\CER 0\tau$ the class from \eqref{def:Aab}. 

For given $\tau>0$ and $\rho_0,\, \rho_1 \in  \calM^+(V)$,  if the set $\ADM 0\tau{\rho_0}{\rho_1}$ is non-empty, then it contains an exact minimizer for $\DVT {\tau}{\rho_0}{\rho_1}$. This is stated by the following result that is a direct consequence of Proposition~\ref{prop:compactness}.   \EEE
\begin{cor}[Existence of minimizers]
\label{c:exist-minimizers}
If  $\rho_0,\rho_1\in \calM^+(V)$
and $\ADM 0\tau{\rho_0}{\rho_1} $ is not empty, then
the infimum in~\eqref{def-psi-rig} is achieved. 
\end{cor}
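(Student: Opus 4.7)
The plan is a textbook direct-method argument, reducing the corollary entirely to the compactness and lower-semicontinuity statements already packaged in Proposition~\ref{prop:compactness}. First I would pick a minimizing sequence $(\rho^n,\bj^n)\in\ADM 0\tau{\rho_0}{\rho_1}$ with
\[
  \int_0^\tau \calR(\rho^n_t,\bj^n_t)\,\dd t \;\longrightarrow\; w:=\DVT\tau{\rho_0}{\rho_1}\quad\text{as }n\to\infty.
\]
The assumption that $\ADM 0\tau{\rho_0}{\rho_1}$ is non-empty guarantees $w<\pinfty$, and consequently $M:=\sup_n\int_0^\tau \calR(\rho^n_t,\bj^n_t)\,\dd t$ is finite, i.e.~hypothesis \eqref{eq:lem:compactness:assumptions} is in force. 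The initial traces $\rho_0^n\equiv\rho_0$ form a constant (hence trivially setwise relatively compact) sequence; $\pi$-absolute continuity of $\rho_0$ is either assumed outright, or reduced to that case via the decomposition of Theorem~\ref{thm:confinement-singular-part}, which splits any finite-action curve into mutually singular absolutely continuous and singular parts with additive $\calR$-actions (so an admissible minimizer for $\rho_0,\rho_1$ may be searched for componentwise).

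Second, I would invoke Proposition~\ref{prop:compactness} to extract a (non-relabelled) subsequence and a limit pair $(\rho,\bj)\in\CER 0\tau$ with
\[
  \rho_t^n\to\rho_t\ \text{setwise in }\calM^+(V)\ \text{for every }t\in[0,\tau],\qquad
  \bj_\Lebone^n\weakto \bj_\Lebone\ \text{setwise in }\calM([0,\tau]\times\edg).
\]
Evaluating the pointwise-in-time setwise convergence at $t=0$ and $t=\tau$ gives $\rho_0=\rho_0$ and $\rho_\tau=\rho_1$, so the limit is admissible: $(\rho,\bj)\in\ADM 0\tau{\rho_0}{\rho_1}$.

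Third, the lower-semicontinuity bound \eqref{ineq:lsc-R} yields
\[
  \int_0^\tau \calR(\rho_t,\bj_t)\,\dd t \;\le\; \liminf_{n\to\infty}\int_0^\tau \calR(\rho_t^n,\bj_t^n)\,\dd t \;=\; w = \DVT\tau{\rho_0}{\rho_1},
\]
and since the reverse inequality holds by the very definition of the infimum, equality is attained and $(\rho,\bj)$ is a minimizer. The only non-mechanical point is the reduction to the $\pi$-absolutely-continuous regime when $\rho_0\not\ll\pi$; everything else is a straight application of the previously established compactness–lower-semicontinuity package.
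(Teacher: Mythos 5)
Your direct-method argument is exactly what the paper intends: the text states the corollary as ``a direct consequence of Proposition~\ref{prop:compactness}'' without giving a proof, and your three steps (minimizing sequence with uniformly bounded action so that hypothesis \eqref{eq:lem:compactness:assumptions} holds, extraction of a limit $(\rho,\bj)\in\CER 0\tau$ with endpoint identification from the pointwise-in-$t$ setwise convergence \eqref{converg-rho-n}, and lower semicontinuity \eqref{ineq:lsc-R}) spell out precisely that invocation. When $\rho_0\ll\pi$ --- the only case in which the corollary is subsequently used in the paper --- the argument is complete and correct.

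The caveat you flag is, however, not quite dispatched by the parenthetical remark. Theorem~\ref{thm:confinement-singular-part} gives \emph{additive} $\calR$-actions for the absolutely continuous and singular parts only when $\upalpha$ is $1$-homogeneous (that is assertion (2), equation \eqref{eq:55bis}); in general you have only the one-sided inequality \eqref{eq:55}, and by Lemma~\ref{l:alt-char-R}(1) the singular branch carries the functional $\tfrac12\calF_{\Upsilon^\infty}(\teta^-_{\rho^\perp},\teta^+_{\rho^\perp},2\bj^\perp)$ rather than $\calR(\rho^\perp,\bj^\perp)$. Moreover, even when the split is exact, Proposition~\ref{prop:compactness} cannot be re-invoked on the singular branch: its superlinear-entropy bound and $L^1(V,\pi)$-compactness step are built around $\pi$-absolutely-continuous curves, so a separate compactness argument would be needed there. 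The reduction does close cleanly when Theorem~\ref{thm:confinement-singular-part}(3) applies (sublinear $\upalpha$, or $\kappa(x,\cdot)\ll\pi$): then the singular part is frozen and $\bj^\perp\equiv0$, and the minimization is genuinely confined to the $\pi$-absolutely-continuous regime. In fairness, the paper itself does not address the singular case, so your proof is in the same spirit --- just be aware that ``searched for componentwise'' is not a literal reduction to two applications of Proposition~\ref{prop:compactness}, and the additivity you cite is conditional on $1$-homogeneity of $\upalpha$.
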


\begin{remark}[Scaling invariance]
  \label{rem:scaling-invariance}
  Let us consider the perspective function 
  $\hat \Psi(r,s)$ associated wih $\Psi$ as
  in \eqref{eq:76}, $\hat \Psi(r,s)=s\Psi(r/s)$ if $s>0$.
  We call $\calR_s(\rho,\bj)$ the
  dissipation functional induced by $\hat{\Psi}(\cdot, s)$, with induced Dynamic-Transport cost
  $\mathscr W_s$.
  For every $\tau>0$, $\rho_0,\rho_1\in \calM^+(V)$ a rescaling argument yields
  \begin{equation}
    \label{eq:188}
    \DVTn(\tau,\rho_0, \rho_1) =\mathscr
    W_{\tau/\sigma}(\sigma,\rho_0,\rho_1)
    =
    \inf\left\{ \int_0^{\sigma} \calR_{\tau/\sigma}(\rho_t,\bj_t) \,\dd
      t \, : \,
      (\rho,\bj ) \in   \CEP 0{\sigma}{\rho_0}{\rho_1}  \EEE \right\}\,. 
  \end{equation}
  In particular, choosing $\sigma=1$  we find \GGGO
  \begin{equation}
    \label{eq:188bis}
    \DVTn(\tau,\rho_0, \rho_1) =\mathscr
    W_{\tau}(1,\rho_0,\rho_1).
  \end{equation}
  Since $\hat\Psi(\cdot,\tau)$ is convex, lower semicontinuous,
  and decreasing with respect to ~$\tau$,
  we find that $\tau\mapsto \DVTn(\tau,\rho_0, \rho_1) $
  is decreasing and convex as well.
%
\end{remark}
\par

Currently,  proving that  \emph{any} pair of measures can be connected by a curve with finite action $\int \calR$   under general conditions on $V$, $\Psi$ and $\upalpha$
is an open problem: in other words, in the general case we cannot exclude that  
 $\ADM 0\tau{\rho_0}{\rho_1} = \emptyset$, which would make $\DVT {\tau}{\rho_0}{\rho_1} = \pinfty$. 
Nonetheless,  in a more specific situation, Proposition \ref{prop:sufficient-for-connectivity} below provides 
sufficient conditions for this connectivity property, between two measures $\rho_0, \, \rho_1 \in  \calM^+(V) $ with the same mass and such that
$\rho_i \ll \pi$ for $i\in \{0,1\}$. Preliminarily, we give the following
\begin{definition}
Let $q\in (1,\pinfty)$.
We say that the measures $(\pi,\tetapi) $ satisfy a \emph{$q$-Poincar\'e inequality}  if 
there exists a constant $C_P>0$ such that 
for every $\xi \in L^q(V;\pi)$ with $\int_{V}\xi(x) \pi(\dd x) =0$ there holds
\begin{equation}
\label{q-Poinc}
\int_{V} |\xi(x)|^q \pi(\dd x) \leq C_P \int_{\edg} |\ona \xi(x,y)|^q \tetapi(\dd x, \dd y) .
\end{equation}
\end{definition}
We are now in a position to state the connectivity result, where we specialize the discussion to 
dissipation densities with $p$-growth for some $p \in (1,\pinfty)$.
\begin{prop}
\label{prop:sufficient-for-connectivity}
Suppose that 
\begin{equation}
\label{psi-p-growth}
\exists\,  p \in (1,\pinfty), \, \overline{C}_p>0 \ \ \forall\,r  \in \R  \, : \qquad \Psi(r) \leq \overline{C}_p(1{+}|r|^p),
\end{equation}
and that the measures $(\pi,\tetapi) $ satisfy a  $q$-Poincar\'e inequality for $q=\tfrac p{p-1}$. 
Let $\rho_0, \rho_1  \in  \calM^+(V) $ with the same mass be given by $\rho_i = u_i \pi$, with positive
$u_i \in L^1(V; \pi) \cap L^\infty (V; \pi) $, for $i \in \{0,1\}$. Then, for every $\tau>0$
the set   $\ADM 0\tau{\rho_0}{\rho_1} $ is non-empty and thus $\DVT \tau{\rho_0}{\rho_1}<\infty$.
\end{prop}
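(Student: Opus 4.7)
The plan is to exhibit an explicit element of $\ADM 0\tau{\rho_0}{\rho_1}$ by linearly interpolating the densities and building a constant-in-time flux via a duality argument based on the $q$-Poincar\'e inequality.

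First I will set $\rho_t := (1{-}t/\tau)\rho_0 + (t/\tau)\rho_1 = u_t\pi$ with $u_t := (1{-}t/\tau)u_0 + (t/\tau)u_1$. The positivity and $L^\infty$ bounds on $u_0,u_1$ yield constants $0<m\le u_t(x)\le M$ uniformly in $t,x$. Since $\upalpha$ is continuous, concave, symmetric, and nontrivial, property \eqref{eq:38} together with compactness of $[m,M]^2$ provides $0<\alpha_0\le\alpha_1$ such that $\alpha_0 \le \upalpha(u_t(x),u_t(y))\le\alpha_1$ for every $t\in[0,\tau]$ and $(x,y)\in \edg$; in particular $\alpha_0\tetapi\le\bnu_{\rho_t}\le\alpha_1\tetapi$.

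Next I seek a skew-symmetric $h\in L^p(\edg;\tetapi)$ such that the constant-in-time flux $\bj_t:=h\,\tetapi$ satisfies $\odiv\bj = ((u_0{-}u_1)/\tau)\,\pi$, which together with $\partial_t\rho = ((u_1{-}u_0)/\tau)\pi$ gives the continuity equation. Consider the linear functional
\begin{equation*}
F(\xi) := \int_V \xi\,\tfrac{u_1-u_0}{\tau}\,\pi(\dd x), \qquad \xi\in L^q(V;\pi).
\end{equation*}
Since $\rho_0$ and $\rho_1$ share the same mass, $F$ annihilates constants and descends to $L^q(V;\pi)/\R$. Applying H\"older's inequality and the $q$-Poincar\'e inequality \eqref{q-Poinc} to $\xi-\bar\xi$ (with $\bar\xi$ the $\pi$-mean of $\xi$) yields
\begin{equation*}
|F(\xi)| \le \Bigl\|\tfrac{u_1-u_0}{\tau}\Bigr\|_{L^p(\pi)}\|\xi-\bar\xi\|_{L^q(\pi)} \le C_P^{1/q}\Bigl\|\tfrac{u_1-u_0}{\tau}\Bigr\|_{L^p(\pi)}\,\|\ona\xi\|_{L^q(\edg;\tetapi)}.
\end{equation*}
Thus $F$ defines a bounded linear functional on the subspace $\{\ona\xi:\xi\in L^q(V;\pi)\}\subset L^q(\edg;\tetapi)$; by Hahn--Banach extension and the $L^q$--$L^p$ duality, there exists $h\in L^p(\edg;\tetapi)$ with $F(\xi) = \iint_\edg \ona\xi\cdot h\,\dd\tetapi$. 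Because $\ona\xi$ is skew-symmetric and $\tetapi$ is symmetric, I may replace $h$ by its skew-symmetrization without altering the identity. Then \eqref{eq:nabladiv} gives $\odiv\bj = ((u_0{-}u_1)/\tau)\pi$, so $(\rho,\bj)\in\CEP 0\tau{\rho_0}{\rho_1}$.

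Finally, using \eqref{eq:21}, the growth bound \eqref{psi-p-growth}, and the two-sided bounds on $\upalpha$,
\begin{equation*}
\calR(\rho_t,\bj_t)
=\tfrac12\iint_\edg \Psi\Bigl(\tfrac{2h(x,y)}{\upalpha(u_t(x),u_t(y))}\Bigr)\upalpha(u_t(x),u_t(y))\,\tetapi(\dd x\,\dd y) \le \tfrac{\alpha_1\overline C_p}{2}\iint_\edg\Bigl(1+\tfrac{2^p|h|^p}{\alpha_0^p}\Bigr)\dd\tetapi,
\end{equation*}
which is finite and independent of $t$, since $\tetapi$ is a finite measure and $h\in L^p(\tetapi)$. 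Integrating in $t$ places $(\rho,\bj)$ in $\ADM 0\tau{\rho_0}{\rho_1}$, whence $\DVT\tau{\rho_0}{\rho_1}<\infty$. The main obstacle is the duality step producing the $L^p(\tetapi)$ representer $h$; once this Poincar\'e-based construction is in place, both the continuity equation and the finite-action estimate are routine consequences of the $p$-growth of $\Psi$ and the uniform bounds $\alpha_0\le\upalpha(u_t,u_t)\le\alpha_1$.
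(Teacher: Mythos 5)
Your argument breaks at the claim that the linear interpolants satisfy $0 < m \le u_t(x)$ uniformly in $t$ and $x$. The hypothesis only gives $u_i \in L^1(V;\pi)\cap L^\infty(V;\pi)$ with $u_i > 0$ $\pi$-a.e.; it does not give $\operatorname{ess\,inf}_V u_i > 0$. If $u_0$ attains values arbitrarily close to $0$ on a set of positive $\pi$-measure, so does every $u_t$ with $t$ near $0$. Since $\upalpha$ may vanish as one argument tends to $0$ (for instance $\upalpha(u,v)=\sqrt{uv}$ or $\upalpha(u,v)=\min(u,v)$), there is then no positive $\alpha_0$ bounding $\upalpha(u_t(x),u_t(y))$ from below, the factor $\upalpha^{1-p}$ in your estimate can blow up, and the finiteness of the action integral is not ensured.

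The paper closes this gap with two ingredients your sketch omits. First, it normalizes the common mass so the constant density $u_1\equiv 1$ is admissible and reduces, by concatenation of admissible curves, to connecting $u_0$ with this constant; this yields the pointwise bound $u_t=(1-t)u_0+t\ge t$, a lower bound that degenerates at $t=0$ but is quantitative. Second, it reparametrizes time via $\tilde u_t:=u_{t^\gamma}$, so the flux density rescales to $\gamma t^{\gamma-1}w$ while $\upalpha(\tilde u_t(x),\tilde u_t(y))$ is bounded below by a multiple of $t^\gamma$; the critical term in the action integrand then scales like $\gamma^p t^{\gamma-p}|w|^p$, which is integrable over $(0,\tau)$ once $\gamma>p-1$. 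Your Hahn--Banach construction of the $L^p(\edg;\tetapi)$ flux representer from the $q$-Poincar\'e inequality is fine and plays the same role as Lemma~\ref{l:intermediate-conn}; but without the reduction to the constant target and the time reparametrization, the action estimate cannot be closed under the stated hypotheses.
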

We postpone the proof of Proposition \ref{prop:sufficient-for-connectivity}
 to Appendix \ref{s:app-2}, where some preliminary results, also motivating the  role of the
 $q$-Poincar\'e inequality, will be provided.
 \EEE


\subsection{Abstract-level properties of \texorpdfstring{$\DVTn$}W}
\label{ss:4.5}
The main result of this section collects a series of properties
of the cost that will play a key role in 
the study of the \emph{Minimizing Movement} scheme   \eqref{MM-intro}. 
 Indeed, as already hinted in the Introduction, the analysis that we will carry out in Section  \ref{s:MM} ahead might well be extended to  
a scheme set up in a general topological space, endowed with a cost functional enjoying   properties \eqref{assW}
below. We will  now  check them for the cost $\DVTn$ associated  with 
 generalized gradient structure $(\calS,\calR,\calR^*)$ \EEE
 fulfilling \textbf{Assumptions~\ref{ass:V-and-kappa} and  \ref{ass:Psi}}.
In this section \emph{all convergences will be with respect to the setwise topology}.
\begin{theorem}
\label{thm:props-cost}
The cost $\DVTn$ enjoys the following properties:
\begin{subequations}\label{assW}
\begin{enumerate}[label={(\arabic*)}]
	\item \label{tpc:1} For all $\tau>0,\, \rho_0,\, \rho_1 \in \calM^+(V)$,
	\begin{equation}\label{e:psi2}
		\DVTn(\tau,\rho_0,\rho_1)= 0 \ \Leftrightarrow \ \rho_0=\rho_1.
	\end{equation}
	\item \label{tpc:2} For all 
 $\rho_1,\, \rho_2,\,\rho_3\in\calM^+(V)$  and  $\tau_1, \tau_2 \in (0,\pinfty)$ with $\tau=\tau_1
 +\tau_2$,
 	\begin{equation}\label{e:psi3}
 \DVTn(\tau,\rho_1,\rho_3) \leq  \DVTn(\tau_1,\rho_1,\rho_2) +  \DVTn(\tau_2,\rho_2,\rho_3).
 	\end{equation}
 	\item \label{tpc:3} For $\tau_n\to \tau>0, \ 
          \rho_0^n \to \rho,  \ \rho_1^n \to \rho_1$ in $\calM^+(V)$,
 	\begin{equation}\label{lower-semicont}
\liminf_{n \to \pinfty} \DVTn(\tau_n,\rho_0^n, \rho_1^n) \geq \DVTn(\tau,\rho_0,\rho_1).
 	\end{equation}
 	\item \label{tpc:4} For all $\tau_n
\downarrow 0$ and for all $(\rho_n)_n$, $ \rho \in \calM^+(V)$,
	\begin{equation}\label{e:psi4}
\sup_{n\in\N} \DVTn(\tau_n, \rho_n,\rho) <\pinfty \quad \Rightarrow \quad  \rho_n \to \rho.
	\end{equation}
	\item \label{tpc:5} For all $\tau_n
\downarrow 0$ and all $(\rho_n)_n$, $ (\nu_n)_n \subset \calM^+(V)$ with $\rho_n \to \rho, \ \nu_n \to\nu$,
	\begin{equation}\label{e:psi6}
\limsup_{n\to\infty} \DVTn(\tau_n, \rho_n,\nu_n) <\pinfty \quad \Rightarrow \quad \rho = \nu.
	\end{equation}
\end{enumerate}
\end{subequations}
\end{theorem}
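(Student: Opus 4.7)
For \ref{tpc:1}, the forward direction uses the constant curve $(\rho_t,\bj_t)\equiv(\rho_0,0)\in\CEP 0\tau{\rho_0}{\rho_0}$, which satisfies $\calR(\rho_0,0)=0$ because $\Psi(0)=0$. For the reverse direction, $\DVT\tau{\rho_0}{\rho_1}=0<\pinfty$ forces $\ADM 0\tau{\rho_0}{\rho_1}\neq\emptyset$, so Corollary~\ref{c:exist-minimizers} produces a minimizer $(\rho^*,\bj^*)$ with $\int_0^\tau\calR(\rho^*_t,\bj^*_t)\,\dd t=0$. Lemma~\ref{l:props:Psi} gives $\Psi(s)>0$ for $s\neq0$, and superlinearity of $\Psi$ gives $\Psi^\infty\equiv+\infty$ off $0$; hence $\calF_\Psi(2\bj^*_t|\bnu_{\rho^*_t})=0$ for a.e.\ $t$ forces $\bj^*_t\equiv 0$, and the continuity equation yields $\rho_0=\rho_1$. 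Property \ref{tpc:2} follows by concatenating $\eps$-near-minimizers on $[0,\tau_1]$ and $[0,\tau_2]$ via Lemma~\ref{l:concatenation&rescaling}(1), noting additivity of $\int\calR$ across concatenations, and sending $\eps\to 0$.

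For \ref{tpc:3}, we may assume $L:=\liminf_n\DVT{\tau_n}{\rho_0^n}{\rho_1^n}<\pinfty$ and, along a realizing subsequence, choose near-minimizers $(\rho^n,\bj^n)\in\ADM 0{\tau_n}{\rho_0^n}{\rho_1^n}$ with action converging to $L$. Since $\tau_n\to\tau$, we may extend each pair to $[0,2\tau]$ by setting $\rho^n_t\equiv\rho_1^n$ and $\bj^n_t\equiv 0$ for $t\in[\tau_n,2\tau]$; as $\calR(\cdot,0)\equiv 0$, the extended pair lies in $\CE 0{2\tau}$ with the same action. Proposition~\ref{prop:compactness} then yields a setwise-convergent subsequence with limit $(\hat\rho,\hat\bj)\in\CER 0{2\tau}$. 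Setwise convergence of initial data gives $\hat\rho_0=\rho_0$, while the equi-integrability of $t\mapsto|\bj^n_t|(\edg)$ extracted in the proof of Proposition~\ref{prop:compactness} combined with the TV bound \eqref{est:ct-eq-TV} and $\tau_n\to\tau$ give $\|\hat\rho^n_\tau-\rho_1^n\|_{TV}\to 0$, whence $\hat\rho_\tau=\rho_1$. Restricting $(\hat\rho,\hat\bj)$ to $[0,\tau]$ produces an admissible competitor, and \eqref{ineq:lsc-R} on $[0,\tau]$ delivers $\DVT\tau{\rho_0}{\rho_1}\le L$.

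Properties \ref{tpc:4} and \ref{tpc:5} reduce to a single quantitative estimate. Pick near-minimizers $(\rho^n,\bj^n)$; mass conservation in the continuity equation plus setwise convergence of the prescribed endpoints yields a uniform bound $\rho^n_t(V)\le M_1$, and thus $\bnu_{\rho^n_t}(\edg)\le M_2$ via \eqref{eq:31}. The lower bound \eqref{eq:64} and Jensen's inequality applied to $\Psi$ give
\[
\int_0^{\tau_n}|\bj^n_t|(\edg)\,\dd t\;\le\;\frac{\tau_n M_2}{2}\,U\!\Bigl(\frac{2A_n}{\tau_n M_2}\Bigr),\qquad A_n:=\int_0^{\tau_n}\calR(\rho^n_t,\bj^n_t)\,\dd t,
\]
where $U=\Psi^{-1}|_{[0,\pinfty)}$ is sublinear by the superlinearity of $\Psi$, so that $\tau_n U(C/\tau_n)\to 0$ for any $C>0$. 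Since $A_n$ is bounded by hypothesis and $\tau_n\downarrow 0$, the right-hand side vanishes. The TV continuity \eqref{est:ct-eq-TV} then gives $\|\rho_n-\rho\|_{TV}\to 0$ in \ref{tpc:4} (proving the claim) and $\|\rho_n-\nu_n\|_{TV}\to 0$ in \ref{tpc:5}; in the latter case, $\rho_n-\nu_n$ converges setwise to $\rho-\nu$ and also to $0$ in $TV$, forcing $\rho=\nu$.

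The most delicate step is \ref{tpc:3}: Proposition~\ref{prop:compactness} is stated under $\rho_0^n\ll\pi$, which mere setwise convergence of $\rho_0^n$ does not provide. Resolving this requires either decomposing $(\rho^n,\bj^n)$ into absolutely continuous and singular parts via Theorem~\ref{thm:confinement-singular-part}(1) and treating them separately (the singular part contributing a constant mass piece that passes trivially to the limit), or extending the compactness argument to rely only on the equi-absolute continuity of $\rho_0^n$ with respect to some common dominating measure, available from Theorem~\ref{thm:equivalence-weak-compactness}(6). Handling the endpoint identification uniformly in the sign of $\tau_n-\tau$ is the secondary point where equi-integrability of the fluxes must be carefully invoked.
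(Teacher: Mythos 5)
Your proof is correct, but it takes a genuinely different route from the paper's, and the difference is worth spelling out. The paper's key device for parts (3)--(5) is the \emph{scaling invariance} of Remark~\ref{rem:scaling-invariance}: by \eqref{eq:188bis} one has $\DVTn(\tau,\rho_0,\rho_1)=\mathscr W_\tau(1,\rho_0,\rho_1)$, which rescales every competitor to the fixed time interval $[0,1]$ at the cost of replacing $\Psi$ by the perspective slice $\hat\Psi(\cdot,\tau)$ (decreasing and jointly lower semicontinuous in $\tau$). With that in hand, (3) is a direct application of Proposition~\ref{prop:compactness} on $[0,1]$ plus joint lower semicontinuity of $\hat\Psi$; parts (4) and (5) then follow because $\hat\Psi(\cdot,0)$ blows up off the origin, so the ``$\tau=0$'' limit dissipation $\calR_0$ forces $\bj=0$. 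You instead work on an unrescaled interval: for (3) you pad each near-minimizer past time $\tau_n$ by the constant curve with zero flux, apply Proposition~\ref{prop:compactness} and \eqref{ineq:lsc-R} on a fixed interval with the \emph{original} $\calR$, and then use the TV estimate \eqref{est:ct-eq-TV} together with equi-integrability of the fluxes to slide the endpoint from $\tau_n$ to $\tau$. For (4)--(5) you produce a \emph{quantitative} bound $\int_0^{\tau_n}|\bj^n_t|(\edg)\,\dd t\lesssim \tau_n\,U(C/\tau_n)\to 0$ by inverting $\Psi$ after the Jensen step of \eqref{eq:64}, and then close via the TV continuity estimate. Both routes work; the paper's scaling argument is cleaner and puts all three parts through the same compactness lemma, while your Jensen--inversion argument for (4)--(5) is more elementary and self-contained, and avoids introducing the degenerate dissipation $\calR_0$.

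Two minor remarks. First, the quantitative estimate for (4)--(5) should really be applied after skew-symmetrizing $\bj^n$ (cf.\ Remark~\ref{rem:skew-symmetric} and Lemma~\ref{le:A1}) or to the positive and negative parts of $\bj^n_t$ separately; otherwise \eqref{eq:64} controls $\bj^n_t(\edg)$ rather than $|\bj^n_t|(\edg)$. The resulting constant changes but the conclusion does not. Second, you are right to flag that Proposition~\ref{prop:compactness} is stated under the hypothesis that $\rho^n_a\ll\pi$, which Theorem~\ref{thm:props-cost} does not assume; this is in fact a latent gap in the \emph{paper's} proof as well, and your proposed remedy via Theorem~\ref{thm:confinement-singular-part}(1) (split off the singular part, whose mass is preserved along any finite-action curve and passes to the limit trivially) is the natural fix.
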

\begin{proof}
\textit{\ref{tpc:1}} 
Since $\Psi(s)$ is strictly positive for $s\neq0$
it is immediate to check that $\calR(\rho,\bj)=0\ \Rightarrow\ \bj=0$.
For an optimal pair
$(\rho,\bj ) $ satisfying 
$
\int_0^\tau \calR(\rho_t,\bj_t)\,\dd t =0
$
we deduce that 
$\bj_t=0$
for a.e.~$t\in (0,\tau)$. The continuity equation then implies
$\rho_0=\rho_1$.
\nc

\medskip

\textit{\ref{tpc:2}} This can  easily be checked by using the existence of minimizers for $\DVTn(\tau,\rho_0, \rho_1)$. 

\medskip

\textit{\ref{tpc:3}}
Assume without loss of generality that $\liminf_{n\to+\infty} \DVTn(\tau_n,\rho_0^n, \rho_1^n) < \infty$. By \eqref{eq:188bis}
\nc we use that, for every $n\in \N$ and setting $\overline \tau = \sup_n \tau_n$, 
\[
\DVTn(\tau_n,\rho_n^0, \rho_n^1)   = \DVTn_{\tau_n} (1,\rho_n^0, \rho_n^1)
\leq \DVTn_{\overline \tau} (1,\rho_n^0, \rho_n^1)
  \stackrel{(*)}= \int_0^{1} \calR_{\overline \tau}(\rho_t^n,\bj_t^n) \,\dd t,
\]
where the identity $(*)$ holds for an optimal pair $(\rho^n,\bj^n) \in \CEP{0}{1}{\rho_0^n}{\rho_1^n}$.
Applying Proposition~\ref{prop:compactness}, we obtain the existence of $(\rho, \bj ) \in \CEP 01{\rho_0}{\rho_1}$ such that, up to a subsequence,  
\begin{equation}
\label{tilde-rho-j-n}
\begin{aligned}
&{\rho}_s^n \to {\rho}_s \text{ setwise in } \calM^+(V) \quad \text{for all } s \in [0,1]\,,\\
&{\bj}^n \to  {\bj} \text{ setwise in }  \calM(\RICKYNEW [0,1]\EEE{\times}\edg)\,,
\end{aligned}
\end{equation}
Arguing as in Proposition~\ref{prop:compactness}
and using the joint
lower semicontinuity of $\hat \Psi$, we find that 
\[
\liminf_{n\to\infty}  \int_0^{1}  \calR_{\tau_n}\left( {\rho}_s^n ,
  {\bj}_s^n \right)  \dd s
\geq  \int_0^{1}  \calR_\tau\left( {\rho}_s ,  {\bj}_s \right)
\dd s
\ge \mathscr W_\tau(1,\rho_0,\rho_1)=
\DVTn(\tau,\rho_0,\rho_1).
\]

\medskip

\textit{\ref{tpc:4}}
If we denote by $\calR_0$ the dissipation associated with
$\hat\Psi(\cdot,0)$, given by $\hat\Psi(w,0) = +\infty$ for $w\not=0$ and $\hat\Psi(0,0)=0$,  we find
\begin{equation}
  \label{eq:189}
  \calR_0(\rho,\bj)<\pinfty\quad\Rightarrow\quad
  \bj=0.
\end{equation}
By the same argument as for part~\ref{tpc:3},
every subsequence of $\rho_n$ has a
converging subsequence in the setwise topology; 
the lower semicontinuity result of the proof of part~\ref{tpc:3}
 shows that any limit point must coincide with $\rho$.

\medskip
\textit{\ref{tpc:5}}
The argument
combines \eqref{eq:189} and
part~\ref{tpc:3}.
\end{proof}

\subsection{The action functional \texorpdfstring{$\VarWn$}W and its properties}

The construction of $\calR$ and $\DVTn$ above proceeded in the order $\calR \rightsquigarrow \DVTn$: we first constructed $\calR$, and then $\DVTn$ was defined in terms of~$\calR$. It is a natural question whether one can invert this construction: given $\DVTn$, can one reconstruct~$\calR$, or at least integrals of the form $\int_a^b \calR\,\dd t$? The answer is positive, as we show in this section.

Given a functional $\DVTn$ satisfying the properties~\eqref{assW},
we define the `$\DVTn$-action'
of a curve $\rho:[a,b]\to\calM^+(V)$ as
\begin{equation}
\label{def-tot-var}
\VarW \rho ab: = \sup \left \{ \sum_{j=1}^M  
  \DVT{t^j - t^{j-1}}{\rho(t^{j-1})}{\rho(t^j)} \, : \ (t^j)_{j=0}^M \in \mathfrak{P}_f([a,b]) 
\right\} ,
\end{equation}
for all $[a,b]\subset [0,T]$
where $\mathfrak{P}_f([a,b])$  denotes the set of all partitions of a given interval $[a,b]$. 

If $\DVTn$ is defined by~\eqref{def-psi-rig}, then each term in the sum above is defined as an optimal version of $\int_{t^{j-1}}^{t^j} \calR(\rho_t,\cdot)\,\dd t$, and we might expect that $\VarW \rho ab$ is an optimal version of $\int_a^b \calR(\rho_t,\cdot)\,\dd t$. This is indeed the case, as is illustrated  by the following analogue of~\cite[Th.~5.17]{DolbeaultNazaretSavare09}:

\begin{prop}
\label{t:R=R}
Let $\DVTn$ be given by~\eqref{def-psi-rig}, and let $\rho:[0,T]\to \calM^+(V)$.
Then 
$\VarW \rho 0T<\pinfty$  if and only if there exists a
measurable map $\bj :[0,T]\to \calM(\edg)$ such that $(\rho,\bj )\in\CE 0T$ with $\int_0^T \calR(\rho_t,\bj_t)\dd t<\pinfty$\,.
In that case, 
\begin{equation}
\label{calR-leq-VarW}
\VarW \rho0T\leq \int_0^T \calR(\rho_t,\bj_t)\,\dd t,
\end{equation}
and there exists a unique $\bj_{\rm opt}$  such that equality is achieved.  
The optimal $\bj_{\rm opt}$ is skew-symmetric, i.e.\  $\bj_{\rm opt}= \tj_{\rm opt}$ \EEE
 (cf.~Remark~\ref{rem:skew-symmetric}).
 \end{prop}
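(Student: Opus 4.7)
The plan is to establish the inequality \eqref{calR-leq-VarW} directly from the definition of $\DVTn$, to produce an optimal $\bj$ via a partition-refinement scheme combined with the compactness result of Proposition~\ref{prop:compactness}, and to deduce uniqueness and skew-symmetry from the strict convexity of $\Upsilon$ in its last argument together with Remark~\ref{rem:skew-symmetric}. The easy direction is almost immediate: if $(\rho,\bj)\in\CE 0T$ has $\int_0^T\calR(\rho_t,\bj_t)\,\dd t<\infty$, then for every partition $0=t^0<\cdots<t^M=T$ the restriction of $(\rho,\bj)$ to $[t^{j-1},t^j]$ belongs to $\CEP{t^{j-1}}{t^j}{\rho(t^{j-1})}{\rho(t^j)}$, so $\DVT{t^j-t^{j-1}}{\rho(t^{j-1})}{\rho(t^j)}\le\int_{t^{j-1}}^{t^j}\calR(\rho_t,\bj_t)\,\dd t$; summing over $j$ and taking the supremum yields $\VarW\rho 0T\le\int_0^T\calR(\rho_t,\bj_t)\,\dd t<\infty$.

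For the converse, assume $\VarW\rho 0T<\infty$. A preliminary observation is that $\rho$ is continuous for setwise convergence: for any $s\le t$ in $[0,T]$, the single-step bound $\DVT{t-s}{\rho(s)}{\rho(t)}\le \VarW\rho 0T$ together with property~\ref{tpc:4} of Theorem~\ref{thm:props-cost} yields $\rho(t_n)\to\rho(t)$ whenever $t_n\to t$. I would now fix a sequence of partitions $\{t^n_j\}_{j=0}^{M_n}$ of $[0,T]$ with vanishing mesh, and on each subinterval $[t^n_{j-1},t^n_j]$ invoke Corollary~\ref{c:exist-minimizers} to produce a minimizer $(\hat\rho^{n,j},\bj^{n,j})$ for $\DVT{t^n_j-t^n_{j-1}}{\rho(t^n_{j-1})}{\rho(t^n_j)}$, which exists because the finite-variation hypothesis ensures the admissible class is nonempty. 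Concatenating via Lemma~\ref{l:concatenation&rescaling} produces $(\hat\rho^n,\bj^n)\in\CER 0T$ with $\hat\rho^n(t^n_j)=\rho(t^n_j)$ and
\[
\int_0^T\calR(\hat\rho^n_t,\bj^n_t)\,\dd t=\sum_{j=1}^{M_n}\DVT{t^n_j-t^n_{j-1}}{\rho(t^n_{j-1})}{\rho(t^n_j)}\le\VarW\rho 0T.
\]
Proposition~\ref{prop:compactness} then furnishes a subsequence and a limit $(\hat\rho,\bj)\in\CER 0T$ with $\hat\rho^n_t\to\hat\rho_t$ setwise for every $t\in[0,T]$ and $\bj^n_\Lebone\to\bj_\Lebone$ setwise. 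Since $\bigcup_n \{t^n_j\}$ is dense in $[0,T]$, the identity $\hat\rho^n(t^n_j)=\rho(t^n_j)$ passes to the limit on a dense subset; the total-variation continuity of $\hat\rho$ (Remark~\ref{rem:expand}) and the setwise continuity of $\rho$ just established force $\hat\rho=\rho$. The lower-semicontinuity part of Proposition~\ref{prop:compactness} then gives $\int_0^T\calR(\rho_t,\bj_t)\,\dd t\le\VarW\rho 0T$, which combined with the easy direction applied to this very $\bj$ yields equality.

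For uniqueness, the class $\{\bj:(\rho,\bj)\in\CE 0T\}$ is affine and $\bj\mapsto\int_0^T\calR(\rho_t,\bj_t)\,\dd t$ is convex, so two minimizers $\bj_1,\bj_2$ yield a midpoint $(\bj_1+\bj_2)/2$ that is again optimal. On the set where $\upalpha(u_t(x),u_t(y))>0$ the integrand $w\mapsto\Upsilon(u_t(x),u_t(y),w)=\hat\Psi(w,\upalpha(u_t(x),u_t(y)))$ is strictly convex (Lemma~\ref{lem:Upsilon-properties}), forcing $\bj_1=\bj_2$ there; on the complementary set the finite-action constraint together with Lemma~\ref{l:alt-char-R} forces the flux to vanish, so $\bj_1=\bj_2$ everywhere. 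Skew-symmetry then follows from Remark~\ref{rem:skew-symmetric}: the skew-symmetrization $\tj_{\mathrm{opt}}$ of any optimizer preserves the continuity equation and does not increase the action, hence is itself optimal, and uniqueness gives $\bj_{\mathrm{opt}}=\tj_{\mathrm{opt}}$.

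The main obstacle I foresee lies in applying Proposition~\ref{prop:compactness} in the second paragraph, whose hypotheses include $\pi$-absolute continuity of the common initial datum. When $\rho(0)\not\ll\pi$, one must first apply Theorem~\ref{thm:confinement-singular-part} to split $(\hat\rho^n,\bj^n)$ into its mutually singular absolutely-continuous and singular parts—using the additive decomposition \eqref{eq:55bis} when $\upalpha$ is $1$-homogeneous, or exploiting sub-linearity of $\upalpha$ or $\pi$-absolute continuity of $\kappa(x,\cdot)$ in the other cases—and run the compactness argument on each component separately before recombining. A secondary subtlety is the precise pinning down of $\bj_{\mathrm{opt}}$ on the set where $\upalpha$ vanishes, which hinges on combining the Lebesgue-decomposition analysis of Lemma~\ref{l:alt-char-R} with the finite-action constraint.
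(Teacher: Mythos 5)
Your approach tracks the paper's proof quite closely: the easy direction is identical, and the hard direction uses the same toolkit (Corollary~\ref{c:exist-minimizers} for existence of minimizers on subintervals, concatenation via Lemma~\ref{l:concatenation&rescaling}, compactness and lower semicontinuity via Proposition~\ref{prop:compactness}, and strict convexity of $\Upsilon(u_1,u_2,\cdot)$ together with Remark~\ref{rem:skew-symmetric} for uniqueness and skew-symmetry). The uniqueness and skew-symmetry arguments match the paper's.

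There is, however, one genuine gap in your limit-identification step. You extract a subsequential limit $\hat\rho$ from Proposition~\ref{prop:compactness} and then assert that the identity $\hat\rho^n(t^n_j)=\rho(t^n_j)$ ``passes to the limit on a dense subset.'' For arbitrary partitions with vanishing mesh this does not follow: a point $s$ that happens to be a node of $P_{n_0}$ need not be a node of $P_n$ for $n>n_0$, so there is no reason $\hat\rho^n(s)$ should equal $\rho(s)$ for large $n$, and hence nothing passes to the limit at $s$. The fix is either to work with nested partitions (e.g.\ dyadic, which you may assume with no loss of generality and should say so), or to follow the paper and invoke Lemma~\ref{l:convergence-of-interpolations}, whose proof controls $\DVTn\bigl(2(\pwC\sft n-t),\rho^n(t),\rho(t)\bigr)$ by estimating through the $\DVTn$-action of $\rho^n$ on one side and of the time-reversed curve $\check\rho$ on the other, and then applies property~\eqref{e:psi4}; that lemma yields $\rho^n(t)\to\rho(t)$ for \emph{all} $t$ and \emph{all} vanishing-mesh partitions, which is why the paper applies it before extracting the convergent subsequence of $\bj^n$. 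Your remark about the $\pi$-absolute-continuity hypothesis of Proposition~\ref{prop:compactness} is well placed---the paper's proof as written shares this issue---and the remedy you sketch via the decomposition of Theorem~\ref{thm:confinement-singular-part} is the appropriate one.
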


Prior to proving Proposition \ref{t:R=R}, we establish  the following approximation result. 
\begin{lemma}
\label{l:convergence-of-interpolations}
Let $\rho:[0,T]\to \calM^+(V)$
 satisfy  $\VarW {\rho}0T<\pinfty$. 
 For a sequence of partitions $P_n =(t_n^j)_{j=0}^{M_n}\in \mathfrak{P}_f([0,T])$ with fineness $\tau_n = \max_{j=1,\ldots, M_n} (t_n^j{-}t_n^{j-1})$ converging to zero, let $\rho^n :[0,T]\to \calM^+(V)$ satisfy
\[
\rho^n(t_n^j) = \rho(t_n^j)\quad\text{for all } j =1,\ldots, M_n \qquad\text{and}\qquad 
\sup\nolimits_{n\in\N} \VarW {\rho^n}0T < \pinfty.
\]
Then $\rho^n(t) \to \rho(t)$  setwise \EEE for all $t\in[0,T]$ as $n\to\infty$.
\end{lemma}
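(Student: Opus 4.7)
I would prove this by a direct total-variation estimate combining the flux representation of finite-action curves with the equi-integrability of their fluxes.

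By Proposition \ref{t:R=R}, both $\rho$ (from $\VarW{\rho}{0}{T}<\pinfty$) and every $\rho^n$ admit measurable fluxes $\bj$, $\bj^n$ such that $(\rho,\bj), (\rho^n,\bj^n)\in \CER{0}{T}$, with $\sup_n\int_0^T\calR(\rho^n_t,\bj^n_t)\,\dd t<\pinfty$. Mass conservation along the continuity equation gives $\rho^n_t(V)=\rho^n(0)(V)=\rho(0)(V)$ uniformly in $n,t$, so that $\bnu_{\rho^n_t}(\edg)\leq M_2$ for a constant $M_2$ depending only on this mass and $\|\kappa_V\|_\infty$ (use \eqref{eq:31} with $y=(c,c)\in D(\upalpha_*)$). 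Jensen's inequality \eqref{eq:79} then produces
\[
\int_0^T\Psi\bigl(2|\bj^n_t|(\edg)/M_2\bigr)\,\dd t\leq \frac{2}{M_2}\int_0^T\calR(\rho^n_t,\bj^n_t)\,\dd t\leq C',
\]
which, by superlinearity of $\Psi$ and the de la Vall\'ee Poussin criterion, makes $(|\bj^n_\cdot|(\edg))_n$ equi-integrable on $[0,T]$; in particular $|\bj_\cdot|(\edg)\in L^1(0,T)$ as well. Consequently, both $\rho$ and each $\rho^n$ are continuous in total variation by \eqref{est:ct-eq-TV}.

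Fix $t\in[0,T]$ and, for each $n$, a partition point $s_n:=t_n^{j_n}\in P_n$ with $|t-s_n|\leq\tau_n\to 0$. The interpolation hypothesis gives $\rho^n(s_n)=\rho(s_n)$, whence applying \eqref{est:ct-eq-TV} separately to $\rho^n$ and to $\rho$ yields
\[
\|\rho^n(t)-\rho(t)\|_{TV}\leq\|\rho^n(t)-\rho^n(s_n)\|_{TV}+\|\rho(s_n)-\rho(t)\|_{TV}\leq 2\int_{I_n}|\bj^n_r|(\edg)\,\dd r+\|\rho(s_n)-\rho(t)\|_{TV},
\]
where $I_n$ is the interval with endpoints $s_n$ and $t$. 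The second summand vanishes by TV-continuity of $\rho$; the first vanishes uniformly in $n$ by equi-integrability of the $|\bj^n_\cdot|(\edg)$ together with $|I_n|\leq\tau_n\to 0$. This gives TV-convergence $\rho^n(t)\to\rho(t)$, which is strictly stronger than the setwise convergence claimed.

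The main technical input---and thus where the real work sits---is the uniform equi-integrability of $(|\bj^n_\cdot|(\edg))_n$. It is here that the superlinear growth of $\Psi$ is essential, coupled with the mass bound from the continuity equation and the uniform action bound inherited from $\sup_n\VarW{\rho^n}{0}{T}<\pinfty$ via Proposition \ref{t:R=R}. Once this equi-integrability is established, the rest of the argument is a two-line triangle inequality.
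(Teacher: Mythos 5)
Your proof has a circularity problem that makes it unavailable as a proof of this lemma. You invoke Proposition~\ref{t:R=R} to extract fluxes $\bj$ for $\rho$ (from $\VarW{\rho}{0}{T}<\pinfty$) and $\bj^n$ for each $\rho^n$, but the direction of Proposition~\ref{t:R=R} that you need --- finiteness of the $\VarWn$-action implies existence of an admissible flux with $\int_0^T\calR\,\dd t<\pinfty$ --- is itself proved \emph{using} Lemma~\ref{l:convergence-of-interpolations}: the paper constructs piecewise-optimal pairs $(\rho^n,\bj^n)$, applies Lemma~\ref{l:convergence-of-interpolations} to get $\rho^n(t)\to\rho(t)$, and only then passes to the limit to obtain a flux for $\rho$. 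So you cannot use Proposition~\ref{t:R=R} here. Note that the hypotheses of the present lemma do \emph{not} supply fluxes for $\rho$ or $\rho^n$ --- only $\VarWn$-bounds --- so without Proposition~\ref{t:R=R} there is no flux to feed into the estimate \eqref{est:ct-eq-TV}, and the claimed TV-continuity of $\rho$ and the equi-integrability of $|\bj^n_\cdot|(\edg)$ are both out of reach at this stage.

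The paper sidesteps this precisely by never producing any fluxes: it works purely at the level of the abstract cost. Using the triangle inequality~\eqref{e:psi3}, the uniform bound on $\VarW{\rho^n}{0}{T}$, and the fact that the time-reversed curve $\check\rho(t):=\rho(T-t)$ has the same (finite) $\VarWn$-action since $\Psi$ is even, one gets a uniform bound $\DVTn\bigl(2(\pwC\sft n(t)-t),\rho^n(t),\rho(t)\bigr)\le C$. Since $2(\pwC\sft n(t)-t)\le 2\tau_n\to0$, property~\eqref{e:psi4} yields setwise convergence directly. Your estimates (Jensen plus superlinearity of $\Psi$ giving equi-integrability, then \eqref{est:ct-eq-TV}) are technically sound and would even yield the stronger TV-convergence, but only \emph{after} Proposition~\ref{t:R=R} is available; they cannot be used to prove the lemma that Proposition~\ref{t:R=R} itself depends on.
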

\begin{proof}
First of all, observe that by the symmetry of $\Psi$, also the time-reversed curve    
 $\check \rho(t):= \rho(T-t)$ satisfies
  $\VarW {\check{\rho}}0T<\pinfty$.
   Let $\pwC {\sft}{n}$ and $\upwC {\sft}{n}$ be the  piecewise constant interpolants 
 associated with the partitions $P_n$, cf.\ \eqref{nodes-interpolants}. Fix $t\in [0,T]$; we estimate 
\begin{align*}
\DVTn\bigl(2(\pwC\sft n-t),\rho^n(t),\rho(t)\bigr) 
&\stackrel{(1)}{\leq} \DVTn\bigl(\pwC\sft n-t,\rho^n(t),\rho^n(\pwC {\sft}{n}(t))\bigr) +
  \DVTn\bigl(\pwC\sft n-t,\rho(\pwC {\sft}{n}(t)),\rho(t)\bigr) \\
&= \DVTn\bigl(\pwC\sft n-t,\rho^n(t),\rho^n(\pwC {\sft}{n}(t))\bigr) +
  \DVTn\bigl(\pwC\sft n-t,\check \rho(T-\pwC {\sft}{n}(t)),\check \rho(T-t)\bigr)\\
&\leq
\VarW {\rho^n}t{\pwC {\sft}{n}(t)} + \VarW{\check \rho}{T-\pwC {\sft}{n}(t)}{T-t}\\
&\leq \sup_{n\in\N} \VarW {\rho^n} 0T + \VarW{\check \rho}0T =: C<\pinfty,
\end{align*}
where (1) follows from property \eqref{e:psi3} of $\DVTn$.
Consequently, by property~\eqref{e:psi4} it follows that $\rho^n(t)
\to  \rho(t)$
setwise  in $\calM^+(V)$ for all $t\in[0,T]$.
\end{proof}

We are now in a position to prove Proposition~\ref{t:R=R}:

\begin{proof}[Proof of Proposition~\ref{t:R=R}]
One implication is straightforward: if a pair $(\rho,\bj )$ exists, then 
\[
\DVTn(t-s,\rho_s,\rho_t) \stackrel{\eqref{def-psi-rig}}\leq  
\int_s^t \calR(\rho_r,\bj_r)\,\dd r,\qquad \text{for all }0\leq s<t\leq T,
\]
and therefore  $\VarW \rho0T<\pinfty$ and \eqref{calR-leq-VarW} holds.

To prove the other implication, assume that   $\VarW \rho0T<\pinfty$. 
Choose a sequence of partitions $P_n =(t_n^j)_{j=0}^{M_n}\in \mathfrak{P}_f([0,T])$ that becomes dense in the limit $n\to\infty$. For each $n\in\N$, construct a pair $(\rho^n,\bj^n)\in \CE 0T$ as follows:
On each time interval $[t_n^{j-1},t_n^j]$, let $(\rho^n,\bj^n)$  be given by Corollary~\ref{c:exist-minimizers} as the minimizer under the constraint $\rho^n(t_n^{j-1}) = \rho(t_n^{j-1})$ and $\rho^n(t_n^j) = \rho(t_n^j)$,
namely
\begin{equation}
\label{minimizer-cost}
\DVT{t_n^{j}{-}t_n^{j-1}}{\rho(t_n^{j-1})}{ \rho(t_n^{j})} = \int_{t_n^{j-1}}^{t_n^{j}}\calR(\rho_r^n,\bj_r^n) \,\dd r\,.
 \end{equation}
 By concatenating the minimizers on each of the intervals a pair $(\rho^n,\bj^n)\in \CE0T$ is obtained,  thanks to Lemma  \ref{l:concatenation&rescaling}.
 By construction we have the property
\begin{align}
\label{eq:VarW-rhon-calR}
&\VarW{\rho^n}0T =
  \int_0^T \calR(\rho^n_t,\bj^n_t)\,\dd t.
\end{align}
Also by optimality we have 
\[
\VarW{\rho^n}{t_n^{j-1}}{t_n^j} 
= \DVTn\bigl(t_n^j-t_n^{j-1},\rho(t_n^{j-1}),\rho(t_n^j)\bigr)
\leq \VarW\rho{t_n^{j-1}}{t_n^j},
\]
which implies by summing that
\begin{equation}
\label{ineq:VarW-rhon-rho}
\VarW{\rho^n}0T\leq \VarW\rho0T.
\end{equation}
By Lemma~\ref{l:convergence-of-interpolations} we then find that $\rho^n(t)\to \rho(t)$ 
 setwise \EEE
as $n\to\infty$ for each $t\in[0,T]$.

Applying Proposition~\ref{prop:compactness}, we find that $\bj^n(\dd
t\,\dd x\,\dd y):= \bj_t^n(\dd x\,\dd y)\,\dd t$
 setwise \EEE
 converges along a
subsequence to a limit $\bj$. The limit  $
\bj$ can be disintegrated as $\bj(\dd t\,\dd x\,\dd y)
= \lambda(\dd t) \, \bj_t(\dd x\,\dd y) $  \EEE for a measurable family $(\bj_t)_{t\in[0,T]}$, and the pair  $(\rho,\bj)$ 
\EEE
is an element of $\CE0T$. In addition
 we have the lower-semicontinuity property
\begin{equation}
\label{ineq:lsc:j-tilde-j}
\liminf_{n\to\infty} \int_0^T \calR(\rho^n_t,\bj^n_t)\,\dd t
\geq \int_0^T \calR(\rho_t,\bj_t)\EEE\,\dd t.
\end{equation}
We then have the series of inequalities
\begin{align*}
\VarW\rho 0T &\stackrel{\eqref{ineq:VarW-rhon-rho}} \geq \limsup_{n\to\infty} \VarW{\rho^n}0T
\stackrel{\eqref{eq:VarW-rhon-calR}} = \limsup_{n\to\infty} \int_0^T \calR(\rho^n_t,\bj^n_t)\,\dd t\\
 & \stackrel{\eqref{ineq:lsc:j-tilde-j}}{\geq} \int_0^T \calR(\rho_t, \bj_t) \EEE\,\dd t 
\stackrel{\eqref{calR-leq-VarW}}{\geq} \VarW \rho0T,
\end{align*}
which implies that $\int_0^T \calR(\rho_t, \bj_t) \EEE \,\dd t = \VarW \rho0T$.

Finally, the uniqueness of  $\bj$ \EEE is a consequence of the strict convexity of $\Upsilon(u_1,u_2,\cdot)$, 
 cf.\ Lemma~\ref{lem:Upsilon-properties}. \EEE Similarly, the skew-symmetry of $\bj$ follows from the strict convexity of $\Upsilon(u_1,u_2,\cdot)$, the symmetry of $\Upsilon(\cdot,\cdot,w)$,  and the invariance of the continuity equation~\eqref{eq:ct-eq-def} under the   `skew-symmetrization' $\bj  \mapsto \tj$,  
 cf.\ Remark \ref{rem:skew-symmetric}. \EEE
\end{proof}

\section{The Fisher information \texorpdfstring{$\Fish $}{D} and the definition of solutions}
\label{s:Fisherinformation}

With the definitions and the properties that we established  in the previous section we have given a rigorous meaning to the first term in the functional $\mathscr L$ in~\eqref{eq:def:mathscr-L}. In this section we continue with the second term in the integral, often called \emph{Fisher information}, after the canonical version in diffusion problems~\cite{Otto01}.   Section \ref{ss:5.2} is devoted to\
\begin{enumerate}[label=(\alph*)]
\item A rigorous definition of the Fisher information $\Fish(\rho)$
  (Definition~\ref{def:Fisher-information}).
\end{enumerate}
In several practical settings, such as  the proof of existence that we
give in Section~\ref{s:MM},
it is important to have lower semicontinuity of $\Fish$: this  is proved in Proposition \ref{PROP:lsc}. 
\par
 We are then in a position to give
\begin{enumerate}[resume,label=(\alph*)]
\item a rigorous definition of solutions  to the $(\calS, \calR, \calR^*)$ system (Definition~\ref{def:R-Rstar-balance}).
\end{enumerate}
In Section~\ref{ss:def-sol-intro}  we explained that the Energy-Dissipation balance approach to defining solutions is based on the fact  that  $\mathscr L(\rho,\bj ) \geq 0$ for all $(\rho,\bj )$
by the validity of a suitable chain-rule inequality.
\begin{enumerate}[resume,label=(\alph*)]
\item A rigorous proof of this chain-rule inequality, involving  $\calR$ and $\Fish$,   is given in Corollary~\ref{cor:CH3}, which is based on Theorem~\ref{th:chain-rule-bound}).
\end{enumerate}
This establishes the inequality $\mathscr L(\rho,\bj ) \geq 0$.
 Hence, 
 we can rigorously deduce that 
 the opposite inequality $\mathscr L(\rho,\bj ) \leq 0$  characterizes the property that $(\rho,\bj )$ is a solution to the $(\calS, \calR, \calR^*)$ system.
 Theorem \ref{thm:characterization} provides an additional characterization  of this solution concept.
%
%
%
\par
Finally, in Sections~\ref{subsec:main-properties} and \ref{ss:5.4}, \EEE 
\begin{enumerate}[resume,label=(\alph*)]
\item \GGGO we prove existence, uniqueness and stability
  of solutions under suitable convexity/l.s.c.~conditions on of
  $\Fish$
  (Theorems \ref{thm:existence-stability} and \ref{thm:uniqueness}).
  We  also discuss their asymptotic behaviour and the role of
  the invariant measures $\pi$.
\end{enumerate}
Throughout this section we adopt \textbf{Assumptions~\ref{ass:V-and-kappa}, \ref{ass:Psi}, and~\ref{ass:S}}.

\subsection{The Fisher information \texorpdfstring{$\Fish $}D} 
\label{ss:Fisher}

Formally, the Fisher information is the second term in~\eqref{eq:def:mathscr-L},  namely
\[
\Fish(\rho) = \calR^*\Bigl(\rho,-\thalf \ona \upphi(u)\Bigr)
=
\GGG \frac12\nc
\iint_\edg \Psi^*\bigl( -\thalf(\upphi'(u(y))-\upphi'(u(x))\bigr) \bnu_\rho(\dd x \, \dd y),\qquad \rho = u\pi\, .
\]
In order to give a precise meaning
to this formulation when $\upphi$ is not differentiable at $0$ 
(as, for instance, in the case of the Boltzmann entropy
function~\eqref{logarithmic-entropy}),
we use the function $\rmD_\upphi$ defined in \eqref{eq:182}.
\begin{definition}[The Fisher-information functional $\Fish $]
\label{def:Fisher-information}
The Fisher information $\Fish: \mathrm{D}(\calS)\to [0,\pinfty]$ is defined as
\begin{equation}
\label{eq:def:D}
	\Fish (\rho) :=
        \displaystyle \frac12\iint_\edg \OrmD_\upphi\bigl(u(x),u(y)\bigr)\,
        \tetapi(\dd x\,\dd y)
        \qquad   \text{for } 
		\rho = u\pi\,.
              \end{equation}
\end{definition}
\nc
\begin{example}[The Fisher information in the quadratic and in the  $\cosh$ case]
 \label{ex:D}
For illustration we recall the two expressions for $\OrmD_\upphi$ from  Example~\ref{ex:Dpm} for the linear equation~\eqref{eq:fokker-planck} with quadratic and cosh-type potentials $\Psi^*$ :    \begin{enumerate}
        \item If $\Psi^*(s)=s^2/2$ , then 
        \[
        \OrmD_\upphi(u,v) = \begin{cases}
                  \frac{1}{2}(\log(u)-\log(v))(u-v)  & \text{if }  u,\, v>0,
                  \\
                  0 & \text{if } u=v=0,
                  \\
                  \pinfty & \text{if } u=0 \text{ and } v \neq 0, \text{ or vice versa}.
                  \end{cases}
                \]
        \item If $\Psi^*(s)=4\bigl(\cosh(s/2)-1\bigr)$, then 
        \[
          \OrmD_\upphi(u,v) = 2\Bigl(\sqrt{u}-\sqrt{v}\Bigr)^2\qquad \forall\,  (u,v) \in [0,\pinfty) \times [0,\pinfty). 
        \]
      \end{enumerate}
       These two  examples of
      $\OrmD_\upphi$ are convex.      
  \end{example}
  Let us discuss the lower-semicontinuity properties of $\Fish$.
   In accordance with the Minimizing-Movement approach carried out in Section \ref{ss:MM}, we will just be interested in lower semicontinuity of $\Fish$ along sequences with bounded
  energy $\calS$. Now,  \EEE
  since sublevels of the energy~$\calS$ are
  relatively compact with respect to setwise convergence (by part~\ref{cond:setwise-compactness-superlinear} of Theorem~\ref{thm:L1-weak-compactness}),
  there is no difference between
  narrow and setwise lower semicontinuity  of $\Fish$. \EEE
  \begin{prop}[\textbf{Lower semicontinuity of $\Fish $}]
    \label{PROP:lsc}
    \upshape
    Assume either that $\pi$ is purely atomic
    or that the function $\rmD_\upphi$ is convex
    on 
    $\R_+^2$. Then
    $\Fish$ is (sequentially) lower semicontinuous 
    with respect to setwise convergence, i.e.,\
    for all $(\rho^n)_n,\, \rho \in 
     \mathrm{D}(\calS) $ 
    \begin{equation}
      \label{lscD}
      \rho^n \to \rho \text{ setwise in } \calM^+(V)\quad
      \Longrightarrow \quad \Fish(\rho) \leq \liminf_{n\to\infty} \Fish(\rho^n)\,.
\end{equation}
\end{prop}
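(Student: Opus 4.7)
My plan is to split the proof along the two alternative hypotheses, after a common reduction: as soon as $\liminf_n \Fish(\rho^n)<\pinfty$ (otherwise there is nothing to prove), every $\rho^n$ with finite $\calS$-value satisfies $\rho^n = u^n \pi \ll \pi$ by Assumption~\ref{ass:S}, and similarly $\rho = u\pi$. Setwise convergence $\rho^n\to\rho$ is then equivalent to weak $L^1(V,\pi)$-convergence of the densities $u^n\to u$, and, thanks to Lemma~\ref{le:kernel-convergence} (whose hypotheses are met because $\kappa$ is a bounded kernel), this convergence is inherited by $\teta^\pm_{\rho^n}\to\teta^\pm_\rho$ setwise in $\calM^+(\edg)$.

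In the convex case (b), the idea is to rewrite
\begin{equation*}
  2\Fish(\rho)=\calF_{\uppsi}\big((\teta^-_\rho,\teta^+_\rho)\,\big|\,\tetapi\big),
\end{equation*}
where $\uppsi:\R^2\to[0,\pinfty]$ extends $\rmD_\upphi$ by $+\infty$ outside $\R^2_+$; this extension is convex (by assumption) and lower semicontinuous (because $\rmD_\upphi$ is already defined as the lower semicontinuous envelope of $\rmD_\upphi^+$). Since $\tetapi$ is fixed and $\teta^\pm_{\rho^n}\to\teta^\pm_\rho$ setwise, Lemma~\ref{l:lsc-general}(7) immediately yields the lower semicontinuity of $\calF_{\uppsi}$ along the sequence, hence \eqref{lscD}. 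Note that, because $\teta^\pm_{\rho^n}\ll\tetapi$, the singular (recession) part of $\calF_\uppsi$ does not contribute.

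For the purely atomic alternative (a), I would argue directly. Writing $\pi=\sum_i c_i\delta_{x_i}$, the detailed-balance identity \eqref{symmetry-nu-pi} forces both marginals of $\tetapi$ to equal $\kappa_V\pi$, so that $\tetapi$ is concentrated on the countable product of atoms $\{(x_i,x_j)\}_{i,j}$ and
\begin{equation*}
  2\Fish(\rho)=\sum_{i,j} \rmD_\upphi(u(x_i),u(x_j))\,\tetapi(\{(x_i,x_j)\}).
\end{equation*}
Testing setwise convergence against the singleton indicators $\One_{\{x_i\}}$ delivers the pointwise convergence $u^n(x_i)\to u(x_i)$ at every atom, and the lower semicontinuity of $\rmD_\upphi$ on $\R^2_+$ together with Fatou's lemma for the counting measure on pairs of atoms closes the case.

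The core obstacle lies in case (b): the two arguments $u^n\circ\sfx$ and $u^n\circ\sfy$ are coupled through a single density $u^n$, so weak $L^1$-convergence alone cannot survive a genuinely nonlinear operation unless joint convexity is available; conversely, pointwise convergence is only guaranteed in the purely atomic situation. The reduction to $\calF_\uppsi$ circumvents both issues, provided one verifies carefully that $u^n\circ\sfx,\, u^n\circ\sfy$ genuinely lie in $L^1(\edg,\tetapi)$ (which follows from the $\calS$-bound combined with $\|\kappa_V\|_\infty<\pinfty$), so that no singular contribution of $\uppsi$ can contaminate the limit.
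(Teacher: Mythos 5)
Your proof is correct, and in the convex case it takes a slightly different route from the paper's. The paper works directly at the density level: it shows that $u\mapsto\iint_\edg\rmD_\upphi(u(x),u(y))\,\dd\tetapi$ is strongly $L^1(V,\pi)$-lower semicontinuous (via Fatou, after noting that strong $L^1(V,\pi)$-convergence of $u$ propagates to strong $L^1(\edg,\tetapi)$-convergence of $(u\circ\sfx,u\circ\sfy)$), then uses convexity to upgrade to weak $L^1$ lower semicontinuity, and finally identifies weak $L^1$-convergence of densities with setwise convergence. You instead stay at the measure level: setwise convergence of $\rho^n$ transfers, via Lemma~\ref{le:kernel-convergence}, to setwise convergence of $(\teta^-_{\rho^n},\teta^+_{\rho^n})$ in $\calM(\edg;\R^2)$, and then the general result Lemma~\ref{l:lsc-general}(7), applied to the convex l.s.c. extension $\uppsi$ of $\rmD_\upphi$, closes the argument directly. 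The two routes ultimately rest on the same convex-analysis fact (convex $+$ strongly l.s.c. $\Rightarrow$ weakly l.s.c.), but yours reuses the paper's existing machinery more systematically, which makes the argument shorter and makes transparent that the coupling of $u^n\circ\sfx$ and $u^n\circ\sfy$ through a single density $u^n$ — the obstacle you rightly flag — is neutralized precisely by \emph{joint} convexity. One small remark: the lower semicontinuity of $\uppsi$ on all of $\R^2$ follows because $\R_+^2$ is \emph{closed} and $\rmD_\upphi$ is l.s.c. on it (a function l.s.c. on a closed set, extended by $+\infty$, remains l.s.c.); the fact that $\rmD_\upphi$ is an l.s.c. envelope only gives you l.s.c. on $\R_+^2$, not automatically on the extension. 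The purely atomic case matches the paper's argument verbatim, up to the minor wording that Fatou is applied against the atomic measure $\tetapi$ rather than a counting measure.
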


\begin{proof}
  When $\pi$ is purely atomic, setwise convergence implies pointwise
  convergence
  $\pi$-a.e.~for the sequence of the densities, so that \eqref{lscD}
  follows by Fatou's Lemma.

  A standard argument, still based on Fatou's Lemma, shows that
  the functional
  \begin{equation}
    \label{eq:98}
    u\mapsto \iint_\edg \OrmD_\upphi(u(x),u(y))\,\tetapi(\dd x,\dd y)
  \end{equation}
  is lower semicontinuous with respect to ~the strong topology in $L^1(V,\pi)$:
  it is sufficient to check that $u_n\to u$ in $L^1(V,\pi)$
  implies $(u_n^-,u_n^+)\to (u^-,u^+)$ in $L^1(\edg,\tetapi)$.
  If $\OrmD_\upphi$ is convex on $\R_+^2$, then the functional \eqref{eq:98}
  is also lower semicontinuous with respect to the weak topology in
  $L^1(V,\pi)$.
  On the other hand, since $\rho_n$ and $\rho$ are absolutely
  continuous
  with respect to ~$\pi$, $\rho_n\to\rho$ setwise if and only if
  $\dd\rho_n/\dd\pi\weakto \dd\rho/\dd\pi$ weakly in $L^1(V,\pi)$
  (see Theorem~\ref{thm:equivalence-weak-compactness}). \EEE
\end{proof}

\nc

\subsection{The definition of solutions: \texorpdfstring{$\calR/\calR^*$}{R/R*} Energy-Dissipation balance }
\label{ss:5.2}

We are now in a position to formalize the concept of solution.

\begin{definition}[$(\calS,\calR,\calR^*)$ Energy-Dissipation balance]
\label{def:R-Rstar-balance}
We say that a 
curve $\rho: [0,T] \to \calM^+(V)$ is a  solution
of the $(\calS,\calR,\calR^*)$ evolution system,
if it
\nc satisfies the {\em $(\calS,\calR,\calR^*)$
  Energy-Dissipation balance}:
\begin{enumerate}
\item $\calS(\rho_0)<\pinfty$;
\item There exists a measurable family $(\bj_t)_{t\in [0,T]} \subset \calM(\edg)$
such that $(\rho,j)\in \CE0T$ with
\begin{equation}
\label{R-Rstar-balance}
\int_s^t \left( \calR(\rho_r, \bj_r) + \Fish(\rho_r) \right) \dd r+ \calS(\rho_t)   = \calS(\rho_s)   \qquad \text{for all } 0 \leq s \leq t \leq T.
\end{equation}
\end{enumerate}
\end{definition}

\begin{remark}\label{rem:properties}
  \begin{enumerate}
  \item Since $(\rho,\bj)\in \CE 0T$, the curve $\rho$ is absolutely
    continuous with respect to the total variation distance.
  \item  The Energy-Dissipation balance \eqref{R-Rstar-balance} written for $s=0$
    and $t=T$ implies that $(\rho,\bj)\in \CER 0T$ as well.
    Moreover, $t\mapsto \calS(\rho_t)$
    takes finite values and it is absolutely continuous in the
    interval $[0,T]$.
\item The chain-rule estimate \eqref{eq:CR2} implies the following important corollary:
\begin{cor}[Chain-rule estimate III]
\label{cor:CH3}
For any curve $(\rho,\bj)\in \CE 0T$, 
\begin{equation}
\label{eq:CR3}	
      \mathscr L_T(\rho,\bj):=
      \int_0^T \left( \calR(\rho_r, \bj_r) + \Fish(\rho_r) \right) \dd
      r+ \calS(\rho_T)
      -\calS(\rho_0)\geq 0  .
\end{equation}
\end{cor}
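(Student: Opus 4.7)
The strategy is to derive \eqref{eq:CR3} as an immediate integrated consequence of the pointwise chain-rule estimate \eqref{eq:CR2} in Corollary \ref{th:chain-rule-bound2}, after reducing to the only non-trivial case via the propagation result in Corollary \ref{cor:propagation-AC}.

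First I would dispose of the degenerate cases under the natural convention that $\mathscr L_T(\rho,\bj)=\pinfty$ whenever the right-hand side of \eqref{eq:CR3} involves a $\pinfty-\pinfty$ indeterminacy. If $\int_0^T[\calR(\rho_r,\bj_r)+\Fish(\rho_r)]\,\dd r=\pinfty$ and $\calS(\rho_0)<\pinfty$ the claim is immediate. If instead $\calS(\rho_0)=\pinfty$, I distinguish two sub-cases: when $\int_0^T\calR\,\dd r=\pinfty$ the convention applies, whereas when $\int_0^T\calR\,\dd r<\pinfty$ we have $(\rho,\bj)\in\CER 0T$, and the contrapositive of Corollary \ref{cor:propagation-AC} applied with $\rho_0\not\ll\pi$ forces $\rho_t\not\ll\pi$ for every $t\in[0,T]$; in particular $\calS(\rho_T)=\pinfty$, and again the convention applies.

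Assume henceforth $\calS(\rho_0)<\pinfty$ and $\int_0^T[\calR+\Fish]\,\dd r<\pinfty$. Then $\rho_0\ll\pi$; moreover $(\rho,\bj)\in\CER 0T$, so Corollary \ref{cor:propagation-AC} yields $\rho_t=u_t\pi\ll\pi$ on all of $[0,T]$. I would then verify the hypotheses \eqref{ass:th:CR2} of Corollary \ref{th:chain-rule-bound2} with $\upbeta=\upphi$: the bound $\int_V\upphi(u_0)\,\dd\pi=2\calS(\rho_0)<\pinfty$ is immediate, and the inequality $\rmD_\upphi^-\leq \rmD_\upphi$ from Lemma \ref{le:trivial-but-useful} gives $\int_0^T\!\iint_\edg\rmD_\upphi^-(u_t^-,u_t^+)\,\tetapi(\dd x\,\dd y)\,\dd t\leq 2\int_0^T\Fish(\rho_r)\,\dd r<\pinfty$.

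Corollary \ref{th:chain-rule-bound2} then delivers the absolute continuity of $t\mapsto\int_V\upphi(u_t)\,\dd\pi=2\calS(\rho_t)$ together with the pointwise estimate
\[
\Bigl|\tfrac{\dd}{\dd t}\int_V\upphi(u_t)\,\dd\pi\Bigr|\leq \calR(\rho_t,\bj_t)+\tfrac12\iint_\edg\rmD_\upphi^-(u_t^-,u_t^+)\,\tetapi(\dd x\,\dd y)\leq \calR(\rho_t,\bj_t)+\Fish(\rho_t)
\]
for a.e.~$t\in(0,T)$. Dividing by $2$, integrating from $0$ to $T$, and estimating $\calS(\rho_0)-\calS(\rho_T)\leq|\calS(\rho_T)-\calS(\rho_0)|$ yields
\[
\calS(\rho_0)-\calS(\rho_T)\leq \tfrac12\int_0^T\bigl[\calR(\rho_r,\bj_r)+\Fish(\rho_r)\bigr]\,\dd r,
\]
from which $\mathscr L_T(\rho,\bj)\geq\tfrac12\int_0^T[\calR+\Fish]\,\dd r\geq 0$. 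The only genuinely delicate point is the book-keeping of the $\calS(\rho_0)=\pinfty$ case: it is precisely the propagation of $\pi$-absolute continuity along finite-$\calR$-action curves furnished by Corollary \ref{cor:propagation-AC} that rules out pathological `jumps' out of the energy domain $\mathrm{D}(\calS)$ and renders the convention $\mathscr L_T=\pinfty$ internally consistent.
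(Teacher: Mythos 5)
Your proposal follows the same route as the paper: Remark~\ref{rem:properties}(3) derives Corollary~\ref{cor:CH3} by integrating the chain-rule estimate \eqref{eq:CR2} of Corollary~\ref{th:chain-rule-bound2} with $\upbeta=\upphi$, and that is exactly the core of your argument, combined with the reduction to $\rho_t\ll\pi$ via Corollary~\ref{cor:propagation-AC}.

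Two technical slips, neither of which affects the final conclusion $\mathscr L_T\geq 0$ but both of which should be fixed. First, the macro \verb|\half| is defined as \verb|\relax| in the preamble, so \eqref{driving-energy} actually reads $\calS(\rho)=\int_V\upphi(u)\,\dd\pi$ with no factor $\tfrac12$. Hence $\int_V\upphi(u_t)\,\dd\pi=\calS(\rho_t)$ (not $2\calS(\rho_t)$), the ``dividing by $2$'' step should be dropped, and the sharpened bound $\mathscr L_T\geq\tfrac12\int(\calR+\Fish)\,\dd r$ you extract is not correct: it would contradict equality $\mathscr L_T=0$ at a non-stationary solution, where $\int(\calR+\Fish)\,\dd r>0$. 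Integrating \eqref{eq:CR2} directly yields precisely $\mathscr L_T\geq 0$. Second, your treatment of the degenerate case $\calS(\rho_0)=\pinfty$ only covers $\rho_0\not\ll\pi$; one can also have $\rho_0\ll\pi$ with $\int_V\upphi(u_0)\,\dd\pi=\pinfty$, in which case Corollary~\ref{cor:propagation-AC} gives no contradiction. To dispose of this sub-case, note that the hypotheses of Corollary~\ref{th:chain-rule-bound2} are time-reversal invariant ($\Psi$ even implies $\calR$ even in $\bj$, and $\rmD^-_\upphi$ is symmetric), so if $\calS(\rho_T)<\pinfty$ and $\int(\calR+\Fish)\,\dd r<\pinfty$, the backward chain rule forces $\calS(\rho_0)<\pinfty$, a contradiction; thus $\mathscr L_T=-\pinfty$ cannot occur. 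The paper glosses over this corner case, so this is bookkeeping rather than a substantive gap.
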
	
\noindent 
It follows that  the Energy-Dissipation balance
    \eqref{R-Rstar-balance} is equivalent to the Energy-Dissipation
    Inequality
    \begin{equation}
      \label{EDineq}
      \mathscr L_T(\rho,\bj)\leq 0.   
    \end{equation}
  \end{enumerate}
\end{remark}

Let us give an equivalent characterization of solutions to the
$(\calS,\calR,\calR^*)$  evolution system.
Recalling the definition~\eqref{eq:184} of the map $\rmF$ in the interior of $\R_+^2$
and the definition~\eqref{eq:102} of $\rmA_\upphi$,
we first note that $\rmF$ can be extended to a function
defined in $\R_+^2$ with values in the extended real line
$[-\infty,+\infty]$ by 
\begin{equation}
  \label{eq:101}
  \rfield(u,v):=
  \begin{cases}
    \big(\Psi^*)'\big(\rmA_\upphi(u,v)\big)\upalpha(u,v)&\text{if }\upalpha(u,v)>0,\\
    0&\text{if }\upalpha(u,v)=0.
  \end{cases}
\end{equation}
where we set $(\Psi^*)'(\pm\infty):=\pm\infty$. The function
$\rfield$ is 
 skew-symmetric.
\begin{theorem}
  \label{thm:characterization}
  A curve $(\rho_t)_{t\in [0,T]}$ in $\Dom(V)$
   is a solution of the $(\calS,\calR,\calR^*)$  system \EEE iff
  \begin{enumerate}[label=(\arabic*)]
  \item\label{thm:characterization-first} $\rho_t=u_t\pi\ll\pi$ for every $t\in [0,T]$ and
    $t\mapsto u_t$ is an absolutely continuous a.e.~differentiable map
    with values in $L^1(V,\pi)$;
  \item $\calS(\rho_0)<\pinfty$;
  \item \label{thm:characterization-finite-F0}
  We have
    \begin{equation}
      \int_0^T \iint_\edg |\rfield(u_t(x),u_t(y))|\,\tetapi(\dd x,\dd y)\,\dd
t<\pinfty;\label{eq:190}
    \end{equation}
    and
    \begin{equation}
      \label{eq:191}
      \rmD_\upphi(u_t(x),u_t(y))=\rmD^-_\upphi(u_t(x),u_t(y))\quad
      \text{for $\Lebone\otimes\tetapi$-a.e.~$(t,x,y)\in [0,T]\times \edg$}.
    \end{equation}
    In particular the complement $U'$
    of the set 
    \begin{equation}
    U:=\{(t,x,y)\in [0,T]\times\edg: \rfield(u_t(x),u_t(y))\in
    \R\}\label{eq:192}
  \end{equation}
    is $(\Lebone \otimes \tetapi)$-negligible
    and $\rfield$ takes finite values 
    $(\Lebone\otimes\tetapi)$-a.e.~in $[0,T]\times \edg$;
  \item\label{thm:characterization-last} Setting
    \begin{equation}
      \label{eq:183}
      2\bj_t(\dd x,\dd y)=-\rfield(u_t(x),u_t(y))\,\tetapi(\dd
      x,\dd y),
    \end{equation}
  we have   $(\rho,\bj)\in \CE 0T$. In particular,
    \begin{equation}
      \label{eq:179}
      \dot u_t(x)=\int_V \rfield(u_t(x),u_t(y))\,\kappa(x,\dd y)
      \quad\text{for $(\lambda \otimes \pi)$-a.e.~}(t,x,y)\in [0,T]\times \edg.
    \end{equation}
  \end{enumerate}
\end{theorem}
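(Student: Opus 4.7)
The strategy is to identify the Energy-Dissipation balance~\eqref{R-Rstar-balance} as the equality case in the chain-rule inequality of Corollary~\ref{th:chain-rule-bound2}, where the two dissipation terms $\calR(\rho_t,\bj_t)$ and $\Fish(\rho_t)$ are paired through Fenchel duality. The forward implication will extract the pointwise equation from this equality case, while the reverse implication will produce the EDB by computing $\calR(\rho_t,\bj_t)$ directly from the formula~\eqref{eq:183} and then applying the chain-rule Theorem~\ref{th:chain-rule-bound}. Throughout I will rely on the elementary Fenchel identity $\Psi((\Psi^*)'(\xi)) = \xi(\Psi^*)'(\xi)-\Psi^*(\xi)$, the fact that $\rfield$ is skew-symmetric (which follows from oddness of $(\Psi^*)'$, symmetry of $\upalpha$, and anti-symmetry of $\rmA_\upphi$), and the sign property $\xi(\Psi^*)'(\xi)\ge 0$ coming from $\Psi^*$ being even and convex with minimum at $0$.

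For the necessity of~\ref{thm:characterization-first}--\ref{thm:characterization-last}: the EDB together with $\calS(\rho_0)<\pinfty$ gives $\calS(\rho_t)\le\calS(\rho_0)<\pinfty$, hence $\rho_t\ll\pi$ for every $t\in[0,T]$, and Corollary~\ref{cor:propagation-AC} then supplies the absolute continuity of $t\mapsto u_t$ in $L^1(V,\pi)$ and the representation $2\bj_\lambda=w\,\lambda\otimes\tetapi$. Since $\int_0^T\Fish(\rho_r)\,\dd r<\pinfty$ and $\rmD^-_\upphi\le\rmD_\upphi$, assumption~\eqref{ass:th:CR2} is satisfied, so Corollary~\ref{th:chain-rule-bound2} applies and yields
\[
  -\frac{\dd}{\dd t}\calS(\rho_t)\le\calR(\rho_t,\bj_t)+\tfrac12\iint_\edg\rmD^-_\upphi(u_t(x),u_t(y))\,\tetapi(\dd x\,\dd y)
\]
for a.e.~$t$. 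Integrating and comparing with the EDB forces (i) $\rmD_\upphi=\rmD^-_\upphi$ $\Lebone\otimes\tetapi$-a.e.\ (giving~\eqref{eq:191}) and (ii) equality in the pointwise bound a.e.\ in $t$. The equality characterization~\eqref{eq:109} in Lemma~\ref{le:trivial-but-useful} then yields skew-symmetry of $2\bj$ together with the identity $-w_t(x,y)=(\Psi^*)'(\rmA_\upphi(u_t(x),u_t(y)))\,\upalpha(u_t(x),u_t(y))=\rfield(u_t(x),u_t(y))$, which is~\eqref{eq:183}. The integrability \eqref{eq:190} follows from $\int_0^T|\bj_t|(\edg)\,\dd t<\pinfty$ built into $\CE 0T$, and~\eqref{eq:179} is a direct instance of Corollary~\ref{cor:propagation-AC}.

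For the sufficiency, assuming~\ref{thm:characterization-first}--\ref{thm:characterization-last}, I define $\bj$ through~\eqref{eq:183}; by~\eqref{eq:190} and skew-symmetry of $\rfield$, $\bj$ is an admissible flux and $(\rho,\bj)\in\CE 0T$ by hypothesis. The chain-rule Theorem~\ref{th:chain-rule-bound} with $\upbeta=\upphi$ applies because $\rmB_\upphi(u^-,u^+,w^\flat)=-\rmA_\upphi\cdot\rfield\le 0$ (by the sign property above), so the positive-part condition in~\eqref{ass:th:CR} holds trivially. This gives
\[
  \frac{\dd}{\dd t}\int_V\upphi(u_t)\,\dd\pi=-\tfrac12\iint_\edg\rmA_\upphi(u_t(x),u_t(y))\,\rfield(u_t(x),u_t(y))\,\tetapi(\dd x\,\dd y).
\]
Evaluating $\calR(\rho_t,\bj_t)=\frac12\iint_\edg\Psi(w/\upalpha)\,\upalpha\,\dd\tetapi$ with $w/\upalpha=-(\Psi^*)'(\rmA_\upphi)$, using evenness of $\Psi$ and the Fenchel identity, one obtains $\calR(\rho_t,\bj_t)=\frac12\iint\rmA_\upphi\rfield\,\dd\tetapi-\Fish(\rho_t)$, where the identification of the Fisher term uses~\eqref{eq:191}. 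Adding the two yields $\frac{\dd}{\dd t}\calS(\rho_t)+\calR(\rho_t,\bj_t)+\Fish(\rho_t)=0$, and integration gives~\eqref{R-Rstar-balance}.

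The most delicate point will be the passage from integral to pointwise equality in the necessity step, and especially the correct handling of the boundary of $\R_+^2$ where $\upphi'$ may diverge and $\upalpha$ may vanish; this is exactly the scenario in which $\rmD_\upphi^-$, $\rmD_\upphi$, and $\rmD_\upphi^+$ differ. It is here that assumption~\eqref{eq:191} (forced in the forward direction, assumed in the reverse) is indispensable, together with the careful case analysis of Lemma~\ref{le:trivial-but-useful}\ref{le:trivial-but-useful:ineq} that identifies when the sharp inequality $|\rmB_\upbeta|\le\Upsilon+\rmD^-_\upbeta$ is attained, ensuring that the formula~\eqref{eq:183} is consistent on $U$ and trivializes on its complement.
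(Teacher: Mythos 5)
Your proof is correct and takes essentially the same route as the paper's. Both directions hinge on the same trio of results: Corollary~\ref{cor:propagation-AC} to obtain the $L^1$-absolutely-continuous density $u_t$ and the $\tetapi$-absolutely-continuous flux density, Theorem~\ref{th:chain-rule-bound} (via Corollary~\ref{th:chain-rule-bound2}) for the chain rule, and Lemma~\ref{le:trivial-but-useful}\ref{le:trivial-but-useful:ineq} for the equality case~\eqref{eq:109} that identifies the flux with $-\rfield\,\tetapi$. Your necessity argument — squeezing $-\tfrac{\dd}{\dd t}\calS$ between $\calR+\tfrac12\iint\rmD^-_\upphi$ and $\calR+\Fish$ and using $\rmD^-_\upphi\le\rmD_\upphi$ to force both $\rmD^-_\upphi=\rmD_\upphi$ a.e.\ and pointwise equality in the chain-rule bound — is exactly the paper's "equality case of Corollary~\ref{th:chain-rule-bound2}". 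In sufficiency, the only cosmetic difference is that you unpack the Fenchel identity $\Psi((\Psi^*)'(\xi))=\xi(\Psi^*)'(\xi)-\Psi^*(\xi)$ directly to evaluate $\calR(\rho_t,\bj_t)$, whereas the paper verifies the conditions~\eqref{eq:109} and cites Lemma~\ref{le:trivial-but-useful} to get $-\rmB_\upphi=\Upsilon+\rmD^-_\upphi$; these are the same computation expressed two ways. Your sign argument $\rmB_\upphi=-\rmA_\upphi\cdot\rfield\le 0$ for dispensing with the positive-part integrability in~\eqref{ass:th:CR}, and your handling of the boundary cases (where $\upalpha$ vanishes or $\rmA_\upphi$ is infinite) via condition~\ref{thm:characterization-finite-F0}, match the paper's verification step for step.
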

\begin{proof}
  Let $\rho_t=u_t\pi$ be a solution of the
  $(\calS,\calR,\calR^*)$ system \EEE  
  with the corresponding flux $\bj_t$.
  By Corollary \ref{cor:propagation-AC}
  we can find a skew-symmetric measurable map $\xi:(0,T)\times \edg\to
  \R$
  such that $\bj_\Lebone=\xi\upalpha(u^-,u^+)\Lebone\otimes\tetapi$
  and \eqref{eq:42}, \eqref{eq:45} hold.
   Taking into account that  $\rmD^-_\upphi\le \rmD_\upphi$ and 
  applying the equality case of Corollary \ref{th:chain-rule-bound2},  
 we complete the
  proof
  of one implication.

  Suppose  now that $\rho_t$ satisfies all the above conditions \ref{thm:characterization-first}--\ref{thm:characterization-last};
we want to apply formula \eqref{eq:CR} of Theorem
\ref{th:chain-rule-bound}
for $\upbeta=\upphi$. For this we write the shorthand $u^-,u^+$ for $u_t(x),u_t(y)$  and set $w=-\rfield(u^-,u^+)$. We verify the equality conditions~\eqref{eq:109}  of Lemma~\ref{le:trivial-but-useful}:
\begin{itemize}
	\item At $(t,x,y)$ where  $\upalpha(u^-,u^+) = 0$, we have  by definition $w = -\rmF_0(u^-,u^+)=0$;
	\item At $(\lambda\otimes\teta)$--a.e.\ $(t,x,y)$ where $\upalpha(u^-,u^+) >0$,   $\rmF_0(u^-,u^+)$ is finite by condition~\ref{thm:characterization-finite-F0}, and by~\eqref{eq:101} it follows that $(\Psi^*)'\bigl(\rmA_\upphi(u^-,u^+)\bigr)$ is finite and therefore $\rmA_\upphi(u^-,u^+)$ is finite. The final condition $-w=(\Psi^*)'\big(\rmA_\upbeta(u,v)\big)\upalpha(u,v)$ then follows by the definition of $w$.
\end{itemize}
By Lemma~\ref{le:trivial-but-useful} therefore we have at  $(\lambda\otimes\teta)$--a.e.\ $(t,x,y)$
\begin{align*}
   -\rmB_\upphi(u^-,u^+,w) =
    \Upsilon(u^-,u^+,-w)+\rmD_\upphi^-(u^-,u^+)
    \stackrel{\eqref{eq:191}}= \Upsilon(u^-,u^+,-w)+\rmD_\upphi(u^-,u^+).
 \end{align*}
In particular $\rmB_\upphi$ is nonpositive, and the integrability condition \eqref{ass:th:CR} is trivially satisfied. Integrating~\eqref{eq:CR} in time we find~\eqref{R-Rstar-balance}.
\end{proof}

\begin{remark}
\label{rmk:why-interesting-1}
\upshape
By Theorem \ref{thm:characterization}(3), along  a solution  $\rho_t = u_t \pi$ of the $(\calS, \calR, \calR^*)$ system, the functions $ \rmD_\upphi$ and
$\rmD^-_\upphi$ coincide. Recall that, in general, we only have $ \rmD_\upphi^- \leq  \rmD_\upphi$, and the inequality can be strict, as in the examples of the linear equation \eqref{eq:fokker-planck} with the Boltzmann entropy and the quadratic and $\cosh$-dissipation potentials discussed in Ex.\ \ref{ex:Dpm}. There,
$ \rmD_\upphi$ and
$\rmD^-_\upphi$  differ on the boundary of $\R^2$. Therefore, \eqref{eq:191} encompasses the  information that the pair $(u_t(x),u_t(y))$ stays in the interior of $\R^2$
$(\lambda{\otimes}\tetapi)$-a.e.\ in $[0,T]\times \edg$. 
\end{remark} \EEE

\subsection{Existence and uniqueness of solutions of the
  $(\calS,\calR,\calR^*)$ system}
\label{subsec:main-properties}
Let us now collect a few basic structural properties of solutions
of the  $(\calS,\calR,\calR^*)$ Energy-Dissipation balance.
Recall that we will always adopt
\textbf{Assumptions~\ref{ass:V-and-kappa}, \ref{ass:Psi},
  and~\ref{ass:S}}.

Following an argument by Gigli~\cite{Gigli10} we first use convexity of
$\Fish$
to deduce uniqueness.

\begin{theorem}[Uniqueness]
 \label{thm:uniqueness}
Suppose that $\Fish$ is convex and the energy density $\upphi$ is strictly convex.
Suppose that $\rho^1,\, \rho^2$ satisfy   the $(\calS,\calR,\calR^*)$
Energy-Dissipation balance \eqref{R-Rstar-balance} and
are identical at time zero. Then $\rho_t^1 = \rho_t^2$ for every $t\in [0,T]$.  
 \end{theorem}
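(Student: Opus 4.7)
The plan is to use a midpoint/averaging argument in the spirit of Gigli's trick: average the two solutions, use the chain-rule inequality of Corollary~\ref{cor:CH3} on the average, and exploit convexity of $\calR$, $\Fish$, and strict convexity of $\calS$ to derive equality everywhere, which forces $\rho^1=\rho^2$.

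Concretely, I would set $\rho_t^{1/2}:=\tfrac12(\rho_t^1+\rho_t^2)$ and $\bj_t^{1/2}:=\tfrac12(\bj_t^1+\bj_t^2)$, where $\bj^i$ is the flux from Definition~\ref{def:R-Rstar-balance} associated with $\rho^i$. Since the continuity equation \eqref{eq:ct-eq-def} is linear in $(\rho,\bj)$ and the integrability $\int_0^T|\bj_t^{1/2}|(\edg)\,\dd t<\pinfty$ follows from the triangle inequality, we have $(\rho^{1/2},\bj^{1/2})\in\CE0T$. Convexity of $\calS$ (which follows from convexity of $\upphi$) and the assumption $\calS(\rho_0^1)=\calS(\rho_0^2)<\pinfty$ yield $\calS(\rho_0^{1/2})<\pinfty$, so Corollary~\ref{cor:CH3} applies on every subinterval $[0,t]\subset[0,T]$:
\begin{equation}
\label{eq:uniq-CR3}
\int_0^t\bigl[\calR(\rho_r^{1/2},\bj_r^{1/2})+\Fish(\rho_r^{1/2})\bigr]\,\dd r + \calS(\rho_t^{1/2}) - \calS(\rho_0^{1/2}) \geq 0.
\end{equation}

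Next, I would invoke the three convexity properties: (i) joint convexity of $\calR$, already established in Lemma~\ref{l:alt-char-R}; (ii) convexity of $\Fish$ by hypothesis; (iii) convexity of $\calS$ (from convexity of $\upphi$, cf.\ Lemma~\ref{l:lsc-general}). Combining these with $\calS(\rho_0^{1/2})=\calS(\rho_0^1)=\calS(\rho_0^2)$ (as $\rho_0^1=\rho_0^2$), the right-hand side of \eqref{eq:uniq-CR3} is bounded above by
\begin{align*}
&\tfrac12\!\!\int_0^t\!\!\bigl[\calR(\rho_r^1,\bj_r^1)+\Fish(\rho_r^1)\bigr]\dd r + \tfrac12\!\!\int_0^t\!\!\bigl[\calR(\rho_r^2,\bj_r^2)+\Fish(\rho_r^2)\bigr]\dd r \\
&\qquad + \tfrac12\calS(\rho_t^1)+\tfrac12\calS(\rho_t^2) - \calS(\rho_0^1).
\end{align*}
Inserting the Energy-Dissipation balance \eqref{R-Rstar-balance} for $\rho^1$ and $\rho^2$ (with $s=0$), the above expression equals $0$. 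Hence all intermediate inequalities are equalities, and in particular
\[
\calS(\rho_t^{1/2}) = \tfrac12\calS(\rho_t^1)+\tfrac12\calS(\rho_t^2) \qquad\text{for every }t\in[0,T].
\]

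The final step exploits strict convexity of $\upphi$: writing $\rho_t^i=u_t^i\pi$ (recall that finite entropy forces $\pi$-absolute continuity, and the midpoint measure has density $\tfrac12(u_t^1+u_t^2)$), the identity above reads
\[
\int_V \upphi\!\left(\tfrac{u_t^1+u_t^2}{2}\right)\dd\pi = \tfrac12\int_V\upphi(u_t^1)\,\dd\pi+\tfrac12\int_V\upphi(u_t^2)\,\dd\pi,
\]
so by strict convexity of $\upphi$ we obtain $u_t^1=u_t^2$ $\pi$-a.e., i.e.\ $\rho_t^1=\rho_t^2$ for every $t\in[0,T]$. I do not foresee a substantial obstacle: the only points requiring attention are that the midpoint pair lies in $\CE0T$ (immediate by linearity), that $\calS(\rho_0^{1/2})$ is finite (immediate by convexity), and that the chain-rule inequality is valid for \emph{any} element of $\CE0T$ regardless of whether it solves the EDB, which is precisely the content of Corollary~\ref{cor:CH3}.
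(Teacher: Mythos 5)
Your proof is correct and follows essentially the same route as the paper: both construct the midpoint curve $(\rho^{1/2},\bj^{1/2})$, apply the chain-rule lower bound ($\mathscr L_t\ge 0$, i.e.\ Corollary~\ref{cor:CH3}) to this interpolant, exploit joint convexity of $\calR$ together with convexity of $\Fish$ and $\calS$ to sandwich $\calS(\rho_t^{1/2})$ between $\tfrac12\calS(\rho_t^1)+\tfrac12\calS(\rho_t^2)$ and itself, and then invoke strict convexity of $\upphi$ to conclude $u_t^1=u_t^2$ $\pi$-a.e. This is precisely Gigli's trick as the paper presents it; the only cosmetic difference is that the paper phrases the chain rule as a lower bound on $\calS(\rho_t)$ rather than as nonnegativity of $\mathscr L_t$, which are equivalent.
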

 \begin{proof}
   Let $\bj^i\in \calM((0,T)\times \edg)$ satisfy
   $\mathscr L_t(\rho^i,\bj^i)=0$
   and let us set
   \begin{displaymath}
     \rho_t:=\frac 12(\rho_t^1+\rho_t^2),\quad
     \bj:=\frac12(\bj^1+\bj^2).
   \end{displaymath}
   By the linearity of the continuity equation 
    we have that 
   $(\rho,\bj)\in \CE 0T$
   with $\rho_0=\rho^1_0=\rho^2_0$, so that
   by convexity 
   \begin{align*}
     \calS(\rho_t)
     &\ge\calS(\rho_0)-
       \int_0^t \left( \calR(\rho_r, \bj_r) + \Fish(\rho_r) \right) \dd
       r
       \\&\ge
     \calS(\rho_0)-
     \frac12\int_0^t \left( \calR(\rho^1_r, \bj^1_r) + \Fish(\rho^1_r) \right) \dd
     r
     -
     \frac12\int_0^t \left( \calR(\rho^2_r, \bj^2_r) + \Fish(\rho^2_r) \right) \dd
     r
     \\&
     =\frac 12\calS(\rho^1_t)+\frac12\calS(\rho^2_t).
   \end{align*}
   Since $\calS$ is strictly convex we deduce $\rho^1_t=\rho^2_t$.
\end{proof}

\begin{theorem}[Existence and stability]
  \label{thm:existence-stability}
  Let us suppose that the Fisher information functional $\Fish$
  is lower semicontinuous with respect to ~setwise convergence
  (e.g.~if $\pi$ is purely atomic, or $\rmD_\upphi$ is convex, see
  Proposition \ref{PROP:lsc}).
  \begin{enumerate}[label=(\arabic*)]
  \item \label{thm:existence-stability:p1} For every $\rho_0\in \calM^+(V)$ with
    $\calS(\rho_0)<\pinfty$
    there exists a solution $\rho:[0,T]\to \calM^+(V)$ of the
    $(\calS,\calR,\calR^*)$ evolution system starting from $\rho_0$.
  \item \label{thm:existence-stability:p2}
    Every sequence $(\rho^n_t)_{t\in [0,T]}$ of solutions to the
    $(\calS,\calR,\calR^*)$ evolution system such that
    \begin{equation}
      \label{eq:195}
      \sup_{n\in \N}      \calS(\rho^n_0)<\pinfty
    \end{equation}
    has a subsequence setwise converging to a limit $(\rho_t)_{t\in
      [0,T]}$
    for every $t\in [0,T]$.
  \item \label{thm:existence-stability:p3}
    Let $(\rho^n_t)_{t\in [0,T]}$ is a sequence of solutions, with corresponding fluxes $(\bj^n_t)_{t\in[0,T]}$. 
    Let $\rho^n_t$ converge setwise  to $\rho_t$ for every $t\in
    [0,T]$,  and assume that 
    \begin{equation}
    \lim_{n\to\infty}\calS(\rho^n_0)=\calS(\rho_0).\label{eq:194}
  \end{equation}
  Then $\rho$  is a solution
  as well, with flux $\bj$, and the following additional convergence properties hold:
  \begin{subequations}
  	    \label{eq:196}
  \begin{align}
  \label{eq:196a}
      \lim_{n\to\infty}\int_0^T\calR(\rho_t^n,\bj_t^n)\,\dd t&=
      \lim_{n\to\infty}\int_0^T\calR(\rho_t,\bj_t)\,\dd t,\\
      \label{eq:196b}
      \lim_{n\to\infty}\int_0^T\Fish(\rho_t^n)\,\dd t&=
      \lim_{n\to\infty}\int_0^T\Fish(\rho_t,\bj_t)\,\dd t,\\
      \label{eq:196c}
      \lim_{n\to\infty}\calS(\rho^n_t)&=\calS(\rho_t)\quad
      \text{for every }t\in [0,T].
    \end{align}
  \end{subequations}
  If moreover $\calS$ is strictly convex then $\rho^n$
  converges uniformly in $[0,T]$
  with respect to the total variation distance.
  \end{enumerate}
\end{theorem}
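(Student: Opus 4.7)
The plan is to prove the three parts in order, using the Minimizing-Movement construction for existence and then bootstrapping the lower semicontinuity of $\calS$, $\calR$, $\Fish$ through the Energy-Dissipation balance for stability.

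For \textit{(1)}, I would invoke the Minimizing-Movement construction of Section~\ref{s:MM} (anticipating Theorem~\ref{thm:construction-MM}): starting from $\rho_0$, the scheme \eqref{MM-intro} produces a limit curve $(\rho,\bj)\in\CER 0T$ satisfying the relaxed inequality \eqref{ineq:soln-rel-gen-slope-intro} with the relaxed slope $\nuovorel$. The lower semicontinuity of $\Fish$ assumed here is exactly the hypothesis needed (via Corollary~\ref{cor:cor-crucial}) to upgrade $\nuovorel\ge \Fish$, so that \eqref{ineq:soln-rel-gen-slope-intro} becomes
\begin{equation*}
\int_0^t\bigl[\calR(\rho_r,\bj_r)+\Fish(\rho_r)\bigr]\,\dd r+\calS(\rho_t)\le \calS(\rho_0).
\end{equation*}
Combining with the chain-rule lower bound~\eqref{eq:CR3} forces equality for every $t\in[0,T]$, so $\rho$ is a solution of the $(\calS,\calR,\calR^*)$ system.

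For \textit{(2)}, I would extract compactness directly from the Energy-Dissipation balance. Since $\calS(\rho_t^n)\le \calS(\rho_0^n)\le C$ uniformly in $n,t$ and $\upphi$ is superlinear, the densities $u_t^n:=\dd\rho_t^n/\dd\pi$ are equi-integrable uniformly in $(t,n)$ by Theorem~\ref{thm:L1-weak-compactness}; in particular the initial data $\rho_0^n$ are setwise relatively compact. The balance also gives $\sup_n\int_0^T\calR(\rho^n_t,\bj^n_t)\,\dd t\le C$, so Proposition~\ref{prop:compactness} yields a subsequence and a pair $(\rho,\bj)\in\CER 0T$ with $\rho_t^n\to\rho_t$ setwise for every $t\in[0,T]$ and $\bj^n_\Lebone\to\bj_\Lebone$ setwise in $\calM([0,T]\times\edg)$.

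For \textit{(3)}, given the setwise convergence from~\textit{(2)} together with \eqref{eq:194}, I would exploit the sharp inequalities
\begin{align*}
\int_0^T\calR(\rho_r,\bj_r)\dd r&\le \liminf_{n}\int_0^T\calR(\rho_r^n,\bj_r^n)\dd r,\qquad \int_0^T\Fish(\rho_r)\dd r\le \liminf_{n}\int_0^T\Fish(\rho_r^n)\dd r,\\
\calS(\rho_t)&\le \liminf_{n}\calS(\rho_t^n)\qquad\text{for every }t\in[0,T],
\end{align*}
coming respectively from Proposition~\ref{prop:compactness}, the assumed lower semicontinuity of $\Fish$ (together with Fatou), and Lemma~\ref{l:lsc-general}\ref{l:lsc-general:i1}. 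Writing the EDP equality for $\rho^n$ on $[0,t]$ and passing to the limit, using \eqref{eq:194} on the right-hand side, gives
\begin{equation*}
\int_0^t\bigl[\calR(\rho_r,\bj_r)+\Fish(\rho_r)\bigr]\dd r+\calS(\rho_t)\le \calS(\rho_0),
\end{equation*}
which combined with the chain-rule lower bound \eqref{eq:CR3} is the EDP equality for $\rho$. Hence $\rho$ is a solution, and the three liminfs above must simultaneously be attained as limits at $t=T$ (and, by the same argument applied on $[0,t]$, at every $t\in[0,T]$), yielding \eqref{eq:196a}--\eqref{eq:196c}. Finally, if $\calS$ is strictly convex, the combination of setwise convergence $\rho^n_t\to \rho_t$ with $\calS(\rho^n_t)\to\calS(\rho_t)$ and the uniform integrability of the densities is a classical criterion forcing strong $L^1(V,\pi)$ convergence of $u_t^n$, hence total-variation convergence of $\rho_t^n$; the equicontinuity estimate \eqref{est:ct-eq-TV} together with the equi-integrability of $t\mapsto|\bj_t^n|(\edg)$ (a consequence of the uniform bound on $\int_0^T\calR\,\dd t$ via the superlinearity of $\Psi$, as in the proof of Proposition~\ref{prop:compactness}) upgrades pointwise to uniform convergence in $[0,T]$.

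The main obstacle is the stability step~\textit{(3)}: the setwise lower semicontinuity of $\Fish$ does not automatically hold for non-convex $\rmD_\upphi$, which is precisely why the hypothesis of Proposition~\ref{PROP:lsc} is needed. A second subtle point is that the EDP equality must be recovered at \emph{every} time $t$, not just at $t=T$; this requires performing the above liminf/chain-rule squeeze on the subinterval $[0,t]$ for arbitrary $t$, which is clean only because the chain-rule inequality~\eqref{eq:CR3} is available on every subinterval.
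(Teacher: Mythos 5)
Your proofs of Parts (2) and (3) essentially coincide with the paper's argument. Part (2) is Proposition~\ref{prop:compactness} applied after observing that the uniform bound $\calS(\rho_0^n)\le C$ yields setwise relative compactness of the initial data (superlinearity of $\upphi$ plus Theorem~\ref{thm:L1-weak-compactness}) and, via the Energy-Dissipation balance, a uniform bound on $\int_0^T\calR$; Part (3) is exactly the paper's squeeze argument combining the three lsc inequalities (Propositions~\ref{prop:compactness} and \ref{PROP:lsc}, Lemma~\ref{l:lsc-general}) with the chain-rule lower bound $\mathscr L_T\ge0$, and the same $L^1$-plus-equicontinuity argument for uniform TV convergence when $\calS$ is strictly convex. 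A minor simplification over your remark: once one has $\mathscr L_T(\rho,\bj)\le 0$ together with $\mathscr L_t(\rho,\bj)\ge 0$ for every $t$, the identity on every subinterval follows by additivity of $\mathscr L$, so there is no need to redo the limit passage on $[0,t]$.

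For Part (1) you take a genuinely different route than the paper. The paper's own proof is more elementary and direct: for densities $u_0$ valued in a compact interval $[a,b]\subset(0,\infty)$ existence follows from the $L^1$-semigroup construction of Section~\ref{s:ex-sg} (Theorem~\ref{thm:sg-sol-is-var-sol}); the general $\rho_0\in D(\calS)$ is then handled by truncating $u_0$ and applying the stability parts (2) and (3) just proved (noting that the truncations satisfy $\calS(\rho_0^n)\to\calS(\rho_0)$, which is what makes Part (3) applicable). Your proposal instead invokes the Minimizing-Movement Theorem~\ref{thm:construction-MM}. That theorem is correct under the lsc hypothesis on $\Fish$, so your conclusion is sound, but there is a hidden dependency you should be aware of: the proof of Theorem~\ref{thm:construction-MM} goes through Corollary~\ref{cor:cor-crucial} ($\nuovorel\ge\Fish$), which in turn relies on Proposition~\ref{p:slope-geq-Fish} ($\nuovo\ge\Fish$), and Proposition~\ref{p:slope-geq-Fish} explicitly assumes the prior existence of a solution of the $(\calS,\calR,\calR^*)$ system starting from $\rho$ --- a hypothesis that the paper discharges precisely via the Section~\ref{s:ex-sg} semigroup construction. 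So the Minimizing-Movement existence proof is not logically independent of Section~\ref{s:ex-sg}; it rests on that existence result as a technical ingredient. The paper's direct route via Theorem~\ref{thm:sg-sol-is-var-sol} plus truncation and stability is therefore both shorter and avoids the indirection.
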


\begin{proof}
Part \textit{\ref{thm:existence-stability:p2}} follows immediately
  from Proposition \ref{prop:compactness}.

For part \textit{\ref{thm:existence-stability:p3}}, 
the three statements of~\eqref{eq:196} \emph{as inequalities} \RICKYNEW $\leq$ \EEE follow from earlier results: for~\eqref{eq:196a} this follows again from Proposition~\ref{prop:compactness}, for~\eqref{eq:196b} from Proposition~\ref{PROP:lsc}, and for~\eqref{eq:196c} from Lemma~\ref{l:lsc-general}.
Using these inequalities to pass to the limit in the equation $\mathscr L_T(\rho^n,\bj^n)=0$ we  obtain that  $\mathscr L_T(\rho,\bj)\le 0$.
  On the other hand, since $\mathscr L_T(\rho,\bj)\geq0$
   by the chain-rule estimate~\eqref{eq:CR3}, \RICKYNEW standard arguments yield   \EEE  the equalities in~\eqref{eq:196}.

   When  $\calS$  is strictly convex, we obtain
   the convergence in $L^1(V,\pi)$ of the densities $u^n_t=\dd
   \rho^n_t/\dd\pi$ for every $t\in [0,T]$.
   We then use the equicontinuity estimate \eqref{eq:65} of
   Proposition
   \ref{prop:compactness}  to conclude uniform convergence of the sequence $(\rho_n)_n$ with respect to  the total variation distance. 

For part \textit{\ref{thm:existence-stability:p1}}, when the density $u_0$ of $\rho_0$ takes value in a compact
   interval $[a,b]$ with $0<a<b<\infty$, the existence of a solution
   follows by Theorem \ref{thm:sg-sol-is-var-sol} below.
   The general case follows by a standard approximation
   of $u_0$ by truncation and applying the stability properties of
   parts \textit{\ref{thm:existence-stability:p2}}
and \textit{\ref{thm:existence-stability:p3}}.
\end{proof}

\subsection{Stationary states and attraction}
\label{ss:5.4}
Let us finally make a few comments on stationary measures and
on the asymptotic behaviour of solutions of the $(\calS,\calR,\calR^*)$ system.
The definition of invariant measures was already given in Section~\ref{subsub:kernels}, and we recall it for convenience. 
\begin{definition}[Invariant and stationary measures]
Let $\rho=u\pi\in D(\calS)$ be given. 
\begin{enumerate}
\item We say that $\rho$ is \emph{invariant} if $\kernel\kappa\rho(\dd x\dd y )= \rho(\dd x)\kappa(x,\dd y)$ has equal marginals, i.e.\ $\sfx_\# \kernel\kappa\rho = \sfy_\# \kernel\kappa\rho$.
	\item 
  We say that $\rho$ is \emph{stationary}
  if the constant curve
  $\rho_t\equiv \rho$ is a solution of the
  $(\calS,\calR,\calR^*)$ system.
\end{enumerate}
\end{definition}

Note that we always assume that $\pi$ is invariant (see Assumption~\ref{ass:V-and-kappa}).
It is immediate to check that
\begin{align}
  \label{eq:197}
  \rho\text{ is stationary}\quad
  \Longleftrightarrow\quad
  \Fish(\rho)=0
  \quad
  &\Longleftrightarrow\quad
  \rmD_\upphi(u(x),u(y))=0\quad\text{$\tetapi$-a.e.}
\end{align}
If a measure $\rho$ is invariant, then $u=\dd \rho/\dd\pi$ satisfies
\begin{equation}
  \label{eq:198}
  u(x)=u(y)\quad\text{for $\tetapi$-a.e.~$(x,y)\in \edg$},
\end{equation}
which implies~\eqref{eq:197}; therefore invariant measures are stationary. Depending on the system, the set of stationary measures might also contain non-invariant measures, as the next example shows.
%
%
%
%
%
%
%

\begin{example}
Consider the example of the cosh-type dissipation~\eqref{choice:cosh},
\[
\upalpha(u,v) := \sqrt{uv}, \quad\Psi^*(\xi) := 4\Bigl(\cosh\frac\xi2-1\Bigr), 
\]
but combine this with a Boltzmann entropy with an additional multiplicative constant $0<\gamma\leq 1$:
\[
\upphi(s) := \gamma(s\log s - s + 1).
\]
The case $\gamma=1$ corresponds to the example of~\eqref{choice:cosh}, and for general $0<\gamma\leq 1$ we find that 
\[
\rmF(u,v) = u^{\frac{1-\gamma}2}v^{\frac{1+\gamma}2}
- u^{\frac{1+\gamma}2}v^{\frac{1-\gamma}2},
\]
resulting in the evolution equation (see~\eqref{eq:180})
\[
\partial_t u(x) = \int_{y\in V} \Bigl[u(x)^{\frac{1-\gamma}2}u(y)^{\frac{1+\gamma}2}
- u(x)^{\frac{1+\gamma}2}u(y)^{\frac{1-\gamma}2}\Bigr]\, \kappa(x,\dd y).
\]
When $0<\gamma<1$, any function of the form  $u(x) = \One\{x\in A\}$ for $A\subset V$ is a stationary point of this equation, and equivalently any measure $\pi \mres A$ is a stationary solution of the $(\calS,\calR,\calR^*)$ system. For $0<\gamma<1$ therefore the set of stationary measures is much larger than just invariant measures.
\end{example}

As in the case of linear evolutions,
$(\calS,\calR,\calR^*)$ systems 
behave well with respect to decomposition of $\pi$ into mutually
singular invariant measures.

\begin{theorem}[Decomposition]
\label{thm:decomposition}
  Let us suppose that $\pi=\pi^1+\pi^2$ with $\pi^1,\pi^2\in
  \calM^+(V)$
  mutually singular and invariant.
  Let $\rho:[0,T]\to\calM^+(V)$ be a curve with $\rho_t=u_t\pi\ll\pi$
  and let $\rho^i_t:=u_t\pi^i$ be the decomposition of $\rho_t$
  with respect to ~$\pi^1$ and $\pi^2$.
  Then $\rho$ is a solution of the $(\calS,\calR,\calR^*)$ system
  if and only if each curve $\rho^i_t$, $i=1,2$, is a solution of
  the $(\calS^i,\calR^i,(\calR^i)^*)$ system, where
  $\calS^i(\mu):=\calF_\upphi(\mu|\pi^i)$
  is the relative entropy with respect to the measures $\pi^i$
  and
  and $\calR^i,(\calR^i)^*$ are induced by $\pi^i$.
\end{theorem}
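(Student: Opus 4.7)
The plan is to use invariance of $\pi^1$ and $\pi^2$ to show that the jump kernel $\kappa$ transports no mass between the disjoint supports of $\pi^1$ and $\pi^2$, then to lift this localization to the pair $(\rho,\bj)$, and finally to conclude by additivity combined with the chain-rule lower bound of Corollary~\ref{cor:CH3}.

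First I would choose $P\in\frB$ with $\pi^1 = \pi\mres P$ and $\pi^2 = \pi\mres (V\setminus P)$, set $V^1 := P$ and $V^2 := V\setminus P$, and derive from invariance of $\pi^1$ that
\begin{equation*}
\int_{V^1} \kappa(x,V^2)\,\pi^1(\dd x) = \int_{V^2}\kappa(y,V)\,\pi^1(\dd y) = 0,
\end{equation*}
so $\kappa(x,V^2) = 0$ for $\pi^1$-a.e.\ $x$, and symmetrically $\kappa(y,V^1)=0$ for $\pi^2$-a.e.\ $y$. Setting $\tetapi^i := \kernel\kappa{\pi^i}$, this gives $\tetapi = \tetapi^1 + \tetapi^2$ with each $\tetapi^i$ concentrated on $V^i\times V^i$. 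The symmetry of $\tetapi$ then forces each $\tetapi^i$ to be symmetric as well, by disjointness of supports, so each $\pi^i$ inherits detailed balance, and the subsystems $(\calS^i, \calR^i, (\calR^i)^*)$ are well-defined and satisfy all the structural assumptions of the paper.

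Given a solution $\rho = u\pi$ with flux $\bj$, I set $\rho^i := u\pi^i = \rho\mres V^i$. Since $\rho\ll\pi$, Lemma~\ref{l:alt-char-R}(2) gives $\bj\ll\tetapi$, so $\bj$ is concentrated on $(V^1\times V^1)\cup(V^2\times V^2)$, and I define $\bj^i := \bj\mres(V^i\times V^i)$. Testing the continuity equation~\eqref{2ndfundthm} against $\varphi\chi_{V^i}$ and using that $\ona(\varphi\chi_{V^i})$ agrees with $\ona\varphi$ on $V^i\times V^i$ while $\bj$ has no mass on the off-diagonal blocks, one obtains $(\rho^i,\bj^i)\in\CE 0T$ for each $i$. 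The three functionals then decompose additively: $\calR(\rho,\bj) = \calR^1(\rho^1,\bj^1) + \calR^2(\rho^2,\bj^2)$ by Corollary~\ref{cor:decomposition}, while $\calS(\rho) = \calS^1(\rho^1) + \calS^2(\rho^2)$ and $\Fish(\rho) = \Fish^1(\rho^1) + \Fish^2(\rho^2)$ follow by direct integration, using $\dd\rho^i/\dd\pi^i = u$ and $\tetapi = \tetapi^1 + \tetapi^2$.

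With these decompositions in hand, the Energy-Dissipation balance $\mathscr L_T(\rho,\bj) = 0$ splits as
\begin{equation*}
0 = \mathscr L_T^1(\rho^1,\bj^1) + \mathscr L_T^2(\rho^2,\bj^2),
\end{equation*}
and since each summand is nonnegative by Corollary~\ref{cor:CH3} applied to the subsystem $(\calS^i, \calR^i, (\calR^i)^*)$, each must vanish, proving the forward direction. The converse is immediate: the same three additive decompositions let one sum the EDP identities for $\rho^1$ and $\rho^2$ to obtain the EDP identity for $\rho$. The main obstacle is the opening step — extracting from the invariance of $\pi^1$ and $\pi^2$ (rather than an \emph{a priori} detailed-balance assumption on each) the localization $\kappa(x,V^{3-i})=0$ for $\pi^i$-a.e.\ $x$. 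Once this is in place, the remainder is a routine additive computation that relies crucially on the sign information provided by the chain rule to split a zero balance into two simultaneously vanishing pieces.
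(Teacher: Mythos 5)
Your proposal is correct and follows essentially the same approach as the paper: decompose $\tetapi$ into $\tetapi^1 + \tetapi^2$ with each piece symmetric, decompose $\calS$, $\Fish$, and $\calR$ additively, and use the sign information from the chain rule (Corollary~\ref{cor:CH3}) to split the vanishing Energy-Dissipation balance into two simultaneously vanishing pieces. If anything you supply a bit more detail than the paper, which simply asserts that ``the assumptions of invariance and mutual singularity imply that $\tetapi$ has a singular decomposition'' with symmetric components; your derivation of $\kappa(x,V^2)=0$ for $\pi^1$-a.e.\ $x$ from the invariance identity $\int_V \kappa(x,B)\,\pi^1(\dd x)=\int_B\kappa(y,V)\,\pi^1(\dd y)$ evaluated at $B=V^2$ is exactly the content that is glossed over there, and your observation that the symmetry of $\tetapi$ passes to each $\tetapi^i$ by uniqueness of the Lebesgue decomposition onto disjoint diagonal blocks is the right way to obtain detailed balance for each $\pi^i$, which is what Corollary~\ref{cor:decomposition} needs.
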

\begin{remark}
  It is worth noting that
  when $\upalpha$ is $1$-homogeneous then $\calR^i=\calR$ and
  $(\calR^i)^*=\calR^*$ do not depend on $\pi^i$, cf.\ Corollary \ref{cor:decomposition}. 
  The decomposition is thus driven just by the splitting of the
  entropy $\calS$.
\end{remark}
\begin{proof}[Proof of Theorem~\ref{thm:decomposition}]
Note that the assumptions of invariance and mutual singularity of $\pi^1$ and $\pi^2$ imply that~$\tetapi$ has a singular decomposition $\teta = \teta^1 + \teta^2 := \kernel\kappa{\pi^1} + \kernel\kappa{\pi^2}$, where the $\kernel\kappa{\pi^i}$ are symmetric.
It then follows that   $\calS(\rho_t)=\calS^1(\rho^1_t)+\calS^2(\rho^2_t)$ and
  $\Fish(\rho_t)=\Fish^1(\rho^1_t)+\Fish^2(\rho^2_t)$,
  where
  \begin{displaymath}
    \Fish^i(\rho^i)=\frac12\iint_\edg
    \rmD_\upphi(u(x),u(y))\,\tetapi^i(\dd x,\dd y).
  \end{displaymath}
  Finally, Corollary~\ref{cor:decomposition} shows that
  decomposing $\bj$ as the sum $\bj^1+\bj^2$ where
  $\bj^i\ll\tetapi^i$, the pairs
  $(\rho^i,\bj^i)$ belong to $\CE 0T$ and
  $\calR(\rho_t,\bj_t)=
  \calR^1(\rho^1_t,\bj^1_t)+
  \calR^2(\rho^2_t,\bj^2_t)$.
\end{proof}

\begin{theorem}[Asymptotic behaviour]
  Let us suppose that the only stationary measures are multiples of $\pi$, and that $\Fish$ 
  is lower semicontinuous with respect to setwise convergence.
  Then every solution $\rho:[0,\infty)\to \calM^+(V)$ 
  of the $(\calS,\calR,\calR^*)$ evolution system
  converges setwise to $c\pi$, where 
  $c:=\rho_0(V)/\pi(V)$.
\end{theorem}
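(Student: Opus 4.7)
The plan is to combine the global Energy-Dissipation bound with a time-shift compactness argument, using the lower semicontinuity of $\Fish$ to identify every cluster point as a stationary measure. First, writing \eqref{R-Rstar-balance} on $[0,T]$ and letting $T\to\infty$, the nonnegativity of $\calS$ yields
$$\int_0^\infty \bigl[\calR(\rho_t,\bj_t) + \Fish(\rho_t)\bigr]\dd t \le \calS(\rho_0) < \infty,$$
and $t\mapsto \calS(\rho_t)$ is nonincreasing. Setting $m:=\rho_0(V)$, Remark~\ref{rem:expand} gives $\rho_t(V)\equiv m$, and the superlinearity of $\upphi$ combined with Theorem~\ref{thm:L1-weak-compactness} shows that the orbit $\{\rho_t:t\ge 0\}$ is relatively compact for the setwise topology. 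It therefore suffices to prove that every sequence $T_n\to\infty$ admits a subsequence along which $\rho_{T_n}$ converges setwise to $c\pi$.

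Given such a sequence, I would introduce the shifted curves $\rho^n_t := \rho_{T_n+t}$ with fluxes $\bj^n_t := \bj_{T_n+t}$. Each $(\rho^n,\bj^n)$ is a solution of the $(\calS,\calR,\calR^*)$ system with $\calS(\rho^n_0)\le \calS(\rho_0)$, and by Theorem~\ref{thm:existence-stability}\ref{thm:existence-stability:p2} (applied on $[0,T]$ for each $T\in\N$, then diagonalising) there exist, up to a subsequence, a limit $(\tilde\rho,\tilde\bj)$ with $(\tilde\rho,\tilde\bj)\in\CE 0T$ for every $T>0$, such that $\rho^n_t\to\tilde\rho_t$ setwise for every $t\ge 0$. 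Using lower semicontinuity of $\int\calR\,\dd t$ (Proposition~\ref{prop:compactness}) and of $\Fish$ (our standing hypothesis, together with Fatou's lemma in time), combined with $\int_{T_n}^{T_n+T}[\calR+\Fish]\,\dd t\to 0$, one obtains
$$\int_0^T \bigl[\calR(\tilde\rho_t,\tilde\bj_t) + \Fish(\tilde\rho_t)\bigr]\dd t = 0 \qquad\text{for every }T>0.$$
Hence $\Fish(\tilde\rho_t)=0$ and $\calR(\tilde\rho_t,\tilde\bj_t)=0$ for $\Lebone$-a.e.\ $t$. Since $\Psi(s)>0$ for $s\ne 0$ (Lemma~\ref{l:props:Psi}), the latter forces $\tilde\bj_t= 0$ almost everywhere, and the continuity equation then gives $\tilde\rho_t\equiv\tilde\rho_0$.

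Consequently $\Fish(\tilde\rho_0)=0$, so $\tilde\rho_0$ is stationary in view of \eqref{eq:197}, and by the uniqueness-of-stationary-measures hypothesis $\tilde\rho_0 = \tilde c\pi$ for some $\tilde c\ge 0$. Mass conservation passes to the setwise limit since $\mathbf{1}\in\Bb(V)$, so $\tilde c\,\pi(V)=\tilde\rho_0(V)=m$, giving $\tilde c = c$. Therefore $\rho_{T_n}=\rho^n_0\to c\pi$ setwise along the subsequence, and since the limit is the same along every sequence $T_n\to\infty$ the whole family $\rho_T$ converges setwise to $c\pi$.

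The main obstacle I expect is the identity $\int_0^T[\calR(\tilde\rho_t,\tilde\bj_t)+\Fish(\tilde\rho_t)]\,\dd t = 0$ for the shifted limit: this relies crucially on the setwise lower semicontinuity of $\Fish$ (the standing hypothesis, supplied by Proposition~\ref{PROP:lsc}) and on the joint lower semicontinuity of the $\calR$-action from Proposition~\ref{prop:compactness}. Once this is in hand, the identification $\tilde\rho_0 = c\pi$ is immediate from the uniqueness assumption combined with conservation of mass.
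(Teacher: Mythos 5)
Your argument is correct but follows a genuinely different route from the paper's. The paper works on the original time axis: it picks a vanishing sequence $\tau_n\downarrow 0$ with $\sum_n\tau_n=\infty$, uses $\int_0^\infty[\calR+\Fish]\,\dd t\le\calS(\rho_0)<\infty$ together with the divergence of $\sum\tau_n$ to find times $t_n'\in[t_n,t_n+\tau_n]$ with $\Fish(\rho_{t_n'})\to 0$ and $\int_{t_n}^{t_n'}\calR\,\dd t\to 0$, observes that $\DVTn(\tau_n,\rho_{t_n},\rho_{t_n'})\to 0$, and then invokes property~\eqref{e:psi6} of the cost to identify the setwise limit of $\rho_{t_n'}$ with that of $\rho_{t_n}$, closing with lower semicontinuity of $\Fish$. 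Your approach is the classical shift-and-compactness (La~Salle type) argument: extract a limit of the shifted solutions $(\rho_{T_n+\cdot},\bj_{T_n+\cdot})$ via the compactness result of Proposition~\ref{prop:compactness}, use lower semicontinuity of the $\calR$-action (Proposition~\ref{prop:compactness} again) and Fatou's lemma with the lower semicontinuity of $\Fish$ to deduce that the limit curve has vanishing dissipation, hence $\tilde\bj\equiv 0$ (by strict positivity of $\Psi$ off the origin) and $\tilde\rho_t\equiv\tilde\rho_0$ stationary. The paper's method is lighter on machinery — it never needs to pass a whole shifted curve to the limit, only a sequence of points — and leans on the structural properties~\eqref{assW} of $\DVTn$; yours needs the heavier compactness of curves-plus-fluxes but is conceptually cleaner and closer to the standard dynamical-systems toolkit. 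One small citation slip: Theorem~\ref{thm:existence-stability}\ref{thm:existence-stability:p2} only asserts pointwise-in-time convergence of the measures, not extraction of a limiting flux, so the correct reference for obtaining $(\tilde\rho,\tilde\bj)\in\CE 0T$ with $\int_0^T\calR(\tilde\rho_t,\tilde\bj_t)\,\dd t\le\liminf$ is Proposition~\ref{prop:compactness} directly (which you do invoke for the lower-semicontinuity step).
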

\begin{proof}
  Let us fix a vanishing sequence $\tau_n\downarrow0$
  such that $\sum_n\tau_n=\pinfty$.
  Let $\rho_\infty$ be any limit point
  with respect to ~setwise convergence of the curve $\rho_t$
  along a diverging sequence of times $t_n\uparrow\pinfty$.
  Such a point exists since
  the curve $\rho$ is contained in a sublevel set of $\calS$.
  Up to extracting a further subsequence, it is not restrictive to
  assume that $t_{n+1}\ge t_n+\tau_n$.

  Since
  \begin{align*}
    \sum_{n\in \N}\int_{t_n}^{t_n+\tau_n}\Big(\calR(\rho_t,\bj_t)+\Fish(\rho_t)\Big)\,\dd t
    \le \int_0^{\pinfty}\Big(\calR(\rho_t,\bj_t)+\Fish(\rho_t)\Big)\,\dd t\le \calS(\rho_0)<\infty    
  \end{align*}
  and the series of $\tau_n$ diverges,
  we find
  \begin{displaymath}
    \liminf_{n\to\pinfty}\frac1{\tau_n}\int_{t_n}^{t_n+\tau_n}\Fish(\rho_t)\,\dd
    t=0,\quad
    \lim_{n\to\infty}\int_{t_n}^{t_n+\tau_n}\calR(\rho_t,\bj_t)\,\dd t=0.
  \end{displaymath}
  Up to extracting a further subsequence, we can suppose that
  the above $\liminf$ is a limit and we can select
  $t'_n\in [t_n,t_n+\tau_n]$ such that
  \begin{displaymath}
    \lim_{n\to\infty}\Fish(\rho_{t_n'})=0,\quad
    \lim_{n\to\infty}\int_{t_n}^{t_n'}\calR(\rho_t,\bj_t)\,\dd t=0.
  \end{displaymath}
  Recalling the definition
  \eqref{def-psi-rig} of the Dynamical-Variational Transport cost and the monotonicity with respect to $\tau$,  we also get
  $\lim_{n\to\infty} \mathscr W(\tau_n,\rho_{t_n},\rho_{t_n'})=0$,
  so that Theorem \ref{thm:props-cost}(5) and the relative compactness
  of the sequence $(\rho_{t_n'})_n$ yield
  $\rho_{t_n'}\to \rho_\infty$ setwise. 

  The lower semicontinuity of $\Fish$ yields $\Fish(\rho_\infty)=0$
  so that $\rho_\infty=c\pi$ thanks to the uniqueness assumption
  and to the conservation of the total mass.
  Since we  have uniquely identified the limit point, we conclude that
  the \emph{whole} curve $\rho_t$ converges setwise to $\rho_\infty$  as $t\to\pinfty$.
\end{proof}


\section{Dissipative evolutions in $L^1(V,\pi)$}
\label{s:ex-sg}

In this section we construct solutions of the
$(\calS,\calR,\calR^*)$ formulation by studying their equivalent characterization as
abstract evolution equations in  $L^1(V,\pi)$. Throughout this section we adopt Assumption~\ref{ass:V-and-kappa}.

\subsection{Integro-differential equations in $L^1$}
Let $J\subset \R$ be a closed interval (not necessarily bounded)
and let us first consider a map
$\rmG:\edg\times J^2\to \R$ with the following properties:
\begin{subequations}
  \label{subeq:G}
  \begin{enumerate}
  \item measurability with respect to ~$(x,y)\in E$:
    \begin{equation}
      \label{eq:113}
      \text{for every $u,v\in J$ the map
      }
      (x,y)\mapsto \rmG(x,y;u,v)\text{ is measurable};
    \end{equation}
  \item continuity with respect to ~$u,v$ and linear growth:
    there exists a constant $M>0$ such that
    \begin{equation}
      \label{eq:114}
      \begin{gathered}
        \text{for every }(x,y)\in \edg\quad
        (u,v)\mapsto \rmG(x,y;u,v)\text{ is continuous and } \\
        |\rmG(x,y;u,v)|\le M(1+|u|+|v|)
        \quad \text{for every
        }u,v\in J,
      \end{gathered}
    \end{equation}
  \item skew-symmetry:
    \begin{equation}
    \label{eq:129}
    \rmG(x,y;u,v)=-\rmG(y,x;v,u),\quad
    \text{for every } (x,y)\in E,\ u,v\in J,
  \end{equation}
\item $\ell$-dissipativity:
  there exists a constant $\ell\ge0$
  such that for every $(x,y)\in E$, $u,u',v\in J$:
  \begin{equation}
    \label{eq:130}
    u\le u'\quad\Rightarrow\quad
    \rmG(x,y;u',v)-
    \rmG(x,y;u,v)\le \ell(u'-u).
  \end{equation}
  \end{enumerate}
\end{subequations}
\begin{remark}
  \label{rem:spoiler}
  Note that \eqref{eq:130} is surely
  satisfied if $\rmG$ is $\ell$-Lipschitz
  in $(u,v)$, uniformly with respect to ~$(x,y)$.
  The `one-sided Lipschitz condition'~\eqref{eq:130} however is weaker than the standard Lipschitz condition; this type of condition is common in the study of ordinary differential equations, since it is still strong enough to guarantee uniqueness and non-blowup of the solutions (see e.g.~\cite[Ch.~IV.12]{HairerWanner96}).
  
  Let us also remark that \eqref{eq:129} and \eqref{eq:130}
  imply the reverse monotonicity property of $\rmG$ with respect to ~$v$,
  \begin{equation}
    \label{eq:130bis}
    v\ge v'\quad\Rightarrow\quad
    \rmG(x,y;u,v')-
    \rmG(x,y;u,v)\le \ell(v-v')\,,
  \end{equation}
  and the joint estimate
  \begin{equation}
    \label{eq:120}
    u\le u',\ v\ge v'\quad\Rightarrow\quad
    \rmG(x,y;u',v')-
    \rmG(x,y;u,v)\le \ell\big[(u'-u)+(v-v')\big].
  \end{equation}
\end{remark}
Let us set 
$L^1(V,\pi;J):=\{u\in L^1(V,\pi):u(x)\in J\
\text{for $\pi$-a.e.~$x\in
  V$}\}$.
\begin{lemma}
  \label{le:tedious}
  Let $u:V\to J$ be a measurable $\pi$-integrable function.
  \begin{enumerate}
  \item We have
    \begin{equation}
      \label{eq:140}
      \int_V\big|\rmG(x,y;u(x),u(y))\big|\,\kappa(x,\dd y)<\pinfty
      \quad\text{for $\pi$-a.e.~$x\in V$},
    \end{equation}
    and the formula
    \begin{equation}
      \label{eq:115}
      \Gop[u](x):=
      \int_V \rmG(x,y;u(x),u(y))\,\kappa(x,\dd y)
    \end{equation}
    defines a function $\Gop[u]$ in $L^1(V,\pi)$
    that only depends on the Lebesgue equivalence class of $u$ in
    $L^1(V,\pi)$.
  \item The map $\Gop:L^1(V,\pi;J)\to L^1(V,\pi)$ is continuous.
  \item The map $\Gop$ is $(\ell\, \|\kappa_V\|_\infty \EEE)$-dissipative, in the sense that \OLI for all $h>0$,\EEE
    \begin{equation}
      \label{eq:141}
      \big\|(u_1- u_2)-h (\Gop[u_1]-\Gop[u_2])\big\|_{L^1(V,\pi)}\ge
      (1-\OLI 2 \EEE \ell  \|\kappa_V|_\infty \EEE \,h)\|u_1-u_2\|_{L^1(V,\pi)}
    \end{equation}
    for every $u_1,u_2\in L^1(V,\pi;J)$.
  \item If $a\in J$ satisfies
    \begin{equation}
      \label{eq:123a}
      0=\rmG(x,y;a,a)\le \rmG(x,y;a,v)\quad
      \text{for every }(x,y)\in \edg,\ v\ge a\,,
    \end{equation}
    then for every function $u\in  L^1(V,\pi;J)$
    we have
    \begin{equation}
      \label{eq:142}
      \begin{aligned}
        u\ge a\text{ $\pi$-a.e.}\quad
        &\Rightarrow\quad
        \lim_{h\downarrow0}\frac1h \int_V
        \Big(a-(u+h\Gop[u])\Big)_+\,\dd \pi=0\,.
      \end{aligned}
    \end{equation}
    If  $b\in J$ satisfies
    \begin{equation}
      \label{eq:123b}
      0=\rmG(x,y;b,b)\ge \rmG(x,y;b,v)\quad
      \text{for every }(x,y)\in \edg,\ v\le b,
    \end{equation}
    then for every function $u\in  L^1(V,\pi;J)$
    we have
    \begin{equation}
      u\le b\text{ $\pi$-a.e.}\quad
    \Rightarrow\quad
    \lim_{h\downarrow0}\frac 1h\int_V
    \Big(u+h\Gop[u]-b\Big)_+\,\dd \pi=0\,.\label{eq:145}
        \end{equation}
  \end{enumerate}
\end{lemma}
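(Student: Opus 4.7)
My plan is to establish the four claims in order, with the skew-symmetry/detailed-balance pairing and the one-sided Lipschitz estimate \eqref{eq:120} as the two key technical ingredients. Throughout I will exploit the marginal identity $\sfx_\sharp\tetapi=\sfy_\sharp\tetapi=\kappa_V\pi$ from \eqref{symmetry-nu-pi}.

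For (1), conditions \eqref{eq:113}--\eqref{eq:114} make $\rmG$ a Carathéodory map on $\edg\times J^2$ and hence jointly Borel measurable. Composing with the measurable map $(x,y)\mapsto(u(x),u(y))$ and integrating against $\kappa(x,\cdot)$ yields a $\frB$-measurable function of $x$, independent of the chosen representative of $u$. Fubini and \eqref{eq:114} then give
\[
\int_V\int_V |\rmG(x,y;u(x),u(y))|\,\kappa(x,\dd y)\pi(\dd x)
\le M\iint_\edg (1+|u(x)|+|u(y)|)\,\tetapi(\dd x,\dd y)
\le M\|\kappa_V\|_\infty\bigl(\pi(V)+2\|u\|_{L^1(V,\pi)}\bigr).
\]
For (2), let $u_n\to u$ in $L^1(V,\pi;J)$. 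Along any subsequence we can extract a further subsequence converging $\pi$-a.e., and by the identity of marginals this lifts to $(u_n^-,u_n^+)\to(u^-,u^+)$ $\tetapi$-a.e.; the continuity of $\rmG$ in $(u,v)$ then gives pointwise $\tetapi$-a.e.\ convergence of the integrands. The equi-integrability of $\{|u_n|\}$ in $L^1(V,\pi)$ transfers through the marginal bound to equi-integrability of $\{1+|u_n^-|+|u_n^+|\}$ in $L^1(\edg,\tetapi)$, so Vitali's theorem yields $\Gop[u_n]\to\Gop[u]$ in $L^1(V,\pi)$ along the sub-subsequence, and a standard subsequence argument promotes this to convergence of the whole sequence.

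For (3), which will be the main technical step, I will use the duality
$\|w\|_{L^1}=\sup\{\int_V\xi w\,\dd\pi : \xi\in L^\infty, |\xi|\le 1\}$
with $\sigma:=\mathrm{sign}_0(u_1-u_2)\in\{-1,0,1\}$, giving
\[
\|(u_1{-}u_2)-h(\Gop[u_1]{-}\Gop[u_2])\|_{L^1}
\ge \|u_1{-}u_2\|_{L^1}-h\int_V\sigma\,(\Gop[u_1]{-}\Gop[u_2])\,\dd\pi.
\]
To bound the last integral, set $\Delta\rmG(x,y):=\rmG(x,y;u_1(x),u_1(y))-\rmG(x,y;u_2(x),u_2(y))$ and $v:=u_1-u_2$. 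Writing the $\pi$-integral as a $\tetapi$-integral and symmetrizing via \eqref{eq:129} together with the symmetry of $\tetapi$ gives
\[
2\int_V\sigma\,(\Gop[u_1]{-}\Gop[u_2])\,\dd\pi
=\iint_\edg\bigl(\sigma(x)-\sigma(y)\bigr)\,\Delta\rmG(x,y)\,\tetapi(\dd x,\dd y).
\]
A case analysis on the nine sign patterns of $(v(x),v(y))$ using \eqref{eq:120} (and \eqref{eq:130}, \eqref{eq:130bis} separately when one of them vanishes) shows that in every case
\[
\bigl(\sigma(x)-\sigma(y)\bigr)\,\Delta\rmG(x,y)\le 2\ell\bigl(|v(x)|+|v(y)|\bigr);
\]
integrating and applying the marginal identity yields $\int_V\sigma(\Gop[u_1]{-}\Gop[u_2])\,\dd\pi\le 2\ell\|\kappa_V\|_\infty\|v\|_{L^1}$, which gives \eqref{eq:141}.

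For (4), I treat \eqref{eq:142}; the proof of \eqref{eq:145} is symmetric. Setting $A_h:=\{x\in V:u(x)+h\,\Gop[u](x)<a\}$, the hypothesis $u\ge a$ gives $a-u-h\Gop[u]\le -h\Gop[u]$ on $A_h$, so
\[
\frac 1h\int_V\bigl(a-u-h\Gop[u]\bigr)_+\,\dd\pi\le \int_{A_h}|\Gop[u]|\,\dd\pi.
\]
As $h\downarrow 0$, $\mathbbm 1_{A_h}\to\mathbbm 1_{\{u=a,\ \Gop[u]<0\}}$ pointwise. However, on $\{u=a\}$ assumption \eqref{eq:123a} forces $\rmG(x,y;a,u(y))\ge 0$ for $\pi$-a.e.\ $x$ and all $y$ (since $u(y)\ge a$), hence $\Gop[u]\ge 0$ $\pi$-a.e.\ on $\{u=a\}$, so the limiting set is $\pi$-negligible. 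Dominated convergence with dominating function $|\Gop[u]|\in L^1(V,\pi)$ from (1) concludes. The main obstacle is part~(3): the symmetrization step is delicate because it requires combining skew-symmetry of $\rmG$ with detailed balance of $\tetapi$ to produce the factor $\sigma(x)-\sigma(y)$, and then the one-sided Lipschitz condition must be applied in a way that correctly handles the sign patterns, in particular on the null set $\{v=0\}$.
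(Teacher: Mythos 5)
Your proof is correct and follows essentially the same strategy as the paper's: Carath\'eodory measurability plus Fubini for (1), convergence of integrands via subsequence extraction for (2), the symmetrization identity $2\int_V\sigma(\Gop[u_1]-\Gop[u_2])\,\dd\pi=\iint_\edg(\sigma(x)-\sigma(y))\Delta_\rmG\,\dd\tetapi$ followed by a sign-pattern case analysis and the one-sided Lipschitz bound for (3), and dominated convergence for (4). The only cosmetic differences are that the paper uses Vitali's theorem's cousin (the generalized dominated convergence theorem) in place of Vitali in (2), and takes $\frs(0)=-1$ rather than $\mathrm{sign}_0$ in (3), which collapses your nine sign cases to four but yields the same bound.
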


\begin{proof}
  \textit{(1)}
  Since $\rmG$ is a Carath\'eodory function,
  for every measurable $u$ and every $(x,y)\in E$ the map
  $(x,y)\mapsto \rmG(x,y;u(x),u(y))$ is measurable.
  Since
  \begin{equation}
    \label{eq:143}
    \begin{aligned}
      \iint_\edg |\rmG(x,y;u(x),u(y)|\,\kappa(x,\dd y)\pi(\dd x)&=
      \iint_\edg |\rmG(x,y;u(x),u(y)|\,\tetapi(\dd x,\dd y)
      \\&\le
      M  \|\kappa_V\|_\infty \EEE \bigg(1+2\int_V |u|\,\dd\pi\bigg)\,,
    \end{aligned}
  \end{equation}
  the first claim follows by Fubini's Theorem
  \cite[II, 14]{Dellacherie-Meyer78}.

\medskip

  \noindent
  \textit{(2)}
  Let $(u_n)_{n\in \N}$ be a sequence of functions strongly converging
  to $u$ in $L^1(V,\pi;J)$. Up to extracting a further subsequence,
  it is not restrictive to assume that $u_n$
  also converges to $u$ pointwise $\pi$-a.e.
  We have
  \begin{equation}
    \label{eq:144}
    \big\|\Gop[u_n]-\Gop[u]\big\|_{L^1(V,\pi)}=
    \iint_\edg \Big|\rmG(x,y;u_n(x),u_n(y))-
    \rmG(x,y;u(x),u(y))\Big|\,\tetapi(\dd x,\dd y)\, .
  \end{equation}
  Since the integrand $g_n$ in \eqref{eq:144} vanishes $\tetapi$-a.e.\ in
  $\edg$ as $n\to\infty$,
  by the generalized Dominated Convergence Theorem
  (see for instance \cite[Thm.\ 4, page 21]{Evans-Gariepy}
  it is sufficient to show that
  there exist positive functions $h_n$ pointwise converging to $h$
  such that
  \begin{displaymath}
    g_n\le h_n\ \tetapi\text{-a.e.~in $\edg$},\qquad
    \lim_{n\to\infty}\iint_\edg h_n\,\dd\tetapi=\iint_\edg h\,\dd\tetapi.
  \end{displaymath}
   We select
  \RICKYNEW $h_n(x,y):=M(2+|u_n(x)|+|u_n(y)|+|u(x)|+|u(y)|)$ and
  $ h(x,y):=2M(1+|u(x)|+|u(y)|)$. \EEE This proves the result.

\medskip

  \noindent
  \textit{(3)}
  Let us set
  \begin{displaymath}
    \frs(r):=
    \begin{cases}
      1&\text{if }r>0\,,\\
      -1&\text{if }r\le 0\,,
    \end{cases}
  \end{displaymath}
  \OLI and observe that the left-hand side of \eqref{eq:141} may be estimated from below by
  \begin{align*}
  	\big\|(u_1- u_2)-h (\Gop[u_1]-\Gop[u_2])\big\|_{L^1(V,\pi)} &\ge \|u_1-u_2\|_{L^1(V,\pi)} \\
  	&\hspace{2em}- h\int_V \frs(u_1-u_2)\big(\Gop[u_1]-\Gop[u_2]\big) \,\dd\pi
  \end{align*}
  for all $h>0$. Therefore, \EEE estimate \eqref{eq:141} follows if we prove that  
  \begin{equation}
    \label{eq:132}
    \delta:=\int_V \frs(u_1-u_2)\big(\Gop[u_1]-\Gop[u_2]\big) \,\dd\pi
    \le 2\ell  \|\kappa_V\|_\infty \EEE \, \|u_1-u_2\|_{L^1(V,\pi)}.
  \end{equation}
  Let us set
  \begin{displaymath}
    \Delta_\rmG(x,y):=
    \rmG(x,y;u_{1}(x),u_{1}(y))-
    \rmG(x,y;u_{2}(x),u_{2}(y)),
  \end{displaymath}
  and
  \begin{equation}
    \label{eq:133}
    \Delta_\frs(x,y):=\frs(u_{1}(x)-u_{2}(x))-\frs(u_{1}(y)-u_{2}(y)).
  \end{equation}
  Since $
    \Delta_\rmG(x,y)=-\Delta_\rmG(y,x)$, using \eqref{eq:129} we have
  \begin{align*}
   \delta= \int_V
      \frs\big(u_{1}-u_{2})\,\big(\Gop[u_{1}]-\Gop[u_{2}]\big)\,\dd\pi
    &=\iint_\edg
    \frs(u_{1}(x)-u_{2}(x))\Delta_\rmG(x,y)
      \,\tetapi(\dd
      x,\dd y)
    \\&=
    \frac12\iint_\edg
    \Delta_\frs(x,y)
    \Delta_\rmG(x,y)
      \,\tetapi(\dd
    x,\dd y)  .
  \end{align*}
  Setting $\Delta(x):=u_{1}(x)-u_{2}(x)$ we observe
  that by \eqref{eq:120} 
  \begin{align*}
    \Delta(x)>0,\ \Delta(y)>0\quad&\Rightarrow\quad
                                          \Delta_\frs (x,y)=0,\\
    \Delta(x)\le 0,\ \Delta(y)\le 0\quad&\Rightarrow\quad
                                                  \Delta_\frs (x,y)=0,\\
    \Delta(x)\le0,\ \Delta(y)>0
    \quad&\Rightarrow\quad
           \Delta_\frs (x,y)=-2,\
           \Delta_G(x,y)\ge-\ell\big(\Delta(y)-\Delta(x)\big)\\
    \Delta(x)>0,\ \Delta(y)\le 0\quad&\Rightarrow\quad
                                       \Delta_\frs (x,y)=2,\
                                           \Delta_G(x,y)\le \ell\big(\Delta(x)-\Delta(y)\big).
  \end{align*}
  We deduce that
  \begin{displaymath}
    \delta\le
    \ell \iint_\edg \Big[|u_{1}(x)-u_{2}(x)|+
    |u_{1}(y)-u_{2}(y)|\Big]\,\tetapi(\dd x,\dd y)\le
    2\ell  \|\kappa_V\|_\infty \EEE \,\|u_1-u_2\|_{L^1(V,\pi)}.
  \end{displaymath}
  
  \medskip
  
  \noindent
  \textit{(4)}
 We will only address the proof  of property  \eqref{eq:142},
as the argument for \eqref{eq:145} is completely analogous.  \EEE
  \OLI Suppose that $u\ge a$ $\pi$-a.e. \EEE Let us first observe that if $u(x)=a$, \OLI then from \eqref{eq:123a},\EEE
  \begin{displaymath}
    \Gop[u](x)=
    \int_V \rmG(x,y;a,u(y))\,\kappa(x,\dd y)\ge 0\,.
  \end{displaymath}
  We set $f_h(x):=h^{-1}(a-u(x))-\Gop[u](x)$,
  observing that $f_h(x)$ is monotonically decreasing to $-\infty$
  if $u(x)>a$ and $f_h(x)=-\Gop[u](x)\le 0$ if $u(x)=a$,
  so that $\lim_{h\downarrow0}\big(f_h(x)\big)_+=0$.
  Since $\big(f_h\big)_+\le \big(\!-\!\Gop[u]\big)_+$
  we can apply the Dominated Convergence Theorem \OLI to obtain \EEE
  \begin{displaymath}
    \lim_{h\downarrow0} \int_V \big(f_h(x)\big)_+\,\pi(\dd x)=0\,,
  \end{displaymath}
 \OLI thereby concluding the proof.\EEE
\end{proof}
%
 In what follows, we shall address the Cauchy problem \EEE
\begin{subequations}
\label{eq:119-Cauchy}
\begin{align}
  \label{eq:119}
  \dot u_t&=\Gop[u_t]\quad\text{in $L^1(V,\pi)$ for every }t\ge0,\\
  \label{eq:119-0}
  u\restr{t=0}&=u_0.
\end{align}
\end{subequations}
\begin{lemma}[Comparison principles]
  \label{le:positivity}
  Let us suppose that the map $\rmG$ satisfies
  {\rm (\ref{subeq:G}a,b,c)} with $J=\R$.
  \begin{enumerate}
  \item If $\bar u\in \R$ satisfies
    \begin{equation}
      \label{eq:123abis}
      0=\rmG(x,y;\bar u,\bar u)\le \rmG(x,y;\bar u,v)\quad
      \text{for every }(x,y)\in \edg,\ v\ge \bar u,
    \end{equation}
    then for every initial datum $u_0\ge\bar u$ the solution $u$ of
    \eqref{eq:119-Cauchy} satisfies $u_t\ge \bar u$ $\pi$-a.e.~for every $t\ge0$.
    \item If $\bar u\in \R$ satisfies
    \begin{equation}
      \label{eq:123bbis}
      0=\rmG(x,y;\bar u,\bar u)\ge \rmG(x,y;\bar u,v)\quad
      \text{for every }(x,y)\in \edg,\ v\le \bar u,
    \end{equation}
    then for every initial datum $u_0\le\bar u$ the solution $u$ of
    \eqref{eq:119-Cauchy} satisfies $u_t\le \bar u$ $\pi$-a.e.~for every $t\ge0$.
  \end{enumerate}
\end{lemma}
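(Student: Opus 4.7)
The plan is to prove statement (1) via a Gronwall argument applied to the negative-excess functional $\phi(t) := \|(\bar u - u_t)_+\|_{L^1(V,\pi)}$, which is well-defined since $\pi$ is finite. The target estimate will be $\dot\phi(t) \leq \ell\|\kappa_V\|_\infty\,\phi(t)$, which combined with $\phi(0)=0$ forces $\phi\equiv 0$ and hence $u_t\geq\bar u$ $\pi$-a.e.\ for every $t\geq 0$. Statement (2) will follow by a symmetric argument applied to $\psi(t) := \|(u_t-\bar u)_+\|_{L^1(V,\pi)}$, using \eqref{eq:123bbis} and \eqref{eq:130bis} in place of \eqref{eq:123abis} and \eqref{eq:130}.

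The key identity comes from differentiating $\phi$. Since $s\mapsto(\bar u-s)_+$ is $1$-Lipschitz and convex and $u_\cdot \in AC_{\rm loc}([0,\infty);L^1(V,\pi))$, the composition is AC into $L^1$, $\phi$ is AC on $[0,\infty)$, and the standard chain rule for AC curves composed with Lipschitz convex functions gives, for a.e.\ $t$,
\begin{equation*}
\dot\phi(t)=-\int_V \sigma_t(x)\,\Gop[u_t](x)\,\pi(\dd x),\qquad \sigma_t(x):=\mathbf 1_{\{u_t(x)\leq \bar u\}}.
\end{equation*}
Substituting the definition of $\Gop$, using the symmetry $\symmap_\#\tetapi=\tetapi$ (Assumption~\ref{ass:V-and-kappa}) and the skew-symmetry \eqref{eq:129} of $\rmG$, a standard Kato-type symmetrization yields
\begin{equation*}
\dot\phi(t)=-\frac12\iint_\edg [\sigma_t(x)-\sigma_t(y)]\,\rmG(x,y;u_t(x),u_t(y))\,\tetapi(\dd x,\dd y).
\end{equation*}

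The integrand is nonzero only in two symmetric cases. In Case A ($u_t(x)\leq\bar u<u_t(y)$) the one-sided Lipschitz bound \eqref{eq:130} gives $\rmG(x,y;u_t(x),u_t(y))\geq \rmG(x,y;\bar u,u_t(y))-\ell(\bar u - u_t(x))$, and then assumption \eqref{eq:123abis}, applied with $v=u_t(y)\geq \bar u$, yields $\rmG(x,y;\bar u,u_t(y))\geq 0$. Combining these two inputs produces the crucial pointwise bound $-\rmG(x,y;u_t(x),u_t(y))\leq \ell\,(\bar u-u_t(x))_+$. Case B ($u_t(x)>\bar u\geq u_t(y)$) reduces to Case A by the skew-symmetry \eqref{eq:129}, giving $\rmG(x,y;u_t(x),u_t(y))\leq \ell\,(\bar u-u_t(y))_+$. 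Inserting these bounds, extending the integration to all of $\edg$ (the additional contributions are pointwise nonnegative), and exploiting once more the symmetry of $\tetapi$, we obtain
\begin{equation*}
\dot\phi(t)\leq \ell \int_V (\bar u-u_t(x))_+\,\kappa(x,V)\,\pi(\dd x)\leq \ell\,\|\kappa_V\|_\infty\,\phi(t),
\end{equation*}
and Gronwall concludes.

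The main obstacle is the rigorous justification of the chain-rule identity for $\dot\phi$, since $s\mapsto(\bar u-s)_+$ is not smooth at $s=\bar u$ and the selection $\sigma_t=\mathbf 1_{\{u_t\leq\bar u\}}$ must be handled measurably. I plan to bypass this by a routine smooth-approximation of $(\cdot)_+$ (e.g.\ via $F_\varepsilon(s)=\sqrt{(\bar u-s)_+^2+\varepsilon^2}-\varepsilon$), for which the classical chain rule applies; then the bounded convergence theorem, together with the one-sided bounds from Lemma~\ref{le:tedious}\,(4) as a sanity check on the boundary set $\{u_t=\bar u\}$, allows passage to the limit $\varepsilon\downarrow 0$ in the resulting integral inequality. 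All other ingredients—the finiteness of $\pi$, the detailed-balance symmetry of $\tetapi$, and the bound $\|\kappa_V\|_\infty<\infty$—are already guaranteed by Assumption~\ref{ass:V-and-kappa}, and no additional regularity of the solution beyond what comes with \eqref{eq:119-Cauchy} is required.
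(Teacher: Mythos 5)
Your proof is correct, and it takes a genuinely different route from the paper's. The paper first reduces to $\bar u=0$ by translating $\rmG$, and then -- this is the slick move -- replaces $\rmG(x,y;u,v)$ by $\overline{\rmG}(x,y;u,v):=\rmG(x,y;u,|v|)$. This extends the sign hypothesis $\rmG(x,y;\bar u,v)\ge 0$ from $v\ge\bar u$ to all $v\in\R$, and with the one-sided Lipschitz bound \eqref{eq:130} it yields the uniform pointwise inequality $\overline{\rmG}(x,y;u,v)\ge\ell u$ for all $u\le 0$, $v\in\R$. Once this is in hand, the negative-excess $b(t)=\int_V(\bar u_t)_-\,\dd\pi$ is estimated by bounding the integrand $-\overline{\rmG}(x,y;\bar u_t(x),\bar u_t(y))$ directly on $P_t\times V$, with no symmetrization and no case analysis -- and, notably, without ever invoking the detailed-balance symmetry $\symmap\tetapi=\tetapi$. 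Since any solution of the modified problem that stays nonnegative also solves the original one, the conclusion follows. Your approach instead keeps $\rmG$ and $\bar u$ as given, and performs a Kato-type symmetrization: you write the dissipation term as $\tfrac12\iint[\sigma_t(x)-\sigma_t(y)]\rmG\,\dd\tetapi$ (using both the symmetry of $\tetapi$ and the skew-symmetry \eqref{eq:129}), observe it vanishes when $\sigma_t(x)=\sigma_t(y)$ (which is exactly where the sign hypothesis \eqref{eq:123abis} gives no information), and then handle the two mixed cases with \eqref{eq:130} and \eqref{eq:123abis}. Both arguments yield the same Gronwall inequality with constant $\ell\|\kappa_V\|_\infty$; the paper's $|v|$-substitution buys simplicity (one inequality, no symmetrization, no reliance on detailed balance), while your Kato-style argument is perhaps more transparent about where each structural hypothesis enters (detailed balance for the symmetrization, skew-symmetry to kill the diagonal block, and \eqref{eq:130}+\eqref{eq:123abis} for the off-diagonal blocks). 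The chain-rule technicality you flag -- differentiating $t\mapsto\int(\bar u - u_t)_+\,\dd\pi$ along a $C^1$ curve in $L^1$ -- is indeed nontrivial at the contact set $\{u_t=\bar u\}$; the paper handles the analogous issue (with $\upbeta(r)=r_-$) equally briefly, and your smoothing via $F_\eps$ is a standard and sound repair.
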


\begin{proof}
  \textit{(1)}
  Let us first consider the case $\bar u=0$.
  We define a new map $\overline \rmG$ by symmetry:
  \begin{equation}
    \label{eq:126}
    \overline \rmG(x,y;u,v):=\rmG(x,y;u,|v|)
  \end{equation}
  which satisfies the same structural properties
  (\ref{subeq:G}a,b,c), and moreover
  \begin{equation}
    \label{eq:127}
    0=\overline\rmG(x,y;0,0) \OLI \le \EEE \overline\rmG(x,y;0,v)\quad
    \text{for every } x,y\in V,\ v\in \R.
  \end{equation}
  We call $\overline\Gop$ the operator induced by $\overline\rmG$,
  and $\bar u$ the   solution curve \EEE of the corresponding
  Cauchy problem starting from the same
  (nonnegative) initial datum $u_0$.
  If we prove that $\bar u_t\ge0$ for every $t\ge0$,
  then $\bar u_t$ is also the unique solution of
  the original Cauchy problem \eqref{eq:119-Cauchy} induced by $\rmG$,
  so that we obtain the positivity of $u_t$.
  
  Note that \eqref{eq:127} \OLI and property \eqref{eq:130} \EEE yield
  \begin{equation}
    \label{eq:124}
    \overline\rmG(x,y;u,v)\ge
    \overline\rmG(x,y;u,v)-\overline\rmG(x,y;0,v)\ge \OLI \ell\,u\qquad\text{for $u\le 0$}\,.\EEE
  \end{equation}
  \OLI We set $\upbeta(r):=r_-=\max(0,-r)$ and $P_t:=\{x\in V:\bar u_t(x)<0\}$ for each $t\ge 0$. Due to the Lipschitz continuity of $\upbeta$, the map $t\mapsto b(t):=\int_V \upbeta(\bar u_t)\,\dd\pi$ is absolutely continuous. Hence, the chain-rule formula applies, which, together with \eqref{eq:124} gives
  \begin{align*}
  	\frac{\dd}{\dd t} b(t) &= -\int_{P_t} \overline\Gop[\bar u_t](x)\,\pi(\dd x) = -\iint_{P_t\times V}
               \overline\rmG(x,y;\bar u_t(x),\bar u_t(y))\,\tetapi(\dd x,\dd y) \\
               &\le \ell \iint_{P_t\times V} (-\bar u_t(x))\,\tetapi(\dd x,\dd y) = \ell \iint_E \upbeta(\bar u_t(x))\,\tetapi(\dd x,\dd y) \le \ell  \|\kappa_V\|_\infty \EEE b(t)\,.
  \end{align*}
\EEE
  Since $b$ is nonnegative and $b(0)=0$, we conclude, \OLI by Gronwall's inequality, \EEE that
  $b(t)=0$
  for every $t\ge0$ and therefore $\bar u_t\ge0$.
  In order to prove the the statement for a general $\bar u\in \R$
  it is sufficient to consider
  the new operator
  $\widetilde \rmG(x,y;u,v):=\rmG(x,y;u+\bar u,v+\OLI\bar u\GGG)$,
  and to consider the curve 
  $\widetilde u_t:=u_t-\bar u$ starting from the nonnegative initial datum
  $\widetilde u_0:=u_0-\bar u$.

\medskip

  \noindent
  \textit{(2)}
  It suffices to apply the transformation
  $\widetilde\rmG(x,y;u,v):=-\rmG(x,y;-u,-v)$
  and  set \EEE $\widetilde u_t:=-u_t$. We then apply the previous claim,
  yielding \EEE
  the lower bound $-\bar u$.
\end{proof}
 We can now state
our main result concerning
the well-posedness
of the Cauchy problem~\eqref{eq:119-Cauchy}. \EEE

\begin{theorem}
  \label{thm:localization-G}
  Let $J\subset \R$ be a closed interval of $\R$ and 
  let $G:\edg\times J^2\to\R$
  be a map satisfying conditions
  {\rm(\ref{subeq:G})}.
  Let us also suppose that, if $a=\inf J>-\infty$ then
  \eqref{eq:123a} holds, and that,
  if $b=\sup J<+\infty$  then \EEE \eqref{eq:123b} holds.
  \begin{enumerate}
  \item For every $u_0\in L^1(V,\pi;J)$ there
    exists a unique curve $u\in \rmC^1([0,\infty);L^1(V,\pi; J))$
    solving the Cauchy problem \eqref{eq:119-Cauchy}.
  \item $\int_V u_t\,\dd\pi=\int_V u_0\,\dd \pi$ for every $t\ge0$.
  \item If $u,v$ are two solutions with initial data $u_0,v_0\in L^1(V,\pi;J)$
    respectively,
    then
    \begin{equation}
      \label{eq:146}
      \|u_t-v_t\|_{L^1(V,\pi)}\le \rme^{2  \|\kappa_V\|_\infty \EEE \ell\,
        t}\|u_0-v_0\|_{L^1(V,\pi)}\quad
      \text{for every }t\ge0.
    \end{equation}
  \item If $\bar a\in J$ satisfies condition \eqref{eq:123a}
    and $u_0\ge \bar a$, then $u_t\ge \bar a$ for every $t\ge0$.
    Similarly, if $\bar b\in J$ satisfies condition \eqref{eq:123b}
    and $u_0\le \bar b$, then $u_t\le \bar b$ for every $t\ge0$.
  \item If $\ell=0$, then the evolution is order preserving:
    if $u,v$ are two solutions with initial data $u_0,v_0$
    then
    \begin{equation}
      \label{eq:128}
      u_0\le v_0\quad\Rightarrow\quad u_t\le v_t\quad\text{for every }t\ge0.
    \end{equation}
  \end{enumerate}
\end{theorem}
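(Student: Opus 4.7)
The proof combines three ingredients from Lemma~\ref{le:tedious}: the continuity of $\Gop$, the dissipativity estimate~\eqref{eq:141}, and the tangential invariance conditions~\eqref{eq:142},~\eqref{eq:145}. I first dispatch uniqueness, the stability estimate~\eqref{eq:146}, and mass conservation (claims (2) and (3)). For two $C^1$ solutions $u,v$, the chain rule for the $L^1$-norm gives
\[
\frac{\dd^+}{\dd t}\|u_t-v_t\|_{L^1(V,\pi)}\le \int_V \sigma_t\bigl(\Gop[u_t]-\Gop[v_t]\bigr)\,\dd\pi
\]
for any measurable selection $\sigma_t\in\operatorname{sgn}(u_t-v_t)$; the right-hand side is bounded by $2\ell\|\kappa_V\|_\infty\|u_t-v_t\|_{L^1(V,\pi)}$ by the same computation that produces~\eqref{eq:132} in the proof of Lemma~\ref{le:tedious}(3), and Gr\"onwall then yields~\eqref{eq:146} and, in particular, uniqueness. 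Mass conservation follows from $\int_V\Gop[u]\,\dd\pi=0$, a direct Fubini consequence of the skew-symmetry~\eqref{eq:129} together with the symmetry of $\tetapi$.

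For existence (claim (1)), my approach is to reduce to the case $J=\R$ by extending $\rmG$ via pointwise precomposition with the projection onto $J$: set $\widetilde{\rmG}(x,y;u,v):=\rmG(x,y;p(u),p(v))$, where $p:\R\to J$ is the Lipschitz projection ($p(s)=\max(a,\min(b,s))$ when $J=[a,b]$, with the analogous formulae for half-lines). A direct verification shows that $\widetilde{\rmG}$ inherits the full list of properties~(\ref{subeq:G}), that conditions~\eqref{eq:123abis} and~\eqref{eq:123bbis} of Lemma~\ref{le:positivity} hold at the finite endpoints of $J$ as consequences of~\eqref{eq:123a},~\eqref{eq:123b} for $\rmG$, and that the associated operator $\widetilde{\Gop}:L^1(V,\pi)\to L^1(V,\pi)$ is continuous, quasi-dissipative with constant $2\ell\|\kappa_V\|_\infty$, and of linear growth. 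I would then construct a solution to $\dot u=\widetilde{\Gop}[u]$ by classical nonlinear-semigroup arguments of Crandall--Liggett type: for small $\tau>0$ the resolvent equation $u-\tau\widetilde{\Gop}[u]=f$ is solved by a fixed-point construction exploiting the pointwise integral structure together with~\eqref{eq:141}, and convergence of the implicit Euler iterates in $C([0,T];L^1(V,\pi))$ is a direct iteration of~\eqref{eq:141}. Continuity of $\widetilde{\Gop}$ (Lemma~\ref{le:tedious}(2)) then allows passage to the limit in the integrated ODE, and $C^1$ regularity is automatic. Finally, the comparison principle of Lemma~\ref{le:positivity} applied to $\widetilde{\Gop}$ confines $u_t$ to $L^1(V,\pi;J)$ whenever $u_0$ lies there, and on this set $\widetilde{\Gop}$ coincides with $\Gop$.

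Part (4) is then an immediate corollary of Lemma~\ref{le:positivity} applied to $\widetilde{\Gop}$, after a translation that reduces the general barrier to $\bar u=0$ as in the proof of that lemma. For part (5), when $\ell=0$ the monotonicities~\eqref{eq:130} (non-increasing in the third argument) and~\eqref{eq:130bis} (non-decreasing in the fourth) allow a refinement of the sign computation of Lemma~\ref{le:tedious}(3): replacing $\operatorname{sgn}(u_t-v_t)$ by $\One_{\{u_t>v_t\}}$ and examining the contributions on pairs $(x,y)\in E$ with exactly one of $x,y$ lying in $\{u_t>v_t\}$ yields
\[
\int_V \One_{\{u_t>v_t\}}\bigl(\Gop[u_t]-\Gop[v_t]\bigr)\,\dd\pi\le 0,
\]
so that $\frac{\dd^+}{\dd t}\|(u_t-v_t)_+\|_{L^1(V,\pi)}\le 0$. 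Since $(u_0-v_0)_+\equiv 0$, the ordering $u_t\le v_t$ propagates for every $t\ge 0$.

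The main obstacle is the existence step: the non-reflexivity of $L^1$ rules out direct use of Hilbert-space monotone-operator machinery, and one must instead leverage the pointwise integral structure of $\widetilde{\Gop}$ together with repeated application of~\eqref{eq:141} both to solve the resolvent equation and to establish a Crandall--Liggett Cauchy estimate on the implicit iterates. A plausible alternative, bypassing the resolvent problem, is an explicit Euler scheme combined with the tangential-invariance estimates~\eqref{eq:142},~\eqref{eq:145} to control leakage out of $L^1(V,\pi;J)$; both routes ultimately funnel through the same core dissipativity inequality.
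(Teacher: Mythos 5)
Your overall strategy parallels the paper's but substitutes different machinery at the existence step, and the substitution introduces a genuine gap. The paper proves claims (1), (3), (4) in one stroke by invoking Martin's abstract generation theorem for \emph{continuous} quasi-dissipative operators defined on a closed convex subset $D$ of a Banach space: the hypotheses are exactly continuity of $\Gop$ on $D$ (Lemma~\ref{le:tedious}(2)), dissipativity~\eqref{eq:141}, and the subtangential condition
$\liminf_{h\downarrow 0}h^{-1}\inf_{v\in D}\|u+h\Gop[u]-v\|_{L^1}=0$,
which is supplied directly by the estimates~\eqref{eq:142},~\eqref{eq:145}. This is a Peano-type argument (explicit Euler iterates) — it does \emph{not} require solving the resolvent equation, precisely because the operator is continuous. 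You instead propose to extend $\rmG$ by precomposition with the projection onto $J$ (a correct and tidy reduction: the extended map inherits (\ref{subeq:G}) and the barrier conditions, and the comparison principle of Lemma~\ref{le:positivity} then confines the flow to $L^1(V,\pi;J)$) and to run Crandall--Liggett for the extended operator. That is where the gap sits: Crandall--Liggett requires $m$-dissipativity, i.e.\ solvability of $u-\tau\widetilde{\Gop}[u]=f$, and the dissipativity estimate~\eqref{eq:141} gives \emph{uniqueness} of that equation, not existence. The operator $\widetilde{\Gop}$ is continuous and of linear growth but not Lipschitz, and $L^1$ is not reflexive, so there is no ready fixed-point argument to close the resolvent step. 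Your ``plausible alternative'' — explicit Euler together with the tangential estimates~\eqref{eq:142},~\eqref{eq:145} — is in fact the route the paper takes (packaged as Martin's theorem), and is the one you should promote to the main argument.

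The rest of your proposal is correct and in places more self-contained than the paper. Your uniqueness/stability argument via the sign-selection chain rule and Gr\"onwall reproduces the computation behind~\eqref{eq:132}; mass conservation by skew-symmetry and Fubini matches claim (2). For claim (5) the paper cites the Crandall--Tartar theorem (non-expansiveness from (3) with $\ell=0$ plus mass conservation implies order preservation), whereas you give a direct estimate: replacing $\operatorname{sgn}(u_t-v_t)$ by $\One_{\{u_t>v_t\}}$, symmetrizing over the edge measure and using~\eqref{eq:120} with $\ell=0$ in the two off-diagonal cases indeed gives $\frac{\dd^+}{\dd t}\|(u_t-v_t)_+\|_{L^1}\le 0$. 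That is a valid and slightly more elementary proof of the order-preservation claim, at the price of redoing a sign calculation that Crandall--Tartar abstracts away.
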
 \EEE
\begin{proof}
  Claims \textit{(1), (3), (4)}
  follow by the abstract generation result
  of \cite[\S 6.6, Theorem 6.1]{Martin76}
  applied to the operator $\Gop$ defined in the closed convex subset
  $D:=L^1(V,\pi;J)$ of the Banach space $L^1(V,\pi)$.
  \OLI For the theorem to apply, \EEE one has to check the continuity of $\Gop\OLI :D\to L^1(V,\pi)\GGG$ (Lemma
  \ref{le:tedious}(2)),
  its dissipativity \eqref{eq:141}, and the property
  \begin{displaymath}
    \liminf_{h\downarrow0} h^{-1}
    \inf_{v\in D} \|u+h\Gop[u]-v\|_{L^1(V,\pi)}=0
    \quad\text{for every }u\in D\,.
  \end{displaymath}
  When $J=\R$, the inner infimum always is zero; if $J$ is a bounded interval $[a,b]$ then the property above  follows from the estimates of Lemma
  \ref{le:tedious}(4), \OLI since for any $u\in D$,
\[
	\inf_{v\in D}\int_V |u + h \Gop[u]-v|\,\dd\pi \le \int_V \Bigl(a- (u+h\Gop[u])\Bigr)_+\dd\pi + \int_V \Bigl(u+h\Gop[u]-b\Bigr)_+\dd\pi\,.
\]
\EEE
When $J=[a,\infty)$ or $J = (-\infty,b]$ a similar reasoning applies.

Claim  \textit{(2)} is an immediate consequence of \eqref{eq:129}.
  Finally, when $\ell=0$, claim \textit{(5)}
  follows from the Crandall-Tartar Theorem \cite{Crandall-Tartar80},
  stating that a non-expansive map in $L^1$  (cf.\ \eqref{eq:146}) \EEE that satisfies claim \textit{(2)} is also order preserving. 
\end{proof}

\subsection{Applications to dissipative evolutions}
Let us now consider the map
$\fw:  (0,+\infty)^2 \to \R$  \EEE induced by the system $(\Psi^*,\upphi,\upalpha)$, first introduced in~\eqref{eq:184},
\begin{equation}
\label{eq:appA-w-Psip-alpha}	
\fw(u,v) := (\Psi^*)'\bigl( \upphi'(v)-\upphi'(u)\bigr)\,
\upalpha(u,v)
\quad\text{for every }u,v>0\,,
\end{equation}
with the corresponding integral operator:
\begin{equation}
  \label{eq:162}
  \Fop[u](x):=\int_V \fw(u(x),u(y))\,\kappa(x,\dd y)\,.
\end{equation}
Since $\Psi^*$, $\upphi$ are $\rmC^1$ \OLI convex \EEE functions on
$(0,\pinfty)$
and $\upalpha$ is locally Lipschitz in $(0,\pinfty)^2$
it is easy to check that $\fw$ satisfies properties
(\ref{subeq:G}a,b,c,d) in every compact subset $J\subset (0,\pinfty)$
and conditions \eqref{eq:123a}, \eqref{eq:123b}
at every point $a,b\in J$.
In order to focus on the structural properties of the \RICKYNEW {associated evolution
problem, cf.\ \eqref{eq:159} below,} \EEE we will mostly confine our analysis to
the regular case, according to the following: \EEE
\begin{Assumptions}{$\rmF$}
  \label{ass:F}
 \EEE The map $\fw$
  defined by \eqref{eq:appA-w-Psip-alpha} satisfies \OLI the following properties:\EEE
  \begin{gather}
    \label{eq:157}
    \fw\text{ admits a continuous extension to $[0,\infty)$, } \intertext{and for every $R>0$ there exists
      $\ell_R\ge 0$ such that }
    \label{eq:158}
    v\le v'\quad\Rightarrow\quad \fw(u,v)-\fw(u,v')\le \ell_R\,
    (v'-v) \quad\text{for every }u,v,v'\in [0,R].
  \end{gather}
  If moreover 
  \eqref{eq:158} is satisfied in $[0,\pinfty)$ for some constant
  $\ell_\infty\ge 0$ and
  there exists a constant $M$ such that
    \begin{equation}
      \label{eq:161}
      |\fw(u,v)|\le M(1+u+v)\quad\text{for every }u,v\ge0\,,
    \end{equation}
    we say that $(\rmF_\infty)$ holds.
\end{Assumptions}
Note that \eqref{eq:157} is always satisfied if $\upphi$ is
differentiable at $0$. Estimate \eqref{eq:158} is also true if in addition $\upalpha$ is
Lipschitz. However, as we have shown in Section
\ref{subsec:examples-intro},
there are important examples in which  $\upphi'(0)=-\infty$,
but \eqref{eq:157} and \eqref{eq:158} hold nonetheless.

\medskip

Theorem \ref{thm:localization-G} yields the following general result:
\begin{theorem}
  \label{thm:ODE-well-posedness}
  Consider
  the Cauchy problem
  \begin{equation}
    \label{eq:159}
    \dot u_t=\Fop[u_t] \quad t\ge0,\quad
    u\restr{t=0}=u_0.
  \end{equation}
  for a given nonnegative $u_0\in L^1(V,\pi)$.
  \begin{enumerate}[label=(\arabic*)]
  \item \label{thm:ODE-well-posedness-ex}
    For every $u_0\in L^1(V,\pi;J)$ with $J$ a compact subinterval of
    $(0,\pinfty)$ 
    there exists a unique bounded and nonnegative solution $u\in
    \rmC^1([0,\infty);L^1(V,\pi;J))$ of \eqref{eq:159}.
    We will denote by $(\sfS_t)_{t\ge0}$ the
    corresponding $\rmC^1$-semigroup \OLI of nonlinear operators, \EEE mapping $u_0$ to the value
    $u_t=\sfS_t[u_0]$ at time $t$ of the solution $u$.
  \item
    $\int_V u_t\,\dd\pi=\int_V u_0\,\dd\pi$ for every $t\ge0$.
  \item
    If $a\le u_0\le b$ $\pi$-a.e.~in $V$, then $a\le u_t\le b$
    $\pi$-a.e.~for every $t\ge0$.
  \item\label{thm:ODE-well-posedness-Lip}
    The solution satisfies the
    Lipschitz estimate \eqref{eq:146} (with $\ell=\ell_R$) and the
    order preserving property if $\ell_R=0$.
    \item
      If Assumption \ref{ass:F} holds,  then
      $(\sfS_t)_{t\ge0}$ can be extended
    to a semigroup defined on
     every essentially bounded nonnegative
     $u_0\in L^1(V,\pi)$ and satisfying the same properties \ref{thm:ODE-well-posedness-ex}--\ref{thm:ODE-well-posedness-Lip} above.
  \item
    If additionally $(\rmF_\infty)$ holds, 
    then $(\sfS_t)_{t\ge0}$ can be extended
    to a semigroup defined on
     every nonnegative
    $u_0\in L^1(V,\pi)$ and satisfying the same properties \ref{thm:ODE-well-posedness-ex}--\ref{thm:ODE-well-posedness-Lip} above.
  \end{enumerate}
\end{theorem}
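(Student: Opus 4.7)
The plan is to apply Theorem \ref{thm:localization-G} (the abstract well-posedness result for $\dot u = \Gop[u]$) to the concrete operator $\Fop$ induced by $\fw$, checking the required structural conditions on progressively larger domains $J$.

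First I would verify that for every compact $J \subset (0,\infty)$ the map $\rmG(x,y;u,v) := \fw(u,v)$ (independent of $(x,y)$) satisfies conditions \eqref{subeq:G}: measurability is trivial; continuity and linear growth follow since $\fw \in \mathrm{C}^0(J^2)$; skew-symmetry \eqref{eq:129} follows from $(\Psi^*)'$ being odd (as $\Psi^*$ is even) together with $\upalpha(u,v)=\upalpha(v,u)$, so that $\fw(u,v) = -\fw(v,u)$; and $\ell_R$-dissipativity \eqref{eq:130} follows from local Lipschitzness of $\fw$ on $J^2$ (guaranteed by the $\mathrm{C}^1$ regularity of $\Psi^*$, $\upphi$ on $(0,\infty)$ and local Lipschitzness of concave $\upalpha$). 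For the endpoints $a,b$ of $J$ the sign conditions \eqref{eq:123a} and \eqref{eq:123b} reduce to checking $\fw(a,v) \geq 0$ for $v \geq a$ and $\fw(b,v) \leq 0$ for $v \leq b$; since $\upphi'$ is nondecreasing and $(\Psi^*)'$ is odd and nondecreasing (with $(\Psi^*)'(0)=0$), the sign of $\fw(u,v)$ equals the sign of $v-u$, yielding both inequalities. Applying Theorem \ref{thm:localization-G} gives parts (1), (3), (4); part (2) is immediate from \eqref{eq:129} via Fubini.

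For part (5), under Assumption \ref{ass:F} the continuous extension of $\fw$ to $[0,\infty)^2$ and the dissipativity bound \eqref{eq:158} on $[0,R]$ allow me to repeat the above verification with $J = [0,R]$: the sign analysis above uses only monotonicity properties that pass to the boundary by continuity, so \eqref{eq:123a} holds at $a=0$ and \eqref{eq:123b} at $b=R$. For a given essentially bounded $u_0 \geq 0$ in $L^1(V,\pi)$ I set $R := \|u_0\|_\infty$ and apply Theorem \ref{thm:localization-G} on $J = [0,R]$, which simultaneously yields existence, the invariance $0 \leq u_t \leq R$, mass conservation and the Lipschitz bound \eqref{eq:146}. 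The semigroup property $\sfS_{t+s} = \sfS_t \circ \sfS_s$ follows from uniqueness; compatibility with the semigroup from part (1) is automatic since a solution taking values in a compact subinterval of $(0,\infty)$ is a fortiori a solution on $[0,R]$ and uniqueness applies.

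For part (6), Assumption $(\rmF_\infty)$ provides precisely the global linear growth \eqref{eq:161} and the global dissipativity bound with constant $\ell_\infty$ needed to apply Theorem \ref{thm:localization-G} on $J = [0,\infty)$ directly. Existence, mass conservation, order-preservation of lower/upper bounds that satisfy \eqref{eq:123abis}--\eqref{eq:123bbis} (again by the sign analysis above, applied to any $\bar a \geq 0$ and any $\bar b < \infty$), and the global Lipschitz estimate \eqref{eq:146} with $\ell_\infty$ then follow at once. Extension from $L^\infty \cap L^1$ data to arbitrary nonnegative $u_0 \in L^1(V,\pi)$ is obtained by approximating $u_0$ by truncations $u_0 \wedge n \in L^\infty$, using the $L^1$-Lipschitz estimate to pass to a limit $\sfS_t[u_0] := \lim_n \sfS_t[u_0 \wedge n]$ in $L^1(V,\pi)$, and verifying that the limiting curve solves \eqref{eq:159} by the continuity of $\Fop$ on $L^1(V,\pi;[0,\infty))$ established in Lemma \ref{le:tedious}(2) (which itself uses the linear growth \eqref{eq:161}).

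The main obstacle is the verification of the sign/boundary conditions \eqref{eq:123a}--\eqref{eq:123b} at $a=0$ when $\upphi'(0) = -\infty$, as in the Boltzmann-entropy case; here the continuous extension hypothesis in \eqref{eq:157} is essential, because the formula $\fw(0,v) = (\Psi^*)'(\upphi'(v) - \upphi'(0))\upalpha(0,v)$ is an indeterminate product of the form $\infty \cdot 0$, and only the existence of a continuous extension lets one legitimately assign a value and deduce its sign from the sign of $v - u$ on the interior combined with limit passage. A minor technical point in part (6) is ensuring that the $L^1$-approximation argument is compatible with the semigroup structure; this follows from the global $\ell_\infty$-Lipschitz estimate, which passes the semigroup identity $\sfS_{t+s} = \sfS_t \circ \sfS_s$ to the limit.
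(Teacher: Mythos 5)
Your proposal is correct and takes essentially the same route as the paper: the paper itself gives no explicit proof of this theorem, stating only ``Theorem~\ref{thm:localization-G} yields the following general result,'' and relying on the sentence immediately before the statement of Assumption~\ref{ass:F} (namely that on compact $J\subset(0,\infty)$ the map $\fw$ satisfies (\ref{subeq:G}a--d) and~\eqref{eq:123a}--\eqref{eq:123b} at every $a,b\in J$, because $\Psi^*,\upphi$ are $\rmC^1$ convex on $(0,\infty)$ and $\upalpha$ is locally Lipschitz there). You have correctly reconstructed this verification, including the sign identification $\operatorname{sign}\fw(u,v)=\operatorname{sign}(v-u)$, the role of skew-symmetry for mass conservation, and the extension mechanism $J\to[0,R]\to[0,\infty)$ under~\ref{ass:F} and $(\rmF_\infty)$. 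One small remark: in part (6), the direct application of Theorem~\ref{thm:localization-G} with $J=[0,\infty)$ already covers all nonnegative $u_0\in L^1(V,\pi)$, so the subsequent truncation argument you sketch is a valid alternative route but is logically redundant once the direct application has been invoked.
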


We now show that the solution $u$
given by Theorem~\ref{thm:ODE-well-posedness} is also
a solution in the sense of the $(\calS,\calR,\calR^*)$ Energy-Dissipation balance.

\begin{theorem}
\label{thm:sg-sol-is-var-sol}
Assume~\ref{ass:V-and-kappa}, \ref{ass:Psi}, \ref{ass:S}.
Let $u_0 \in L^1(V;\pi)$ be nonnegative and $\pi$-essentially valued in
a compact interval $J$ of $(0,\infty)$
and let $u=\sfS[u_0]  \in  \mathrm{C}^1 ([0,\pinfty);L^1(V,\pi;J))$
be the solution to \eqref{eq:159}
given by Theorem~\ref{thm:ODE-well-posedness}.
\nc
Then the pair $(\rho,\bj )$ given by
 \begin{align*}
 \rho_t (\dd x)&: = u_t(x) \pi(\dd x)\,,\\
   \OLI 2\EEE\bj_t (\dd x\, \dd y)&:= w_t(x,y)\, \tetapi(\dd x\, \dd y)\,,\qquad
                          w_t(x,y):=-\rmF(u_t(x),u_t(y))\,,
 \end{align*}
 is an element of $\CE0\pinfty$ and  satisfies the $(\calS,\calR$,$\calR^*)$ 
 Energy-Dissipation balance \eqref{R-Rstar-balance}.

 If\/ $\rmF$ satisfies the stronger assumption\/ \ref{ass:F},
 then the same result holds for every essentially bounded and nonnegative initial datum.
 Finally, if also $(\rmF_\infty)$ holds,
 the above result is valid for every nonnegative $u_0\in L^1(V,\pi)$ with
 $\rho_0=u_0\pi\in D(\calS)$.
 \end{theorem}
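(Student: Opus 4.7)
The plan is to proceed in three stages matching the three cases of the statement, with the core technical content concentrated in the first stage.

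\textbf{Stage 1 (continuity equation).} I would first verify that the pair $(\rho,\bj)$ lies in $\CE 0\infty$ by a direct computation. Since $\Psi^*$ is even, $(\Psi^*)'$ is odd, whence $\rmF(u,v) = -\rmF(v,u)$ by \eqref{eq:appA-w-Psip-alpha} and the symmetry of $\upalpha$; combined with the detailed-balance identity $\pi(\dd x)\kappa(x,\dd y)=\pi(\dd y)\kappa(y,\dd x)$, this yields
\begin{displaymath}
  (\odiv \bj_t)(\dd x)
  = -\Bigl(\int_V \rmF(u_t(x),u_t(y))\,\kappa(x,\dd y)\Bigr)\pi(\dd x)
  = -\Fop[u_t](x)\,\pi(\dd x)
  = -\partial_t \rho_t(\dd x).
\end{displaymath}
Since $u\in\mathrm C^1([0,\infty);L^1(V,\pi))$, the integral formulation \eqref{2ndfundthm} follows.

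\textbf{Stage 2 (EDB for $u_0\in L^\infty(V,\pi;J)$).} By Theorem \ref{thm:ODE-well-posedness}(3), $u_t$ stays $\pi$-a.e.~valued in the compact interval $J\subset(0,\infty)$. Boundedness of $J$ away from $0$ and $\infty$, continuity of $\upphi,\upphi',\Psi^*,\upalpha$, and the growth hypothesis \eqref{eq:114} on $\rmF$ make every integrand in sight bounded, so the integrability condition \eqref{ass:th:CR} of Theorem \ref{th:chain-rule-bound} for $\upbeta=\upphi$ holds trivially. Moreover, the skew-symmetry of $\rmF$ makes $2\bj = -\rmF(u^-,u^+)\tetapi$ already skew-symmetric, so $w=w^\flat$, and Theorem \ref{th:chain-rule-bound} gives
\begin{displaymath}
  \frac{\dd}{\dd t}\int_V\upphi(u_t)\,\dd\pi
  = \frac12\iint_\edg \rmB_\upphi\bigl(u_t(x),u_t(y),w_t(x,y)\bigr)\,\tetapi(\dd x,\dd y).
\end{displaymath}
By construction, $-w = (\Psi^*)'(\rmA_\upphi(u,v))\,\upalpha(u,v)$, and since $u_t\in J\subset(0,\infty)$ we have $\upalpha(u_t(x),u_t(y))>0$ (by \eqref{eq:38}) and $\rmA_\upphi(u_t(x),u_t(y))\in\R$. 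Thus the equality condition \eqref{eq:109} of Lemma \ref{le:trivial-but-useful} is met, yielding the pointwise identity
\begin{displaymath}
  -\rmB_\upphi(u,v,w) = \Upsilon(u,v,w) + \rmD_\upphi^-(u,v) = \Upsilon(u,v,w) + \rmD_\upphi(u,v),
\end{displaymath}
where the last equality uses that $\rmD_\upphi^-=\rmD_\upphi$ wherever $\upalpha>0$. Integrating this identity in time recovers exactly \eqref{R-Rstar-balance}.

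\textbf{Stage 3 (extension by approximation).} Under Assumption~\ref{ass:F}, for essentially bounded $u_0\in L^\infty(V,\pi;[0,b])$ I would approximate by $u_0^\eps := u_0\vee\eps \in [\eps,b\vee\eps]$. The $L^1$-Lipschitz estimate \eqref{eq:146} of Theorem \ref{thm:ODE-well-posedness}(4) gives $u_t^\eps\to u_t$ in $L^1(V,\pi)$ on every bounded time interval, hence $\rho_t^\eps \to \rho_t$ setwise; the uniform $L^\infty$ bound and the continuous extension of $\rmF$ up to the boundary of $\R_+^2$ yield setwise convergence of the associated fluxes $\bj^\eps_\Lebone \to \bj_\Lebone$. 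I would then combine the lower-semicontinuity of $\calR$ (Lemma~\ref{l:alt-char-R}(3)), of $\Fish$ (Proposition~\ref{PROP:lsc}, or a direct Fatou argument since the limits are bounded), and of $\calS$ to pass to the $\liminf$ in the EDB for $u^\eps$, obtaining the Energy-Dissipation inequality $\mathscr L_T(\rho,\bj)\leq 0$; Corollary~\ref{cor:CH3} supplies the reverse inequality, giving \eqref{R-Rstar-balance}. Under the additional condition $(\rmF_\infty)$, a second truncation $u_0^n := u_0\wedge n$ reduces the general case $u_0\in L^1_+\cap\mathrm D(\calS)$ to the previous one.

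The main obstacle is the final passage to the limit in Stage 3: one must ensure that $\calS(\rho_0^\eps)\to\calS(\rho_0)$ and $\calS(\rho_t^\eps)\to\calS(\rho_t)$, since only convergence (not merely lsc) of the endpoint energies lets one extract equality from the two one-sided inequalities. For bounded approximants this follows from dominated convergence applied to $\upphi(u_t^\eps)\to\upphi(u_t)$; for the $L^1$-truncation argument under $(\rmF_\infty)$, one instead uses the a priori bound $\calS(\rho_t^n)\leq \calS(\rho_0^n)\leq \calS(\rho_0)<\infty$ and the Dunford--Pettis-type characterization in Theorem~\ref{thm:L1-weak-compactness} to upgrade the setwise convergence to convergence of the entropies.
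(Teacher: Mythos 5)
Your argument is correct and follows essentially the same structure as the paper's proof. The only notable difference is organizational: in Stage~2 you rederive the chain-rule identity by hand via Theorem~\ref{th:chain-rule-bound} and Lemma~\ref{le:trivial-but-useful}, whereas the paper invokes Theorem~\ref{thm:characterization} directly (which packages exactly these ingredients); the content is the same, and your version is, if anything, slightly more self-contained. Your Stage~3 matches the paper's two-step truncation strategy (first $u_0\vee\eps$, then $u_0\wedge n$ under $(\rmF_\infty)$), passes to the limit in the EDB using the same lower-semicontinuity results to obtain $\mathscr L_T(\rho,\bj)\le 0$, and correctly invokes Corollary~\ref{cor:CH3} (equivalently Remark~\ref{rem:properties}(3)) for the reverse inequality. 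Your closing remark about controlling $\calS(\rho_0^\eps)$ is a genuine point worth flagging: the paper tacitly uses that the strong $L^1$ convergence of the bounded truncations $u_0^\eps$ gives $\calS(\rho_0^\eps)\to\calS(\rho_0)$ by dominated convergence, so that the "$\leq 0$" inequality survives the limit; and you correctly note that $\Fish$ is lower semicontinuous along \emph{strongly} $L^1$-convergent sequences by Fatou alone, without the convexity hypothesis of Proposition~\ref{PROP:lsc}.
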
 
 \begin{proof}
   Let us first consider the case when $u_0$ satisfies
   $0<a\le u_0\le b<\pinfty$ $\pi$-a.e..
   Then, the solution $u=\sfS[u_0]$
   satisfies the same bounds,
   the map $w_t$ is uniformly bounded
   and $
   \upalpha(u_t(x),u_t(y))\ge \upalpha(a,a)>0$, 
   so that $(\rho,\bj)\in \CER 0T.$
   We can thus apply Theorem \ref{thm:characterization}, obtaining
   the Energy-Dissipation balance
   \begin{equation}
     \label{eq:164L}
     \calS(\rho_0)-\calS(\rho_T)=
     \int_0^T \calR(\rho_t,\bj_t)\,\dd t+
     \int_0^T \Fish(\rho_t)\,\dd t,
     \qquad\text{or equivalently}\quad
     \mathscr L(\rho,\bj)=0.
   \end{equation}

   In the case  $0\leq u_0\leq b$ 
   we can argue by approximation, setting $u_{0}^a:=\max\{u_0, a\}$, $a>0$,
   and considering the solution $u_t^a:=\sfS_t[u_0^a]$
   with divergence field $\OLI 2\EEE \bj_t^a(\dd x,\dd
   y)=-\rmF(u_t^{a}(x),u_t^{a}(y))\tetapi(\dd x,\dd y)$.
   Theorem \ref{thm:ODE-well-posedness}(4) shows that
   $u_t^a\to u_t$ strongly in $L^1(V,\pi)$ as $a\downarrow0$, \OLI and consequently also 
    $\bj_\lambda^a\to \bj_\lambda$ \EEE setwise. Hence,
    we can pass to the limit in
   \eqref{eq:164L} (written for $(\rho^a,\bj^a)$
   thanks to Proposition \ref{prop:compactness} and Proposition
   \ref{PROP:lsc}),
   obtaining $\mathscr L(\rho,\bj)\le 0$, which is still sufficient
   to conclude that $(\rho,\bj)$ is a solution thanks to Remark
   \ref{rem:properties}(3).

   Finally, if $(\rmF_\infty)$ holds,
   we obtain the general result by a completely analogous argument,
   approximating $u_0$ by the sequence $u_0^b:=\min\{u_0, b\}$
   and letting $b\uparrow\pinfty$.   
\end{proof}
\nc

\section{Existence via Minimizing Movements}
\label{s:MM}

In this section we construct solutions to the $(\calS,\calR,\calR^*)$ formulation
via the \emph{Minimizing Movement} approach. The method uses only fairly general properties of $\DVTn$, $\calS$, and the underlying space, and it may well have broader applicability than the \RICKYNEW measure-space \EEE setting that we consider here (see Remark \ref{rmk:generaliz-topol}).   Therefore we formulate the results in a slightly more general setup. 

We consider a topological space  
 \begin{equation}
\label{ambient-topological}
(X,\sigma) = \calM^+(V) \text{ endowed with the \GGGO setwise topology}.
\end{equation}
For consistency with the above definition, in this section we will use
use the abstract notation $\weaksigmatoabs$  to denote  setwise
convergence  in $X= \calM^+(V)$. 
Although throughout this paper we adopt the Assumptions~\ref{ass:V-and-kappa}, \ref{ass:Psi}, and~\ref{ass:S}, in this chapter we will base the discussion only on the following properties:
\begin{Assumptions}{Abs}
\label{ass:abstract}
\begin{enumerate}
\item the Dynamical-Variational Transport (DVT) cost $\DVTn$ enjoys properties \eqref{assW};
\item the driving functional $\calS$ enjoys the typical lower-semicontinuity and coercivity properties underlying the variational approach to gradient flows:
\begin{subequations}\label{conditions-on-S}
\begin{align}
\label{e:phi1}
&\calS \geq 0 \quad  \text{and}  \quad  \calS 
  \ \text{is $\sigma$-sequentially lower semicontinuous};\\
&\exists \rho^{*} \in X \quad\text{such that}\quad \forall\, \tau>0, \notag\\
	\label{e:phipsi1}
&\qquad \text{the map $\rho \mapsto \DVTn(\tau,\rho^{*}, \rho) + \calS(\rho)$ has
$\sigma$-sequentially compact sublevels.}
	\end{align}
\end{subequations}
\end{enumerate}
\end{Assumptions}

Assumption~\ref{ass:abstract} is implied by Assumptions~\ref{ass:V-and-kappa}, \ref{ass:Psi}, and~\ref{ass:S}. The properties~\eqref{assW} are the content of Theorem~\ref{thm:props-cost};  condition \eqref{e:phi1} follows from Assumption~\ref{ass:S} and Lemma~\ref{PROP:lsc};  condition~\eqref{e:phipsi1} follows from the superlinearity of $\upphi$ at infinity and Prokhorov's characterization of compactness in the space of finite measures~\cite[Th.~8.6.2]{Bogachev07}.

\subsection{The Minimizing Movement scheme and the convergence result}
\label{ss:MM}
The classical `Minimizing Movement' scheme for metric-space gradient flows~\cite{DeGiorgiMarinoTosques80,AmbrosioGigliSavare08} starts by defining approximate solutions through incremental minimization, 
\[
\rho^n \in \argmin_{\rho} \left( \frac1{2\tau} d(\rho^{n-1},\rho)^2 + \calS(\rho)\right).
\]
In the context of this paper the natural generalization of the expression to be minimized is $\DVT\tau{\rho^{n-1}}\rho + \calS(\rho)$. This can be understood by remarking that if $\calR(\rho,\cdot)$ is quadratic, then it formally generates a metric 
\begin{align*}
\frac12 d(\mu,\nu)^2 
&= \inf\left\{ \int_0^1 \calR(\rho_t,\bj_t)\,\dd t \, : \, \partial_t \rho_t + \odiv \bj_t = 0, \ \rho_0 = \mu, \text{ and }\rho_
  1 = \nu\right\}\\
&= \tau \inf\left\{ \int_0^\tau \calR(\rho_t,\bj_t)\,\dd t \, : \, \partial_t \rho_t + \odiv \bj_t = 0, \ \rho_0 = \mu, \text{ and }\rho_
  \tau = \nu\right\}\\
  &= \tau \DVT\tau{\mu}\nu.
\end{align*}
In this section we set up the approximation scheme  featuring the cost $\DVTn$.  

We 
consider  a partition
$
 \{\ttau^0 =0< \ttau^1< \ldots<\ttau^n < \ldots< \ttau^{N_\tau-1}<T\le
\ttau^{N_\tau}\} $, 
with fineness $\tau : = \max_{i=n,\ldots, N_\tau} (\ttau^{n} {-} \ttau^{n-1})$,
of the time interval $[0,T]$.
The sequence of approximations $(\rho_\tau^n)_n$ is defined by the \RICKYNEW following \EEE recursive minimization scheme. Fix $\rho^\circ\in X$.
\begin{problem}
\label{pr:time-incremental} Given $\Utau^0:=\rho^\circ,$ find $\Utau^1,
\ldots, \Utau^{N_\tau} \in X $ fulfilling
\begin{equation}
\label{eq:time-incremental} \Utau^n \in \argmin_{v \in X} \Bigl\{
\DVTn(\ttau^n -\ttau^{n-1}, \Utau^{n-1}, v) +\calS(v)\Bigr\} \quad
\text{for $n=1, \ldots, {N_\tau}.$}
\end{equation}
\end{problem}
\begin{lemma}
\label{lemma:exist-probl-increme} Under assumption \ref{ass:abstract},
  for any $\tau
>0$ Problem \ref{pr:time-incremental} admits a solution
$\{\Utau^n\}_{n=1}^{{N_\tau}}\subset X$.
\end{lemma}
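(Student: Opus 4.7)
The plan is to apply the direct method of the calculus of variations, proceeding inductively on $n \in \{1,\ldots,N_\tau\}$. Fix $\Utau^{n-1} \in X$, write $\tau_{n-1,n}:=\ttau^n-\ttau^{n-1}$, and set $F(v):=\DVTn(\tau_{n-1,n},\Utau^{n-1},v)+\calS(v)$. The candidate $v = \Utau^{n-1}$ is admissible, and property \eqref{e:psi2} gives $\DVTn(\tau_{n-1,n},\Utau^{n-1},\Utau^{n-1})=0$, whence $\inf F\le \calS(\Utau^{n-1})$. If $\calS(\rho^\circ)=\pinfty$ any element of $X$ is a minimizer, so assume $\calS(\rho^\circ)<\pinfty$; the nonnegativity of $\DVTn$ then yields $\calS(\Utau^{n-1})\le \calS(\Utau^{n-2})\le\dots\le \calS(\rho^\circ)<\pinfty$ by an easy induction, hence $\inf F<\pinfty$ and I can pick a minimizing sequence $(v_k)_k\subset X$ with $F(v_k)\le C<\pinfty$.

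The crux is compactness of $(v_k)_k$, obtained by reducing the $\Utau^{n-1}$-anchored bound to the reference-point bound appearing in~\eqref{e:phipsi1} via the triangle-like inequality \eqref{e:psi3}. Since $\calS,\DVTn\ge0$, we get $\calS(v_k)\le C$ and $\DVTn(\tau_{n-1,n},\Utau^{n-1},v_k)\le C$; combining the latter with \eqref{e:psi3} yields, for every $\sigma>0$,
\begin{equation*}
\DVTn(\sigma+\tau_{n-1,n},\rho^*,v_k)\le \DVTn(\sigma,\rho^*,\Utau^{n-1})+\DVTn(\tau_{n-1,n},\Utau^{n-1},v_k).
\end{equation*}
I carry along as a second inductive hypothesis the existence of some $\sigma_{n-1}>0$ with $\DVTn(\sigma_{n-1},\rho^*,\Utau^{n-1})<\pinfty$; this is trivially fulfilled at $n=1$ (e.g.\ for $\rho^\circ=\rho^*$, or assumed a priori on $\rho^\circ$). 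Then $\DVTn(\sigma_{n-1}+\tau_{n-1,n},\rho^*,v_k)+\calS(v_k)$ is uniformly bounded, and \eqref{e:phipsi1} produces (up to subsequence) a limit $\Utau^n\in X$ with $v_k\weaksigmatoabs \Utau^n$.

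Passing to the limit is standard: the lower-semicontinuity \eqref{lower-semicont} of $\DVTn$ gives $\DVTn(\tau_{n-1,n},\Utau^{n-1},\Utau^n)\le\liminf_k \DVTn(\tau_{n-1,n},\Utau^{n-1},v_k)$, and \eqref{e:phi1} yields $\calS(\Utau^n)\le\liminf_k \calS(v_k)$; summing, $F(\Utau^n)\le\liminf_k F(v_k)=\inf F$, so $\Utau^n$ is a minimizer. The same lower-semicontinuity applied to the displayed bound also ensures $\DVTn(\sigma_{n-1}+\tau_{n-1,n},\rho^*,\Utau^n)<\pinfty$, closing the induction with $\sigma_n:=\sigma_{n-1}+\tau_{n-1,n}$. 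The principal technical obstacle is precisely this reference-point mismatch: the compactness \eqref{e:phipsi1} is anchored at $\rho^*$ whereas the incremental problem moves its anchor to $\Utau^{n-1}$, and it is the sub-additivity \eqref{e:psi3} of $\DVTn$ (together with its $\tau$-monotonicity noted in Remark~\ref{rem:scaling-invariance}) that bridges the two.
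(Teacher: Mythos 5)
The paper states Lemma~\ref{lemma:exist-probl-increme} without proof, treating it as a routine direct-method fact; your argument is the expected one, and it is essentially correct. A few remarks.

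Your identification of the reference-point mismatch (compactness anchored at $\rho^*$, minimization anchored at $\Utau^{n-1}$) is the genuine content of the argument, and you resolve it correctly via the triangle inequality~\eqref{e:psi3} and an inductive bookkeeping of the finiteness $\DVTn(\sigma_{n-1},\rho^*,\Utau^{n-1})<\pinfty$. Note that in the concrete setting of the paper the mismatch never even arises: the paper justifies~\eqref{e:phipsi1} by observing that sublevels of~$\calS$ alone are already compact (superlinearity of~$\upphi$ plus Prokhorov), so $\calS(v_k)\le C$ is already enough and the $\DVTn$ term in~\eqref{e:phipsi1} is harmless. The paper does, in the \emph{a priori} estimates proposition, ``observe that $\DVTn(t,\rho^*,\rho_0)<\pinfty$'' exactly as you need, without justification; it is implicit either because $\rho^*$ can be taken equal to $\rho^\circ$, or because the $\DVTn$-term is redundant for compactness in the concrete model. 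Your inductive hypothesis matches what the paper tacitly assumes, and you flag its status honestly.

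One small error: the sentence ``If $\calS(\rho^\circ)=\pinfty$ any element of $X$ is a minimizer'' is not correct. $\calS(\rho^\circ)=\pinfty$ does not force $F\equiv\pinfty$; there could be $v$ with $\DVTn(\tau,\rho^\circ,v)<\pinfty$ and $\calS(v)<\pinfty$, and then the infimum is finite and not every point minimizes. The correct dichotomy is on $\inf F$ rather than on $\calS(\rho^\circ)$: either $\inf F=\pinfty$ (then any $v$ minimizes), or $\inf F<\pinfty$ (then argue as you do). In practice this is moot because the theorems that invoke the lemma always assume $\rho^\circ\in\rmD(\calS)$, which gives $\inf F\le\calS(\rho^\circ)<\pinfty$ via the admissible competitor $v=\Utau^{n-1}$, as you correctly observe. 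Also a cosmetic point: the monotonicity of $\tau\mapsto\DVTn(\tau,\cdot,\cdot)$ you invoke is the property recorded in~\eqref{monotonia}; Remark~\ref{rem:scaling-invariance} concerns the concrete cost, not the abstract properties~\eqref{assW} in force here.
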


\par
We denote by   $\pwC \rho \tau$  and  $\upwC \rho \tau$ the
left-continuous and right-continuous piecewise constant interpolants
of the values $\{\Utau^n\}_{n=1}^{{N_\tau}}$ on the nodes of the partition,
fulfilling $\pwC \rho \tau(\ttau^n)=\upwC \rho
\tau(\ttau^n)=\Utau^n$ for all $n=1,\ldots, {N_\tau}$, i.e.,
\begin{equation}
\label{pwc-interp}
 \pwC \rho \tau(t)=\Utau^n \quad \forall t \in (\ttau^{n-1},\ttau^n],
\quad \quad \upwC \rho \tau(t)=\Utau^{n-1} \quad \forall t \in
[\ttau^{n-1},\ttau^n), \quad n=1,\ldots, {N_\tau}.
\end{equation}
Likewise, we  denote by $\pwC {\sft}{\tau}$ and $\upwC {\sft}{\tau}$  the piecewise constant interpolants  $\pwC {\sft}{\tau}(0): = \upwC {\sft}{\tau}(0): =0$, $ \pwC {\sft}{\tau}(T): = \upwC {\sft}{\tau}(T): =T$,  and 
\begin{equation}
\label{nodes-interpolants}
 \pwC {\mathsf{t}} \tau(t)=\ttau^n \quad \forall t \in (\ttau^{n-1},\ttau^n],
\quad \quad \upwC {\mathsf{t}} \tau(t)=\ttau^{n-1} \quad \forall t \in
[\ttau^{n-1},\ttau^n)\,.
\end{equation}
\par
We also introduce another  notion  of interpolant of  the discrete values $\{\Utau^n\}_{n=0}^{N_\tau}$ 
 introduced by De Giorgi, namely the  \emph{variational interpolant} 
$\pwM\rho\tau : [0,T]\to X$, which
 is defined in the following way: the map 
 $t\mapsto \pwM \rho\tau(t)$ 
 is Lebesgue measurable
in $(0, T )$ and satisfies
\begin{equation}
  \label{interpmin}
  \begin{cases}\quad
      \pwM \rho\tau(0)=\rho^\circ, \quad \text{and, for }
       t=\ttau^{n-1} + r \in (\ttau^{n-1}, \ttau^{n}],
       \medskip
      \\\quad
      \pwM \rho\tau(t)
     \in
\displaystyle \argmin_{\mu \in X} \left\{ \DVTn(r, \Utau^{n-1}, \mu) +\calS(\mu)\right\}
\end{cases}
\end{equation}
The existence of a measurable selection is guaranteed by 
\cite[Cor. III.3, Thm. III.6]{Castaing-Valadier77}. 
\par
 It is  natural to introduce the following extension of the notion of \emph{(Generalized) Minimizing Movement}, which is typically given in a metric setting \cite{Ambr95MM,AmbrosioGigliSavare08}. For simplicity, we will continue to use the classical terminology.
\begin{definition}
\label{def:GMM}
We say that a curve $\rho: [0,T] \to X$ is a \emph{Generalized Minimizing Movement} for the energy functional $\calS$ starting from the initial datum $\rho^\circ\in \mathrm{D}(\calS)$,
if there exist a sequence of partitions with fineness $(\tau_k)_k$, $\tau_k\downarrow 0$ as $k\to\infty$, and, correspondingly, a sequence of discrete solutions $(\pwC \rho {\tau_k})_k$
such that, as $k\to\infty$,
\begin{equation}
\label{sigma-conv-GMM}
\pwC \rho {\tau_k}(t) \weaksigmatoabs \rho(t) \qquad \text{for all } t \in [0,T].
\end{equation}
We shall denote by $\GMM{\calS,\DVTn}{\rho^\circ}$ the collection of all Generalized Minimizing Movements for $\calS$ starting from $\rho^\circ$. 
\end{definition} 

We can now state the main result of this section.
\begin{theorem}
\label{thm:construction-MM}
Under \textbf{Assumptions~\ref{ass:V-and-kappa}, \ref{ass:Psi},
  and~\ref{ass:S}},
let the lower-semicontinuity Property \eqref{lscD} be satisfied.

Then $\GMMT{\calS,\DVTn}{0}{T}{\rho^\circ} \neq \emptyset$ and  every $\rho \in \GMMT{\calS,\DVTn}{0}{T}{\rho^\circ}$ 
 satisfies the $(\calS,\calR,\calR^*)$ Energy-Dissipation balance (Definition~\ref{def:R-Rstar-balance}). 
\end{theorem}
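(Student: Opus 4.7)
The plan is to follow the classical Minimizing-Movement strategy, adapted to the non-metric cost $\DVTn$, and then close the loop via the chain-rule inequality of Corollary~\ref{cor:CH3}. First, for each partition of $[0,T]$ with fineness $\tau$, Lemma~\ref{lemma:exist-probl-increme} produces discrete values $\{\Utau^n\}_{n=0}^{N_\tau}$ satisfying the minimality inequality
\begin{equation*}
\DVTn(\ttau^n-\ttau^{n-1}, \Utau^{n-1}, \Utau^n) + \calS(\Utau^n) \leq \calS(\Utau^{n-1}),
\end{equation*}
so that telescoping gives a uniform bound on $\calS(\Utau^n)$ and on $\sum_{n} \DVTn(\ttau^n-\ttau^{n-1}, \Utau^{n-1}, \Utau^n) \leq \calS(\rho^\circ)$. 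By Corollary~\ref{c:exist-minimizers}, on each interval $[\ttau^{n-1},\ttau^n]$ I can select an optimal pair in $\CEP{\ttau^{n-1}}{\ttau^n}{\Utau^{n-1}}{\Utau^n}$ realising $\DVTn(\ttau^n-\ttau^{n-1}, \Utau^{n-1}, \Utau^n) = \int_{\ttau^{n-1}}^{\ttau^n} \calR(\rho_\tau, \bj_\tau)\,\dd r$, and concatenating via Lemma~\ref{l:concatenation&rescaling} yields $(\rho_\tau,\bj_\tau) \in \CER{0}{T}$ with uniformly bounded $\calR$-action.

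Second, I will extract compactness. Proposition~\ref{prop:compactness} gives a subsequence with $\rho_\tau(t) \weaksigmatoabs \rho(t)$ setwise for every $t \in [0,T]$ and $\bj_{\tau,\lambda} \weaksigmatoabs \bj_\lambda$ setwise in $\calM([0,T]\times\edg)$, with a limit pair $(\rho,\bj) \in \CER 0T$. Since the nodal values of $\pwC\rho\tau$ agree with those of $\rho_\tau$, Lemma~\ref{l:convergence-of-interpolations} (applied via Property~\eqref{e:psi3}--\eqref{e:psi4} of $\DVTn$) transfers the convergence to the piecewise-constant interpolants, so $\GMMT{\calS,\DVTn}{0}{T}{\rho^\circ} \neq \emptyset$, and every element of $\GMMT{\calS,\DVTn}{0}{T}{\rho^\circ}$ is obtained this way.

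Third, to upgrade this to the Energy-Dissipation balance, I will introduce the De Giorgi variational interpolant $\pwM\rho\tau$ of \eqref{interpmin} and the associated relaxed slope $\nuovorel$ defined in~\eqref{relaxed-nuovo}. Following the standard derivative-of-the-Moreau--Yosida argument (Section~\ref{ss:aprio}), one obtains the discrete Energy-Dissipation inequality
\begin{equation*}
\int_0^{\pwC{\sft}\tau(t)} \calR(\rho_\tau,\bj_\tau)\,\dd r + \int_0^{\pwC{\sft}\tau(t)}\nuovorel(\pwM\rho\tau(r))\,\dd r + \calS(\pwC\rho\tau(t)) \leq \calS(\rho^\circ),
\end{equation*}
and then I pass to the limit $\tau \downarrow 0$: the first term uses the setwise lower semicontinuity of $\int\calR$ from Proposition~\ref{prop:compactness}, the third uses Assumption~\ref{ass:S}, and the middle term is handled by the inequality $\nuovorel \geq \Fish$, which is precisely Corollary~\ref{cor:cor-crucial} and relies on the assumed lower semicontinuity of $\Fish$ (Property~\eqref{lscD}).

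Combining these limits gives the Energy-Dissipation inequality $\mathscr L_t(\rho,\bj)\leq 0$ for every $t\in[0,T]$; the opposite inequality $\mathscr L_t(\rho,\bj)\geq 0$ is the chain-rule bound of Corollary~\ref{cor:CH3}, and equality then yields~\eqref{R-Rstar-balance}. The main obstacle is the passage from $\nuovorel$ to $\Fish$: the relaxed slope is intrinsically defined through the minimization scheme, while $\Fish$ is an explicit integral functional, and the bridge between them is delicate. As flagged in the introduction, this is addressed by first proving $\nuovo\geq\Fish$ (Proposition~\ref{p:slope-geq-Fish}) through the existence of solutions constructed in Section~\ref{s:ex-sg}, and then relaxing to $\nuovorel\geq\Fish$ under the standing lower-semicontinuity hypothesis on $\Fish$.
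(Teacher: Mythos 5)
Your proposal is correct and follows the same overall strategy as the paper: incremental minimization, discrete energy-dissipation inequality via the De Giorgi variational interpolant, compactness, passage to the limit, and finally closing the loop by the chain-rule inequality of Corollary~\ref{cor:CH3} together with $\nuovorel\geq\Fish$ (Corollary~\ref{cor:cor-crucial}). The main structural difference is that you work directly with the concatenated piecewise-optimal pairs $(\rho_\tau,\bj_\tau)\in\CER 0T$ and extract the limit flux via Proposition~\ref{prop:compactness}, whereas the paper first proves the purely $\DVTn$-based abstract estimate (Theorem~\ref{thm:abstract-GMM}) and only afterwards reconstructs the flux from $\VarW \rho 0T<\infty$ through Proposition~\ref{t:R=R}. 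Your route is more concrete and avoids the post-hoc flux reconstruction; the paper's is more modular and is organized so that the abstract part applies to $\DVTn$-cost systems in a general topological space (Remark~\ref{rmk:generaliz-topol}).

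Two small imprecisions, neither fatal. First, Lemma~\ref{l:convergence-of-interpolations} does not literally apply to transfer convergence from $\rho_\tau$ to $\pwC\rho\tau$, since it requires the comparison curves to agree at nodes with a \emph{fixed} curve of finite $\VarW$; here the piecewise-constant and piecewise-optimal interpolants agree with \emph{each other} at nodes, not with the limit curve $\rho$. The correct tool is the bound $\DVTn\bigl(\pwC\sft\tau(t)-t,\,\rho_\tau(t),\,\pwC\rho\tau(t)\bigr)\le \int_t^{\pwC\sft\tau(t)}\calR(\rho_\tau,\bj_\tau)\,\dd r\to 0$ combined with properties~\eqref{e:psi4}--\eqref{e:psi6} of $\DVTn$, which is exactly what the paper does in Step~2 of the proof of Theorem~\ref{thm:abstract-GMM}. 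Second, in your discrete energy-dissipation inequality the middle term should involve the slope $\nuovo$, not its relaxation $\nuovorel$; the passage $\nuovo\rightsquigarrow\nuovorel$ happens only in the $\tau\downarrow0$ limit via Fatou's Lemma.
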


 Throughout Sections \ref{ss:aprio}--\ref{ss:compactness} we will first prove an abstract version of this theorem as Theorem~\ref{thm:abstract-GMM} below,  under \textbf{Assumption~\ref{ass:abstract}}.
 Indeed, therein we could `move away' from the context of the `concrete' gradient structure for the Markov processes, and carry out our analysis in a general topological setup (cf.\ Remark 
 \ref{rmk:generaliz-topol} ahead). In  Section~\ref{ss:pf-of-existence} we will `return' to the problem  under consideration and deduce the proof of Theorem\ \ref{thm:construction-MM} from Theorem\ \ref{thm:abstract-GMM}. 

%

\subsection{Moreau-Yosida approximation and  generalized slope}
\label{ss:aprio}
Preliminarily, let us observe some straightforward consequences of the properties of the transport cost:
\begin{enumerate}
\item
 the `generalized triangle inequality' from  \eqref{e:psi3} entails that for all $m \in \N$,  for
all $(m+1)$-ples $(t, t_1, \ldots, t_m) \in (0,\pinfty)^{m+1}$, and all 
$(\rho_0, \rho_1, \ldots, \rho_m) \in X^{m+1}$, we have 
\begin{equation}
\label{eq1} 
\DVTn(t,\rho_0,\rho_{m}) \leq \sum_{k=1}^{m}
\DVTn(t_k,\rho_{k-1},\rho_{k}) \qquad \text{if\, $t=\sum_{k=1}^m t_k$.}
\end{equation}
\item Combining \eqref{e:psi2} and \eqref{e:psi3} we deduce that
\begin{equation}
\label{monotonia} 
\DVTn(t,\rho,\mu) \leq \DVTn(s,\rho,\mu) \quad \text{ for all } 0<s<t
\text{ and for all } \rho, \mu \in X.
\end{equation}
\end{enumerate}

In the context of  metric gradient-flow theory, the `Moreau-Yosida approximation' (see e.g.~\cite[Ch.~7]{Brezis11} or~\cite[Def.~3.1.1]{AmbrosioGigliSavare08})  provides an approximation of the driving functional that is finite and sub-differentiable everywhere, and can be used to define a generalized slope. We now construct the analogous objects in the situation at hand. 

Given $r>0$ and $\rho \in X$, we define the subset $J_r(\rho)\subset X$ by 
$$
J_r(\rho) := \argmin_{\mu \in X} \Bigl\{ \DVTn(r,\rho,\mu) + \calS(\mu)\Bigr\}
$$
(by Lemma \ref{lemma:exist-probl-increme}, this set is non-empty) and
define 
\begin{equation}
\label{def:gen}
 \gen(r,\rho):= \inf_{\mu \in X} \left\{ \DVTn(r,\rho,\mu) +
\calS(\mu)\right\}= \DVTn(r,\rho, \rho_r) + \calS(\rho_r) \quad \forall\, \rho_r \in
J_r(\rho).
\end{equation}
In addition, for all $\rho \in \rmD(\calS)$, we define the \emph{generalized slope}\begin{equation}
\label{def:nuovo}
\nuovo(\rho):= \limsup_{r \downarrow 0}\frac{\calS(\rho) -\gen(r,\rho) }{r} =
\limsup_{r \downarrow 0}\frac{\sup_{\mu \in X} \left\{ \calS(\rho) -\DVTn(r,\rho,\mu)
-\calS(\mu)\right\} }{r}\,. 
\end{equation}
Recalling the \emph{duality formula} for the local slope (cf.\ \cite[Lemma 3.15]{AmbrosioGigliSavare08}) and the fact that 
$\DVTn(\tau,\cdot, \cdot)$ is a proxy for $\frac1{2\tau}d^2(\cdot, \cdot)$, it is immediate to recognize that the generalized slope is a surrogate of the local slope. Furthermore, as we will see that its definition is somehow tailored to  the validity of Lemma \ref{lemma-my-1} \RICKYNEW ahead. \EEE  
Heuristically, the generalized slope $\nuovo(\rho)$ coincides with the Fisher information $\Fish(\rho) = \calR^*(\rho,-\rmD\calS(\rho))$. This can be recognized, again heuristically, by fixing a point $\rho_0$ and considering curves $\rho_t := \rho_0 -t\odiv \bj $, for a class of fluxes $\bj $. We  then calculate
\begin{align*}
\calR^*(\rho_0,-\rmD\calS(\rho_0))
&= \sup_{\bj }\, \bigl\{ -\rmD\calS(\rho_0)\cdot \bj  - \calR(\rho_0,\bj )\bigr\}\\
&= \sup_\bj  \lim_{r\to0} \frac1r \biggl\{ \calS(\rho_0) - \calS(\rho_r) - \int_0^r \calR(\rho_t,\bj )\, \dd t\biggr\}.
\end{align*}
In Theorem~\ref{th:slope-Fish} below we  rigorously prove that  $\nuovo\geq \Fish$ using this approach.

\bigskip

The following result collects some properties of $\genn r$ and $\nuovo$.
\begin{lemma}
    \label{lemma-my-1} 
For all $\rho \in \rmD(\calS)$ and for every selection $ \rho_r \in
J_r(\rho)$
\begin{align}
& \label{gen1} \gen(r_2, \rho) \leq \gen(r_1,\rho) \leq \calS(\rho) \quad
\text{for all } 0 <r_1<r_2;
\\
& \label{gen2} \rho_r \weaksigmatoabs\rho \ \text{as $r \downarrow 0$,} \quad
\calS(\rho)= \lim_{r \downarrow 0} \gen(r,\rho);
\\
& \label{gen3}
 \frac{\rm d}{{\rm d}r} \gen(r,\rho) \leq - \nuovo(\rho_r) \quad \foraa\
 r>0.
\end{align}
In particular, for all $\rho \in \rmD(\calS)$
\begin{align}
& \label{nuovopos}
 \nuovo(\rho) \geq 0 \quad \ \text{and}
\\
&
    \label{ineqenerg}
   \DVTn(r_0, \rho, \rho_{r_0}) +
 \int_{0}^{r_0} \nuovo(\rho_r) \, {\rm d}r \leq \calS(\rho) -\calS(\rho_{r_0})
    \end{align}
    for every $ r_{0}>0$ and $
   \rho_{r_0} \in J_{r_0}(\rho)$.
\end{lemma}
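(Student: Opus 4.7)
The proof proposal splits naturally according to the five displayed properties, with \eqref{gen3} being the essential one; the others are either trivial bookkeeping or consequences.

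My plan is to establish \eqref{gen1} first, as a direct consequence of the monotonicity \eqref{monotonia}: since $r\mapsto \DVTn(r,\rho,\mu)$ is non-increasing for every $\mu$, so is $r\mapsto\genn{r}(\rho) = \inf_\mu\{\DVTn(r,\rho,\mu)+\calS(\mu)\}$; the upper bound $\genn{r}(\rho)\le\calS(\rho)$ follows by testing with $\mu=\rho$ and using \eqref{e:psi2}. For \eqref{gen2}, the defining inequality $\DVTn(r,\rho,\rho_r)+\calS(\rho_r)\le\calS(\rho)$ together with $\calS\ge0$ gives a uniform bound $\DVTn(r,\rho,\rho_r)\le\calS(\rho)$, so property \eqref{e:psi4} forces $\rho_r\weaksigmatoabs\rho$ as $r\downarrow0$. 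The second half of \eqref{gen2} then follows by sandwiching: $\calS(\rho_r)\le\genn{r}(\rho)\le\calS(\rho)$, and the lower semicontinuity \eqref{e:phi1} of $\calS$ gives $\liminf_{r\downarrow0}\calS(\rho_r)\ge\calS(\rho)$.

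The main step is \eqref{gen3}. Fix $r>0$ at which $r\mapsto\genn{r}(\rho)$ is differentiable (which holds a.e.\ by \eqref{gen1}), pick $\rho_r\in J_r(\rho)$, and for $h>0$ use the triangle-type inequality \eqref{e:psi3} to bound, for any $\mu\in X$,
\begin{equation*}
  \genn{r+h}(\rho)\le\DVTn(r+h,\rho,\mu)+\calS(\mu)
  \le \DVTn(r,\rho,\rho_r)+\DVTn(h,\rho_r,\mu)+\calS(\mu).
\end{equation*}
Taking the infimum over $\mu\in X$ and rearranging, using $\DVTn(r,\rho,\rho_r)=\genn{r}(\rho)-\calS(\rho_r)$, yields
\begin{equation*}
  \frac{\genn{r+h}(\rho)-\genn{r}(\rho)}{h}\le -\frac{\calS(\rho_r)-\genn{h}(\rho_r)}{h}.
\end{equation*}
Now one extracts a subsequence $h_k\downarrow0$ along which the right-hand-side $\limsup$ defining $\nuovo(\rho_r)$ is attained; since the derivative on the left exists, the limit along this subsequence equals $(\dd/\dd r)\genn{r}(\rho)$, and the inequality \eqref{gen3} follows. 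Note that $\rho_r\in\rmD(\calS)$ because $\calS(\rho_r)\le\genn{r}(\rho)\le\calS(\rho)<\infty$, so $\nuovo(\rho_r)$ is well defined.

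The remaining assertions are immediate corollaries. Property \eqref{nuovopos} is just $\calS(\rho)\ge\genn{r}(\rho)$ divided by $r$ and taking $\limsup$. For \eqref{ineqenerg}, the function $g(r):=\genn{r}(\rho)$ is non-increasing by \eqref{gen1}, hence a BV function whose distributional derivative decomposes as $g'\,\dd r+\mu^s$ with $\mu^s\le0$; thus
\begin{equation*}
  \int_0^{r_0}g'(r)\,\dd r\;\ge\;g(r_0)-g(0^+)\;=\;\genn{r_0}(\rho)-\calS(\rho),
\end{equation*}
where the last equality uses \eqref{gen2}. Combining this with \eqref{gen3} and recalling $\genn{r_0}(\rho)=\DVTn(r_0,\rho,\rho_{r_0})+\calS(\rho_{r_0})$ gives exactly \eqref{ineqenerg}.

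The only genuine obstacle is the extraction step in \eqref{gen3}: since $\nuovo$ is defined as a $\limsup$, one cannot simply pass to the limit on both sides of the differential-quotient inequality. The resolution—selecting a subsequence realising the $\limsup$ and using that the full limit exists on the left—is the standard device, but it is the one place where the argument is not purely a rewrite of \eqref{e:psi3}.
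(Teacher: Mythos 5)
Your proof is correct and follows essentially the same route as the paper's: test with $\mu=\rho$ for the upper bound in \eqref{gen1}, appeal to \eqref{e:psi4} via the uniform cost bound for convergence in \eqref{gen2}, and use the triangle inequality \eqref{e:psi3} plus the identity $\DVTn(r,\rho,\rho_r)=\gen(r,\rho)-\calS(\rho_r)$ to get \eqref{gen3}. You also spell out the BV integration step for \eqref{ineqenerg}, which the paper leaves implicit — a genuine small improvement in transparency.

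One remark on your closing caveat: the "extraction" you flag as the one delicate step is actually unnecessary. If $a_h\le -b_h$ for all $h>0$ and $a_h\to a$, then $a=\liminf_h a_h\le\liminf_h(-b_h)=-\limsup_h b_h$, so one may pass to $\liminf_{h\downarrow0}$ on the right-hand side of the differential-quotient inequality directly; this is exactly what the paper does. Your subsequence version reaches the same conclusion, but it is worth noting that no such device is needed, since the inequality $\lim a_h\le\liminf(-b_h)$ is itself one-sided and elementary.
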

\begin{proof}
Let $r>0$, $\rho\in \rmD(\calS)$, and $\rho_r\in J_r(\rho)$.
It follows from  \eqref{def:gen}  and 
\eqref{e:psi2} that
\begin{equation}
\label{eq3} \gen(r,\rho)= \DVTn(r,\rho,\rho_r) + \calS(\rho_r)  \leq \DVTn(r,\rho,\rho) +
\calS(\rho) = \calS(\rho) \quad \forall \, r>0, \rho \in X;
\end{equation}
in the same way, one checks that for all $\rho\in X$ and $0 <r_1<r_2$,
\[
\gen(r_2,\rho) -\gen(r_1, \rho) \leq \DVTn(r_2, \rho_{r_1}, \rho) + \calS(\rho_{r_1})
-\DVTn(r_1, \rho_{r_1}, \rho) - \calS(\rho_{r_1}) \stackrel{\eqref{monotonia}}\leq 0,
\]
which implies \eqref{gen1}. Thus, the map $r \mapsto \gen(r,\rho)$ is  non-increasing on $(0,\pinfty)$, and hence almost everywhere
differentiable. Let us fix a point of differentiability $r>0$. For $h>0$ and $\rho_r \in J_r (\rho)$ we then have
\begin{align*}
\frac{\gen(r+h,\rho)-\gen(r,\rho)}{h} 
& = \frac1h\, {\inf_{v \in X} \Bigl\{\DVTn(r+h,
\rho,v) +\calS(v) -\DVTn(r, \rho,\rho_r) -\calS(\rho_r) \Bigr\}}
\\
& \leq \frac1h \, {\inf_{v \in X} \Bigl\{\DVTn(h, \rho_r,v) +\calS(v) -\calS(\rho_r)
\Bigr\}},
\end{align*}
the latter inequality due to  \eqref{e:psi3}, so that
$$
\begin{aligned}
 \frac{\rm d}{{\rm d}r} \gen(r,\rho) &
 \leq \liminf_{h \downarrow 0} \,\frac1h \,{\inf_{v \in X} \Bigl\{\DVTn(h, \rho_r,v) +\calS(v) -\calS(\rho_r)
\Bigr\}}
\\
& = - \limsup_{h \downarrow 0}\, \frac1h \, {\sup_{v \in X} \Bigl\{-\DVTn(h, \rho_r,v)
-\calS(v) +\calS(\rho_r) \Bigr\}}, \end{aligned}
$$
whence \eqref{gen3}. Finally, \eqref{eq3} yields that, for any $\rho
\in \rmD(\calS)$ and any selection $\rho_r \in J_r(\rho)$, one has $
\sup_{r>0} \DVTn(r,\rho,\rho_r) <+ \infty .$ Therefore, \eqref{e:psi4}
entails the first convergence in   \eqref{gen2}. Furthermore, we
have
$$
\calS(\rho) \geq \limsup_{r \downarrow 0} \gen(r,\rho)\geq \liminf_{r \downarrow 0}
\left(\DVTn(r,\rho,\rho_r) + \calS(\rho_r)\right) \geq \liminf_{r \downarrow
0}\calS(\rho_r)\geq \calS(\rho),
$$
where the first inequality again follows from \eqref{eq3}, and the
last one from the $\sigma$-lower semicontinuity of $\calS$.
This  implies the second statement of~\eqref{gen2}.
\end{proof}

\subsection{\emph{A priori} estimates}
Our next result collects the basic  estimates on the discrete solutions. In order to properly state it, we need to introduce
   the `density  of
dissipated energy' associated with the interpolant $\pwC \rho{\tau}$, namely the 
piecewise constant
function
$\pwC {\mathsf{W}}{\tau}:[0,T] \to [0,\pinfty)$ defined by
\begin{align}
\notag
\pwC {\mathsf{W}}{\tau}(t)&:= \frac{\DVTn(\ttau^{n}-\ttau^{n-1},
\Utau^{n-1}, \Utau^{n})}{\ttau^{n}-\ttau^{n-1}} \quad t\in
(\ttau^{n-1}, \ttau^n], \quad n=1,\ldots, {N_\tau}, 
\\
   \text{so that}\quad \int_{\ttau^{j-1}}^{\ttau^n}\pwC {\mathsf{W}}{\tau}(t)\, {\rm d}t&=
\sum_{k=j}^n \DVTn(\ttau^{k}-\ttau^{k-1}, \Utau^{k-1}, \Utau^{k})
\quad \text{for all } 1 \leq j < n \leq {N_\tau}.
\label{density-W}
\end{align}
\begin{prop}[Discrete energy-dissipation inequality and \emph{a priori} estimates]
We have
\begin{align}
\label{discr-enineq-var}
&
   \DVTn(t-\upwC \sft{\tau}(t), \upwC \rho{\tau}(t), \pwM {\rho}{\tau}(t)) +
 \int_{\upwC \sft{\tau}(t)}^{t} \nuovo(\pwM {\rho}{\tau}(r)) \, {\rm d}r +\calS(\pwM
 {\rho}{\tau}(t))\leq \calS(\upwC \rho{\tau}(t)) \quad \text{for all } 0 \leq t \leq T\,,
\\
&
\label{discr-enineq}
 \int_{\upwC \sft{\tau}(s)}^{\pwC \sft{\tau}(t)}   \pwC {\mathsf{W}}{\tau}(r)\, {\rm d}r  + 
   \int_{\upwC \sft{\tau}(s)}^{\pwC \sft{\tau}(t)} \nuovo(\pwM {\rho}{\tau}(r)) \, {\rm d}r  +\calS(\pwC \rho\tau(t))
 \leq \calS(\upwC \rho\tau(s)) \qquad \text{for all } 0\leq s \leq t \leq T\,,
\end{align}
 and there exists a constant $C>0$ such that  for all $\tau>0$
\begin{gather}
\label{est-diss}
\int_0^T \pwC{\mathsf{W}}\tau (t)\, \dd t \leq C, \qquad \int_0^T  \nuovo(\pwM {\rho}{\tau}(t)) \, {\rm d}t  \leq C.
\end{gather}
Finally, there exists a $\sigma$-sequentially compact subset $K\subset X$ such that 
\begin{equation}\label{aprio1}
\pwC \rho{\tau}(t),\, \upwC \rho{\tau}(t),\,  \pwM \rho{\tau}(t)\, \in K \quad \text{$\forall\, t \in
 [0,T]$ and $\tau >0$}.
\end{equation}
\end{prop}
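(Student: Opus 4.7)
The plan is to derive everything from iterated application of the local variational estimate of Lemma~\ref{lemma-my-1}. First, fixing $n\ge 1$ and $t\in(\ttau^{n-1},\ttau^n]$, I apply inequality \eqref{ineqenerg} with $\rho=\Utau^{n-1}$, $r_0=t-\ttau^{n-1}$, and $\rho_{r_0}=\pwM\rho\tau(t)\in J_{r_0}(\Utau^{n-1})$; the required measurable selection $r\mapsto\rho_r\in J_r(\Utau^{n-1})$ inside the integrand is furnished by $r\mapsto\pwM\rho\tau(\ttau^{n-1}+r)$. After the change of variable $s=\ttau^{n-1}+r$ this is exactly \eqref{discr-enineq-var}. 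Evaluating at the right endpoint $t=\ttau^n$ gives the one-step Energy-Dissipation balance
$$\DVTn(\ttau^n-\ttau^{n-1},\Utau^{n-1},\Utau^n)+\int_{\ttau^{n-1}}^{\ttau^n}\nuovo(\pwM\rho\tau(r))\,dr+\calS(\Utau^n)\le\calS(\Utau^{n-1}),$$
and summing this from $n=m+1$ to $n=N$, where $\ttau^m=\upwC\sft\tau(s)$ and $\ttau^N=\pwC\sft\tau(t)$, together with the definition \eqref{density-W} of $\pwC{\mathsf W}\tau$, produces \eqref{discr-enineq}.

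The a priori bounds \eqref{est-diss} then follow from \eqref{discr-enineq} with $s=0$, $t=T$, discarding the nonnegative term $\calS(\pwC\rho\tau(T))$ on the right and truncating the integrals to $[0,T]$; this also yields $\calS(\pwC\rho\tau(t))\le\calS(\rho^\circ)$ uniformly in $t$ and~$\tau$. For the compactness claim \eqref{aprio1} the plan is to place all interpolant values into a single sublevel set of $\rho\mapsto\DVTn(2T,\rho^*,\rho)+\calS(\rho)$, which is $\sigma$-sequentially compact by~\eqref{e:phipsi1}. Via the triangle inequality \eqref{eq1} with the splitting $2T=(2T-t)+\ttau^{n-1}+(t-\ttau^{n-1})$ chained through $\rho^\circ$ and $\Utau^{n-1}$, I estimate, for $t\in(\ttau^{n-1},\ttau^n]$,
$$\DVTn(2T,\rho^*,\pwM\rho\tau(t))\le\DVTn(2T-t,\rho^*,\rho^\circ)+\DVTn(\ttau^{n-1},\rho^\circ,\Utau^{n-1})+\DVTn(t-\ttau^{n-1},\Utau^{n-1},\pwM\rho\tau(t)).$$
Since $2T-t\ge T$ for $t\le T$, monotonicity \eqref{monotonia} bounds the first summand by $\DVTn(T,\rho^*,\rho^\circ)$; iterating the triangle inequality together with \eqref{est-diss} bounds the second by $\calS(\rho^\circ)$; the third is at most $\calS(\rho^\circ)$ by \eqref{ineqenerg} applied to the minimization defining $\pwM\rho\tau(t)$. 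The identical splitting applied to $\Utau^n=\pwC\rho\tau(t)$ and to $\upwC\rho\tau(t)=\Utau^{n-1}$ closes the uniform bound.

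The conceptually delicate point is this compactness step: the triangle inequality \eqref{eq1} constrains the time arguments to sum to the prescribed total, so one of the components ($2T-t$) shrinks as $t\to T$, and we must exploit the monotonicity \eqref{monotonia} to absorb it into a fixed positive time, which is why we begin with $2T$ rather than $T$. The argument tacitly requires the connectivity hypothesis $\DVTn(T,\rho^*,\rho^\circ)<\infty$---i.e., the existence of a $\DVTn$-finite path from the reference point to the initial datum---whose verification in the concrete measure-valued setting of this paper rests on the structural results of Section~\ref{sec:cost} (cf.~Proposition~\ref{prop:sufficient-for-connectivity}). All remaining steps are direct, essentially mechanical consequences of the abstract properties~\eqref{assW} and of Lemma~\ref{lemma-my-1}.
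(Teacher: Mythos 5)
Your proof is correct and follows essentially the same route as the paper: the pointwise variational inequality \eqref{discr-enineq-var} is obtained exactly as in the paper by applying \eqref{ineqenerg} on each subinterval, summing the endpoint versions gives \eqref{discr-enineq}, and the \emph{a priori} bounds together with the compactness argument via the sublevel of $\DVTn(2T,\rho^*,\cdot)+\calS(\cdot)$ match the paper's structure. Your three-part splitting $2T=(2T-t)+\ttau^{n-1}+(t-\ttau^{n-1})$ is only a cosmetic variant of the paper's two-part splitting through $\rho^\circ$.

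Your remark about the implicit requirement $\DVTn(T,\rho^*,\rho^\circ)<\infty$ is a fair observation---the paper simply ``observes'' this without justification. However, your proposed resolution via Proposition~\ref{prop:sufficient-for-connectivity} is not the one the paper relies on (that result needs extra hypotheses like a $q$-Poincar\'e inequality). The cleaner point is that in the concrete setting the coercivity/compactness in \eqref{e:phipsi1} already comes from $\calS$ alone, via the superlinearity of $\upphi$ and Prokhorov; hence \eqref{e:phipsi1} holds for \emph{any} choice of $\rho^*$, and one is free to take $\rho^*=\rho^\circ$, making $\DVTn(t,\rho^*,\rho^\circ)=0$ and rendering the connectivity issue vacuous.
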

\begin{proof}
From \eqref{ineqenerg} we directly deduce, for $t \in (\ttau^{j-1},
\ttau^j]$,
\begin{equation}
\label{eq4}
   \DVTn(t-\ttau^{j-1}, \Utau^{j-1}, \pwM {\rho}{\tau}(t)) +
 \int_{\ttau^{j-1}}^{t} \nuovo(\pwM {\rho}{\tau}(r)) \, {\rm d}r +\calS(\pwM
 {\rho}{\tau}(t))\leq \calS(\Utau^{j-1}),
\end{equation}
which implies  
\eqref{discr-enineq-var};
in particular, for $t= \ttau^j$ one has
\begin{equation}
\label{eq4bis}
   \int_{\ttau^{j-1}}^{\ttau^j}\pwC {\mathsf{W}}{\tau}(t)\, {\rm d}t +
 \int_{\ttau^{j-1}}^{\ttau^j} \nuovo(\pwM {\rho}{\tau}(t)) \, {\rm d}t +\calS(\Utau^{j})\leq
 \calS(\Utau^{j-1}).
\end{equation}
The estimate~\eqref{discr-enineq}
 follows upon summing \eqref{eq4bis} over the index $j$. Furthermore,  applying \eqref{eq1}--\eqref{monotonia}
one deduces  for all $1 \leq n \leq N_\tau$ that
\begin{equation}
\label{est-basis}
\DVTn(n\tau,\rho_0,\Utau^{n}) +\calS(\Utau^{n})
\leq   \int_0^{\ttau^n}\pwC {\mathsf{W}}{\tau}(r)\, {\rm d}r +
   \int_0^{\ttau^n} \nuovo(\pwM {\rho}{\tau}(r)) \, {\rm d}r  +\calS(\Utau^{n})
 \leq \calS(\rho_0).
\end{equation}
In particular, \eqref{est-diss} follows, as well as $\sup_{n=0,\ldots,N_\tau} \calS(\Utau^{n}) \leq C$. 
Then, \eqref{eq4} also yields $\sup_{t\in [0,T]} \calS(\pwM
 {\rho}{\tau}(t)) \leq C$.

Next we show the two estimates
\begin{align}
\label{est1}
 \DVTn(2T, \rho^*,\pwC \rho{\tau}(t)) +\calS(\pwC \rho{\tau}(t))
\leq C,
\\
\label{est2}
\DVTn(2T, \rho^*, \pwM {\rho}{\tau}(t)) + \calS(\pwM {\rho}{\tau}(t)) \leq C  \,.
\end{align}
Recall that $\rho^*$ is introduced in Assumption~\ref{ass:abstract}.

To deduce \eqref{est1}, we  use the triangle inequality for $\DVTn$.
Preliminarily, we observe that 
$ \DVTn(t, \rho^*, \rho_0) <\pinfty$ for all $t>0$. In particular, let us fix an arbitrary $  m \in \{1,\ldots, N_\tau\}$ and let $C^*: =  \DVTn(\ttau^{ m}, \rho^*, \rho_0) $. 
We have for any $n$,
\begin{align*}
\DVTn(2T, \rho^*,\Utau^n) &\leq \DVTn(2T-\ttau^n, \rho^*, \rho_0)   +\DVTn(\ttau^n, \rho_0,\Utau^n) \stackrel{(1)}{\leq}   \DVTn(\ttau^{ m}, \rho^*, \rho_0)    +\DVTn(\ttau^n, \rho_0,\Utau^n) \\ & \leq  C^*   +\DVTn(\ttau^n, \rho_0,\Utau^n)  \quad \text{for all } n \in \{1, \ldots,N_\tau\},
\end{align*}
where for (1) we have used that $  \DVTn(2T-\ttau^n, \rho^*, \rho_0) \leq   \DVTn(\ttau^{ m}, \rho^*, \rho_0) $ since $2T- \ttau^n \geq \ttau^{ m} $. 
Thus, 
in view of  \eqref{est-basis} we 
 we deduce
\begin{align}
\notag 
\DVTn(2T, \rho^*,\pwC \rho{\tau}(t))   +\calS(\pwC \rho{\tau}(t))
 &\leq  C^*   +\DVTn(\pwC \sft \tau(t), \rho_0, \pwC \rho{\tau}(t) ) +\calS(\pwC \rho{\tau}(t)) \\
&\leq C^*
 +\calS(\rho_0) \leq C    \quad \text{for all } t \in [0, T]\,,
\label{eq5}
\end{align}
i.e.\  the desired  \eqref{est1}.
\par

Likewise, adding \eqref{eq4} and \eqref{eq4bis} one has
$
\DVTn(t, \rho_0, \pwM {\rho}{\tau}(t)) + \calS(\pwM {\rho}{\tau}(t)) \leq
\calS(\rho_0) $, whence \eqref{est2} with arguments similar to those in the previous lines. 
\end{proof}

\subsection{Compactness result}
\label{ss:compactness}
The main result of this section, Theorem \ref{thm:abstract-GMM}  below, states that $\GMMT{\calS,\DVTn}0T{\rho^\circ} $ is non-empty, and that any curve $\rho \in \GMMT{\calS,\DVTn}0T{\rho^\circ}$ fulfills an `abstract' version \eqref{limit-enineq} of the
 $(\calS,\calR,\calR^*)$  Energy-Dissipation estimate~\eqref{EDineq}, obtained by passing to the limit in the discrete inequality~\eqref{discr-enineq}. 
 
We recall the $\DVTn$-action of a curve $\rho:[0,T]\to X$, defined in~\eqref{def-tot-var} as 
\begin{equation*}
\VarW \rho ab: = \sup \left \{ \sum_{j=1}^M  
\DVT{t^j - t^{j-1}}{\rho(t^{j-1})}{\rho(t^j)} \, : \ (t^j)_{j=0}^M \in \mathfrak{P}_f([a,b])   \right\}
\end{equation*}
for all $[a,b]\subset [0,T]$, where $\mathfrak{P}_f([a,b])$  is the set of all finite partitions of the interval $[a,b]$.
We also introduce the \emph{relaxed generalized slope} $\nuovorel: \rmD (\calS) \to [0,\pinfty]$ of the driving energy functional $\calS$, namely
 the relaxation of the generalized slope $\nuovo$ along sequences with bounded energy:
\begin{equation}
\label{relaxed-nuovo}
 \nuovorel(\rho) : = \inf\biggl\{ \liminf_{n\to\infty} \nuovo(\rho_n) \, :   \ \rho_n\weaksigmatoabs \rho, \ \sup_{n\in \N} \calS(\rho_n) <\pinfty\biggr\}\,.
\end{equation}

We are now in a position to state and prove the `abstract version' of Theorem \ref{thm:construction-MM}.
\begin{theorem}
\label{thm:abstract-GMM} 
Under \textbf{Assumption~\ref{ass:abstract}}, let $\rho^\circ \in \mathrm{D}(\calS)$. Then, for every vanishing sequence $(\tau_k)_k$
there exist a (not relabeled) subsequence and a $\sigma$-continuous curve
  $\rho : [0,T]\to X$
   such that 
 $\rho(0) =  \rho^\circ$, and 
 \begin{equation}
 \label{convergences-interpolants}
 \pwC \rho{\tau_k}(t),\,  \upwC \rho{\tau_k}(t),\, \pwM \rho{\tau_k}(t)  \weaksigmatoabs\rho(t) \qquad \text{for all } t \in [0,T],
 \end{equation}
and $\rho$ satisfies the Energy-Dissipation estimate
\begin{equation}
\label{limit-enineq}
\VarW \rho0t + \int_0^t \nuovorel(\rho(r)) \dd r +\calS(\rho(t)) \leq \calS(\rho_0) \qquad \text{for all } t \in [0,T].
\end{equation}
\end{theorem}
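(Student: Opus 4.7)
The plan is to carry out the standard scheme of the Minimizing-Movement theory, suitably adapted to the abstract Dynamical-Variational Transport setting, in three stages: (i) extract a limit curve by a Helly-type compactness argument, (ii) identify the three interpolants in the limit, and (iii) pass to the $\liminf$ in the discrete Energy-Dissipation inequality \eqref{discr-enineq}.

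For stage (i), the bound \eqref{aprio1} places all three interpolants in the $\sigma$-sequentially compact set $K$ for every $t\in[0,T]$. Combining the generalized triangle inequality \eqref{eq1} with the definition \eqref{density-W} of $\pwC{\mathsf W}\tau$ yields, for $0\le s\le t\le T$,
\begin{equation*}
\DVTn\bigl(\pwC\sft\tau(t)-\upwC\sft\tau(s),\,\upwC\rho\tau(s),\,\pwC\rho\tau(t)\bigr)\le
\int_{\upwC\sft\tau(s)}^{\pwC\sft\tau(t)}\pwC{\mathsf W}\tau(r)\,\dd r.
\end{equation*}
Because of the uniform bound \eqref{est-diss}, the auxiliary monotone functions $V_{\tau_k}(t):=\int_0^{\pwC\sft{\tau_k}(t)}\pwC{\mathsf W}{\tau_k}(r)\,\dd r$ are equibounded, and by Helly's classical selection principle we may extract a (not relabelled) subsequence converging pointwise to a nondecreasing $V:[0,T]\to[0,\infty)$. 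A diagonal argument on a countable dense set $Q\subset[0,T]$ (containing~$0$), using the $\sigma$-sequential compactness of $K$, produces pointwise limits $\rho(q)$ for $q\in Q$, with $\rho(0)=\rho^\circ$. To extend to every $t\in[0,T]$, pick $q_n\in Q$ with $q_n\to t$; the displayed inequality together with $V(q_n)\to V(t)$ gives $\DVTn(|q_n-q_m|+o(1),\pwC\rho{\tau_k}(q_n),\pwC\rho{\tau_k}(q_m))\to0$ (after taking $k\to\infty$ first), hence by property \eqref{e:psi4} of $\DVTn$ the sequence $\rho(q_n)$ is Cauchy in the sense that any two limit points coincide, defining $\rho(t)$. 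The same bound, in conjunction with \eqref{e:psi4}, also shows that $\pwC\rho{\tau_k}(t)\weaksigmatoabs\rho(t)$ for every $t\in[0,T]$, and that $\rho$ is $\sigma$-continuous.

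For stage (ii), the convergence of $\upwC\rho{\tau_k}$ to the same limit follows from $\upwC\rho{\tau_k}(t)=\pwC\rho{\tau_k}(t-\tau_k)$ off the nodes and the continuity of $\rho$. For the De Giorgi interpolant $\pwM\rho{\tau_k}(t)$, the discrete inequality \eqref{discr-enineq-var} yields
\begin{equation*}
\DVTn\bigl(t-\upwC\sft{\tau_k}(t),\,\upwC\rho{\tau_k}(t),\,\pwM\rho{\tau_k}(t)\bigr)\le\calS(\rho^\circ),
\end{equation*}
and since $t-\upwC\sft{\tau_k}(t)\le\tau_k\to0$, property \eqref{e:psi6} forces $\pwM\rho{\tau_k}(t)\weaksigmatoabs\rho(t)$ as well.

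For stage (iii), fix $0\le s\le t\le T$. By the generalized triangle inequality \eqref{eq1} applied partition-wise, the piecewise-constant dissipation density satisfies
\begin{equation*}
\int_{\upwC\sft{\tau_k}(s)}^{\pwC\sft{\tau_k}(t)}\pwC{\mathsf W}{\tau_k}(r)\,\dd r\ge \VarW{\pwC\rho{\tau_k}}{\upwC\sft{\tau_k}(s)}{\pwC\sft{\tau_k}(t)},
\end{equation*}
and the lower semicontinuity property \eqref{lower-semicont} of $\DVTn$ propagates to the action functional $\VarWn$: one shows via partition refinement that $\VarW\rho st\le\liminf_k \VarW{\pwC\rho{\tau_k}}st$, so that the left-hand side of \eqref{discr-enineq} passes to the limit producing $\VarW\rho st$ in the limit. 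For the slope term, the definition \eqref{relaxed-nuovo} of $\nuovorel$ together with the uniform energy bound gives $\nuovorel(\rho(r))\le\liminf_k\nuovo(\pwM\rho{\tau_k}(r))$, and Fatou's lemma yields $\int_s^t\nuovorel(\rho)\,\dd r\le\liminf_k\int_s^t\nuovo(\pwM\rho{\tau_k})\,\dd r$. Finally, the $\sigma$-lower semicontinuity \eqref{e:phi1} of $\calS$ handles the energy term. Combining these with \eqref{discr-enineq} and letting $s\to0$ gives \eqref{limit-enineq}. The hardest step, and the one that most requires care, is the propagation of lower semicontinuity from the pointwise cost $\DVTn$ to the action $\VarWn$: the argument uses property \eqref{lower-semicont} at each node of an arbitrary partition of $[s,t]$ and then takes the supremum over partitions, exploiting that $\pwC\rho{\tau_k}(q)\weaksigmatoabs\rho(q)$ for \emph{every} $q$, a property secured in stage (i).
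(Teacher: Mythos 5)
Your overall outline matches the paper's three-step proof (extract a limit on a dense set, upgrade to all $t$ and identify the three interpolants, then pass to the $\liminf$ in the discrete Energy-Dissipation inequality), and stages (i) and the compactness/Fatou/l.s.c.\ ingredients are in the right place. But there is a genuine gap in stage (iii).

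The inequality you write,
\begin{equation*}
\int_{\upwC\sft{\tau_k}(s)}^{\pwC\sft{\tau_k}(t)}\pwC{\mathsf W}{\tau_k}(r)\,\dd r\ \ge\ \VarW{\pwC\rho{\tau_k}}{\upwC\sft{\tau_k}(s)}{\pwC\sft{\tau_k}(t)},
\end{equation*}
is the \emph{wrong direction}, and in fact is false as soon as $\pwC\rho{\tau_k}$ takes two distinct values. By the generalized triangle inequality~\eqref{e:psi3}, refining a partition can only \emph{increase} the sum $\sum_j \DVTn(t^j-t^{j-1},\cdot,\cdot)$, so $\VarW{\pwC\rho{\tau_k}}{\cdot}{\cdot}$ (a $\sup$ over partitions) dominates the sum over the natural nodes, i.e.\ $\int\pwC{\mathsf W}{\tau_k}\le\VarW{\pwC\rho{\tau_k}}$. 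Worse, since $\DVTn(\tau,\cdot,\cdot)$ is nonincreasing in $\tau$ and property~\eqref{e:psi6} implies $\DVTn(\tau,\mu,\nu)\to\infty$ as $\tau\downarrow0$ whenever $\mu\ne\nu$, one has $\VarW{\pwC\rho{\tau_k}}{\cdot}{\cdot}=+\infty$ whenever the interpolant has a jump. So the chain $\VarW\rho st\le\liminf_k\VarW{\pwC\rho{\tau_k}}st\le\liminf_k\int\pwC{\mathsf W}{\tau_k}$ does not hold: the middle quantity is infinite, and the second $\le$ fails. The paper sidesteps $\VarW{\pwC\rho{\tau_k}}$ entirely: fix a partition $0=t^0<\dots<t^M=t$ of the \emph{limit interval}, apply~\eqref{lower-semicont} termwise to get
$\sum_j \DVTn(t^j-t^{j-1},\rho(t^{j-1}),\rho(t^j))\le \liminf_k \sum_j \DVTn\bigl(\pwC\sft{\tau_k}(t^j)-\pwC\sft{\tau_k}(t^{j-1}),\pwC\rho{\tau_k}(t^{j-1}),\pwC\rho{\tau_k}(t^j)\bigr)$,
then use~\eqref{eq1} to bound the right-hand sum by $\int_0^{\pwC\sft{\tau_k}(t)}\pwC{\mathsf W}{\tau_k}$, and finally take the supremum over partitions. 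Your ingredients are all present, but this reversal of an inequality is a conceptual error, not a typo.

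Two secondary points. In stage (ii), the convergence $\upwC\rho{\tau_k}(t)\weaksigmatoabs\rho(t)$ does not follow merely from ``continuity of $\rho$'': you would need to know that $\pwC\rho{\tau_k}(t_k)\to\rho(t)$ for a $k$-dependent sequence $t_k\to t$, which requires a uniform modulus, not just pointwise continuity of the limit. The paper gets this for free from $\limsup_k\DVTn\bigl(t-\upwC\sft{\tau_k}(t),\upwC\rho{\tau_k}(t),\pwC\rho{\tau_k}(t)\bigr)\le\calS(\rho_0)$ combined with~\eqref{aprio1} and~\eqref{e:psi6}, exactly as you do for $\pwM\rho{\tau_k}$. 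In stage (i), the property you need to conclude that two limit points along $q_n,q_m\to t$ coincide is~\eqref{e:psi6}, not~\eqref{e:psi4}: the latter requires one argument of $\DVTn$ to be fixed. The Helly selection of the auxiliary monotone functions $V_{\tau_k}$ is also extraneous — the paper extracts $\rho$ directly on $A$ via compactness and then uses the DVT bound~\eqref{e:psi6} to extend; note also that your claimed ``$\to0$'' can fail at discontinuities of $V$, though only boundedness is actually needed.
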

\begin{remark}
\label{rmk:generaliz-topol} 
\upshape
Theorem \ref{thm:abstract-GMM} could be extended to a topological space where the cost $\DVTn$ and the energy functional $\calS$ satisfy the properties 
listed at the beginning of the section.
\end{remark}

\begin{proof}
Consider a sequence $\tau_k \downarrow 0$ as $k\to\infty$.  

\emph{Step 1: Construct the limit curve $\ol\rho$. }
We first define the limit curve $\ol\rho$ on the set $A: = \{0\} \cup 
  N$, with $N$ a countable dense subset of $(0,T]$.  Indeed, in view of \eqref{aprio1}, 
  with a diagonalization procedure we find a function $\overline\rho : A \to X$ and a (not relabeled) subsequence such that 
\begin{equation}
  \label{step1-constr}
  \pwC \rho{\tau_k}(t) \weaksigmatoabs\overline\rho(t) \quad \text{for all } t \in A \quad \text{and}  \quad  \overline\rho(t) \in K  \text{ for all } t \in A .  
\end{equation}
In particular, $ \overline\rho(0)=\rho^\circ$. 

We next show that $\ol\rho$ can be uniquely extended to a $\sigma$-continuous curve $\ol\rho:[0,T]\to X$. Let $s,t\in A$ with $s<t$. By the lower-semicontinuity property~\eqref{lower-semicont} we have 
\begin{align*}
\DVT {t-s} {\ol\rho(s)} {\ol\rho(t)}
&\leq\liminf_{k\to\infty} 
\DVT {t-s} {\pwC\rho{\tau_k}(s)} {\pwC\rho{\tau_k}(t)}
\stackrel{\eqref{density-W}}\leq\liminf_{k\to\infty} 
\int_{\upwC {\sft}{\tau_{k}} (s)}^{\pwC {\sft}{\tau_{k}} (t)} \pwC {\mathsf{W}}{\tau_{k}} (r) \,\dd r\\
& \stackrel{(1)}{\leq} \liminf_{k\to\infty} \calS(\pwC \rho{\tau_{k}} (t_1) ) \stackrel{(2)}{\leq}  \calS(\rho_0),
\end{align*}
where {(1)} follows from \eqref{discr-enineq}
 (using the lower bound on $\calS$),
  and  {(2)} is due to the fact that  $t\mapsto \calS(\pwC \rho{\tau_{k}}(t))$ is nonincreasing. 

By the property~\eqref{e:psi6} of $\DVTn$, this estimate is a form of uniform continuity of $\ol\rho$, and we now use this to extend $\ol\rho$. Fix $t\in [0,T]\setminus A$, and choose a sequence $t_m\in A$, $t_m\to t$, with the property that $\ol\rho(t_m)$ $\sigma$-converges to some $\tilde\rho$. For any sequence $s_m\in A$, $s_m\to t$, we then have
\[
\sup_{m} \DVT {|t_m-s_m|}{\ol\rho(s_m)}{\ol\rho(t_m)} < \pinfty,
\] 
and since $|t_m-s_m|\to0$, property~\eqref{e:psi6} implies that ${\ol\rho(s_m)}\weaksigmatoabs \tilde \rho$. This implies that along any converging sequence $t_m\in A$, $t_m\to t$ the sequence  $\ol\rho(t_m)$ has the same limit; therefore there is a unique extension of $\ol\rho$ to $[0,T]$, that we again indicate by $\ol\rho$. By again applying the lower-semicontinuity property~\eqref{lower-semicont} we find that 
\[
\DVT {|t-s|}{\ol\rho(s)}{\ol\rho(t)} \leq \calS(\rho_0)
\qquad \text{for all }t,s\in [0,T], \ s\not=t,
\]
and therefore the curve $[0,T]\ni t\mapsto \ol\rho(t)$ is $\sigma$-continuous.

\bigskip

\emph{Step 2: Show convergence at all $t\in [0,T]$.}
Now fix $t\in [0,T]$; we show that $\pwC \rho{\tau_k}(t)$, $\upwC\rho{\tau_k}(t)$, and $\pwM\rho{\tau_k}(t)$ each converge to $\ol\rho(t)$.
Since $\pwC \rho{\tau_k}(t)\in K$, there exists  a convergent subsequence $\pwC\rho{\tau_{k_j}}(t)\weaksigmatoabs\tilde \rho$. Take any $s\in A$ with $s\not=t$. Then 
\begin{align*}
\DVT {|t-s|}{\tilde \rho}{\ol\rho(s)}
\leq \liminf_{j\to\infty} 
\DVT {|t-s|}{\pwC\rho{\tau_{k_j}}(t)}{\pwC\rho{\tau_{k_j}}(s)}
\leq \calS(\rho_0)\leq C,
\end{align*}
by the same argument as above. Taking the limit $s\to t$,  property~\eqref{e:psi6} and the continuity of $\ol\rho$ imply $\tilde\rho= \ol\rho(t)$. Therefore $\pwC\rho{\tau_{k_j}}(t)\weaksigmatoabs \ol\rho(t)$ along each subsequence $\tau_{k_j}$, and consequently also along the whole sequence $\tau_k$.

Estimates \eqref{discr-enineq-var} \&  \eqref{discr-enineq} also give at each $t\in (0,T]$
\[
 \limsup_{k\to\infty}    \DVTn(t-\upwC \sft{\tau_k}(t), \upwC \rho{\tau_k}(t), \pwC {\rho}{\tau_k}(t))   \leq   \calS(\rho_0),
\qquad  \limsup_{k\to\infty}    \DVTn(t-\upwC \sft{\tau_k}(t), \upwC \rho{\tau_k}(t), \pwM {\rho}{\tau_k}(t))   \leq   \calS(\rho_0),
\]
so that,  again using  the compactness information provided by  \eqref{aprio1} and property 
 \eqref{e:psi6}  of the cost $\DVTn$, it is immediate to conclude \eqref{convergences-interpolants}.

\bigskip

\emph{Step 3: Derive the energy-dissipation estimate.}
Finally, let us observe  that  
\begin{equation}
 \label{liminf-var}
 \liminf_{k\to\infty} \int_0^{\pwC {\sft}{\tau_k}(t)} \pwC {\mathsf{W}}{\tau_k}(r) \dd r \geq \VarW \rho0t \quad \text{for all } t  \in [0,T].
 \end{equation}
 Indeed, for any partition $\{ 0=t^0<\ldots <t^j <\ldots<t^M = t\}$ of $[0,t]$ we find that 
 \begin{align*}
 \sum_{j=1}^{M} \DVT{t^j-t^{j-1}}{\rho(t^{j-1})}{\rho(t^j)} &\stackrel{(1)}{\leq} \liminf_{k\to\infty}  \sum_{j=1}^{M} \DVT{\pwC {\sft}{\tau_k}(t^j)-{\pwC {\sft}{\tau_k}(t^{j-1})}}{\pwC \rho{\tau_k}(t^{j-1})}{\pwC \rho{\tau_k}(t^j)} \\
 &= \liminf_{k\to\infty} \int_0^{\pwC {\sft}{\tau_k}(t)} \pwC {\mathsf{W}}{\tau_k}(r) \,\dd r,
 \end{align*}
 with (1) due to 
 \eqref{lower-semicont}.
  Then \eqref{liminf-var} follows by taking the supremum over all partitions.
 On the other hand, by  Fatou's Lemma we find that 
 \[
  \liminf_{k\to\infty} \int_0^{\pwC {\sft}{\tau_k}(t)} \nuovo (\pwM \rho{\tau_k}(r)) \,\dd  r \geq \int_0^t \nuovorel(\rho(r)) \dd r,
 \]
 while the lower semicontinuity of $\calS$ gives 
 \[
   \liminf_{k\to\infty} \calS(\pwC \rho{\tau_k}(t))  \geq \calS(\rho(t))
 \]
 so that \eqref{limit-enineq} follows from taking  the $\liminf_{k\to\infty}$ in \eqref{discr-enineq} for $s=0$.  
\end{proof}

\subsection{Proof of Theorem~\ref{thm:construction-MM}}
\label{ss:pf-of-existence}

Having established the abstract compactness result of Theorem~\ref{thm:abstract-GMM}, we now apply this to the proof of Theorem~\ref{thm:construction-MM}. As described above, 
under \textbf{Assumptions~\ref{ass:V-and-kappa}, \ref{ass:Psi}, and~\ref{ass:S}} 
the conditions of Theorem~\ref{thm:abstract-GMM} are fulfilled, and Theorem~\ref{thm:abstract-GMM} provides us with a curve $\rho:[0,T]\to\calM^+(V)$ \OLI that is continuous with respect to setwise convergence \EEE such that 
\begin{equation}
\label{ineq:pf-abs-to-concrete}
\VarW \rho 0t+ \int_0^t \nuovorel(\rho(r)) \dd r +\calS(\rho(t)) \leq \calS(\rho_0) \qquad \text{for all } t \in [0,T].
\end{equation}
To conclude the proof of Theorem~\ref{thm:construction-MM}, we now show
that
\GGG
the Energy-Dissipation inequality \eqref{EDineq}
can be derived from \eqref{ineq:pf-abs-to-concrete}.
\nc

We first note that  Corollary~\ref{c:exist-minimizers} implies the  existence of a flux $\bj$ such that $(\rho,\bj)\in \CE 0T$ and $\VarW \rho 0T = \int_0^T \calR(\rho_t,\bj_t)\,\dd t$. 
Then from Corollary~\ref{cor:cor-crucial} below, we find that
$\nuovorel(\rho(r))\geq \Fish(\rho(r))$ for all  \RICKYNEW $r \in [0,T]$. \EEE  Combining these
results with~\eqref{ineq:pf-abs-to-concrete} we find the required
estimate~\eqref{EDineq}.

\bigskip

It remains to prove the inequality $\nuovorel\geq \Fish$, which follows from the corresponding inequality $ \nuovo\geq \Fish$ for the non-relaxed slope (Theorem~\ref{th:slope-Fish}) with the 
lower semicontinuity of $\Fish$ that is assumed in Theorem~\ref{thm:construction-MM}.
This is the topic of the next section.


\subsection{The generalized slope bounds the Fisher information}

We recall the definition of the generalized slope $\nuovo$ from~\eqref{def:nuovo}:
\[
\nuovo(\rho):= 
\limsup_{r \downarrow 0}\sup_{\mu \in X} \frac1r \Bigl\{ \calS(\rho) -\calS(\mu)-\DVTn(r,\rho,\mu)
\Bigr\} \,. 
\]
Given the structure of this definition, the proof of the inequality $\nuovo\geq \Fish$ naturally proceeds by constructing an admissible curve $(\rho, \bj)\in\CE0T$ such that $\rho\restr{t=0}=\rho$ and such that the  expression in braces can be related to $\Fish(\rho)$. 

For the systems of this paper, the construction of such a curve faces three technical difficulties: the first is that $\rho$ needs to remain nonnegative,  the second is that $\upphi'$ may be unbounded at zero, and the third is that the function $\rmD_\upphi(u,v)$ in~\eqref{eq:182} that defines $\Fish$ may be infinite when $u$ or $v$ is zero (see Example~\ref{ex:D}).


We first prove a  lower  bound for the generalized slope $\nuovo$  involving $\rmD_\upphi^-$, under the basic conditions on the $(\calS, \calR, \calR^*)$ system presented in 
Section \ref{s:assumptions}. \EEE
\begin{theorem}
\label{th:slope-Fish}
Assume \ref{ass:V-and-kappa}, \ref{ass:Psi}, and~\ref{ass:S}. Then 
\begin{equation}
 	\label{ineq:nuovo-FI1}
 	\nuovo (\rho) \geq
        \frac12\iint_\edg \rmD^-_\upphi(u(x),u(y))\,\tetapi(\dd x,\dd
        y)
        \quad \text{for all } \rho=u\pi \in  \rmD(\calS).
      \end{equation}
    \end{theorem}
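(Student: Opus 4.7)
The plan is to construct, for each sufficiently small $r > 0$, an explicit competitor $(\rho_t, \bj_t) \in \CEP{0}{r}{\rho}{\rho_r}$ in the definition of $\DVTn(r, \rho, \rho_r)$, and then to combine the chain-rule identity of Theorem~\ref{th:chain-rule-bound} with Fenchel--Young duality to extract a lower bound on the difference quotient $r^{-1}\bigl[\calS(\rho) - \calS(\rho_r) - \DVTn(r,\rho,\rho_r)\bigr]$, whose $\limsup$ as $r \downarrow 0$ is $\nuovo(\rho)$.

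First I would reduce to the case in which $u = \dd \rho / \dd \pi$ is essentially valued in a compact subinterval $J = [a,b] \subset (0,\infty)$; the general case is recovered by a truncation at the very end. Fix a bounded, skew-symmetric, measurable test function $\eta \in \Bb(\edg)$ and consider the Cauchy problem
\begin{equation*}
\partial_t u_t(x) = -\int_V (\Psi^*)'(\eta(x,y))\,\upalpha(u_t(x), u_t(y))\,\kappa(x,\dd y), \qquad u_0 = u.
\end{equation*}
The vector field $\rmG(x,y;u',v') := -(\Psi^*)'(\eta(x,y))\,\upalpha(u',v')$ is skew-symmetric in the swap $(x,y) \leftrightarrow (y,x)$ (by oddness of $(\Psi^*)'$ and symmetry of $\upalpha$), is locally Lipschitz on $J^2$ (since a continuous concave function on $\R_+^2$ is locally Lipschitz in the interior of its domain), and has linear growth; in particular it verifies the structural hypotheses~\eqref{subeq:G} of Section~\ref{s:ex-sg}. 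Theorem~\ref{thm:ODE-well-posedness} then yields a unique $\rmC^1$-solution $u_t \in L^1(V,\pi;J')$ on some time interval $[0, r_0]$ with $J' \supset J$ slightly enlarged, and the swap-antisymmetric flux $2\bj_t := (\Psi^*)'(\eta)\,\upalpha(u_t^-, u_t^+)\,\tetapi$ makes $(\rho_t, \bj_t) \in \CER{0}{r_0}$ admissible in $\DVTn(r,\rho,\rho_r)$ for every $r \in (0, r_0]$.

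Next the core computation. Since $w_t := (\Psi^*)'(\eta)\,\upalpha(u_t^-, u_t^+)$ is swap-antisymmetric, $w_t^\flat = w_t$, and Theorem~\ref{th:chain-rule-bound} applied with $\upbeta = \upphi$ produces an exact identity for $\tfrac{\dd}{\dd t}\int_V \upphi(u_t)\,\dd\pi$ in terms of $\rmA_\upphi(u_t^-, u_t^+)\,w_t$. Together with $\calR(\rho_t, \bj_t) = \tfrac{1}{2}\iint_{\edg} \Psi\bigl((\Psi^*)'(\eta)\bigr)\,\upalpha(u_t^-,u_t^+)\,\dd\tetapi$, the Fenchel equality $\eta\,(\Psi^*)'(\eta) - \Psi\bigl((\Psi^*)'(\eta)\bigr) = \Psi^*(\eta)$, and the upper bound $\DVTn(r,\rho,\rho_r) \leq \int_0^r \calR(\rho_t, \bj_t)\,\dd t$, I obtain an estimate of the form
\begin{equation*}
\calS(\rho) - \calS(\rho_r) - \DVTn(r,\rho,\rho_r) \geq \tfrac{1}{2}\int_0^r \iint_{\edg} \Bigl[\Psi^*(\eta) + \bigl(\xi_u - \eta\bigr)(\Psi^*)'(\eta)\Bigr]\upalpha(u_t^-, u_t^+)\,\dd\tetapi\,\dd t,
\end{equation*}
where $\xi_u(x,y) := -\rmA_\upphi(u(x),u(y))$ arises from the chain-rule term (with the factor in front of $\calS$ absorbed into the normalization). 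Dividing by $r$ and letting $r \downarrow 0$, the strong continuity $u_t \to u$ in $L^1(V,\pi)$ combined with dominated convergence (made possible by the uniform bounds on $u_t$, $\upalpha(u_t^-,u_t^+)$, and $\eta$) yield the same lower bound with $u_t$ replaced by $u$. Specializing $\eta(x,y)$ to the truncation of $\xi_u(x,y)$ at level $K > 0$, the error factor $\bigl(\xi_u - \eta\bigr)(\Psi^*)'(\eta)$ is pointwise nonnegative, because $(\Psi^*)'$ is odd and monotone nondecreasing: on $\{|\xi_u| \leq K\}$ both factors vanish, while on $\{\xi_u > K\}$ both are strictly positive and on $\{\xi_u < -K\}$ both are strictly negative. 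Dropping this nonnegative term and letting $K \to \pinfty$, monotone convergence produces
\begin{equation*}
\nuovo(\rho) \geq \tfrac{1}{2}\iint_{\edg} \Psi^*\bigl(\rmA_\upphi(u^-,u^+)\bigr)\,\upalpha(u^-,u^+)\,\dd\tetapi = \tfrac{1}{2}\iint_{\edg} \rmD^-_\upphi(u^-,u^+)\,\dd\tetapi,
\end{equation*}
where the final equality uses that $\rmD^-_\upphi$ was defined in~\eqref{eq:181} to vanish on the set $\{\upalpha(u^-, u^+) = 0\}$.

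The main obstacle I expect is removing the assumption that $u$ is essentially valued in a compact subinterval of $(0,\infty)$, because Theorem~\ref{thm:ODE-well-posedness} cannot be invoked directly when $u$ vanishes on a set of positive $\pi$-measure or is unbounded. The natural remedy is a double-truncation approximation $u^{(N)} := c_N\,\min\!\bigl(\max(u, 1/N), N\bigr)$, with $c_N \to 1$ chosen to preserve the total mass, to prove the desired bound for each $\rho^{(N)} := u^{(N)}\pi$ (which satisfies the boundedness hypotheses above), and to pass to the limit $N \to \infty$ using Fatou's lemma on the right-hand side together with the lower semicontinuity of $\nuovo$ along sequences uniformly bounded in $\calS$. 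The very definition of $\rmD^-_\upphi$ guarantees that no contribution is lost at the boundary of $\R_+^2$, so this limit procedure is compatible with the sought inequality.
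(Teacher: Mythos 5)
Your core variational strategy — build a flux $w_t = (\Psi^*)'(\eta)\,\upalpha(u_t^-,u_t^+)$ driven by a bounded skew-symmetric multiplier $\eta$, combine the chain rule with Fenchel--Young, bound $\DVTn(r,\rho,\rho_r)$ from above by $\int_0^r \calR$, pass $r\downarrow 0$, and then specialize $\eta$ to a truncation of the dual variable $-\rmA_\upphi(u^-,u^+)$ and send the truncation level to infinity — is exactly the paper's strategy. However, there are two gaps, one minor and one that breaks the argument in the generality of the statement.

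The minor gap concerns well-posedness of the short-time ODE. Theorem~\ref{thm:ODE-well-posedness} is formulated for the specific map $\fw$ of~\eqref{eq:appA-w-Psip-alpha}, not for an arbitrary field $\rmG(x,y;u,v)=-(\Psi^*)'(\eta(x,y))\,\upalpha(u,v)$, and the relevant Theorem~\ref{thm:localization-G} requires the endpoint invariance conditions~\eqref{eq:123a}--\eqref{eq:123b}. Your field violates these: $\rmG(x,y;a,a)=-(\Psi^*)'(\eta(x,y))\,\upalpha(a,a)\neq 0$ for $a>0$ in general, so no compact interval $J'$ is invariant. A direct short-time Picard argument on a slightly enlarged interval would repair this for the bounded case, but the theorems you cite do not apply as stated.

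The serious gap is the limit $N\to\infty$ in the final truncation $u^{(N)}$. You need to pass the inequality $\nuovo(\rho^{(N)})\geq \tfrac12\iint\rmD^-_\upphi(u^{(N)-},u^{(N)+})\,\dd\tetapi$ to $\rho$. Fatou together with the lower semicontinuity of $\rmD^-_\upphi$ handles the right-hand side, giving $\liminf_N\iint\rmD^-_\upphi(u^{(N)-},u^{(N)+})\,\dd\tetapi\geq\iint\rmD^-_\upphi(u^-,u^+)\,\dd\tetapi$. But on the left you then need $\nuovo(\rho)\geq\limsup_N\nuovo(\rho^{(N)})$, i.e.\ \emph{upper} semicontinuity of $\nuovo$ along the truncations — lower semicontinuity, which is what you invoke, gives the inequality in the wrong direction ($\nuovo(\rho)\leq\liminf_N\nuovo(\rho^{(N)})$). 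No upper semicontinuity of $\nuovo$ is available; indeed the paper introduces the relaxed slope $\nuovorel\leq\nuovo$ precisely because $\nuovo$ is not known to be closed in either direction. The paper avoids this entirely by regularizing not the initial datum but the flux density: the cutoff $g_\eps(u,v)=\upalpha_\eps(u,v)\,h(\eps\max(u,v))\,q(\min(1,\min(u,v)/\eps))$ is globally Lipschitz on $\R_+^2$ and vanishes identically on $\{uv=0\}$, so that \eqref{eq:123a} holds at $a=0$ and Theorem~\ref{thm:localization-G} gives a nonnegative global $L^1$-solution for an arbitrary $u_0\in L^1_+(V,\pi)$ — no boundedness of $u_0$ required, no approximation of $\rho_0$, and hence no semicontinuity of $\nuovo$ needed. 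You should replace the double-truncation of the initial datum with a regularization of $\upalpha$ in the flux, which is what makes the argument close at full generality.
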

 \begin{proof}
   Let us fix $\rho_0=u_0\pi\in \rmD(\calS)$, 
   a bounded measurable skew-symmetric map
   \begin{displaymath}
     \xi:\edg\to \R
     \quad\text{with }
     \xi(y,x)=-\xi(x,y),\quad
     |\xi(x,y)|\le \Xi<\infty
     \quad\text{for every $(x,y)\in \edg$,}
 \end{displaymath}
 the Lipschitz functions
 \OLI $q(r):=\min(r, 2(r-1/2)_+)$ \GGGO (approximating the identity far from $0$) and
 $h(r):=\max(0,\min(2-r, 1))$ (cutoff for $r\ge 2$), and the Lipschitz regularization of
 $\upalpha$
%
%
 \begin{displaymath}
   \upalpha_\eps(u,v):=\eps q(\upalpha(u,v)/\eps).
 \end{displaymath}
 We introduce the field $\rmG_\eps:\edg\times\R_+^2\to \R$ 
   \begin{equation}
     \label{eq:166}
     \rmG_\eps(x,y;u,v):=\xi(x,y)g_\eps(u,v)\,,
   \end{equation}
   where
   \[
   	g_\eps(u,v):=\upalpha_\eps(u,v)\,h(\eps \max(u, v))q(\min(1,\min(u,v)/\epsilon))\,,
   \]
   which vanishes if \OLI $\upalpha(u,v)<\eps/2$ or $\min(u,v)<\eps/2$ or $\max(u, v)\ge 2/\eps$, \GGGO
   and coincides with $\upalpha$ if \OLI $\upalpha\ge \eps$, $\min(u,v)\ge \eps$, and
   $\max(u, v)\le 1/\eps$. \GGGO
   Since $g_\eps$ is Lipschitz,
   it is easy to check that $\rmG_\eps$ satisfies all the assumptions
   (\ref{subeq:G}a,b,c,d)  and also \eqref{eq:123a} for $a=0$,
   since \OLI $0=g_\eps(0,0)\le g_\eps(0,v)$ for every $v\ge 0$ and every $(x,y)\in E$. \GGGO

   It follows that for every nonnegative $u_0\in L^1(X,\pi)$ there exists a unique
   nonnegative solution $u^\eps\in \rmC^1([0,\infty);L^1(V,\pi))$ of the Cauchy problem
   \eqref{eq:119-Cauchy} \OLI induced by $\rmG_\eps$ \GGGO with initial datum $u_0$ and 
   the same total mass. \OLI Henceforth, we set $\rho_t^\eps = u_t^\eps\pi$ for all $t\ge 0$. \GGGO

   Setting $\OLI 2 \GGGO\bj_t^\eps(\dd x,\dd y):=w_t^\eps(x,y) \tetapi(\dd x,\dd y) $,
   where $w_t^\eps(x,y):=\rmG_\eps(x,y;u_t(x),u_t(y))$,
   it is also easy to check that
   $(\rho^\eps,\bj^\eps)\in \CER 0T$,
   since $g_\eps(u,v)\le \upalpha(u,v)$ and \OLI
   \begin{displaymath}
     |w_t^\eps(x,y)|\le |\xi| \upalpha(u_t^\eps(x),u_t^\eps(y))\nchi_{U_\eps(t)}(x,y)\qquad \text{for $(x,y)\in E$}\,,
   \end{displaymath}
   where $U_\eps(t):=\{(x,y)\in \edg: g_\epsilon(u_t^\eps(x),u_t^\eps(y))>0\}$, thereby yielding
   \[
   		\Upsilon(u_t^\eps(x),u_t^\eps(y),w_t^\eps(x,y))\le \Psi(\Xi)\upalpha(2/\eps,2/\eps)\,.
   \]\GGGO
   Finally, 
   recalling \eqref{eq:102} and \eqref{eq:105},
   we get
   \begin{displaymath}
     |\RICKYNEW \rmB_\upphi \EEE(u_t^\eps(x),u_t^\eps(y),w_t^\eps(x,y))|
     \le \Xi  \big(\upphi'(2/\eps)-\OLI \upphi'(\eps/2) \GGGO \big) \upalpha(2/\eps,2/\eps).
   \end{displaymath}
Thus, we can apply Theorem \ref{th:chain-rule-bound}
   obtaining
   \begin{equation}
     \label{eq:167}
     \calS(\rho_0)-\calS(\rho_\tau^\eps)=
     -\OLI\frac{1}{2}\GGGO\int_0^\tau \iint_\edg\rmB_\upphi(u_t^\eps(x),u_t^\eps(y),w_t^\eps(x,y))\,\tetapi(\dd
     x,\dd y)\,\dd t,
   \end{equation}
   and consequently
   \begin{equation}
     \label{eq:168}
     \begin{aligned}
       \nuovo(\rho_0)&\ge \limsup_{\tau\downarrow0}\tau^{-1}
       \Big(\calS(\rho_0)-\calS(\rho_\tau^\eps)-
       \int_0^\tau \calR(\rho_t^\eps,\bj_t^\eps)\,\dd t\Big)
       \\&=
       \OLI\frac{1}{2}\GGGO\iint_\edg
       \Big(\rmB_\upphi(u_0(x),u_0(y),w_0^\eps(x,y))-
       \Upsilon(u_0(x),u_0(y), w_0^\eps(x,y))\Big)\,\tetapi(\dd x,\dd
       y).
     \end{aligned}
   \end{equation}
   
   Let us now set $\Delta_k$ to be the truncation of $\upphi'(u_0(x))-\upphi'(u_0(y))$ to $[-k,k]$, i.e. \OLI
   \[
   	\Delta_k(x,y):=\max\Bigl\{-k,\min\bigl[k,
   \upphi'(u_0(x))-\upphi'(u_0(y))\bigr]\Bigr\}\,,
   \]
   and $\xi_k(x,y):= (\Psi^*)'(\Delta_k(x,y))$ for each $k\in\N$. Notice that $\xi_k$ is a bounded measurable skew-symmetric map satisfying $|\xi_k(x,y)|\le k$ for every $(x,y)\in E$ and $k\in\N$. Therefore, inequality \eqref{eq:168} holds for $w_0^\eps(x,y) = \xi_k(x,y)\,g_\eps(u_0(x),u_0(y))$, $(x,y)\in E$. We then observe from Lemma~\ref{le:trivial-but-useful}\ref{le:trivial-but-useful:ineq} that
   \begin{equation}
     \label{eq:171}
     \begin{aligned}
     (\upphi'(u_0(x))-\upphi'(u_0(y)))\cdot \xi_k(x,y)&\ge
         \Delta_k(x,y)\xi_k(x,y) \\
         &= \Psi(\xi_k(x,y))+\Psi^*(\Delta_k(x,y))\,,
     \end{aligned}
   \end{equation}
   and from $g_\epsilon(u,v)\le \upalpha(u,v)$ that \GGGO
   \begin{equation}
     \label{eq:172}
     \begin{aligned}
     \Upsilon(u_0(x),u_0(y), w_0^\eps(x,y))
     &=\Psi\left(\frac{\xi_k(x,y) g_\eps(u_0(x),u_0(y))}{\upalpha(u_0(x),u_0(y))}\right)
     \upalpha(u_0(x),u_0(y)) \\
     &\le \Psi(\xi_k(x,y))\upalpha(u_0(x),u_0(y))\,.
     \end{aligned}
   \end{equation}
   Substituting these bounds in \eqref{eq:168} and passing to the
   limit
   as $\eps\downarrow0$ we obtain
   \begin{equation}
     \label{eq:173}
     \nuovo(\rho)\ge \OLI\frac{1}{2}\GGGO\iint_\edg \Psi^*(\Delta_k(x,y))\upalpha(u_0(x),u_0(y))\,
     \tetapi(\dd x,\dd y)\,.
   \end{equation}
   We eventually let $k\uparrow\infty$ and obtain
    \eqref{ineq:nuovo-FI1}. \EEE
   \end{proof}
 In the next proposition we finally bound  $\nuovo$ from below by the Fisher information,  by relying on the existence of a solution to 
 the $(\calS,\calR,\calR^*)$ system, as shown in Section~\ref{s:ex-sg}. \EEE
\begin{prop}
\label{p:slope-geq-Fish}
Let us suppose that for $\rho\in D(\calS)$
there exists a solution to the $(\calS,\calR,\calR^*)$ system.
Then the generalized slope bounds the Fisher information from above: 
\begin{equation}
	\label{ineq:nuovo-FI}
	\nuovo (\rho) \geq \Fish (\rho) \quad \text{for all } \rho \in  \rmD(\calS). 
	\end{equation}
\end{prop}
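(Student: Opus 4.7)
The plan is to evaluate the $\limsup$ in the definition \eqref{def:nuovo} of $\nuovo(\rho)$ using the assumed solution as the test curve, exploiting the fact that the solution realizes the infimum in the dissipation bound $\DVTn \leq \int\calR$ while simultaneously satisfying the Energy-Dissipation balance \eqref{R-Rstar-balance}.

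More concretely, let $(\rho_t,\bj_t)_{t\in[0,T]}$ denote a solution starting from $\rho_0=\rho$. Since the restriction of $(\rho,\bj)$ to $[0,\tau]$ lies in $\CE 0\tau$ with endpoints $\rho_0$ and $\rho_\tau$, the definition \eqref{def-psi-rig} of the cost immediately gives $\DVTn(\tau,\rho_0,\rho_\tau)\le \int_0^\tau \calR(\rho_t,\bj_t)\,\dd t$. Combining this with \eqref{R-Rstar-balance} written between $0$ and $\tau$ yields
\[
\calS(\rho_0)-\DVTn(\tau,\rho_0,\rho_\tau)-\calS(\rho_\tau)\;\ge\;\int_0^\tau \Fish(\rho_t)\,\dd t.
\]
Using the admissible choice $\mu=\rho_\tau$ in the supremum defining $\genn{r}$, dividing by $\tau$, and taking $\limsup_{\tau\downarrow 0}$ would then give
\[
\nuovo(\rho_0)\;\ge\;\limsup_{\tau\downarrow 0}\frac{1}{\tau}\int_0^\tau \Fish(\rho_t)\,\dd t.
\]

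The remaining step, and the main obstacle, is to verify that this $\limsup$ bounds $\Fish(\rho_0)$ from below. The idea is to combine several facts already available: first, by Corollary \ref{c:narrow-ct} the solution satisfies $\rho_t\to\rho_0$ in total variation as $t\downarrow 0$, hence $u_t\to u_0$ in $L^1(V,\pi)$, and (using \eqref{bound-unif-kappa}) $u_t^\pm\to u_0^\pm$ in $L^1(\edg,\tetapi)$ as well. Second, along any solution Theorem \ref{thm:characterization}\ref{thm:characterization-finite-F0} forces $\rmD_\upphi(u_t(x),u_t(y))=\rmD_\upphi^-(u_t(x),u_t(y))$ for $(\lambda\otimes\tetapi)$-a.e.\ $(t,x,y)$, so for a.e.\ $t\in(0,T]$ the Fisher information can be written with the lower semicontinuous integrand $\rmD_\upphi^-$ of Lemma \ref{le:trivial-but-useful}. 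A double Fatou argument — first in $(x,y)\in\edg$ using lower semicontinuity of $\rmD_\upphi^-$, then in $t$ — then yields $\liminf_{\tau\downarrow 0}\tfrac1\tau\int_0^\tau \Fish(\rho_t)\,\dd t \ge \tfrac12\iint_\edg \rmD_\upphi^-(u_0^-,u_0^+)\,\dd\tetapi$. The delicate final step is to upgrade this lower bound from $\rmD_\upphi^-$ to $\rmD_\upphi$; this is precisely the gap between Theorem \ref{th:slope-Fish} and the full estimate $\nuovo\ge\Fish$, and it has to be closed using the existence of the solution itself — either by observing that the existence of a finite-action solution issuing from $\rho_0$ forces the initial densities to lie (on edges of positive $\tetapi$-mass) in the region where $\rmD_\upphi=\rmD_\upphi^-$, or by an approximation argument in which $u_0$ is perturbed into the strict positivity regime where $\rmD_\upphi^+\equiv\rmD_\upphi^-$ so that the lsc envelope identification from \eqref{subeq:D} can be exploited.
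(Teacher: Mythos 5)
Your first two steps match the paper's proof exactly: take the solution issuing from $\rho_0$, use the bound $\DVTn(\tau,\rho_0,\rho_\tau)\le\int_0^\tau\calR$ together with the Energy-Dissipation balance, and reduce to showing $\liminf_{\tau\downarrow 0}\tfrac1\tau\int_0^\tau\Fish(\rho_t)\,\dd t\ge\Fish(\rho_0)$.

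The final step, however, has a self-inflicted gap. You switch to the integrand $\rmD_\upphi^-$ (via Theorem~\ref{thm:characterization}\ref{thm:characterization-finite-F0}), apply Fatou, and conclude a lower bound by $\tfrac12\iint\rmD_\upphi^-(u_0^-,u_0^+)\,\dd\tetapi$. But since $\rmD_\upphi^-\le\rmD_\upphi$, this is \emph{weaker} than $\Fish(\rho_0)$, and you then need the reverse inequality $\rmD_\upphi^-(u_0^-,u_0^+)\ge\rmD_\upphi(u_0^-,u_0^+)$ $\tetapi$-a.e., which is not generally true and is exactly what you cannot close. Neither of your two suggested fixes is needed, because the detour through $\rmD_\upphi^-$ is unnecessary. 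The function $\rmD_\upphi$ is \emph{itself} lower semicontinuous on $\R_+^2$ — by construction, as the lower semicontinuous envelope of $\rmD_\upphi^+$ in \eqref{eq:182}, and this is also recorded explicitly in Lemma~\ref{le:trivial-but-useful}(2). So you may apply your Fatou argument directly with $\rmD_\upphi$: since $u_t\to u_0$ in $L^1(V,\pi)$ (hence $(u_t^-,u_t^+)\to(u_0^-,u_0^+)$ in $L^1(\edg,\tetapi)$), lower semicontinuity of $\rmD_\upphi$ plus Fatou gives $\liminf_{t\downarrow 0}\Fish(\rho_t)\ge\Fish(\rho_0)$, and a further Fatou in $t$ (the paper phrases this via the change of variables $\int_0^1\Fish(\rho_{ts})\,\dd s$) finishes the proof. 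Note this uses only strong $L^1$ lower semicontinuity of $\Fish$, which — as the proof of Proposition~\ref{PROP:lsc} shows — holds unconditionally; the convexity or atomicity hypotheses in the \emph{statement} of Proposition~\ref{PROP:lsc} are only needed for the weak/setwise version, which is not what is being invoked here.
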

   \begin{proof}
     Let $\rho_t = u_t \pi$
     be a solution to the $(\calS,\calR,\calR^*)$ system
     with initial datum $\rho_0\in \rmD(\calS)$. Then, we can find \OLI a family $(\bj_t)_{t\ge 0}\in
     \calM(\edg)$ \EEE such that 
     $(\rho,\bj )\in \CE0\pinfty$ and 
 \[
 \calS(\rho_t) +\int_0^t \bigl[\calR(\rho_r,\bj_r)+\Fish(\rho_r)\bigr] \,\dd r = \calS(\rho_0)\qquad \text{for all }t\geq0.
 \]
 Therefore 
 \begin{align*}
 \nuovo(\rho_0) &\geq \liminf_{t\downarrow 0} 
 	\frac1t \Bigl[ \calS(\rho_0) - \calS(\rho_t) - \DVT t{\rho_0}{\rho_t}\Bigr]\\
 &\geq \liminf_{t\downarrow 0} 
 	\frac1t \Bigl[ \calS(\rho_0) - \calS(\rho_t) - \int_0^t \calR(\rho_r,\bj_r) \,\dd r\Bigr]
 = \liminf_{t\downarrow 0} 
 	\frac1t \int_0^t \Fish(\rho_r)\, \dd r\,.
 \end{align*}
 Since $u_t\to u_0$ in $L^1(V;\pi)$
 as $t\to0$ and since $\Fish$ is lower semicontinuous with respect to
 $L^1(V,\pi)$-convergence (\OLI see the proof of \EEE Proposition~\ref{PROP:lsc}),
  with a change of variables  we find
 \[
 \nuovo(\rho_0) \geq \liminf_{t\downarrow 0} \int_0^1 \Fish(\rho_{ts})\, \dd s
 \geq \Fish(\rho_0).
 \qedhere
 \]
 \end{proof}
 
 We then easily get   the desired lower bound for $\nuovorel$ in terms of $\Fish$, under the condition that the latter functional is lower semicontinuous (recall that Proposition \ref{PROP:lsc} provides
 sufficient conditions for the lower semicontinuity of $\Fish$): \EEE
 \begin{cor}
 \label{cor:cor-crucial}
 Let us suppose that \textbf{Assumptions~\ref{ass:V-and-kappa},
   \ref{ass:Psi}, \ref{ass:S}}
 hold and that $\Fish$ is lower semicontinuous
 with respect to ~setwise convergence. Then
 	\begin{equation}
 	\label{desired-rel}
 	\nuovorel (\rho) \geq  \Fish (\rho)
        \quad \text{for all } \rho \in \mathrm{D}(\calS). 	\end{equation}
 \end{cor}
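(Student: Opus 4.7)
The plan is to reduce Corollary~\ref{cor:cor-crucial} to a direct combination of three previously established facts: Proposition~\ref{p:slope-geq-Fish} (giving $\nuovo\geq \Fish$ whenever a solution exists), the unconditional existence statement Theorem~\ref{thm:existence-stability}\ref{thm:existence-stability:p1} for solutions starting from any $\rho\in \mathrm{D}(\calS)$, and the hypothesized setwise lower semicontinuity of $\Fish$.

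Concretely, I would fix $\rho\in \mathrm{D}(\calS)$ and take an arbitrary sequence $(\rho_n)_n\subset \mathrm{D}(\calS)$ with $\rho_n \weaksigmatoabs \rho$ and $\sup_n\calS(\rho_n)<\infty$ (if no such sequence exists, $\nuovorel(\rho)=+\infty$ and there is nothing to prove). By Theorem~\ref{thm:existence-stability}\ref{thm:existence-stability:p1} each $\rho_n$ is the initial datum of a solution to the $(\calS,\calR,\calR^*)$ system, so Proposition~\ref{p:slope-geq-Fish} gives $\nuovo(\rho_n)\ge \Fish(\rho_n)$ for every $n$. Taking the liminf and invoking the assumed setwise lower semicontinuity of $\Fish$ yields
\begin{equation*}
\liminf_{n\to\infty}\nuovo(\rho_n)\;\ge\;\liminf_{n\to\infty}\Fish(\rho_n)\;\ge\;\Fish(\rho),
\end{equation*}
and passing to the infimum over all such sequences gives $\nuovorel(\rho)\ge \Fish(\rho)$, which is the claim.

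The only substantive point to check, and the one I would emphasize in the write-up, is the absence of circularity in the chain of implications. Proposition~\ref{p:slope-geq-Fish} requires an \emph{a priori} existing solution, so we must not invoke the minimizing-movement construction of Theorem~\ref{thm:construction-MM} (which itself relies on Corollary~\ref{cor:cor-crucial}) to obtain it. This is fine, because Theorem~\ref{thm:existence-stability}\ref{thm:existence-stability:p1} is proved through the independent semigroup approach of Section~\ref{s:ex-sg} — first for densities valued in a compact subinterval of $(0,\infty)$ via Theorem~\ref{thm:sg-sol-is-var-sol}, and then for the general case by truncation together with the stability parts~\ref{thm:existence-stability:p2}--\ref{thm:existence-stability:p3}, both of which rest only on Proposition~\ref{prop:compactness}, Proposition~\ref{PROP:lsc}, and the chain-rule inequality of Corollary~\ref{cor:CH3}, none of which depend on Corollary~\ref{cor:cor-crucial}.
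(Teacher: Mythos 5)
Your proof is correct and fills in exactly the argument that the paper leaves implicit (the corollary is stated without a written proof, preceded only by the sentence ``We then easily get the desired lower bound...''). The chain Theorem~\ref{thm:existence-stability}(1) $\Rightarrow$ Proposition~\ref{p:slope-geq-Fish} $\Rightarrow$ lower semicontinuity of $\Fish$ $\Rightarrow$ relaxation is the intended route, and your remark on the absence of circularity (that existence comes from the semigroup construction of Section~\ref{s:ex-sg} and the stability clauses of Theorem~\ref{thm:existence-stability}, not from the minimizing-movement Theorem~\ref{thm:construction-MM} which itself uses this corollary) is precisely the point worth checking and is correctly checked.
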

  \noindent
 
 \begin{remark}
 \label{rmk:Mark}
 The combination of Theorem \ref{th:slope-Fish}, Proposition \ref{p:slope-geq-Fish}, and Corollary~\ref{cor:cor-crucial} illustrates why we introduced both $\rmD_\upphi$ and $\rmD^-_\upphi$. For the duration of this remark, consider both the functional $\Fish$ that is defined in~\eqref{eq:def:D} in terms of $\rmD_\upphi$, and a corresponding functional ~$\Fish^-$ defined in terms of the function $\rmD_\upphi^-$:
 \[
 	\Fish^- (\rho) :=
        \displaystyle \frac12\iint_\edg \OrmD^-_\upphi\bigl(u(x),u(y)\bigr)\,
        \tetapi(\dd x\,\dd y)
        \qquad   \text{for } 
		\rho = u\pi\,.
\]
In the two guiding cases of Example~\ref{ex:Dpm}, $\rmD_\upphi$ is convex and lower semicontinuous, but $\rmD_\upphi^-$ is only lower semicontinuous. As a result, $\Fish$ is lower semicontinuous with respect to setwise convergence, but $\Fish^-$ is not  (indeed, \EEE consider e.g.\ a sequence $\rho_n$ converging setwise to $\rho$, with $\dd\rho_n/\dd\pi$ given by characteristic functions of some sets $A_n$, where the sets $A_n$ are chosen such that for the limit the density $\dd\rho/\dd\pi$ is strictly positive and non-constant; then $\Fish^-(\rho_n)=0$ for all $n$ while $\Fish^-(\rho)>0$).
Setwise lower semicontinuity of $\Fish$ is important for two reasons: first, this is required for stability of solutions of the Energy-Dissipation balance under convergence in some parameter (evolutionary $\Gamma$-convergence), which is a hallmark of a good variational formulation; and secondly, the proof of existence using the   Minimizing-Movement approach \EEE  requires the bound~\eqref{desired-rel}, for which $\Fish$ also needs to be lower semicontinuous. This explains the importance of $\rmD_\upphi$, and it also explains why we  defined the Fisher information $\Fish$ in terms of  $\rmD_\upphi$ and not in terms of $\rmD_\upphi^-$.

On the other hand, $\rmD_\upphi^-$ is  straightforward to determine, and in addition the weaker control of $\rmD_\upphi^-$ is still sufficient for the chain rule: it is $\rmD_\upphi^-$ that appears on the right-hand side of~\eqref{eq:CR2}. Note that if $\rmD_\upphi^-$ itself is convex, then it coincides with $\rmD_\upphi$.
 \end{remark}

\appendix

\section{Continuity equation}
\label{appendix:proofs}
 In this Section  we
complete the analysis of the continuity equation by  carrying out the proofs of Lemma \ref{l:cont-repr} and Corollary
\ref{c:narrow-ct}. 
\begin{proof}[Proof of Lemma \ref{l:cont-repr}]
 The distributional identity \EEE  \eqref{eq:90} yields that
  for every $\zeta\in \Cb(V,\tau)$ 
the map
\[
  t\mapsto \rho_t(\zeta): =  \int_V \zeta(x) \rho_t (\dd x )\quad\text{belongs to $W^{1,1}(a,b)$},
\]
with distributional derivative 
\begin{equation}
  \frac{\dd}{\dd t}\rho_t(\zeta) =  \iint_{\edg} \dnabla\zeta\,\dd \bj_t=
  -\int_V \zeta\,\dd\odiv \bj_t
\quad\text{for almost all $ t \in [a,b]$.}\label{eq:13}
\end{equation}
Hence, setting $\mathfrak d_t:=|\odiv \bj_t|\in \calM^+(V)$, we have
\begin{equation}
\label{distributional-derivative}
 \left| \frac{\dd}{\dd t}\rho_t(\zeta)\right| \leq
\int_V |\zeta| \,\dd \mathfrak d_t\le 
\|\zeta\|_{\Cb(V)}|\odiv \bj_t|(V)\le
2\|\zeta\|_{\Cb(V)}| \bj_t|(\edg),
\end{equation}
where we used the fact that
\begin{equation*}
  \label{eq:53}
  \mathfrak d_t=|\sfx_\sharp (\bj_t-\sfs_\sharp \bj_t)|
  =
  |\sfx_\sharp \bj_t-\sfy_\sharp \bj_t|\le
  |  \sfx_\sharp \bj_t|+
  |\sfy_\sharp\bj_t|
\end{equation*}
which implies
\[
\mathfrak d_t (V) \le 
  2|\bj_t|(E).
\]
\nc
Hence, the set $L_\zeta$ of the Lebesgue points of $t\mapsto
\rho_t(\zeta)$ has full Lebesgue measure.
Choosing $\zeta\equiv 1$ one immediately recognizes that $\rho_t(V)$ is
(essentially) constant: it is not restrictive to normalize it to $1$ for convenience.
Let us now consider a countable set  \RICKYNEW $Z=\{\zeta_k\}_{k\in \N}$ \EEE of
uniformly bounded functions in $\Cb(V)$
such that
\begin{displaymath}
  |\zeta_k|\le 1,\quad
  \mathsf d(\mu,\nu):=\sum_{k=1}^\infty 2^{-k}\Big|\int_V \zeta_k\,\dd(\mu-\nu)\Big|
\end{displaymath}
is a distance inducing the weak topology in 
 $ \calM^+(V) $ \EEE
(see e.g.~\cite[\S\,5.1.1]{AmbrosioGigliSavare08}).
By introducing the set $L_Z := \bigcap_{\zeta \in Z} L_\zeta$,
it follows from \eqref{distributional-derivative}
that
\begin{equation}
  \label{eq:10}
  \mathsf d(\rho_s,\rho_t)\le 2 \int_s^t |\bj_r|(\edg)\, \dd r
\end{equation}
showing that the restriction of $\rho$ to $L_Z$ is
continuous in  $\Dom(V)$.  \EEE
Estimate~\eqref{distributional-derivative} also shows that for all $s,t\in L_Z$ with $s \leq t$ we have
\begin{equation}
  |\rho_t(\zeta) {-} \rho_s(\zeta) | \leq
  \int_s^t \int_V |\zeta|\,\dd\mathfrak d_r\,\dd r\le 2\| \zeta\|_{\Cb(V)}
  \int_s^t |\bj_r|(\edg)\, \dd r \qquad
 \text{for all } \zeta \in \Cb(V).
 \label{eq:11}
\end{equation}
Taking the supremum with respect to ~$\zeta$ we obtain
\begin{equation}
  \label{eq:12}
  \|\rho_t-\rho_s\|_{TV}\le 2
  \int_s^t |\bj_r|(\edg)\, \dd r \qquad
  \text{ and  all } s,t\in L_Z,\ s \leq t,
\end{equation}
 which shows that 
 the measures $(\rho_t)_{t\in L_Z}$ are uniformly continuous
 with respect to the total variation metric in  $\Dom(V)$ \EEE
 and thus can be extended to an absolutely continuous curve $\tilde\rho\in
 \mathrm{AC}(I;\Dom(V))$ \EEE
 satisfying
 \eqref{eq:12} for every $s,t\in I$.

 When $\varphi\in \Cb(V)$, \eqref{2ndfundthm}  immediately follows from
  \eqref{eq:13}. By a standard argument based on
  the functional monotone class Theorem \cite[\S 2.12]{Bogachev07}
  we can extend the validity of
  \eqref{2ndfundthm} to every bounded Borel function.

  If $\varphi\in \mathrm C^1([a,b];\Bb(V))$,
  combining \eqref{eq:13} and the fact that
  the map $t\mapsto \int_V \varphi(t,x)\,\tilde\rho_t(\dd x)$
  is absolutely continuous we easily get \eqref{maybe-useful}. 
\end{proof}

\begin{proof}[Proof of Corollary \ref{c:narrow-ct}]
Keeping the same notation of the previous proof,  
if we define
  \begin{displaymath}
    \gamma:=\rho_0+\int_0^T \mathfrak d_t \,\dd t
  \end{displaymath}
  then the estimate \eqref{distributional-derivative} shows that
  \begin{displaymath}
    \rho_t(B)\le \gamma(B)\quad\text{for every }B\in \frB,
  \end{displaymath}
  thus showing that $\rho_t=\tilde u_t\gamma$ for every $t\in [0,T]$
  and
  \begin{equation}
    \label{eq:54}
    \|\rho_t-\rho_s\|_{TV}=\int_V |\tilde u_t-\tilde
    u_s|\,\dd\gamma\le
    2\int_s^t |\bj_r|(\edg)\,\dd r
    \quad\text{for every }0\le s<t\le T.
  \end{equation}
\end{proof}
 We conclude with  a result on the decomposition of 
the measure   \RICKYNEW $  \bj -\symmap \bj = 2\tj $ \EEE into its positive and negative part. 
\begin{lemma}
  \label{le:A1}
  If $\bj \in \calM(\edg)$ and we set
  \begin{equation}
    \label{eq:14}
   \RICKYNEW    \bj^+:=(   \bj {-}\symmap \bj )_+,\quad
    \bj^-:=(   \bj {-}\symmap \bj )_-,  \EEE
  \end{equation}
  then we have
  \begin{equation}
    \label{eq:15}
    \bj^-=\symmap \bj^+,\quad
  \RICKYNEW  \odiv \bj^+= \odiv \bj. \EEE
  \end{equation}
  When $\bj$ is skew-symmetric, we also have
  \begin{equation}
    \label{eq:16}
   \RICKYNEW  \bj^+=2\bj_+,\quad \bj^-=-2\bj_-. \EEE
  \end{equation}
\end{lemma}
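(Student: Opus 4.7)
The plan is to introduce the auxiliary measure $\mu := \bj - \symmap \bj \in \calM(\edg)$ and observe first that $\symmap$ is an involution, so $\symmap \mu = \symmap \bj - \bj = -\mu$. In other words, $\mu$ is anti-symmetric with respect to the swap map. This single observation drives both identities.

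For the first identity $\bj^- = \symmap \bj^+$, I would write the Jordan decomposition $\mu = \mu_+ - \mu_-$ (so by definition $\bj^+ = \mu_+$ and $\bj^- = \mu_-$), and note that since $\symmap$ is a measurable bijection, the measures $\symmap \mu_+$ and $\symmap \mu_-$ are nonnegative and mutually singular (mutual singularity is preserved because a Hahn set $P$ for $\mu$ is sent by $\symmap^{-1}=\symmap$ to a Hahn set for $\symmap \mu$). Hence $\symmap \mu = \symmap \mu_+ - \symmap \mu_-$ is \emph{the} Jordan decomposition of $\symmap \mu$. On the other hand $\symmap \mu = -\mu = \mu_- - \mu_+$, which is also a Jordan decomposition of the same measure. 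By uniqueness of the Jordan decomposition, $\symmap \mu_+ = \mu_-$, i.e.\ $\symmap \bj^+ = \bj^-$.

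For the divergence identity $\odiv \bj^+ = \odiv \bj$, I would use the defining formula \eqref{eq:def:div}, which can be rewritten as $\odiv \bnu = \sfx_\sharp \bnu - \sfx_\sharp (\symmap \bnu) = \sfx_\sharp(\bnu - \symmap \bnu)$ for any $\bnu \in \calM(\edg)$. Applying this to $\bj^+$ and using the first identity just proved,
\[
\odiv \bj^+ = \sfx_\sharp(\bj^+ - \symmap \bj^+) = \sfx_\sharp(\bj^+ - \bj^-) = \sfx_\sharp \mu = \odiv \bj,
\]
the last equality again by the formula applied to $\bj$ itself.

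For \eqref{eq:16}, skew-symmetry of $\bj$ means $\symmap \bj = -\bj$, so $\mu = \bj - \symmap \bj = 2\bj$. Uniqueness of the Jordan decomposition then gives $\bj^+ = (2\bj)_+ = 2\bj_+$ and analogously for the negative part, which is immediate. There is no real obstacle here; the only point of care is invoking uniqueness of the Jordan decomposition twice, and checking that pushforward under $\symmap$ preserves mutual singularity—both are standard measure-theoretic facts.
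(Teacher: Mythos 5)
Your proposal is correct and follows essentially the same route as the paper's proof: introduce the skew-symmetric measure (the paper uses $\tj=\tfrac12(\bj-\symmap\bj)$, you use $\mu=\bj-\symmap\bj=2\tj$, a harmless reparametrization), observe that $\symmap$ maps it to its negative, invoke uniqueness of the Jordan decomposition to identify $\symmap\bj^+=\bj^-$, and then read off $\odiv\bj^+=\odiv\bj$ from the identity $\odiv\bnu=\sfx_\sharp(\bnu-\symmap\bnu)$. If anything, you spell out more carefully why pushforward under $\symmap$ preserves mutual singularity (the Hahn-set transport), a step the paper simply asserts. One small note: when you say the negative-part identity in \eqref{eq:16} is ``immediate,'' what your argument produces is $\bj^-=(2\bj)_-=2\bj_-$, which is consistent with the paper's own proof (``$\bj^-=2\tj_-$''); the sign in the statement of \eqref{eq:16} looks like a typo.
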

\begin{proof} 
  \RICKYNEW By definition, we have 
  $\bj^+=2\tj_+$,
  $\bj^-=2\tj_-$. 
Furthermore, $ \tj =-\symmap \tj =\symmap \tj _--\symmap \tj_+$,
where the first equality follows from the fact that $\tj$ is skew-symmetric. 
Since $\symmap \tj_-\perp \symmap \tj_+$ we deduce that
  $\symmap \tj_+=\tj_-,$ $\symmap \tj_-=\tj_+$ and $\tj=\tj_+-\symmap \tj_+$, so that 
$\odiv \bj = \odiv \tj =2\odiv \tj_+=\odiv \bj^+$. 
\end{proof}
\EEE

\section{Slowly increasing superlinear entropies}
 The main result of this Section is Lemma \ref{le:slowly-increasing-entropy} ahead, invoked in the proof of Proposition 
\ref{prop:compactness}. It provides the construction of a \emph{smooth} function estimating the entropy  density $\upphi$ from below
and such that 
the function $(r,s) \mapsto \Psi^*(A_\upomega(r,s)) \alpha(r,s)$ fulfills a suitable bound,
 cf.\ \eqref{eq:41} ahead. Prior to that, we prove the preliminary
 Lemmas \ref{le:alpha-behaviour} and \ref{le:sub} below. \EEE
\begin{lemma}
  \label{le:alpha-behaviour}
  Let us suppose that $\upalpha$ satisfies Assumptions
  \ref{ass:Psi}. Then for every $a\ge0$
  \begin{equation}
    \label{eq:40}
    \lim_{r\to\pinfty}\frac{\upalpha(r,a)}r=
    \lim_{r\to\pinfty}\frac{\upalpha(a,r)}r=0.
  \end{equation}
\end{lemma}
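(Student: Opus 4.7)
The plan is to combine the Fenchel-dual representation of the recession function with the vanishing boundary condition~\eqref{alpha-0}. By the symmetry property~\eqref{alpha-symm} the two limits in~\eqref{eq:40} coincide, so it suffices to analyze $\upalpha(r,a)/r$ as $r \to \infty$ for an arbitrary fixed $a \ge 0$.

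The key observation is that evaluating the formula~\eqref{form-4-alpha-infty} at $z = (1,0)$, together with the hypothesis $\upalpha^\infty(1,0) = 0$ coming from~\eqref{alpha-0}, yields
\begin{equation*}
\inf_{(p_1,p_2) \in D(\upalpha_*)} p_1 \;=\; \upalpha^\infty(1,0) \;=\; 0.
\end{equation*}
Hence, for every $\varepsilon > 0$ I can choose $(p_1,p_2) \in D(\upalpha_*)$ with $p_1 \le \varepsilon$, and the affine upper bound~\eqref{eq:8} then reads
\begin{equation*}
\frac{\upalpha(r,a)}{r} \;\le\; p_1 + \frac{p_2\, a - \upalpha_*(p_1,p_2)}{r}, \qquad r > 0.
\end{equation*}
Since $\upalpha_*(p_1,p_2)$ is finite by the very definition of $D(\upalpha_*)$, sending $r \to \infty$ with $(p_1,p_2)$ fixed yields $\limsup_{r \to \infty} \upalpha(r,a)/r \le p_1 \le \varepsilon$.

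By the arbitrariness of $\varepsilon > 0$ and the nonnegativity of $\upalpha$, the limit exists and equals zero; the companion limit involving $\upalpha(a,r)/r$ then follows either by the symmetry~\eqref{alpha-symm} or by applying the identical argument to the direction $(0,1)$. Since the whole argument only invokes the already established duality formulas~\eqref{eq:7}--\eqref{form-4-alpha-infty}, I do not anticipate any substantial technical obstacle.
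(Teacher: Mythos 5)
Your proof is correct, and it takes a genuinely different route from the paper. The paper's argument works directly with concavity: it first notes that $r \mapsto r^{-1}\bigl(\upalpha(r,a)-\upalpha(0,a)\bigr)$ is decreasing, so the limit $L(a)$ exists; it then derives $L(a) \le L(a/\lambda)$ for $\lambda\ge 1$ from the elementary concavity inequality $\upalpha(r,a)\le\lambda\,\upalpha(r/\lambda,a/\lambda)$, and finally sends $b:=a/\lambda \downarrow 0$ to reduce to $L(0)=\upalpha^\infty(1,0)=0$. Your argument instead invokes the Fenchel-dual representation~\eqref{form-4-alpha-infty} of $\upalpha^\infty$ and the affine majorant~\eqref{eq:8}: the condition $\upalpha^\infty(1,0)=0$ translates into the existence of $(p_1,p_2)\in D(\upalpha_*)$ with $p_1$ arbitrarily small, and the resulting bound $\upalpha(r,a)\le p_1 r + p_2 a - \upalpha_*(p_1,p_2)$ is uniform in $a$ (up to a bounded offset), so the conclusion is immediate after dividing by $r$. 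Your route is somewhat cleaner because it does not need to first establish existence of the limit (the $\limsup$ bound combined with $\upalpha\ge 0$ already pins it down) and it avoids the scaling argument transferring from $(r,0)$ to $(r,a)$; the cost is reliance on the dual machinery in~\eqref{eq:4}--\eqref{form-4-alpha-infty}, which however the paper has already set up, so nothing is lost. Both proofs bottom out in the same fact, $\upalpha^\infty(1,0)=0$, supplied by~\eqref{alpha-0}.
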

\begin{proof}
  Since $\upalpha$ is symmetric it is sufficient to prove the first
  limit.  
  Let us first observe that the concavity of $\upalpha$ yields
  the existence of the limit since the map $r\mapsto
  r^{-1}(\upalpha(r,a)-\upalpha(0,a))$ is decreasing, so that
  \begin{displaymath}
    \lim_{r\to\pinfty}\frac{\upalpha(r,a)}r=
    \lim_{r\to\pinfty}\frac{\upalpha(r,a)-\upalpha(0,a)}r=
    \inf_{r>0}\frac{\upalpha(r,a)-\upalpha(0,a)}r.
  \end{displaymath}
  Let us call $L(a)\in \R_+$ the above quantity. 
  The inequality (following by the concavity of $\upalpha$ and the fact
  that $\upalpha(0,0)\ge0$)
  \begin{equation}
    \label{eq:concave-elementary}
    \upalpha(r,a)\le \lambda\upalpha(r/\lambda,a/\lambda)\quad\text{for
      every }\lambda\ge1
  \end{equation}
  yields
  \begin{equation}
    \label{eq:32}
    L(a)=\lim_{r\to\pinfty}\frac{\upalpha(r,a)}r\le 
    \lim_{r\to\pinfty}\frac{\upalpha(r/\lambda,a/\lambda)}{r/\lambda}=
    L(a/\lambda) \quad\text{for
      every }\lambda\ge1.
  \end{equation}
  For every $b\in (0,a)$ and $r>0$, setting $\lambda:=a/b>1$, we thus obtain
  \begin{displaymath}
    L(a)\le L(b)\le \frac{\upalpha(r,b)-\upalpha(0,b)}r    
  \end{displaymath}
  Passing first to the limit as $b\downarrow0$ and using the
  continuity of $\upalpha$ we get
  \begin{displaymath}
    L(a)\le \frac{\upalpha(r,0)-\upalpha(0,0)}r    \quad\text{for every $r>0$}.
  \end{displaymath}
  Eventually, we pass to the limit as $r\uparrow\pinfty$ and we get
  $L(a)\le \upalpha^\infty(1,0)=0$ thanks to \eqref{alpha-0}.
\end{proof}
\begin{lemma}
  \label{le:sub}
  Let $f:\R_+\to \R_+$ be an increasing continuous function
  and $f_0\ge0$ with
  \begin{equation}
    \label{eq:43}
    \lim_{r\to\pinfty}f(r)=\sup f=\pinfty,\qquad
    \liminf_{r\downarrow0}\frac{f(r)-f_0}r\in (0,\pinfty].
  \end{equation}
  Then for every $g_0\in [0,f_0]$
  there exists a $\rmC^\infty$ concave function $g:\R_+\to\R_+$ such that
  \begin{equation}
    \label{eq:35}
    \forall\,r\in \R_+:g(r)\le f(r),\qquad
    g(0)=g_0,\qquad
    \lim_{r\to\pinfty}g(r)=\pinfty.
  \end{equation}
\end{lemma}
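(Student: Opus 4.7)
The proof naturally splits into two stages: first, a piecewise-linear concave ``skeleton'' $h$ with the required properties is built by hand, and then $h$ is smoothed by convolution with a mollifier. I expect no serious obstacle in the second stage; all the difficulty lies in the first, where concavity (decreasing slopes), the upper bound $h\leq f$, the boundary condition $h(0)=g_0$, and divergence $h(r)\to+\infty$ must be reconciled simultaneously.

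For the construction of $h$, I would exploit the slope assumption at $0$ to start: pick $c^*\in (0,\,\liminf_{r\downarrow0}(f(r)-f_0)/r)$ and $\delta^*>0$ with $f(r)\ge f_0+c^*r$ on $[0,\delta^*]$, and set $h(r):=g_0+(c^*/2)r$ on $[0,r_1]$ with $r_1:=\delta^*$; since $g_0\le f_0$, this linear piece lies below $f$. I then define inductively $g_n:=g_0+n$ and choose breakpoints $r_1<r_2<\cdots$ with $r_n\to\infty$ by setting
\[
  r_{n+1}:=\max\bigl(\sigma_{n+1},\; r_n+(r_n-r_{n-1})\bigr),\qquad
  \sigma_{n+1}:=\inf\{r\ge 0: f(r)\ge g_{n+1}\},
\]
which is finite because $f(r)\to\infty$. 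Setting $h$ to be the linear interpolation on $[r_n,r_{n+1}]$, concavity follows because $r_{n+1}-r_n\ge r_n-r_{n-1}$ makes the slopes $c_n=1/(r_{n+1}-r_n)$ non-increasing, while the bound $h\le f$ on $[r_n,r_{n+1}]$ follows from
\[
  h(r)\le h(r_{n+1})=g_{n+1}\le f(r_n)\le f(r),
\]
using that $f$ is non-decreasing and the defining property of $\sigma_{n+1}$. Divergence $r_n\to\infty$ is automatic from $r_n-r_{n-1}\ge r_1-r_0\ge \delta^*>0$, and $h(r_n)=g_0+n\to+\infty$. The one point where care is needed is the compatibility at the first joint $r_1$: one chooses $\delta^*$ (or a smaller positive $c^*$) so that $(c^*/2)\delta^*\ge 1$, which is always possible because one may enlarge $\delta^*$ at the cost of shrinking $c^*$, or equivalently postpone the first unit increment to $r_1$ large.

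For the smoothing stage, I would extend $h$ to all of $\R$ by the affine piece $\tilde h(r):=g_0+(c^*/2)r$ for $r<0$, obtaining a globally concave continuous function $\tilde h$, and define $g:=\tilde h*\rho_\eps$ with $\rho_\eps$ a standard even $C^\infty$ mollifier of support in $[-\eps,\eps]$, for $\eps>0$ smaller than $\min(r_1,2g_0/c^*+1)$ (or just small enough that the affine region around $0$ contains $[-\eps,\eps]$). Then $g\in C^\infty(\R)$; $g$ is concave because convolution of a concave function with a nonnegative kernel preserves concavity; Jensen's inequality together with $\int s\,\rho_\eps(s)\,ds=0$ gives $g(r)\le \tilde h(r)$, hence $g(r)\le h(r)\le f(r)$ on $\R_+$. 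Since $\tilde h$ is affine on $[-\eps,\eps]$, one has $g(0)=\tilde h(0)=g_0$. Positivity $g(r)\ge 0$ and divergence $g(r)\to\infty$ follow from $g(r)\ge \min_{|s|\le\eps}\tilde h(r-s)\ge h(r-\eps)$ on $r\ge\eps$, together with direct computation of $g$ on $[0,\eps]$, where $\tilde h$ is still affine and thus $g(r)=\tilde h(r)\ge 0$.

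The main obstacle, as already noted, is balancing concavity against the upper bound $h\le f$ while still driving $h$ to infinity: concavity forces the slopes $c_n$ to decrease, so each new linear piece must rely on $f$ having ``caught up'' above the target level $g_{n+1}$. The trick is to use the monotonicity of $f$ (so that $h(r)\le g_{n+1}\le f(r_n)\le f(r)$ on the entire interval) together with $f(r)\to\infty$ (to make $\sigma_{n+1}<\infty$), and to freely enlarge $r_{n+1}$ so as to accommodate the concavity constraint $r_{n+1}-r_n\ge r_n-r_{n-1}$; this introduces no conflict with $h\le f$ because the pointwise bound only requires $f$ to be large at the left endpoint.
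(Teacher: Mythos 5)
Your overall strategy---build a piecewise-linear concave skeleton $h$ and then mollify---matches the paper's, and the mollification stage is sound. But the verification of the bound $h\le f$ contains a genuine gap. You assert $g_{n+1}\le f(r_n)$, citing ``the defining property of $\sigma_{n+1}$''; however $\sigma_{n+1}:=\inf\{r\ge 0:f(r)\ge g_{n+1}\}$ together with $r_{n+1}\ge\sigma_{n+1}$ only yields $f(r_{n+1})\ge g_{n+1}$, whereas about the \emph{left} endpoint you can only infer $f(r_n)\ge g_n$ (from $r_n\ge\sigma_n$)---one level short. This deficit is fatal: if $f$ rises from $g_n$ at $r_n$ nearly flat across most of $[r_n,\sigma_{n+1})$ and then climbs to $g_{n+1}$ only near $\sigma_{n+1}$, the linear interpolant $h$ overshoots $f$ in the interior of the interval. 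Your base case does give $f(r_1)\ge f_0+c^*\delta^*\ge g_0+2=g_2$, i.e.~a one-step lead at $n=1$, but the recursion with \emph{absolute} targets does not propagate it: once $r_n\ge\sigma_{n+1}$, you take $r_{n+1}=2r_n-r_{n-1}$, and nothing in the hypotheses forces $f(r_{n+1})\ge g_{n+2}$.

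The paper avoids this by using \emph{relative} increments: it chooses $x_{n+1}:=\min\{x\ge 2x_n-x_{n-1}:\ f(x)\ge f(x_n)+\delta\}$, so that $f(x_{n+1})\ge f(x_n)+\delta$ always holds, and combined with the base case $f(x_1)\ge 2\delta$ (the factor $2$ comes from choosing $\delta:=ax_1$ with $2a$ strictly below the liminf slope, which is why the paper takes $a$ as one third of the liminf) it yields by induction $f(x_n)\ge(n+1)\delta=g(x_{n+1})$---precisely the one-step lead you need so that on $[x_n,x_{n+1}]$ one has $g(x)\le g(x_{n+1})\le f(x_n)\le f(x)$. Replacing your $\sigma_{n+1}$ by $\inf\{r\ge 0:f(r)\ge f(r_n)+\delta\}$ would repair your argument. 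A secondary, more easily fixable issue: your claim that one can always arrange $(c^*/2)\delta^*\ge 1$ is false for $f$ with a slow initial rise (the product $c^*\delta^*$ is bounded above by $f(\delta^*)-f_0$); the paper instead scales the increment, setting $\delta:=ax_1$, and you could likewise take $g_n:=g_0+n\delta$ with $\delta:=(c^*/2)\delta^*$ rather than insisting that the increment equal $1$.
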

\begin{proof}
  By subtracting $f_0$ and $g_0$ from $f$ and $g$, respectively,
  it is not restrictive to assume $f_0=g_0=0$.
  We will use a recursive procedure to construct a concave piecewise-linear function $g$ satisfying
  \eqref{eq:35}; a standard regularization yields a
  $\rmC^\infty$ map.

  We set
  \begin{equation}
    \label{eq:44}
    a:=\frac 13\liminf_{r\downarrow0}\frac{f(r)}r,\quad
    x_1:=\sup\Big\{x\in (0,1]:f(r)\ge 2 ar\text{ for
      every }r\in (0,x]\Big\},
  \end{equation}
  and $\delta:=ax_1.$
  We consider a strictly increasing sequence $(x_n)_{n\in\N}$, $n\in \N$, defined by
  induction starting from $x_0=0$ and $x_1$ as in \eqref{eq:44},
  according to
  \begin{equation}
    \label{eq:25}
    x_{n+1}:=\min\Big\{x\ge 2x_n-x_{n-1}:
    f(x)\ge f(x_n)+\delta\Big\},\quad n\ge 1.
  \end{equation}
  Since $\lim_{r\to\pinfty}f(r)=\pinfty$, the minimizing set in
  \eqref{eq:25} is closed and not empty, so that the algorithm is well
  defined. It yields a sequence $x_n$ satisfying
  \begin{equation}
    \label{eq:33}
    x_{n+1}-x_n\ge x_{n}-x_{n-1},\quad
    x_{n+1}\ge x_n+\delta\quad\text{for every }n\ge0,
  \end{equation}
  so that $(x_n)_{n\in \N}$ is strictly increasing and unbounded,
  and induces a partition $\{0=x_0<x_1<x_1<\cdots<x_n<\cdots\}$ of $\R_+$.
  We can thus consider the piecewise linear function $g:\R_+\to\R_+$
  such that 
  \begin{equation}
    \label{eq:36}
    g(x_n):=n\delta,\quad g((1-t)x_n+t x_{n+1}):=(n+t)\delta\quad
    \text{for every }n\in \N,\ t\in[0,1].
  \end{equation}
We observe that   $g$ is increasing, $\lim_{r\to\pinfty}g(r)=\pinfty$ and it is concave since
  \begin{displaymath}
    \frac{g(x_{n+1})-g(x_n)}{x_{n+1}-x_n}=
    \frac{\delta}{x_{n+1}-x_n}\topref{eq:33}\le
    \frac\delta{x_{n}-x_{n-1}}=\frac{g(x_{n})-g(x_{n-1})}{x_{n}-x_{n-1}}.
  \end{displaymath}
 Furthermore,  $g$ is also dominated by $f$:
  in the interval $[x_0,x_1]$ this follows by \eqref{eq:44}.
  For $x\in [x_n,x_n+1]$ and $n\ge 1$, we observe that
  \eqref{eq:25} yields
  $f(x_{n+1})\ge f(x_n)+\delta$ so that by induction $f(x_n)\ge
  (n+1)\delta$; on the other hand
  \begin{displaymath}
    \text{for every $x\in [x_{n},x_{n+1}]$}:\quad
    g(x)\le g(x_{n+1})=(n+1) \delta\le f(x_n)\le f(x).\qedhere
\end{displaymath}  
\end{proof}
\begin{lemma}
  \label{le:slowly-increasing-entropy}
  Let $\Psi^*,\upalpha$ be satisfying
  Assumptions \ref{ass:Psi} and
  let $\upbeta:\R_+\to\R_+$ be a convex superlinear function with
  $\upbeta'(r)\ge \upbeta_0'>0$ for a.e.~$r\in \R_+$.
  Then, there exists a $\rmC^\infty$ convex superlinear function
  $\upomega:\R_+\to\R_+$ such that
  \begin{equation}
    \label{eq:41}
     \upomega(r)\le \upbeta(r),\qquad
    \Psi^*(\upomega'(s)-\upomega'(r))\upalpha(s,r)\le
    r+s\quad\text{for every }r,s\in \R_+.
  \end{equation}
\end{lemma}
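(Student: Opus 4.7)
The plan is to construct $\upomega$ as a primitive, $\upomega(r) := \int_0^r g(t)\,\dd t$, where $g: \R_+ \to \R_+$ is a nonnegative increasing function with $g(0) = 0$, $g(r) \to +\infty$ as $r \to \infty$, and $g \le \upbeta'$ almost everywhere. Such a $g$ immediately yields a convex superlinear $\upomega$ with $\upomega(0) = 0$; the bound $\upomega \le \upbeta$ follows from $\upomega(r) = \int_0^r g \le \int_0^r \upbeta' \le \upbeta(r) - \upbeta(0) \le \upbeta(r)$, using $\upbeta \ge 0$. The required $C^\infty$ regularity will be achieved at the end by a standard mollification preserving all relevant bounds.

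By the symmetry of $\upalpha$ and the evenness of $\Psi^*$, the target inequality $\Psi^*(g(s)-g(r))\upalpha(s,r) \le r+s$ needs to be verified only for $s \ge r$, and, setting $h := (\Psi^*)^{-1}\restr{[0,\infty)}$ (concave, increasing, sublinear, with $h(0) = 0$ and $h \to \infty$, by Assumption \ref{ass:Psi}), it is equivalent to $g(s) - g(r) \le h\bigl((s+r)/\upalpha(s,r)\bigr)$, with the convention that the right-hand side is $+\infty$ when $\upalpha(s,r) = 0$ (in which case the original inequality is trivial). Since $h$ is concave with $h(0) = 0$, it is subadditive, and it therefore suffices to produce an increasing function $F: \R_+ \to \R_+$ with $F(0) = 0$, $F(r) \to \infty$, and
\[
F(s) - F(r) \le \frac{r+s}{\upalpha(s,r)} \qquad \text{for every } s \ge r \ge 0,
\]
and then to define $g := h \circ F$: indeed $g(s) - g(r) = h(F(s)) - h(F(r)) \le h(F(s) - F(r)) \le h((s+r)/\upalpha(s,r))$ by the monotonicity and subadditivity of $h$.

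The construction of $F$ relies decisively on Lemma \ref{le:alpha-behaviour} together with \eqref{alpha-0}: for every fixed $r \ge 0$ one has $\upalpha(s,r)/s \to 0$ as $s \to \infty$, whence $(s+r)/\upalpha(s,r) \to +\infty$. I build $F$ as a piecewise-affine interpolation along a sparse sequence $0 = r_0 < r_1 < r_2 < \cdots \to \infty$ selected recursively so that, once $r_0, \ldots, r_{m-1}$ have been fixed, $r_m$ is chosen large enough to ensure
\[
(r_n + r_m)/\upalpha(r_m, r_n) \ge m \qquad \text{for all } 0 \le n < m;
\]
this is possible because the finitely many maps $s \mapsto (r_n + s)/\upalpha(s, r_n)$ each diverge to $+\infty$. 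Setting $F(r_n) := n$ and interpolating linearly between nodes produces an increasing continuous $F$ with $F(r_n) = n \to \infty$. The inequality $F(r_m) - F(r_n) = m-n \le m \le (r_n + r_m)/\upalpha(r_m, r_n)$ at the nodes then holds by construction, and its extension to arbitrary $s \ge r$ is obtained by exploiting the monotonicity of $\upalpha$ in each argument together with the piecewise-affine structure of $F$.

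Finally, to enforce $g \le \upbeta'$ I cap by setting $g := (h \circ F) \wedge \upbeta'$, which remains increasing and tends to $+\infty$ (since $\upbeta' \to \infty$ by the superlinearity of $\upbeta$ combined with $\upbeta'\ge \upbeta_0'>0$); Lemma \ref{le:sub} then furnishes a smooth concave minorant retaining all the qualitative properties, from which the $C^\infty$ superlinear $\upomega$ emerges by integration. The main obstacle is the passage from the node-level bound on $F$ to the bound for arbitrary $s \ge r$: this requires a careful case analysis using the monotonicity and concavity of $\upalpha$, and may force the sequence $(r_n)$ to grow geometrically (rather than just fast enough for consecutive-index bounds) to absorb the interpolation slack.
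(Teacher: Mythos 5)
Your reduction is a genuinely different route from the paper's: you exploit the additive subadditivity of $h:=(\Psi^*)^{-1}$ to replace the target estimate by a two-variable constraint $F(s)-F(r)\le (r+s)/\upalpha(s,r)$ on $F$, whereas the paper uses the concavity inequality \eqref{eq:concave-elementary} to pass to the ratio $z=s/r$ and reduces to a \emph{one-variable} inequality $\upomega'(z)\le P(z/\upalpha(z,1))$, $z\ge 1$, together with the \emph{multiplicative} subadditivity $\upomega'(y)-\upomega'(x)\le\upomega'(y/x)$, which it gets for free by setting $\upomega'(x)=g(\log(x\vee 1))$ for a concave $g$.

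The problem is that the two final steps of your argument both destroy exactly the estimate you built. Capping: for $g:=(h\circ F)\wedge\upbeta'$ one only has $g(s)-g(r)\le\max\{(h\circ F)(s)-(h\circ F)(r),\ \upbeta'(s)-\upbeta'(r)\}$, and the second quantity carries no information from $\upalpha$ or $\Psi^*$. Concretely, if the cap is active at $r$ but not at $s$, then $g(s)-g(r)=\bigl[h(F(s))-h(F(r))\bigr]+\bigl[h(F(r))-\upbeta'(r)\bigr]$; the second bracket can be made arbitrarily large (take $\upbeta'\equiv\upbeta_0'$ on a long initial interval while $h\circ F$ keeps growing), whereas for $1$-homogeneous $\upalpha$ and $s/r$ bounded the right-hand side $h\bigl((1+s/r)/\upalpha(s/r,1)\bigr)$ stays bounded, so the two-variable bound fails outright, not merely the proof of it. Smoothing: Lemma~\ref{le:sub} produces a smooth \emph{concave} minorant $\tilde g\le g$, and concavity together with $\tilde g(0)\ge 0$ yields only the \emph{additive} subadditivity $\tilde g(s)-\tilde g(r)\le\tilde g(s-r)\le h(F(s-r))$; your constraint controls $F(s-r)$ only through $F(s-r)-F(0)\le (s-r)/\upalpha(s-r,0)$, which is $+\infty$ whenever $\upalpha(\cdot,0)\equiv 0$ (the $\cosh$ case $\upalpha=\sqrt{uv}$) and, even when finite, need not be $\le (r+s)/\upalpha(s,r)$. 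The paper avoids both issues by applying Lemma~\ref{le:sub}, in the logarithmic variable, to a \emph{single} function $\gamma$ that is constructed from the start as a common minorant of $\upbeta'$ and $Q=P(\cdot/\upalpha(\cdot,1))$; imposing the two barriers sequentially, as you do, is precisely where the argument breaks. The interpolation gap in the construction of $F$ that you flag yourself is real but secondary by comparison.
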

\begin{proof}
  By a standard regularization, \GGG 
  we can always approximate $\upbeta$ by a smooth convex superlinear function
  $\tilde\upbeta\le \upbeta$ whose derivative is strictly positive, so that \EEE 
  it is not restrictive to assume that
  $\upbeta$ is of class $\rmC^2$. 
  Let us set 
  $r_0:=\inf\{r>0:\Psi^*(r)>0\}$ and   
  let $P:(0,\pinfty)\to (r_0,\pinfty)$ be  the inverse map of $\Psi^*$:
  $P$ is continuous, strictly increasing, and of class $\rmC^1$.
  
  Since $\upalpha$ is concave, the function $x \mapsto \upalpha(x,1)/x$ is
  nonincreasing in $(0,\pinfty)$; we can thus define the nondecreasing
  function $Q(x):=P(x/\upalpha(x,1))$
  and the function
  \begin{displaymath}
    \gamma(x):=2g_0+\int_1^x \min(\upbeta''(y),Q'(y))\,\dd y\quad
    \text{for every }x\ge 1,\quad
    g_0:=\frac12\min(\upbeta_0', Q(1))>0.
  \end{displaymath}
  By construction $\gamma(1)=2g_0= \min(\upbeta'_0,Q(1))\le \upbeta'(1)$
  so that $\gamma(x)\le \min(\upbeta'(x),Q(x))$ for every $x\ge1$.
  We eventually set
  \begin{displaymath}
    f(t):=\frac{\rme^t}{\gamma(\rme^t)}\quad t\ge0.
  \end{displaymath}
  Clearly, we have 
     $f(0)=2g_0$. Furthermore,  we combine the estimate
     $\gamma(\rme^t) \leq Q(\rme^t) = P(\rme^t/\upalpha(\rme^t,1))$ with the facts that 
     $\rme^t/\upalpha(\rme^t,1) \to +\infty$ as $t\to +\infty$, thanks to Lemma \ref{le:alpha-behaviour}, 
     and that $P$ has sublinear growth at infinity, being the inverse function of $\Psi^*$. 
     All in all, we conclude that 
    \[
    \lim_{t\to\pinfty}f(t)=\pinfty.
  \]
  Therefore, we are in a position to 
\EEE apply Lemma \ref{le:sub}, obtaining an increasing concave function
  $g:\R_+\to\R_+$ such that
  $g_0=g(0)\le g(t)\le f(t)$ and $\lim_{t\to\pinfty}g(t)=\pinfty$.
  Since $g(0)\ge0$, the concaveness of $g$ yields
  $g(t'')-g(t')\le g(t''-t')$ for every $0\le t'\le t''$, so that 
  the function $h(x):=g(\log (x\lor 1))$  satisfies
  $h(x)=g_0\le \upbeta'(x)$ for $x\in [0,1]$, and
  \begin{equation}
    \label{eq:48}
    h(z)\le \min(\upbeta'(z),Q(z))\quad\text{for every }z\ge 1,\quad
    h(y)-h(x)\le 
    h(y/x)\quad
    \text{for every }0< x\le y.
  \end{equation}
  In fact, if $x\le 1$ we get
  \begin{displaymath}
    h(y)-h(x)=h(y)-g_0\le h(y)\le h(y/x)
  \end{displaymath}
  and if $x\ge 1$ we get
  \begin{displaymath}
     h(y)-h(x)\le  g(\log y)-g(\log x)\le g(\log y-\log x)=g(\log(y/x))=
    h(y/x).
  \end{displaymath}
  Let us now define the convex function $\upomega(x):=\int_0^x h(y)\,\dd y$ with
  $\upomega(0)=0$ and $\upomega'=h$. In particular $\upomega(x)\le
  \upbeta(x)$
  for every $x\ge 0$.

  It remains to check the second inequality of \eqref{eq:41}.
  The case $r,s\le 1$ is trivial since
  $\upomega'(s)-\upomega'(r)=h(r)-h(s)=0$.
  We can also consider the case
  $\upomega'(r)\neq \upomega'(s)$ and $\upalpha(r,s)>0$;
  since \eqref{eq:41} is also symmetric, it is not restrictive to
  assume $r\le s$; by continuity, we can assume $r>0$.

  Recalling that $\upalpha(s,r)\le r\upalpha(s/r,1)$ if $0<r\le s$,
  and $(r+s)/r>s/r$,
  \eqref{eq:41} is surely satisfied if
  \begin{equation}
    \label{eq:49}
    \Psi^*(\upomega'(s)-\upomega'(r))\upalpha(s/r,1)\le s/r\quad
    \text{for every }0<r<s.
  \end{equation}
  Recalling that $\upomega'(s)-\upomega'(r)\le \upomega'(s/r)$ by
  \eqref{eq:48} and $\Psi^*$ is nondecreasing,
  \eqref{eq:49} is satisfied if
  \begin{equation}
    \label{eq:50}
    \Psi^*(\upomega'(s/r))\upalpha(s/r,1)\le s/r\quad
    \text{for every }0<r<s.
  \end{equation}
  After the substitution $t:=r/s$, \eqref{eq:50} corresponds to
  \begin{equation}
    \label{eq:51}
    \upomega'(t)\le P(t/\upalpha(t,1))=Q(t)\quad\text{for every }t\ge 1,
  \end{equation}
  which is a consequence of the first inequality of \eqref{eq:48}.
\end{proof}
\section{Connectivity by curves of finite action}
\label{s:app-2}
Preliminarily, with  the reference measure $\pi \in \calM_+(V)$ and with the    `jump equilibrium rate' $\tetapi$ from \eqref{nu-pi} we associate  the `graph divergence' operator
$\odiv_{\pi,\tetapi}:  L^p(\edg;\tetapi) \to L^p(V;\pi) $, $p\in [1,\pinfty]$, defined as the transposed of the `graph gradient' $\ona:L^q(V;\pi) \to L^{q}(\edg;\tetapi)$, with $q = p'$.
Namely
\[
\begin{gathered}
\text{for }  \zeta \EEE \in  L^p(\edg;\tetapi), \qquad  \xi = - \odivn_{\pi,\tetapi}( \zeta \EEE)\qquad \text{if and only if}
\\
 \int_{V} \xi(x) \omega(x) \pi(\dd x) = \int_{\edg}  \zeta \EEE(x,y) \ona \omega(x,y) \tetapi(\dd x, \dd y) \quad \text{for all } \omega \in  L^q(V;\pi) 
 \end{gathered}
\]
or, equivalently, 
\begin{equation}
\label{in-terms-of-measures}
\xi \pi = - \odiv( \zeta \EEE\tetapi) 
\end{equation}
 (with $\odiv$ the divergence operator from \eqref{eq:def:ona-div}) in the sense of measures. 
\par
We can now first address the connectivity problem in the very specific setup
\begin{equation}
\label{alpha-equiv-1}
\upalpha(u,v) \equiv 1 \qquad \text{for all } (u,v) \in [0,\pinfty) \times [0,\pinfty).
\end{equation}
Then, the action functional $\int \calR$ is translation-invariant. 
Let us consider two measures $\rho_0, \rho_1 \in \calM_+(V)$ such that for $i\in \{0,1\}$ there holds $\rho_i = u_i \pi$ with $u_i \in L_+^p(V;\pi)$ for some
$p\in (1,\pinfty)$. 
Thus, we look for curves $\rho \in  \ADM 0\tau{\rho_0}{\rho_1}$, with finite action, such that $\rho_t \ll \pi$,  with density $u_t$, for almost all $t\in (0,\tau)$. 
Consequently, any flux $(\bj_t)_{t\in (0,\tau)}$ shall satisfy
$\bj_t \ll \tetapi$ for a.a.\ $t\in (0,\tau)$ (cf.\ Lemma   \ref{l:alt-char-R}). 
 Taking into account \eqref{in-terms-of-measures},  the continuity equation reduces to 
\begin{equation}
\label{cont-eq-densities}
\dot{u}_t = - \odivn_{\pi,\tetapi}( \zeta_t \EEE) \qquad \foraa\, t \in (0,\tau)
\end{equation}
with $  \zeta_t =   \frac{\dd \bj_t}{\dd \tetapi}$. Furthermore, we look for a connecting curve $\rho_t = u_t \pi$ with $u_t = (1{-}t)u_0 +t u_1$, so that 
\eqref{cont-eq-densities} becomes 
$
- \odivn_{\pi,\tetapi}( \zeta_t \EEE)  \equiv u_1 -u_0$.  Hence, we can restrict to flux densities that are constant in time, i.e.\  $\zeta_t \equiv \zeta$ with $\zeta\in  L^p(\edg;\tetapi)$. \EEE
In this specific context, and if we further confine the discussion to the case $\Psi(r) = \frac1p |r|^p $ for $p\in (1,\pinfty)$, 
the minimal action problem becomes 
\begin{equation}
\label{minimal-action}
\inf\left \{ \frac1p \int_{\edg} |w|^p \tetapi(\dd x,\dd y) \, : \   w = 2\zeta \in  L^p(\edg;\tetapi), \ - \odivn_{\pi,\tetapi}(\zeta)  \equiv u_1 -u_0 \right\} \EEE
\end{equation}
Now, by a general duality result on linear operators, 
the operator $- \odiv_{\pi,\tetapi}: L^p(\edg;\tetapi) \to L^p(V;\pi)$ is surjective
 if and only if the graph gradient $\ona:L^q(V;\pi) \to L^{q}(\edg;\tetapi)$ fulfills 
the following property:
\[
\exists\, C>0 \ \ \forall\, \xi \in L^q(V;\pi)  \text{ with } \int_V \xi \pi(\dd x) =0 \text{ there holds } \| \xi\|_{L^q(V;\pi)} \leq C \| \ona \xi\|_{L^q(\edg;\tetapi)},
\]
namely the $q$-Poincar\'e inequality \eqref{q-Poinc}. We can thus conclude the following result.
\begin{lemma}
\label{l:intermediate-conn}
Suppose that $\upalpha \equiv 1$, that $\Psi$ has $p$-growth (cf.\ \eqref{psi-p-growth}), and that 
 the measures $(\pi,\tetapi) $ satisfy a  $q$-Poincar\'e inequality for $q=\tfrac p{p-1}$. 
Let $\rho_0, \rho_1  \in  \calM^+(V) $ be given by $\rho_i = u_i \pi$, with positive
$u_i \in L^p(V; \pi)$, for $i \in \{0,1\}$. Then, for every $\tau\in (0,1)$
we have $\DVT{\tau}{\rho_0}{\rho_1}<\pinfty$. If $\Psi(r) = \frac1p |r|^p$,  $q$-Poincar\'e inequality  is also necessary for
having $\DVT{\tau}{\rho_0}{\rho_1}<\pinfty$.
\end{lemma}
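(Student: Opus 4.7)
The plan is to exhibit an explicit competitor in $\ADM 0\tau{\rho_0}{\rho_1}$ consisting of the linear interpolation of the densities paired with a \emph{time-independent} skew-symmetric flux obtained by inverting the graph divergence via the $q$-Poincaré inequality.

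First, I would set $u_t := (1{-}t/\tau)u_0 + (t/\tau)u_1$ and $\rho_t := u_t \pi$. Since $u_0, u_1$ are positive and essentially bounded, there exist $0 < a \leq b < \infty$ with $a \leq u_i \leq b$ $\pi$-a.e., so the same bounds propagate to $u_t$ for every $t \in [0,\tau]$. Because $\upalpha$ is continuous on the compact set $[a,b]^2 \subset (0,\infty)^2$, on which it is strictly positive by \eqref{eq:38}, there exist constants $0 < c_\upalpha \leq C_\upalpha < \infty$ such that
\begin{equation*}
c_\upalpha \leq \upalpha(u_t(x), u_t(y)) \leq C_\upalpha \qquad \text{for all } t \in [0,\tau] \text{ and } \tetapi\text{-a.e.\ }(x,y) \in \edg.
\end{equation*}

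Next, set $f := (u_1 - u_0)/\tau$, which belongs to $L^p(V;\pi)$ (as $\pi$ is finite and $u_i \in L^\infty$) and has vanishing $\pi$-mean because $\rho_0$ and $\rho_1$ have the same mass. The $q$-Poincaré inequality \eqref{q-Poinc} asserts that $\ona$ is bounded below on the closed subspace of mean-zero functions in $L^q(V;\pi)$, hence has closed range in $L^q(\edg;\tetapi)$; by the closed range theorem, the adjoint map $-\odivn_{\pi,\tetapi}: L^p(\edg;\tetapi) \to L^p(V;\pi)$ surjects onto the pre-annihilator of $\ker(\ona) = $ constants, i.e.\ onto $L^p_0(V;\pi)$. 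Choose $\zeta \in L^p(\edg;\tetapi)$ with $-\odivn_{\pi,\tetapi}(\zeta) = f$, and replace it by its skew-symmetric part $(\zeta - \zeta \circ \symmapn)/2$, which preserves the divergence thanks to the symmetry of $\tetapi$ (cf.\ Lemma~\ref{le:A1}) without increasing the $L^p$ norm. Then define the time-independent flux $\bj_t(\dd x, \dd y) := \zeta(x,y)\,\tetapi(\dd x, \dd y)$ for all $t \in [0,\tau]$.

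It remains to verify $(\rho, \bj) \in \CER 0\tau$. For every $\varphi \in \Bb(V)$ and $0 \leq t_1 \leq t_2 \leq \tau$, skew-symmetry of $\zeta$ together with symmetry of $\tetapi$ yields
\begin{equation*}
\int_V \varphi \,\dd(\rho_{t_2} - \rho_{t_1}) = (t_2{-}t_1) \int_V \varphi f \,\dd\pi = (t_2{-}t_1) \iint_\edg \ona\varphi \cdot \zeta \,\dd\tetapi = \iint_{[t_1,t_2] \times \edg} \ona\varphi \,\dd\bj_\Lebone,
\end{equation*}
which is \eqref{2ndfundthm}. For the action, the representation \eqref{eq:92} combined with the $p$-growth \eqref{psi-p-growth} of $\Psi$ gives $\Psi(2\zeta/\upalpha)\,\upalpha \leq \overline{C}_p \bigl(\upalpha + 2^p |\zeta|^p \upalpha^{1-p}\bigr)$, whence
\begin{equation*}
\int_0^\tau \calR(\rho_t, \bj_t)\,\dd t \leq \frac{\tau \overline{C}_p}{2} \Bigl[ C_\upalpha\, \tetapi(\edg) + 2^p c_\upalpha^{1-p} \|\zeta\|_{L^p(\edg;\tetapi)}^p \Bigr] < \infty,
\end{equation*}
since $\tetapi(\edg) \leq \|\kappa_V\|_\infty\, \pi(V) < \infty$ and $\zeta \in L^p(\edg;\tetapi)$. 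The main (and essentially the only) obstacle is the potential degeneracy of $\upalpha$ at the boundary of $\R_+^2$: the exponent $1{-}p$ is negative, so without the uniform lower bound $c_\upalpha > 0$ the action estimate above would fail. This is precisely why the hypothesis requires the densities $u_i$ to be positive (and not merely nonnegative); handling vanishing densities would necessitate a more delicate construction, e.g.\ first regularizing $u_i$ away from zero and then relaxing the regularization in a controlled way.
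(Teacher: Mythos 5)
Your construction of a competitor—linear density interpolation paired with a time-independent flux obtained by inverting $-\odivn_{\pi,\tetapi}$ via the $q$-Poincar\'e inequality and the closed-range theorem, then estimating the action through the $p$-growth of $\Psi$—is exactly the paper's route to the sufficiency direction, and the skew-symmetrization step and the verification of \eqref{2ndfundthm} are both correct. However, you prove only half the lemma: the statement also asserts that when $\Psi(r)=\frac1p|r|^p$ the $q$-Poincar\'e inequality is \emph{necessary} for $\DVT{\tau}{\rho_0}{\rho_1}<\pinfty$, and that direction is entirely absent from your argument. In the paper this comes from the same duality observation read the other way: $-\odivn_{\pi,\tetapi}:L^p(\edg;\tetapi)\to L^p(V;\pi)$ maps onto the mean-zero subspace if and only if $\ona$ is bounded below on mean-zero $L^q(V;\pi)$, so if the cost were finite for all admissible endpoint data one could recover the Poincar\'e inequality; you should add this.

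A secondary, but real, problem is that you have silently strengthened the hypotheses and thereby argued in the wrong generality. In this lemma $\upalpha\equiv 1$, so your constants $c_\upalpha,C_\upalpha$ are both $1$, there is no degeneracy at the boundary of $\R_+^2$ to guard against, and your closing remark about why positivity of $u_i$ is essential is beside the point here. More importantly, the lemma assumes only that $u_i\in L^p(V;\pi)$, not $u_i\in L^\infty(V;\pi)$; your reduction to $a\le u_i\le b$ inserts a hypothesis that the lemma does not grant you, and which for $\upalpha\equiv 1$ is also unnecessary ($f=(u_1-u_0)/\tau\in L^p$, $\zeta\in L^p$, and the action bound $\frac{\tau}{2}\overline C_p\bigl(\tetapi(\edg)+2^p\|\zeta\|_{L^p(\edg;\tetapi)}^p\bigr)$ goes through with no $L^\infty$ control). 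You appear to be conflating Lemma~\ref{l:intermediate-conn} with Proposition~\ref{prop:sufficient-for-connectivity}, where bounded positive densities and control of $\upalpha$ near zero do matter; that reasoning belongs there, not here.
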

We are now in a position to carry out the 
\begin{proof}[Proof of Proposition \ref{prop:sufficient-for-connectivity}] Assume that $\rho_0(V) = \int_V u_0 (x) \pi(\dd x) = \pi(V)$. Hence, it is sufficient to provide a solution for the connectivity problem between $u_0$ and $u_1 \equiv 1$. We may also assume without loss of generality that
$\upalpha(u,v) \geq \upalpha_0(u,v)$ with $\upalpha_0(u,v) = c_0 \min(u,v,1)$ for some $c_0>0$,  so that 
\begin{equation}
\label{inequ}
\begin{aligned}
\Psi\left(\frac w{\upalpha(u,v)} \right)\upalpha(u,v) \leq \Psi\left(\frac w{\upalpha_0(u,v)} \right)\upalpha_0(u,v) &  \leq C_p \left( 1+ \left| \frac w{\upalpha_0(u,v)} \right|^p \right) \upalpha_0(u,v)  
\\
 & \leq
C_p c_0 + C_p |w|^p (\upalpha_0(u,v))^{1-p}\,,
\end{aligned}
\end{equation}
where the first estimate follows from the convexity of $\Psi$ and the fact that $\Psi(0)=0$, yielding that $\lambda \mapsto \lambda \Psi(w/\lambda)$ is non-increasing. 
It is therefore sufficient to consider the case in which $c_0=C_p=1$,  $\upalpha_0(u,v) = \min(u,v,1)$, and to solve the connectivity problem for $\tilde\Psi(r) = \frac1p |r|^p$. By Lemma \ref{l:intermediate-conn}, we may first find $w\in L^p(\edg;\tetapi)$
 solving the minimum problem \eqref{minimal-action} in the case $\upalpha \equiv 1$, so that  the flux density  $\zeta_t \equiv  \frac12 w$ \EEE is associated with the curve  $u_t = (1{-}t)u_0 +t u_1$, 
 $t\in [0,\tau]$.
  Then, we fix an exponent $\gamma>0$ and we consider the rescaled curve $\tilde{u}_t: = u_{t^\gamma}$, 
 that fulfills
 $\partial_t \tilde{u}_t = -  \odivn_{\pi,\tetapi}(\tilde{\zeta}_t) $
 with $\tilde{\zeta}_t  = \frac12 \tilde{w}_t = \frac12 \gamma t^{\gamma-1} w$. \EEE
 Moreover, 
 \begin{align*}
 \upalpha_0(\tilde{u}_t (x), \tilde{u}_t (y)) &= \min \{  (1{-}t^\gamma) u_0(x) + t^\gamma u_1(x), (1{-}t^\gamma) u_0(y) + t^\gamma u_1(y), 1\} \\
 &\geq \min(t^\gamma, 1) = t^\gamma
 \end{align*}
 since $u_1(x) = u_1(y) =1$.
 By \eqref{inequ} we thus get
 \[
 \begin{aligned}
 &
 \int_{\edg} \Psi \left(\frac{\tilde{w}_t(x,y)}{\upalpha(\tilde{u}_t (x), \tilde{u}_t (y))} \right) \upalpha(\tilde{u}_t (x), \tilde{u}_t (y)) \tetapi(\dd x,\dd y)  
 \\ &\quad \leq 
 C_p c_0 \tetapi(\edg)+
 \int_{\edg} \gamma^p t^{p(\gamma{-}1)} |w(x,y)|^p t^{\gamma(1{-}p)} \tetapi(\dd x,\dd y)  =   C_p c_0 \tetapi(\edg)+ \gamma^p t^{\gamma-p} \|w\|_{L^p(\edg;\tetapi)}^p\,.
 \end{aligned}
 \]
 Choosing $\gamma>p-1$ we conclude that 
 \[
 \int_0^\tau \int_{\edg} \Psi \left(\frac{\tilde{w}_t(x,y)}{\upalpha(\tilde{u}_t (x), \tilde{u}_t (y))} \right) \upalpha(\tilde{u}_t (x), \tilde{u}_t (y)) \tetapi(\dd x,\dd y)  <\pinfty\,
 \]
hence  $\ADM 0\tau{\rho_0}{\rho_1} \neq \emptyset$. 
 \end{proof}
 \EEE

\bibliographystyle{alpha}
\bibliography{ricky_lit}

\newcommand{\etalchar}[1]{$^{#1}$}
\begin{thebibliography}{DMDM06}

\bibitem[ADPZ11]{AdamsDirrPeletierZimmer11}
S.~Adams, N.~Dirr, M.~A. Peletier, and J.~Zimmer.
\newblock From a large-deviations principle to the {W}asserstein gradient flow:
  A new micro-macro passage.
\newblock {\em Communications in Mathematical Physics}, 307:791--815, 2011.

\bibitem[ADPZ13]{AdamsDirrPeletierZimmer13}
S.~Adams, N.~Dirr, M.~A. Peletier, and J.~Zimmer.
\newblock Large deviations and gradient flows.
\newblock {\em Philosophical Transactions of the Royal Society A: Mathematical,
  Physical and Engineering Sciences}, 371(2005):20120341, 2013.

\bibitem[AFP05]{AmFuPa05FBVF}
L.~Ambrosio, N.~Fusco, and D.~Pallara.
\newblock {\em Functions of Bounded Variation and Free Discontinuity Problems}.
\newblock Oxford University Press, 2005.

\bibitem[AGS08]{AmbrosioGigliSavare08}
L.~Ambrosio, N.~Gigli, and G.~Savar\'e.
\newblock {\em Gradient Flows in Metric Spaces and in the Space of Probability
  Measures}.
\newblock Lectures in Mathematics ETH Z{\"u}rich. Birkh{\"a}user, 2008.

\bibitem[Amb95]{Ambr95MM}
L.~Ambrosio.
\newblock Minimizing movements.
\newblock {\em Rend. Accad. Naz. Sci. XL Mem. Mat. Appl. (5)}, 19:1773--1799,
  1995.

\bibitem[AMP{\etalchar{+}}12]{ArnrichMielkePeletierSavareVeneroni12}
S.~Arnrich, A.~Mielke, M.~A. Peletier, G.~Savar\'e, and M.~Veneroni.
\newblock {Passing to the limit in a Wasserstein gradient flow: From diffusion
  to reaction}.
\newblock {\em Calculus of Variations and Partial Differential Equations},
  44:419--454, 2012.

\bibitem[ATW93]{AlmgrenTaylorWang93}
F.~Almgren, J.~E. Taylor, and L.~Wang.
\newblock {Curvature-Driven Flows: A Variational Approach}.
\newblock {\em SIAM Journal on Control and Optimization}, 31:387--437, 1993.

\bibitem[BB00]{Benamou-Brenier}
J.-D. Benamou and Y.~Brenier.
\newblock A computational fluid mechanics solution to the {M}onge-{K}antorovich
  mass transfer problem.
\newblock {\em Numer. Math.}, 84(3):375--393, 2000.

\bibitem[Bog07]{Bogachev07}
V.~I. Bogachev.
\newblock {\em Measure theory. {V}ol. {I}, {II}}.
\newblock Springer-Verlag, Berlin, 2007.

\bibitem[BP16]{BonaschiPeletier16}
G.~A. Bonaschi and M.~A. Peletier.
\newblock Quadratic and rate-independent limits for a large-deviations
  functional.
\newblock {\em Contin. Mech. Thermodyn.}, 28:1191--1219, 2016.

\bibitem[Bre11]{Brezis11}
H.~Brezis.
\newblock {\em Functional Analysis, Sobolev Spaces and Partial Differential
  Equations}.
\newblock Springer, New York, 2011.

\bibitem[Bul03]{Bullen2003handbook}
P.S. Bullen.
\newblock {\em Handbook of Means and Their Inequalities}.
\newblock Mathematics and Its Applications. Springer Netherlands, 2003.

\bibitem[CHLZ12]{ChowHuangLiZhou12}
S.-N. Chow, W.~Huang, Y.~Li, and H.~Zhou.
\newblock Fokker--{P}lanck equations for a free energy functional or {M}arkov
  process on a graph.
\newblock {\em Archive for Rational Mechanics and Analysis}, 203(3):969--1008,
  2012.

\bibitem[CT80]{Crandall-Tartar80}
M.~G. Crandall and L.~Tartar.
\newblock Some relations between nonexpansive and order preserving mappings.
\newblock {\em Proc. Amer. Math. Soc.}, 78(3):385--390, 1980.

\bibitem[CV77]{Castaing-Valadier77}
C.~Castaing and M.~Valadier.
\newblock {\em Convex analysis and measurable multifunctions}.
\newblock Lectures Notes in Mathematics, Vol. 580. Springer-Verlag, Berlin-New
  York, 1977.

\bibitem[DFM18]{DondlFrenzelMielke18TR}
P.~Dondl, T.~Frenzel, and A.~Mielke.
\newblock A gradient system with a wiggly energy and relaxed {EDP}-convergence.
\newblock {\em arXiv preprint arXiv:1801.07144}, 2018.

\bibitem[DGMT80]{DeGiorgiMarinoTosques80}
E.~De~Giorgi, A.~Marino, and M.~Tosques.
\newblock Problems of evolution in metric spaces and maximal decreasing curve.
\newblock {\em Atti Accad. Naz. Lincei Rend. Cl. Sci. Fis. Mat. Natur. (8)},
  68(3):180--187, 1980.

\bibitem[DM78]{Dellacherie-Meyer78}
C.~Dellacherie and P.-A. Meyer.
\newblock {\em Probabilities and potential}, volume~29 of {\em North-Holland
  Mathematics Studies}.
\newblock North-Holland Publishing Co., Amsterdam-New York; North-Holland
  Publishing Co., Amsterdam-New York, 1978.

\bibitem[DMDM06]{Dal-MasoDeSimoneMora06}
G.~Dal~Maso, A.~DeSimone, and M.~G. Mora.
\newblock Quasistatic evolution problems for linearly elastic--perfectly
  plastic materials.
\newblock {\em Archive for rational mechanics and analysis}, 180(2):237--291,
  2006.

\bibitem[DNS09a]{DolbeaultNazaretSavare09}
J.~Dolbeault, B.~Nazaret, and G.~Savar{\'e}.
\newblock A new class of transport distances between measures.
\newblock {\em Calculus of Variations and Partial Differential Equations},
  34(2):193--231, 2009.

\bibitem[DNS09b]{DNS09}
J.~Dolbeault, B.~Nazaret, and G.~Savar\'e.
\newblock A new class of transport distances between measures.
\newblock {\em Calc. Var. Partial Differential Equations}, 34(2):193--231,
  2009.

\bibitem[DPZ13]{DuongPeletierZimmer13}
M.~H. Duong, M.~A. Peletier, and J.~Zimmer.
\newblock {GENERIC} formalism of a {V}lasov-{F}okker-{P}lanck equation and
  connection to large-deviation principles.
\newblock {\em Nonlinearity}, 26:2951--2971, 2013.

\bibitem[DSZ16]{DirrStamatakisZimmer16}
N.~Dirr, M.~Stamatakis, and J.~Zimmer.
\newblock Entropic and gradient flow formulations for nonlinear diffusion.
\newblock {\em J. Math. Phys.}, 57(8):081505, 13, 2016.

\bibitem[DZ98]{DemboZeitouni98}
A.~Dembo and O.~Zeitouni.
\newblock {\em Large Deviations Techniques and Applications}.
\newblock Springer Verlag, 1998.

\bibitem[EFLS16]{ErbarFathiLaschosSchlichting16TR}
M.~Erbar, M.~Fathi, V.~Laschos, and A.~Schlichting.
\newblock Gradient flow structure for {M}c{K}ean-{V}lasov equations on discrete
  spaces.
\newblock {\em Discrete Contin. Dyn. Syst.}, 36(12):6799--6833, 2016.

\bibitem[EFS19]{ErbarFathiSchlichting19TR}
M.~Erbar, M.~Fathi, and A.~Schlichting.
\newblock Entropic curvature and convergence to equilibrium for mean-field
  dynamics on discrete spaces.
\newblock {\em arXiv preprint arXiv:1908.03397}, 2019.

\bibitem[EG92]{Evans-Gariepy}
L.~C. Evans and R.F~F. Gariepy.
\newblock {\em Measure theory and fine properties of functions}.
\newblock Studies in Advanced Mathematics. CRC Press, Boca Raton, FL, 1992.

\bibitem[EHIM09]{ElHajjIbrahimMonneau09}
A.~El~Hajj, H.~Ibrahim, and R.~Monneau.
\newblock {Dislocation dynamics: from microscopic models to macroscopic crystal
  plasticity}.
\newblock {\em Continuum Mechanics and Thermodynamics}, 21(2):109--123, 2009.

\bibitem[EM14]{ErbarMaas12}
M.~Erbar and J.~Maas.
\newblock Gradient flow structures for discrete porous medium equations.
\newblock {\em Discrete Contin. Dyn. Syst.}, 34(4):1355--1374, 2014.

\bibitem[Erb14]{Erbar14}
Matthias Erbar.
\newblock Gradient flows of the entropy for jump processes.
\newblock {\em Ann. Inst. Henri Poincar\'{e} Probab. Stat.}, 50(3):920--945,
  2014.

\bibitem[Erb16]{Erbar16TR}
M.~Erbar.
\newblock A gradient flow approach to the {B}oltzmann equation.
\newblock {\em Arxiv preprint arXiv:01603.00540}, 2016.

\bibitem[Fei72]{Feinberg72}
M.~Feinberg.
\newblock On chemical kinetics of a certain class.
\newblock {\em Archive for Rational Mechanics and Analysis}, 46(1):1--41, 1972.

\bibitem[FGY11]{FigalliGangboYolcu11}
A.~Figalli, W.~Gangbo, and T.~Yolcu.
\newblock A variational method for a class of parabolic {PDE}s.
\newblock {\em Annali della Scuola Normale Superiore di Pisa---Classe di
  Scienze}, 10(1):207--252, 2011.

\bibitem[Gig10]{Gigli10}
N.~Gigli.
\newblock On the heat flow on metric measure spaces: Existence, uniqueness and
  stability.
\newblock {\em Calculus of Variations and Partial Differential Equations},
  39(1-2):101--120, 2010.

\bibitem[GM13]{GlitzkyMielke13}
A.~Glitzky and A.~Mielke.
\newblock A gradient structure for systems coupling reaction--diffusion effects
  in bulk and interfaces.
\newblock {\em Zeitschrift f{\"u}r angewandte Mathematik und Physik},
  64(1):29--52, 2013.

\bibitem[GNP19]{GavishNyquistPeletier19TR}
N.~Gavish, P.~Nyquist, and M.~Peletier.
\newblock Large deviations and gradient flows for the {B}rownian
  one-dimensional hard-rod system.
\newblock {\em arXiv preprint arXiv:1909.02054}, 2019.

\bibitem[Grm84]{Grmela84}
M.~Grmela.
\newblock Particle and bracket formulations of kinetic equations.
\newblock In J.~E. Marsden, editor, {\em Proceedings of the AMS-IMS-SIAM Joint
  Summer Research Conference in the Mathematical Sciences on Fluids and
  Plasmas: Geometry and Dynamics}, pages 125--132, 1984.

\bibitem[Grm10]{Grmela10}
M.~Grmela.
\newblock Multiscale equilibrium and nonequilibrium thermodynamics in chemical
  engineering.
\newblock {\em Advances in Chemical Engineering}, 39:75--129, 2010.

\bibitem[GW08]{GiraultWheeler08}
V.~Girault and M.~F. Wheeler.
\newblock Numerical discretization of a {D}arcy-{F}orchheimer model.
\newblock {\em Numer. Math.}, 110(2):161--198, 2008.

\bibitem[HMP20]{HudsonVanMeursPeletier20TR}
Thomas Hudson, Patrick~van Meurs, and Mark~A Peletier.
\newblock Atomistic origins of continuum dislocation dynamics.
\newblock {\em arXiv preprint arXiv:2001.06120}, 2020.

\bibitem[HW96]{HairerWanner96}
E.~Hairer and G.~Wanner.
\newblock {\em Solving ordinary differential equations. {II}}, volume~14 of
  {\em Springer Series in Computational Mathematics}.
\newblock Springer-Verlag, Berlin, second edition, 1996.
\newblock Stiff and differential-algebraic problems.

\bibitem[JKO98]{JordanKinderlehrerOtto98}
R.~Jordan, D.~Kinderlehrer, and F.~Otto.
\newblock The variational formulation of the {F}okker-{P}lanck {E}quation.
\newblock {\em SIAM Journal on Mathematical Analysis}, 29(1):1--17, 1998.

\bibitem[KJZ18]{KaiserJackZimmer18}
M.~Kaiser, R.~L Jack, and J.~Zimmer.
\newblock Canonical structure and orthogonality of forces and currents in
  irreversible {M}arkov chains.
\newblock {\em Journal of Statistical Physics}, 170(6):1019--1050, 2018.

\bibitem[KL95]{KnuppLage95}
P.~M. Knupp and J.~L. Lage.
\newblock Generalization of the {F}orchheimer-extended {D}arcy flow model to
  the tensor permeability case via a variational principle.
\newblock {\em J. Fluid Mech.}, 299:97--104, 1995.

\bibitem[KOV89]{KipnisOllaVaradhan89}
C.~Kipnis, S.~Olla, and S.~R.~S. Varadhan.
\newblock Hydrodynamics and large deviation for simple exclusion processes.
\newblock {\em Comm. Pure Appl. Math.}, 42(2):115--137, 1989.

\bibitem[LMPR17]{LieroMielkePeletierRenger17}
M.~Liero, A.~Mielke, M.~A. Peletier, and D.~R.~M. Renger.
\newblock On microscopic origins of generalized gradient structures.
\newblock {\em Discrete and Continuous Dynamical Systems-Series S}, 10(1):1,
  2017.

\bibitem[LMS18]{LMS18}
M.~Liero, A.~Mielke, and G.~Savar\'{e}.
\newblock Optimal entropy-transport problems and a new
  {H}ellinger-{K}antorovich distance between positive measures.
\newblock {\em Invent. Math.}, 211(3):969--1117, 2018.

\bibitem[LS95]{LuckhausSturzenhecker95}
S.~Luckhaus and T.~Sturzenhecker.
\newblock {Implicit time discretization for the mean curvature flow equation}.
\newblock {\em Calculus of Variations and Partial Differential Equations},
  3(2):253--271, 1995.

\bibitem[Maa11]{Maas11}
J.~Maas.
\newblock Gradient flows of the entropy for finite {M}arkov chains.
\newblock {\em Journal of Functional Analysis}, 261(8):2250--2292, 2011.

\bibitem[Mar76]{Martin76}
Robert~H. Martin, Jr.
\newblock {\em Nonlinear operators and differential equations in {B}anach
  spaces}.
\newblock Wiley-Interscience [John Wiley \& Sons], New York-London-Sydney,
  1976.
\newblock Pure and Applied Mathematics.

\bibitem[Mie05]{Mielke05a}
A.~Mielke.
\newblock {Evolution in rate-independent systems}.
\newblock In {\em Handbook of Differential Equations: Evolutionary Differential
  Equations}, pages 461--559. North-Holland, 2005.

\bibitem[Mie13]{Mielke13CALCVAR}
A.~Mielke.
\newblock Geodesic convexity of the relative entropy in reversible {M}arkov
  chains.
\newblock {\em Calc. Var. Partial Differential Equations}, 48(1-2):1--31, 2013.

\bibitem[Mie16]{Mielke16}
A.~Mielke.
\newblock Deriving effective models for multiscale systems via evolutionary
  {$\Gamma$}-convergence.
\newblock In {\em Control of Self-Organizing Nonlinear Systems}, pages
  235--251. Springer, 2016.

\bibitem[MMP20]{MielkeMontefuscoPeletier20TR}
A.~Mielke, A.~Montefusco, and M.~A Peletier.
\newblock Exploring families of energy-dissipation landscapes via
  tilting--three types of {EDP} convergence.
\newblock {\em arXiv preprint arXiv:2001.01455}, 2020.

\bibitem[MN08]{MaesNetocny08}
C.~Maes and K.~Neto{\v{c}}n{\'y}.
\newblock Canonical structure of dynamical fluctuations in mesoscopic
  nonequilibrium steady states.
\newblock {\em Europhysics Letters}, 82(3):30003, 2008.

\bibitem[M{\"o}r10]{Moerters10}
P.~M{\"o}rters.
\newblock Introduction to large deviations.
\newblock Technical report, University of Bath, 2010.

\bibitem[MPPR17]{MielkePattersonPeletierRenger17}
A.~Mielke, R.I.A. Patterson, M.~A Peletier, and D.R.~M. Renger.
\newblock Non-equilibrium thermodynamical principles for chemical reactions
  with mass-action kinetics.
\newblock {\em SIAM Journal on Applied Mathematics}, 77(4):1562--1585, 2017.

\bibitem[MPR14]{MielkePeletierRenger14}
A.~Mielke, M.~A. Peletier, and D.~R.~M. Renger.
\newblock On the relation between gradient flows and the large-deviation
  principle, with applications to {M}arkov chains and diffusion.
\newblock {\em Potential Analysis}, 41(4):1293--1327, 2014.

\bibitem[MPR16]{MielkePeletierRenger16}
A.~Mielke, M.~A. Peletier, and D.~R.~M. Renger.
\newblock A generalization of {O}nsager's reciprocity relations to gradient
  flows with nonlinear mobility.
\newblock {\em Journal of Non-Equilibrium Thermodynamics}, 41(2):141--149,
  2016.

\bibitem[MR15]{MielkeRoubicek15}
A.~Mielke and T.~Roub{\'\i}cek.
\newblock {\em Rate-Independent Systems}.
\newblock Springer, 2015.

\bibitem[MRS12]{MielkeRossiSavare12a}
A.~Mielke, R.~Rossi, and G.~Savar{\'e}.
\newblock {BV} solutions and viscosity approximations of rate-independent
  systems.
\newblock {\em ESAIM: Control, Optimisation and Calculus of Variations},
  18(01):36--80, 2012.

\bibitem[MRS13]{MRS2013}
A.~Mielke, R.~Rossi, and G.~Savar{\'e}.
\newblock Nonsmooth analysis of doubly nonlinear evolution equations.
\newblock {\em Calc. Var. Partial Differential Equations}, 46(1-2):253--310,
  2013.

\bibitem[MRS16]{MRS13}
A.~Mielke, R.~Rossi, and G.~Savar{\'e}.
\newblock Balanced viscosity ({BV}) solutions to infinite-dimensional
  rate-independent systems.
\newblock {\em J. Eur. Math. Soc. (JEMS)}, 18(9):2107--2165, 2016.

\bibitem[MS13]{MirrahimiSouganidis13}
S.~Mirrahimi and P.~E. Souganidis.
\newblock A homogenization approach for the motion of motor proteins.
\newblock {\em Nonlinear Differential Equations and Applications NoDEA},
  20(1):129--147, 2013.

\bibitem[MS19]{MielkeStephan19TR}
A.~Mielke and A.~Stephan.
\newblock Coarse-graining via {EDP}-convergence for linear fast-slow reaction
  systems.
\newblock {\em arXiv preprint arXiv:1911.06234}, 2019.

\bibitem[MTL02]{MielkeTheilLevitas02}
A.~Mielke, F.~Theil, and V.~I. Levitas.
\newblock A variational formulation of rate-independent phase transformations
  using an extremum principle.
\newblock {\em Archive for Rational Mechanics and Analysis}, 162(2):137--177,
  2002.

\bibitem[Ott01]{Otto01}
F.~Otto.
\newblock The geometry of dissipative evolution equations: The porous medium
  equation.
\newblock {\em Communications in Partial Differential Equations}, 26:101--174,
  2001.

\bibitem[{\"O}tt19]{Ottinger19}
H.~C. {\"O}ttinger.
\newblock On the combined use of friction matrices and dissipation potentials
  in thermodynamic modeling.
\newblock {\em Journal of Non-Equilibrium Thermodynamics}, 44(3):295--302,
  2019.

\bibitem[Pel14]{PeletierVarMod14TR}
M.~A. Peletier.
\newblock Variational modelling: Energies, gradient flows, and large
  deviations.
\newblock {\em Arxiv preprint arXiv:1402:1990}, 2014.

\bibitem[PRV14]{PeletierRedigVafayi14}
M.~A. Peletier, F.~Redig, and K.~Vafayi.
\newblock Large deviations in stochastic heat-conduction processes provide a
  gradient-flow structure for heat conduction.
\newblock {\em Journal of Mathematical Physics}, 55(9):093301, 2014.

\bibitem[PS09a]{PerthameSouganidis09a}
B.~Perthame and P.~E. Souganidis.
\newblock Asymmetric potentials and motor effect: A large deviation approach.
\newblock {\em Archive for Rational Mechanics and Analysis}, 193(1):153--169,
  2009.

\bibitem[PS09b]{PerthameSouganidis09}
B.~Perthame and P.E. Souganidis.
\newblock Asymmetric potentials and motor effect: A homogenization approach.
\newblock {\em Annales de l'Institut Henri Poincare (C) Non Linear Analysis},
  26(6):2055--2071, 2009.

\bibitem[PS19]{PeletierSchlottke19TR}
M.~A. Peletier and M.~C. Schlottke.
\newblock Large-deviation principles of switching {M}arkov processes via
  {H}amilton-{J}acobi equations.
\newblock {\em arXiv preprint arXiv:1901.08478}, 2019.

\bibitem[Ren18]{Renger18}
D.R.M. Renger.
\newblock Flux large deviations of independent and reacting particle systems,
  with implications for macroscopic fluctuation theory.
\newblock {\em Journal of Statistical Physics}, 172(5):1291--1326, 2018.

\bibitem[RSSS19]{RSSS19}
R.~Rossi, G.~Savar\'{e}, A.~Segatti, and U.~Stefanelli.
\newblock Weighted energy-dissipation principle for gradient flows in metric
  spaces.
\newblock {\em J. Math. Pures Appl. (9)}, 127:1--66, 2019.

\bibitem[RW98]{Rockafellar-Wets98}
R.~Tyrrell Rockafellar and Roger J.-B. Wets.
\newblock {\em Variational Analysis}.
\newblock Springer-Verlag, Berlin, 1998.

\bibitem[RZ19]{RengerZimmer19TR}
M.~Renger and J.~Zimmer.
\newblock Orthogonality of fluxes in general nonlinear reaction networks.
\newblock {\em Discrete and Continuous Dynamical Systems Series S}, 2019.

\bibitem[Ser11]{Serfaty11}
S.~Serfaty.
\newblock Gamma-convergence of gradient flows on {H}ilbert and metric spaces
  and applications.
\newblock {\em Discrete Contin. Dyn. Syst.}, 31(4):1427--1451, 2011.

\bibitem[SS04]{Sandier-Serfaty04}
E.~Sandier and S.~Serfaty.
\newblock Gamma-convergence of gradient flows with applications to
  {G}inzburg-{L}andau.
\newblock {\em Comm. Pure Appl. Math.}, 57(12):1627--1672, 2004.

\end{thebibliography}

\end{document}